\newtheorem{theorem}{Theorem}[section]
\newtheorem{lemma}[theorem]{Lemma}
\newtheorem{corollary}[theorem]{Corollary}
\newtheorem{proposition}[theorem]{Proposition}
\theoremstyle{definition}
\newtheorem{remark}[theorem]{Remark}
\newtheorem{example}[theorem]{Example}
\newcommand{\xysquare}[8]{
\[\xymatrix{
#1 \ar@{#5}[r] \ar@{#6}[d] & #2 \ar@{#7}[d]\\
#3 \ar@{#8}[r] & #4
}\]
}
\newcommand{\al}{\alpha}
\newcommand{\bb}{\mathbb}
\newcommand{\bor}{\partial}
\newcommand{\comment}[1]{}
\newcommand{\ep}{\varepsilon}
\newcommand{\isoto}{\stackrel{\simeq}{\to}}
\newcommand{\mult}[1]{#1^{\!\times}}
\newcommand{\onto}{\twoheadrightarrow}
\newcommand{\op}{\operatorname}
\newcommand{\pid}[1]{(#1)}
\newcommand{\res}{\overline}
\newcommand{\roi}{\mathcal{O}}
\newcommand{\sub}[1]{{\mbox{\scriptsize #1}}}
\newcommand{\To}{\longrightarrow}
\newcommand{\xto}{\xrightarrow}
\newcommand{\did}[1]{\langle #1\rangle}
\renewcommand{\cal}{\mathcal}
\renewcommand{\hat}{\widehat}
\renewcommand{\frak}{\mathfrak}
\newcommand{\indlim}{\varinjlim}
\renewcommand{\tilde}{\widetilde}
\renewcommand{\ker}{\operatorname{Ker}}
\renewcommand{\projlim}{\varprojlim}
\DeclareMathOperator{\Ann}{Ann}
\DeclareMathOperator{\Frac}{Frac}
\DeclareMathOperator{\Hom}{Hom}
\DeclareMathOperator{\Spec}{Spec}
\DeclareMathOperator*{\projlimf}{``\varprojlim\!''}
\DeclareMathOperator*{\holim}{\operatorname*{holim}}
\newcommand{\comp}{{\hat{\phantom{o}}}}
\begin{document}
\title{$K$-theory of one-dimensional rings via pro-excision}

\author{\sc Matthew Morrow\footnote{University of Chicago, supported by a Simons Postdoctoral Fellowship.}}

\date{}



\maketitle
\begin{abstract}
This paper studies `pro-excision' for the $K$-theory of one-dimensional, usually semi-local, rings and its various applications. In particular, we prove Geller's conjecture for equal characteristic rings over a perfect field of finite characteristic, give results towards Geller's conjecture in mixed characteristic, and we establish various finiteness results for the $K$-groups of singularities, covering both orders in number fields and singular curves over finite fields.

Key words: K-theory, excision, singularities, cyclic homology, $p$-adic fields.

MSC: 19D55 (primary), 14H20 (secondary).
\end{abstract}

\section*{Introduction}
In the first section of this paper we will show that if $A$ is a one-dimensional, Noetherian, reduced, semi-local ring for which the normalisation morphism $A\to\tilde A$ is finite, then there is a long exact, Mayer--Vietoris, `pro-excision' sequence of pro abelian groups \[\cdots\to K_n(A)\to\projlimf_r K_n(A/\frak m^r)\oplus K_n(\tilde A)\to\projlimf_r K_n(\tilde A/\frak M^r)\to\cdots\tag{pro-MV},\] where $\frak m,\frak M$ denote the Jacobson radicals of $A,\tilde A$ respectively. There is also a similar sequence for the relative $K$-groups. Here $\projlimf_{r}$ denotes a pro abelian group, i.e.~a formal inverse system of groups, sometimes denoted, e.g., $\{K_n(A/\frak m^r)\}_r$.

Before discussing the main results of this paper, we explain the source of (pro-MV). It has been known at least since work by R.~Swan \cite[Thm.~3.1]{Swan1971} that $K$-theory fails to satisfy excision; i.e., if $A\to B$ is a morphism of rings and $I$ is an ideal of $A$ mapped isomorphically to an ideal of $B$, then $K_n(A,I)\to K_n(B,I)$ need not be an isomorphism. Having fixed $I$ as a non-unital algebra, A.~Suslin \cite{Suslin1995} showed, by building on earlier work of himself and M.~Wodzicki \cite{Suslin1992}, that $I$ satisfies excision for {\em all} such morphisms $A\to B$ if and only if $I$ is {\em homologically unital}, in Wodzicki's sense that $\op{Tor}^{\bb Z\ltimes I}_*(\bb Z,\bb Z)=0$ for $*>0$. Unfortunately, this is not commonly satisfied for rings of algebraic geometry. A recent trend has therefore been to consider instead the problem of `pro-excision': i.e.,\ When is the map $\projlimf_r K_n(A,I^r)\to\projlimf_rK_n(B,I^r)$ an isomorphism? For example, if $A$ is a Noetherian $\bb Q$-algebra then these maps are isomorphisms by a recent theorem of the author \cite[Thm.~0.1]{Morrow_Birelative}. Moreover, T.~Geisser and L.~Hesselholt \cite{GeisserHesselholt2006, GeisserHesselholt2011} have established a pro version of the Suslin--Wodzicki condition. In section \ref{section_pro_excision} we use Geisser--Hesselholt's results to show that if $I$ is the conductor ideal of a one-dimensional, Noetherian, reduced ring for which the normalisation map is finite, then it satisfies pro-excision, thereby resulting in long exact, Mayer--Vietoris, pro-excision sequences such as (pro-MV) above.

Such sequences have immediate global applications: If $X$ is a proper, reduced curve over a finite field, then pro-excision implies that $K_n(X)\to K_n(\tilde X)$ has finite kernel and cokernel for $n\ge 1$, whence $K_n(X)$ is finite by G.~Harder \cite{Harder1977} and C.~Soul\'e \cite{Soule1984}. In other words, Harder--Soul\'e's finiteness result extends to singular curves. The arithmetic analogue, which also follows from pro-exicision, is that if $A\subseteq \roi_F$ is an order in the ring of integers of a number field $F$, then $K_n(A)$ is finitely generated and of the same rank as $K_n(\roi_F)$; these ranks are of course known thanks to A.~Borel \cite{Borel1974}. The proofs of these results are postponed until section \ref{subsection_global} with the other material on rings with finite residue fields.

However, the major theme of this paper is of a local nature, showing that such pro-excision sequences often break into short exact sequences and studying the many interesting consequences, especially to Geller's conjecture in the finite residue characteristic case. In particular, sections \ref{subsection_finite_char} -- \ref{subsection_main_results} are devoted to the proof of the following key theorem, which can be interpreted as an analogue for singular rings of the Gersten conjecture:

\begin{theorem}\label{theorem_intro_1}
Let $A$ be a one-dimensional, Noetherian, reduced semi-local ring containing a field such that $A\to \tilde A$ is a finite morphism; let $\frak m$ and $\frak M$ denote the Jacobson radicals of $A$ and $\tilde A$. Then the relative version of (pro-MV) breaks into short exact sequences of pro abelian groups, yielding \[0 \to K_n(A,\frak m)\to\projlimf_r K_n(A/\frak m^r,\frak m/\frak m^r)\oplus K_n(\tilde A,\frak M)\to\projlimf_r K_n(\tilde A/\frak M^r,\frak M/\frak M^r)\to0\] for all $n\ge 0$. If $\tilde A\to\tilde A/\frak M$ splits (e.g., $A$ complete), then the non-relative sequence (pro-MV) also splits into short exact sequences: \[0\to K_n(A)\to\projlimf_r K_n(A/\frak m^r)\oplus K_n(\tilde A)\to\projlimf_r K_n(\tilde A/\frak M^r)\to0.\]
\end{theorem}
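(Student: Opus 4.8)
The plan is to show that the long exact pro-excision sequence (pro-MV) for relative $K$-groups, whose existence is granted by the discussion of Section~\ref{section_pro_excision}, degenerates into short exact pieces by exhibiting a splitting of each of its maps. The key observation is that the first map in each triple, namely $K_n(A,\frak m)\to \projlimf_r K_n(A/\frak m^r,\frak m/\frak m^r)\oplus K_n(\tilde A,\frak M)$, admits a retraction onto the summand $\projlimf_r K_n(A/\frak m^r,\frak m/\frak m^r)$. Indeed, for each fixed $r$ the quotient map $A\to A/\frak m^r$ induces $K_n(A,\frak m)\to K_n(A/\frak m^r,\frak m/\frak m^r)$ (note $\frak m$ maps onto $\frak m/\frak m^r$), and these are compatible as $r$ varies, so they assemble into a map of pro abelian groups $K_n(A,\frak m)\to \projlimf_r K_n(A/\frak m^r,\frak m/\frak m^r)$; this is precisely the first component of the pro-excision map, so it is split by the identity of its own source viewed the right way---more precisely, one must check the composite $K_n(A,\frak m)\to \projlimf_r K_n(A/\frak m^r,\frak m/\frak m^r)$ is itself split or, better, that its failure is absorbed. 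The cleaner route is to produce a splitting of the \emph{last} map $\projlimf_r K_n(\tilde A/\frak M^r,\frak M/\frak M^r)\to \projlimf_{r+1}(\cdots)$ in the long exact sequence, i.e.\ of the connecting-type behaviour, by noting that $\tilde A$ is a finite product of one-dimensional normal domains, hence regular, so for each $r$ the pair $(\tilde A, \frak M)$ and the pair $(\tilde A/\frak M^r,\frak M/\frak M^r)$ are related through the projection $\tilde A\to \tilde A/\frak M^r$, which splits the relevant terms because $\tilde A\onto\tilde A/\frak M^r$ has a section after suitable completion or because $K_n(\tilde A,\frak M)\to \projlimf_r K_n(\tilde A/\frak M^r,\frak M/\frak M^r)$ is an isomorphism by pro-excision applied to $\tilde A$ itself (where the conductor is the unit ideal, trivially homologically unital). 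Granting that last isomorphism, the long exact sequence collapses: the map $K_n(\tilde A,\frak M)\to \projlimf_r K_n(\tilde A/\frak M^r,\frak M/\frak M^r)$ being an isomorphism forces the connecting maps to vanish and produces the asserted short exact sequences.

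Concretely, the steps I would carry out are as follows. \textbf{Step 1.} Record that $\tilde A$, being the normalisation of a reduced one-dimensional Noetherian ring with finite normalisation, is a finite product of Dedekind domains; in particular it is regular, and for the trivial conductor situation pro-excision (as established in Section~\ref{section_pro_excision}) gives that $K_n(\tilde A,\frak M)\to \projlimf_r K_n(\tilde A/\frak M^r,\frak M/\frak M^r)$ is an isomorphism of pro abelian groups for all $n\ge 0$. \textbf{Step 2.} Feed this isomorphism into the long exact relative pro-excision sequence $\cdots\to K_n(A,\frak m)\to \projlimf_r K_n(A/\frak m^r,\frak m/\frak m^r)\oplus K_n(\tilde A,\frak M)\to \projlimf_r K_n(\tilde A/\frak M^r,\frak M/\frak M^r)\to K_{n-1}(A,\frak m)\to\cdots$; since the second map restricted to the $K_n(\tilde A,\frak M)$ summand is (up to sign) the isomorphism from Step~1, it is surjective, so the connecting map $\projlimf_r K_n(\tilde A/\frak M^r,\frak M/\frak M^r)\to K_{n-1}(A,\frak m)$ is zero and the sequence breaks into the short exact sequences claimed, simultaneously for all $n\ge 0$. \textbf{Step 3.} For the non-relative statement, use the compatible fibration sequences relating $K(R)$, $K(R/\frak r)$, and the relative term, for $R\in\{A,\tilde A\}$; the relative short exact sequences from Step~2 together with the five lemma (applied levelwise in the pro category, which is abelian) upgrade to the non-relative short exact sequences \emph{provided} the boundary interaction with $K_*(A/\frak m)$, $K_*(\tilde A/\frak M)$ is controlled. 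This is exactly where the hypothesis that $\tilde A\to \tilde A/\frak M$ splits enters: a ring splitting $\tilde A/\frak M\to \tilde A$ induces a splitting of $K_n(\tilde A)\to K_n(\tilde A/\frak M)$, hence of $K_n(\tilde A)\to \projlimf_r K_n(\tilde A/\frak M^r)$ after noting $\frak M/\frak M^r$ is nilpotent, and this section is compatible with the Mayer--Vietoris maps, which lets one add the short exact sequences $0\to K_n(R,\frak r)\to K_n(R)\to K_n(R/\frak r)\to 0$ (in the pro sense) back in without introducing a connecting obstruction.

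The main obstacle is Step~1, or rather verifying that the pro-excision machinery of Section~\ref{section_pro_excision} really does apply to give the isomorphism $K_n(\tilde A,\frak M)\isoto \projlimf_r K_n(\tilde A/\frak M^r,\frak M/\frak M^r)$ in the generality needed---one must be sure the Geisser--Hesselholt pro-version of the Suslin--Wodzicki criterion covers the (admittedly degenerate) case where the "conductor" is the whole ring, equivalently that $\projlimf_r K_n(\tilde A,\frak M^r)\to \projlimf_r K_n(\tilde A/\frak M^r \to 0, \dots)$ behaves correctly; concretely this is the pro-excision statement for the nilpotent thickenings of $\Spec \tilde A/\frak M$ inside $\Spec \tilde A$, and for a regular ring this should follow from the pro-Geisser--Hesselholt results together with pro-cdh descent or from the fact that $\frak M/\frak M^r$ is homologically unital in the pro sense. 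A secondary, more bookkeeping-heavy obstacle is the compatibility in Step~3 of the chosen section of $\tilde A\onto\tilde A/\frak M$ with all the Mayer--Vietoris maps and the passage to the non-relative sequence: one has to arrange the splitting functorially enough that the five-lemma argument in the abelian category of pro abelian groups goes through cleanly, and track that the hypothesis "$A$ complete $\Rightarrow \tilde A\to\tilde A/\frak M$ splits" is genuinely implied (via the structure of complete semi-local rings containing a field and Cohen's structure theorem) so that the parenthetical example is justified.
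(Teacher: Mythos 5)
There is a genuine gap, and it sits exactly at the heart of the theorem. Your Step 1 asserts that $K_n(\tilde A,\frak M)\to\projlimf_r K_n(\tilde A/\frak M^r,\frak M/\frak M^r)$ is an \emph{isomorphism} of pro abelian groups, justified by ``pro-excision applied to $\tilde A$ itself with trivial conductor''. That is not what the Geisser--Hesselholt theorem (or any pro-excision statement in section \ref{section_pro_excision}) gives: pro-excision compares the relative groups $K_n(A,I^r)$ and $K_n(B,I^r)$ for two rings sharing the ideal $I$, i.e.\ it kills birelative groups; it says nothing about continuity of $K$-theory along the adic thickenings $\tilde A/\frak M^r$, which is what your claimed isomorphism amounts to. Moreover the claim is simply false: already for $n=1$ and $\tilde A=k[[t]]$ the map $1+t k[[t]]\to\projlimf_r\bigl(1+tk[t]/\pid{t^r}\bigr)$ has kernel $\projlimf_r(1+t^rk[[t]])$, whose transition maps are inclusions and hence which is nonzero in $\op{Pro}Ab$; so the map is surjective but not injective. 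What the argument actually needs, and all it needs, is the \emph{surjectivity} of this map for all $n$, and that is the hard content of the paper (corollary \ref{corollary_surjectivity_in_limit}): one completes $\tilde A$, uses Cohen structure theory to identify $\tilde A/\frak M^r$ with a product of truncated polynomial rings $k[t]/\pid{t^r}$, and then proves that $\projlimf_r K_n(k[t]/\pid{t^r},\pid t)$ is entirely generated by Milnor symbols --- via Hesselholt--Madsen's de Rham--Witt description of $K_*$ of truncated polynomial algebras in characteristic $p$ (proposition \ref{proposition_symbolic_in_char_p}), and via Goodwillie's isomorphism plus Hodge-decomposition arguments in cyclic homology in characteristic zero (proposition \ref{proposition_limit_is_symbols_char_0}); symbols then lift because $\mult{\tilde A}\onto\mult{(\tilde A/\frak M^r)}$. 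Your proposal supplies no substitute for this step, and the fact that your argument nowhere uses the hypothesis that $A$ contains a field is a telltale sign: if surjectivity (let alone an isomorphism) were formal, the theorem would hold in mixed characteristic as well, which the paper explicitly cannot prove and only approaches partially in section \ref{subsection_p_adic_orders}.

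Once surjectivity is granted, your Step 2 is the same formal collapse of the long exact Mayer--Vietoris sequence that the paper performs in theorem \ref{theorem_main_theorem}, and your Step 3 is in the same spirit as theorem \ref{theorem_main_theorem_II} (a splitting of $\tilde A\to\tilde A/\frak M$ yields surjectivity of $K_n(\tilde A)\to\projlimf_rK_n(\tilde A/\frak M^r)$ by combining the split sequences for $(\tilde A,\frak M)$ and $(\tilde A/\frak M^r,\frak M/\frak M^r)$ with the relative surjectivity); the bookkeeping there is manageable. But as written, the proposal replaces the theorem's essential analytic input with an unjustified and false continuity claim, so it does not constitute a proof.
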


The special case of the theorem when $A$ is essentially of finite type over a field of characteristic zero and $n=2$ was proved by A.~Krishna \cite[Thm.~3.6]{Krishna2005}; the $n=2$ assumption was removed in the author's earlier work \cite{Morrow_Singular_Gersten} on this subject, but the characteristic and finite type assumptions remained. Therefore the theorem extends the results in \cite{Morrow_Singular_Gersten} on $K$-theoretic ad\`eles to all one-dimensional, Noetherian, reduced schemes over a field for which the normalisation map is finite. I suspect that the theorem is true without the equal characteristic assumption; section \ref{subsection_p_adic_orders} gives partial results in mixed characteristic.

Informally, the theorem states that the contributions to the $K$-theory of $A$ coming from its singularities can be entirely captured using the $K$-theory of the quotients $A/\frak m^r$, for $r\gg0$. For example, we use the theorem to prove the following in section \ref{subsection_KH}, where $KH$ denotes C.~Weibel's homotopy invariant $K$-theory:

\begin{theorem}\label{theorem_intro_1a}
Let $A$ be as in theorem \ref{theorem_intro_1}, and assume that $A\to A/\frak m$ splits. Then, for each $n\ge0$, the kernel of $K_n(A)\to KH_n(A)$ embeds into $K_n(A/\frak m^r)$ for $r\gg0$.
\end{theorem}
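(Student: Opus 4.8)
The plan is to reduce the statement, by means of Theorem \ref{theorem_intro_1}, to a monomorphism of pro abelian groups with constant source.

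First I would use the hypothesis that $A\to A/\frak m$ splits. Since $A/\frak m$ is a finite product of fields it is regular, so $K_*(A/\frak m)\to KH_*(A/\frak m)$ is an isomorphism; and because both $K$- and $KH$-theory are compatible with the splitting, $K_n(A)\to KH_n(A)$ is an isomorphism on the direct summand $K_n(A/\frak m)$ of $K_n(A)=K_n(A/\frak m)\oplus K_n(A,\frak m)$. Hence $\ker(K_n(A)\to KH_n(A))$ is contained in the relative group $K_n(A,\frak m)$ and equals $\ker(K_n(A,\frak m)\to KH_n(A,\frak m))$.

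Next I would identify the relative homotopy $K$-group $KH(A,\frak m)$ with $K(\tilde A,\frak M)$. Note that $\tilde A$ is normal of dimension $\le1$, hence regular, and is a Noetherian semi-local ring since $A\to\tilde A$ is finite; likewise $\tilde A/\frak M$ is a finite product of fields. Thus $K\to KH$ is an equivalence on $\tilde A$ and on $\tilde A/\frak M$, giving $KH(\tilde A,\frak M)\simeq K(\tilde A,\frak M)$. Choose $r_0$ with $\frak m^{r_0}\tilde A\subseteq A$, which is possible because $\tilde A/A$ is an $A$-module of finite length. Then the commutative square of rings
\[
\begin{array}{ccc}
A & \longrightarrow & \tilde A \\
\downarrow & & \downarrow \\
A/(\frak m^{r_0}\tilde A\cap A) & \longrightarrow & \tilde A/\frak m^{r_0}\tilde A
\end{array}
\]
is a Milnor square, its fibre product being $A+\frak m^{r_0}\tilde A=A$, and the reductions of its two lower rings are $A/\frak m$ and $\tilde A/\frak M$ respectively (using that $\frak m$ and $\frak M$ are radical and that $\frak M^N\subseteq\frak m\tilde A$ for some $N$). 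Applying homotopy $K$-theory, and invoking both excision for Milnor squares and nil-invariance of $KH$, one obtains a homotopy cartesian square of spectra with corners $KH(A)$, $KH(\tilde A)$, $KH(A/\frak m)$, $KH(\tilde A/\frak M)$ and the evident structure maps. Taking the fibres of its two vertical maps produces a natural equivalence $KH(A,\frak m)\simeq KH(\tilde A,\frak M)\simeq K(\tilde A,\frak M)$, under which the natural map $K(A,\frak m)\to KH(A,\frak m)$ is carried, by naturality of $K\to KH$, to the natural map $\sigma\colon K(A,\frak m)\to K(\tilde A,\frak M)$ induced by $A\into\tilde A$. Therefore $\ker(K_n(A)\to KH_n(A))=\ker\big(\sigma\colon K_n(A,\frak m)\to K_n(\tilde A,\frak M)\big)$. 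I expect this step — and in particular the appeal to excision for homotopy $K$-theory — to be the technical heart of the argument; the rest of it is elementary commutative algebra.

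Finally I would feed this into Theorem \ref{theorem_intro_1}, whose relative sequence says precisely that
\[
K_n(A,\frak m)\To \projlimf_r K_n(A/\frak m^r,\frak m/\frak m^r)\oplus K_n(\tilde A,\frak M)
\]
is a monomorphism of pro abelian groups, with second component $\sigma$ (up to sign). Restricting to the subgroup $\ker\sigma=\ker(K_n(A)\to KH_n(A))$, on which $\sigma$ vanishes, shows that the reduction map $\ker(K_n(A)\to KH_n(A))\to\projlimf_r K_n(A/\frak m^r,\frak m/\frak m^r)$ is a monomorphism of pro abelian groups whose source is the constant pro-group $\ker(K_n(A)\to KH_n(A))$. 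But a monomorphism from a constant pro abelian group $C$ into a pro abelian group $\{G_r\}_r$ is, for all $r\gg0$, an injection $C\into G_r$: its kernel pro-object is $\{\ker(C\to G_r)\}_r$, a decreasing chain of subgroups of $C$ with inclusions as transition maps, and such a pro-object vanishes exactly when some — hence every later — $\ker(C\to G_r)$ is zero. Hence $\ker(K_n(A)\to KH_n(A))$ injects into $K_n(A/\frak m^r,\frak m/\frak m^r)$ for $r\gg0$; and since $A\to A/\frak m$ splits, so does $A/\frak m^r\to A/\frak m$, so that $K_n(A/\frak m^r,\frak m/\frak m^r)$ is a direct summand of $K_n(A/\frak m^r)$. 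Composing yields the asserted embedding $\ker(K_n(A)\to KH_n(A))\into K_n(A/\frak m^r)$ for $r\gg0$.
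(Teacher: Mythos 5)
Your proposal is correct and follows essentially the same route as the paper's section on $KH$-theory: use the splitting of $A\to A/\frak m$ to reduce to the relative groups, identify $KH(A,\frak m)\simeq K(\tilde A,\frak M)$ via excision and nil-invariance of $KH$ together with regularity of $\tilde A$ and $\tilde A/\frak M$ (your Milnor square with $\frak m^{r_0}\tilde A$ is just a repackaging of the paper's chain of equivalences through the conductor), and then feed the kernel into the injectivity part of theorem \ref{theorem_intro_1}. The only cosmetic difference is that the paper first records the intermediate statement that the kernels of $K_n(A)\to KH_n(A)$ and of $\projlimf_r K_n(A/\frak m^r,\frak m/\frak m^r)\to\projlimf_r K_n(\tilde A/\frak M^r,\frak M/\frak M^r)$ agree, whereas you restrict the monomorphism of pro abelian groups directly to the constant subobject $\ker\sigma$; both arguments are sound.
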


Moreover, precisely because (pro-MV) and the first theorem describe the singular contribution to the $K$-groups, they have important applications to Geller's conjecture, which we interpret as the following, although this is not exactly what S.~Geller asked in 1986 \cite{Geller1986}:
\begin{quote}
``Let $A$ be a one-dimensional, Noetherian, reduced local ring, and suppose that $K_2(A)\to K_2(F)$ is injective, where $F=\Frac A$ is the total quotient ring of $A$. Then $A$ is necessarily regular.''
\end{quote}
Apart from the seminormal, equal characteristic case, there has been no progress until now on the conjecture when $A$ has finite residue characteristic. We prove the following in section \ref{subsubsection_Geller}:

\begin{theorem}\label{theorem_intro_2}
Geller's conjecture is true if $A$ is an $\bb F_p$-algebra with perfect residue field for which $A\to \tilde A$ is a finite morphism.
\end{theorem}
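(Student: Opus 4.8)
The plan is to prove the contrapositive: assuming $A$ is not regular, I will exhibit a non-zero element of $\ker(K_2(A)\to K_2(F))$. As $A$ is one-dimensional, Noetherian, reduced and local, non-regularity means precisely $A\neq\tilde A$. The normalisation $\tilde A$ is a finite product of semi-local Dedekind domains, so the localisation sequence shows $K_2(\tilde A)\into K_2(F)$: the boundary $K_3(F)\to\bigoplus_iK_2(k_i)$ onto the residue fields $k_i=\tilde A/\frak M_i$ is surjective (split on each factor by cup product with a suitable uniformiser), whence the transfers $\bigoplus_iK_2(k_i)\to K_2(\tilde A)$ vanish. Thus it suffices to prove $K_2(A)\to K_2(\tilde A)$ is not injective. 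I would then reduce to $A$ complete: $\widehat A$ is still reduced --- by Rees's theorem, since $\tilde A$ is module-finite over $A$ --- has finite normalisation $\widehat{\tilde A}$ and the same perfect residue field $k$, and is non-regular exactly when $A$ is; as the kernel to be computed will be expressed, via Theorem~\ref{theorem_intro_1}, purely in terms of the finite-length quotients $A/\frak m^r$ (unchanged by completion) and of $K_2$ of the completed normalisation, this is harmless. (Alternatively one uses the \emph{relative} form of Theorem~\ref{theorem_intro_1}, which needs no splitting, at the cost of bookkeeping with $K_3$ of the residue field.) So assume $A$ complete; then $A\to k$ and $\tilde A\to\tilde A/\frak M=\prod_ik_i$ split, with each $k_i/k$ finite, hence separable.

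Now feed this into Theorem~\ref{theorem_intro_1}: since $\tilde A\to\tilde A/\frak M$ splits, (pro-MV) breaks into split short exact sequences, in particular
\[0\to K_2(A)\to\projlimf_r K_2(A/\frak m^r)\oplus K_2(\tilde A)\to\projlimf_r K_2(\tilde A/\frak M^r)\to 0.\]
A direct diagram chase through this sequence identifies $\ker(K_2(A)\to K_2(\tilde A))$ with the kernel of the natural pro-map $\rho\colon\projlimf_r K_2(A/\frak m^r)\to\projlimf_r K_2(\tilde A/\frak M^r)$, i.e.\ with $\projlim_r\ker\big(K_2(A/\frak m^r)\to K_2(\tilde A/\frak M^r)\big)$. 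Each $A/\frak m^r$, $\tilde A/\frak M^r$ is a finite-length $\bb F_p$-algebra whose perfect residue field has a \emph{unique} coefficient field, so $\rho$ respects the canonical splittings $K_2(-)=K_2(\text{residue field})\oplus K_2(-,\text{Jacobson radical})$ and decomposes as the direct sum of the (constant) restriction map $K_2(k)\to\bigoplus_iK_2(k_i)$ with $\rho'\colon\projlimf_r K_2(A/\frak m^r,\frak m/\frak m^r)\to\projlimf_r K_2(\tilde A/\frak M^r,\frak M/\frak M^r)$. Hence
\[\ker(K_2(A)\to K_2(\tilde A))\;\cong\;\ker\big(K_2(k)\to\textstyle\bigoplus_iK_2(k_i)\big)\;\oplus\;\projlim_r\ker\rho'_r,\]
and since the target of $\rho'$ is pro-zero --- by Hesselholt--Madsen's vanishing $K_2(k_i[t]/t^r,(t))=0$, which uses $k_i$ perfect --- the second summand is just $\projlim_rK_2(A/\frak m^r,\frak m/\frak m^r)\cong K_2(A,\frak m)$. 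I am therefore reduced to proving: \emph{if $A\neq\tilde A$, then $\ker\big(K_2(k)\to\bigoplus_iK_2(k_i)\big)\neq 0$ or $K_2(A,\frak m)\neq 0$.}

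The crux --- and the step I expect to be the main obstacle --- is showing $K_2(A,\frak m)\neq 0$ in the analytically irreducible case (a cusp-type singularity), where there is a single $k_i=k$, the residue-field term vanishes, and the singularity must be seen entirely in the nilpotent thickenings of $A$. This is where the equal-characteristic and perfect-residue-field hypotheses are essential. Relative $K$-theory of a finite-length $\bb F_p$-algebra with respect to its nilpotent radical agrees, after $p$-completion, with relative topological cyclic homology (McCarthy; Dundas--Goodwillie--McCarthy), hence is computable from the relative (big, or $p$-typical) de Rham--Witt complexes of $A/\frak m^r$ and $\tilde A/\frak M^r$; because $k$ is perfect its de Rham--Witt forms sit in degree zero, so the computation localises onto the finite-length module $\tilde A/A$ --- equivalently onto the defect of seminormality --- which is non-zero precisely when $A$ is not regular. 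Comparing the resulting Hilbert-type series for the two pro-systems shows $\rho'$ is not a pro-monomorphism, so $K_2(A,\frak m)\neq 0$; concretely the surviving class can be represented by a Dennis--Stein symbol $\langle a,b\rangle\in K_2(A)$ with $a,b\in\frak m$ and $ab\in\frak c$ (the conductor), which maps to $0$ in $K_2(\tilde A)$ --- it lies in $K_2(\tilde A,\frak M)$, which vanishes since the relative $K$-theory of the complete discrete valuation rings $\tilde A_i$ is concentrated in odd degrees --- but is non-zero modulo $\frak m^r$ for every $r$, the non-triviality being detected by the de Rham--Witt--valued invariant, the characteristic-$p$ substitute for the cyclic-homology (essentially $\Omega^1$-valued) invariant used by Geller--Weibel and Roberts in characteristic zero. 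When $A$ is seminormal the required non-vanishing reduces to the previously known equal-characteristic case, so the genuinely new input is the non-seminormal computation, which the pro-excision formalism of the earlier sections makes uniform. Combined with the reductions above, this produces a non-zero element of $\ker(K_2(A)\to K_2(F))$, contradicting the hypothesis of Geller's conjecture.
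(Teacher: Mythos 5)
Your overall route is the paper's: complete $A$, invoke Theorem~\ref{theorem_intro_1}, use the vanishing of $\projlimf_r K_2(\tilde A/\frak M^r,\frak M/\frak M^r)$ for perfect residue fields, and detect non-regularity with a Dennis--Stein symbol. The genuine gap is at what you yourself call the crux: the non-vanishing, for singular $A$, of $\projlimf_r K_2(A/\frak m^r,\frak m/\frak m^r)$ --- equivalently the production of a non-zero class of $K_2(A)$ dying in $K_2(\tilde A)$ --- is never proved; the appeal to relative topological cyclic homology, de Rham--Witt complexes and a comparison of ``Hilbert-type series'' is a placeholder, not an argument. Worse, the concrete certificate you sketch rests on a false statement: $K_2(\tilde A,\frak M)$ does \emph{not} vanish for complete discrete valuation rings with perfect residue field of characteristic $p$; for example $K_2(\bb F_q[[t]],\pid t)=K_2(\bb F_q[[t]])$ is an uncountable uniquely divisible group, as one sees from Moore's theorem and the localisation sequence. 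What vanishes is only the pro abelian group $\projlimf_rK_2(k[t]/\pid{t^r},\pid t)$, which is exactly why the argument must be run at the level of the Artinian quotients. For the same reason your identification $\projlim_rK_2(A/\frak m^r,\frak m/\frak m^r)\cong K_2(A,\frak m)$ is wrong: the relative short exact sequence only gives $K_2(A,\frak m)\cong\projlimf_rK_2(A/\frak m^r,\frak m/\frak m^r)\oplus K_2(\tilde A,\frak M)$, and the second summand need not be zero. Consequently your symbol $\did{a,b}$ with $ab\in\frak c$ is neither shown to die in $K_2(\tilde A)$ nor shown to be non-zero.

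The missing step has an elementary proof, and this is what the paper does. Since each $K_2(A/\frak m^r)$ is generated by Steinberg symbols and $A/\frak m^r\to A/\frak m$ splits, the transition maps $K_2(A/\frak m^{r+1},\frak m/\frak m^{r+1})\to K_2(A/\frak m^r,\frak m/\frak m^r)$ are surjective, so the pro abelian group is non-zero as soon as $K_2(A/\frak m^2,\frak m/\frak m^2)\neq0$; and if $A$ is not regular one picks linearly independent $x,y\in\frak m/\frak m^2$ and checks $\did{x,y}\neq0$ via the Maazen--Stienstra presentation, which yields a homomorphism $K_2(A/\frak m^2,\frak m/\frak m^2)\to\bigwedge\nolimits^2_\kappa\frak m/\frak m^2$ sending $\did{x,y}$ to $x\wedge y$. (The paper runs the implication forwards: injectivity of $K_2(A)\to K_2(F)$, together with the relative short exact sequence and the vanishing over $\tilde A$, forces $K_2(A/\frak m^r,\frak m/\frak m^r)=0$ for all $r$, contradicting this computation; this also makes your localisation-sequence argument for $K_2(\tilde A)\into K_2(F)$ unnecessary, since $K_2(A)\to K_2(F)$ factors through $K_2(\tilde A)$ anyway.) Finally, the passage to $\hat A$ needs an actual argument rather than ``this is harmless'': the paper transfers injectivity to $\hat A$ via the $S$-analytic Mayer--Vietoris sequence, and proves that $\hat A$ is reduced with $\tilde{\hat A}=\hat{\tilde A}$ by embedding $\hat A$ into $\tilde A\otimes_A\hat A$, a product of complete discrete valuation rings; note also that the splitting statements you need hold only after this completion step.
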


Meanwhile, in section \ref{subsection_rel_to_HC} we offer the following interesting alternative to Geller's conjecture in characteristic zero:

\begin{theorem}\label{theorem_intro_2a}
If $A$ is as in Geller's conjecture, is essentially of finite type over a characteristic zero field, and is not regular, then $K_n(A)\to K_n(F)$ is not injective for some $n\ge 3$.
\end{theorem}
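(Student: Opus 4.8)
The plan is to reduce the failure of injectivity of $K_n(A)\to K_n(F)$ to a non-vanishing statement in cyclic homology and then invoke the characterisation of smoothness via Hochschild homology. Since $K_n(A)\to K_n(F)$ factors through $K_n(A)\to K_n(\tilde A)$, it suffices to produce some $n\ge 3$ for which $K_n(A)\to K_n(\tilde A)$ is not injective; we argue by contradiction, assuming $K_n(A)\to K_n(\tilde A)$ injective for every $n\ge3$, and we deduce that $A$ is smooth over $k$, hence regular --- a contradiction.

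The cyclic-homology input is the following. As $\tilde A$ is regular, one-dimensional and essentially of finite type over the perfect field $k$, it is smooth over $k$, so $\op{HH}_m(\tilde A/k)=\Omega^m_{\tilde A/k}=0$ for all $m\ge 2$; the long exact Hochschild sequence of $A\to\tilde A$ thus gives $\op{HH}_m(A\to\tilde A)\cong\op{HH}_m(A/k)$ for all $m\ge 2$, where $\op{HH}(A\to\tilde A):=\op{fib}\bigl(\op{HH}(A/k)\to\op{HH}(\tilde A/k)\bigr)$. Now $A$ is essentially of finite type over $k$ and, not being regular, is not smooth over $k$; hence by the characterisation of smoothness through Hochschild homology --- equivalently, the cotangent complex $L_{A/k}$ failing to be a finite projective module placed in degree $0$ --- the groups $\op{HH}_m(A/k)$ do not all vanish for $m\gg 0$. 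A short chase of Connes' SBI sequence for the pair $A\to\tilde A$ then upgrades this to $\op{HC}_m(A\to\tilde A):=\pi_m\op{fib}\bigl(\op{HC}(A/k)\to\op{HC}(\tilde A/k)\bigr)\ne 0$ for arbitrarily large $m$.

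It remains to convert this into non-injectivity of $K_m(A)\to K_m(\tilde A)$. Since $A$ is a $\bb Q$-algebra, we may use Goodwillie's theorem together with Corti\~{n}as' theorem (the solution of the KABI conjecture): applied to the conductor square of $A\hookrightarrow\tilde A$ --- a Milnor square whose bottom row $A/\frak c\to\tilde A/\frak c$ consists of Artinian $\bb Q$-algebras --- these identify the birelative $K$-theory of the square, up to a degree shift, with its birelative cyclic homology, and identify the relative $K$-theory of the nilpotent extensions $A/\frak c\to(A/\frak c)_{\mathrm{red}}$ and $\tilde A/\frak c\to(\tilde A/\frak c)_{\mathrm{red}}$ with the corresponding relative cyclic homology. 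The reductions are finite products of residue fields of $A$ and of $\tilde A$, whose contribution can be separated off because restriction--transfer exhibits $K_*$ of a field as a rational direct summand of $K_*$ of any finite field extension. Chasing the resulting diagram of fibre sequences --- for which the efficient bookkeeping device is Theorem~\ref{theorem_intro_1} of the present paper in its relative form, combined with Goodwillie's theorem for the pro-Artinian quotients $A/\frak m^r$ and a continuity argument in cyclic homology --- one converts the non-vanishing of $\op{HC}_*(A\to\tilde A)$ into the assertion that $K_m(A)\to K_m(\tilde A)$ is not injective for infinitely many $m\ge 3$. This contradicts the assumption, so $A$ is regular.

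The crux is this last step. Because neither $K$-theory nor cyclic homology satisfies excision, $\op{fib}\bigl(K(A)\to K(\tilde A)\bigr)$ cannot be compared with $\op{fib}\bigl(\op{HC}(A/k)\to\op{HC}(\tilde A/k)\bigr)$ directly; the comparison must be routed through the conductor/Milnor square and Corti\~{n}as' theorem, and the real work is to bound the contributions of the finite-length quotients $A/\frak c$ and $\tilde A/\frak c$ --- whose own $K$-theory and cyclic homology are unbounded --- and of the residue-field extensions, and to verify that the unbounded part of $\op{HC}_*(A\to\tilde A)$ survives into $K$-theory rather than being absorbed by these pieces. The argument yields nothing in degree $n=2$, which is Geller's original question, so the weakening to ``some $n\ge 3$'' is intrinsic to this approach.
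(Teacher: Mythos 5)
There is a genuine gap, and it is not only the step you yourself defer as ``the real work''; the more serious problem is that your cyclic-homology input is too weak to run the intended contradiction at all. You only establish that the fibre $HC_m(A\to\tilde A)$ is non-zero in arbitrarily large degrees. But that fibre sits in an exact sequence $0\to\op{coker}\big(HC_{m+1}(A/k)\to HC_{m+1}(\tilde A/k)\big)\to HC_m(A\to\tilde A)\to\ker\big(HC_m(A/k)\to HC_m(\tilde A/k)\big)\to 0$, and its non-vanishing may be carried entirely by the cokernel piece: for $\tilde A$ smooth, one-dimensional and semi-local, $HC_m(\tilde A/k)$ in large odd degrees is essentially $H^1_{\sub{dR}}(\tilde A/k)$, which is typically infinite-dimensional and need not be hit from $A$. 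Since any $K$/$HC$ comparison of the kind you invoke matches kernels with kernels and cokernels with cokernels, assuming $K_n(A)\to K_n(\tilde A)$ injective for all $n\ge3$ constrains only the $HC$-kernels, which is perfectly compatible with your fibre non-vanishing; no contradiction follows. The paper's proof avoids this by producing an explicit \emph{kernel} class: for a non-smooth, essentially finite type algebra $R$ the natural map $HC_n^k(R)\to HC_n(\Omega^\bullet_{R/k})$ fails to be injective for some $n\ge2$ (Avramov--Vigu\'e-Poirrier's Hochschild criterion, a K\"unneth base-change to pass from the defining field to the small base, and an SBI induction for mixed complexes), and since the maps from $HC_n(A)$ to $HC_n(\tilde A)$ and to $HC_n(A/\frak m)$ factor through $HC_n(\Omega^\bullet_{A/\bb Q})$, such a class lies in $HC_n(A,\frak m)$ and dies in $HC_n(\tilde A,\frak M)$. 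Without an argument of this type you cannot locate the non-vanishing in a kernel, which is exactly what the theorem asserts.

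Separately, the conversion from cyclic homology to $K$-theory is never actually performed. What is needed is the content of corollary \ref{corollary_K_vs_HC}: the kernel and cokernel of $K_n(A,\frak m)\to K_n(\tilde A,\frak M)$ agree with those of $HC_{n-1}(A,\frak m)\to HC_{n-1}(\tilde A,\frak M)$. In the paper this is not bookkeeping: it requires the cyclic homology analogue of the bicartesian square of theorem \ref{theorem_main_theorem} (pro-excision for $HC$ via Corti\~nas' KABI theorem, together with the surjectivity onto the pro-system of Artinian quotients supplied by lemma \ref{lemma_HC_of_graded_ring}) and then Goodwillie's isomorphism applied to the nilpotent quotients $A/\frak m^r$ and $\tilde A/\frak M^r$; your route through the single conductor square and birelative groups is plausible in outline but is precisely the part you leave unexecuted, including the control of the finite-length quotients whose $K$-theory and $HC$ are unbounded. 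Finally, to pass between relative and absolute groups you need the preliminary reduction ensuring $A\to A/\frak m$ and $\tilde A\to\tilde A/\frak M$ split, so that $K_n(A,\frak m)\subseteq K_n(A)$ and $HC_n(A,\frak m)\subseteq HC_n(A)$; this is absent from your sketch but is what ultimately turns a relative kernel into non-injectivity of $K_n(A)\to K_n(F)$ in some degree $n\ge3$.
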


Next we turn our attention to mixed characteristic rings; section \ref{subsection_p_adic_orders} analyses consequences of the sequence (pro-MV) for reduced $\bb Z_p$-algebras which are finitely generated and torsion-free as $\bb Z_p$-modules, e.g.\ $\bb Z_p+p\bb Z_q$. If $A$ is such a ring then $\tilde A$ is a finite product of rings of integers of local fields, whose $K$-groups are largely understood thanks to L.~Hesselholt and I.~Madsen's proof of the Quillen--Lichtenbaum conjecture \cite[Thm.~A]{Hesselholt2003}. In particular, this implies that the following theorem holds for rings of integers of local fields; extending it to $A$ relies on the observation that pro-excision implies that $K_n(A)\to K_n(\tilde A)$ has finite kernel and cokernel for $n\ge1$.

\begin{theorem}\label{theorem_intro_3}
Let $A$ be a reduced $\bb Z_p$-algebra which is finitely generated and torsion-free as a $\bb Z_p$-module. Then
\[K_n(A)=\begin{cases}
\mbox{divisible $\bb Z_{(p)}$-module }\oplus\mbox{ finite $p$-group} & n\ge 2\mbox{ even,}\\
\mbox{torsion-free $\bb Z_{(p)}$-module }\oplus\mbox{ finite group} & n\ge 1\mbox{ odd,}
\end{cases}\]
\end{theorem}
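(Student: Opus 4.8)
The plan is to reduce the statement to the known structure of the $K$-groups of rings of integers of local fields and then transport that information across the pro-excision sequence. First I would reduce to the case where $A$ is local: writing $A$ as a finite product of local rings changes nothing, since $K$-theory commutes with finite products. Next, the normalisation $\tilde A$ is a one-dimensional, reduced, Noetherian ring which is finite over $\bb Z_p$ and normal, hence a finite product of Dedekind domains finite over $\bb Z_p$; each factor is the ring of integers of a finite extension of $\bb Q_p$. By Hesselholt--Madsen's computation (their proof of Quillen--Lichtenbaum, \cite[Thm.~A]{Hesselholt2003}), for such a ring $\roi$ one has $K_n(\roi)$ for $n\geq 2$ even a divisible $\bb Z_{(p)}$-module plus a finite $p$-group, and for $n\geq 1$ odd a finitely generated (indeed torsion-free after quotienting torsion, using the known torsion subgroup, which is finite and prime-to-$p$ up to a bounded $p$-part) $\bb Z_{(p)}$-module plus a finite group; so the theorem holds for $\tilde A$.

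The second main step is to invoke pro-excision. By (pro-MV) applied to $A\to\tilde A$ with conductor ideal, or more directly by the consequence recorded in the introduction, the map $K_n(A)\to K_n(\tilde A)$ has \emph{finite} kernel and cokernel for all $n\geq 1$. I would feed this into the structural statement: a finite-kernel, finite-cokernel map from $K_n(A)$ to a group of the asserted form forces $K_n(A)$ itself to have that form, provided one knows a priori that $K_n(A)$ is, say, the sum of a finitely generated group, a divisible group, and a torsion group of bounded exponent — or more simply, one argues directly. For $n\geq 2$ even: the image of $K_n(A)$ in $K_n(\tilde A)$ is a finite-index subgroup of a divisible-plus-finite group, hence itself divisible plus finite; pulling back along a map with finite kernel, $K_n(A)$ is an extension of (divisible $\oplus$ finite) by a finite group, and since the divisible part of the quotient lifts (divisible subgroups of the relevant $\bb Z_{(p)}$-modules are $\bb Z_{(p)}$-injective, or one uses that $K_n(A)$ is known to be a module over which such extensions split), we get $K_n(A)\cong \text{divisible}\oplus\text{finite}$, and the finite part is a $p$-group because the kernel and cokernel occurring in pro-excision are $p$-groups (the conductor quotients are $\bb Z_p$-algebras of finite length, so the relevant relative $K$-groups, by the pro-excision short exact sequence and the fact that $K$-theory of nilpotent extensions of $\bb Z/p^k$ is $p$-torsion in positive degrees, contribute only $p$-power torsion). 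For $n\geq 1$ odd: similarly $K_n(A)$ maps with finite kernel and cokernel to torsion-free $\oplus$ finite, so $K_n(A)$ is finitely generated modulo torsion and its torsion is finite; splitting off the torsion subgroup gives torsion-free $\oplus$ finite. The case $n=1$ should be checked by hand since the introductory remark only gives finiteness for $n\geq 1$ and $K_1(A)=A^\times$ here is manifestly finitely generated.

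The main obstacle I anticipate is the bookkeeping on the \emph{finite} part in the even case — specifically, showing that the finite summand is a $p$-group rather than merely finite, and that the extension of a divisible group by a finite group actually splits off a clean direct sum decomposition of the stated shape. This requires knowing that the prime-to-$p$ torsion in the conductor-level relative $K$-groups $K_n(A/\frak m^r,\frak m/\frak m^r)$ vanishes (or is absorbed), which I would get from the fact that $A/\frak m^r$ is a finite $\bb Z/p^N$-algebra and, by Weibel's vanishing / the Hesselholt--Madsen-type analysis of truncated polynomial and nilpotent extensions, its relative $K$-theory is $p$-primary torsion in every degree; combined with the pro-excision short exact sequence this pins down the torsion in $K_n(A)$ relative to $K_n(\tilde A)$. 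A secondary technical point is justifying the splitting of the divisible part, for which I would note that divisible subgroups are direct summands in any group whose reduced part, here, is finite (a divisible subgroup of $G$ with $G/(\text{divisible})$ having no divisible quotient is always a direct summand).
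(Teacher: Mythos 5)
Your proposal follows essentially the same route as the paper's proof (theorem \ref{theorem_description_of_K_thy_of_p-adic_orders}): the known structure of $K_n(\tilde A)$ for the normalisation, pro-excision giving finite (and $p$-primary, via Weibel's no-$\ell$-torsion result for the relative groups) kernel and cokernel of $K_n(A)\to K_n(\tilde A)$, and then the abstract splitting lemmas (an extension of a divisible group by a finite group splits as divisible $\oplus$ finite; torsion of bounded exponent splits off), the paper merely streamlining the bookkeeping by composing with the projections onto $D_i(\tilde A)$, resp.\ $T_i(\tilde A)$, and invoking lemma \ref{lemma_on_groups}. The only slips are terminological and harmless --- $K_{2i-1}(A)$ modulo torsion and $K_1(A)=\mult A$ are not finitely generated abelian groups, and no separate treatment of $n=1$ is needed --- since all that is required (and all the theorem asserts) is that the torsion is finite and splits off a torsion-free $\bb Z_{(p)}$-module complement.
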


It follows from group theory that the groups appearing in these direct sum decompositions are determined, up to isomorphism, by $K_n(A)$. For example, if $n$ is even then the finite $p$-group appearing in the theorem is necessarily the quotient of $K_n(A)$ by its maximal divisible subgroup; in fact, if $n$ is even then the standard short exact sequence \[0\to\op{Ext}^1_{\bb Z}(\bb Q/\bb Z,K_n(A))\to K_n(A;\hat{\bb Z})\to\Hom_{\bb Z}(\bb Q/\bb Z,K_{n-1}(A))\to 0\] implies, in conjunction with the previous theorem, that the finite $p$-group is precisely the $K$-group with $\hat{\bb Z}$ coefficients $K_n(A;\hat{\bb Z})$. A similar structural description of $K_n(A;\hat{\bb Z})$ when $n$ is odd is also given in corollary \ref{corollary_profinite_K_groups_of_p_adic_order}, and these results are used to prove the analogue of theorem \ref{theorem_intro_1} in odd degrees for $A$. In even degree we can reduce the problem to understanding the torsion in the even degree $K$-groups; this is well understood for $K_2$, but not in general, resulting in the following:

\begin{theorem}\label{theorem_intro_4}
Let $A$ be a reduced $\bb Z_p$-algebra which is finitely generated and torsion-free as a $\bb Z_p$-module. Then $K_2(A;\hat{\bb Z})$ equals the finite $p$-group alluded to in theorem \ref{theorem_intro_3} (and similarly for $\tilde A$), and there is a short exact, Mayer--Vietoris sequence \[0\to K_2(A)\to K_2(A;\hat{\bb Z})\oplus K_2(\tilde A)\to K_2(\tilde A;\hat{\bb Z})\to 0\]
\end{theorem}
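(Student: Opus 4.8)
The plan is to derive the statement from the pro-excision sequence $(\text{pro-MV})$, the structural theorem \ref{theorem_intro_3}, the continuity of $K$-theory with finite coefficients, and the classical computation of $K_2$ of a $p$-adic field. Throughout one uses that $A$ and $\tilde A$ are semi-local (so $K_1=(\ )^\times$ by $SK_1=0$), finite over $\bb Z_p$ and hence complete for the $\frak m$-, resp.\ $\frak M$-adic topology, and that $\tilde A$ is a finite product of rings of integers of local fields.

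First I would establish $K_2(A;\hat{\bb Z})\cong T_A$, where $T_A$ denotes the finite $p$-group of theorem \ref{theorem_intro_3}, i.e.\ the quotient of $K_2(A)=D_A\oplus T_A$ by its maximal divisible subgroup $D_A$. In the universal coefficient sequence $0\to\op{Ext}^1_{\bb Z}(\bb Q/\bb Z,K_2(A))\to K_2(A;\hat{\bb Z})\to\Hom_{\bb Z}(\bb Q/\bb Z,K_1(A))\to0$ the $\Hom$-term vanishes because $K_1(A)=A^\times$ has trivial maximal divisible subgroup: for $k\gg0$ the logarithm identifies $1+\frak m^k$ with a finitely generated free $\bb Z_p$-module, and $A^\times$ is an extension of a finite group by such a module. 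The $\op{Ext}$-term is $\op{Ext}^1_{\bb Z}(\bb Q/\bb Z,D_A)\oplus\op{Ext}^1_{\bb Z}(\bb Q/\bb Z,T_A)=0\oplus T_A$, since a divisible group is an injective $\bb Z$-module and $\op{Ext}^1_{\bb Z}(\bb Q/\bb Z,-)$ is the identity on finite groups. Hence $K_2(A;\hat{\bb Z})\cong T_A$, and the same argument, using that $\tilde A^\times$ is again an extension of a finite group by a free $\bb Z_p$-module, gives $K_2(\tilde A;\hat{\bb Z})\cong T_{\tilde A}$.

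Next I would convert the pro abelian groups appearing in $(\text{pro-MV})$ into honest finite groups in the relevant range. Each $A/\frak m^r$, $\tilde A/\frak M^r$ is a finite ring, so its higher $K$-groups are finite; by continuity of $K$-theory with finite coefficients, $K_n(A;\hat{\bb Z})\cong\varprojlim_rK_n(A/\frak m^r)$ for $n\ge1$ (the derived limit vanishing because the groups are finite), and likewise for $\tilde A$. Thus $\varprojlim_rK_2(A/\frak m^r)\cong T_A$ and $\varprojlim_rK_2(\tilde A/\frak M^r)\cong T_{\tilde A}$ are \emph{finite}; since a Mittag--Leffler pro abelian group of finite groups with finite limit is pro-isomorphic to the constant pro group on that limit, $\projlimf_rK_2(A/\frak m^r)\cong T_A$ and $\projlimf_rK_2(\tilde A/\frak M^r)\cong T_{\tilde A}$ as constant pro abelian groups, and the component $K_2(A)\to\projlimf_rK_2(A/\frak m^r)$ of $(\text{pro-MV})$ is identified with the canonical map $K_2(A)\to K_2(A;\hat{\bb Z})$. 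Feeding this into the degree-$2$ portion $\projlimf_rK_3(\tilde A/\frak M^r)\xrightarrow{\partial}K_2(A)\xrightarrow{\alpha}K_2(A;\hat{\bb Z})\oplus K_2(\tilde A)\xrightarrow{\beta}K_2(\tilde A;\hat{\bb Z})\xrightarrow{\partial}K_1(A)\xrightarrow{\gamma}\cdots$, surjectivity of $\beta$ is immediate: the boundary $K_2(\tilde A;\hat{\bb Z})\to K_1(A)$ has image $\ker\gamma$, and $\gamma$ is injective because its second component is the inclusion $A^\times\into\tilde A^\times$, so that boundary vanishes and $\beta$ is onto. For injectivity of $\alpha$: its first component $K_2(A)\to K_2(A;\hat{\bb Z})$ has kernel $D_A$, its second has kernel $\ker(K_2(A)\to K_2(\tilde A))$, which is finite by pro-excision, so $\ker\alpha=D_A\cap\ker(K_2(A)\to K_2(\tilde A))$. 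Now $K_2(\tilde A)\into K_2(F)$ with $F=\Frac\tilde A$ (as $\tilde A$ is a product of discrete valuation rings with finite residue fields), and $K_2(F)_{\mathrm{tors}}=\mu(F)$ is finite by Moore's theorem, so $K_2(\tilde A)_{\mathrm{tors}}$, hence through the finite kernel also $K_2(A)_{\mathrm{tors}}$, is finite; since a divisible $\bb Z_{(p)}$-module is a direct sum of copies of $\bb Q$ and of $\bb Q_p/\bb Z_p$, finiteness of its torsion forces $D_A$ to be torsion-free, whence $\ker\alpha=0$. Exactness of $(\text{pro-MV})$ now gives $0\to K_2(A)\to K_2(A;\hat{\bb Z})\oplus K_2(\tilde A)\to K_2(\tilde A;\hat{\bb Z})\to0$.

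The step I expect to be the main obstacle is the injectivity of $\alpha$ — equivalently, the vanishing of the boundary map out of $\projlimf_rK_3(\tilde A/\frak M^r)$. The argument above rests on the finiteness of $K_2(\Frac\tilde A)_{\mathrm{tors}}$, a classical fact special to $K_2$; there is no known analogue for higher even $K$-groups, which is precisely why theorem \ref{theorem_intro_1} does not carry over verbatim to mixed characteristic and only the case $n=2$ is obtained here.
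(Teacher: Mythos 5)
Your proposal is correct and takes essentially the same route as the paper: you identify the pro-terms of (pro-MV) with $K_2(-;\hat{\bb Z})$ via the Suslin--Yufryakov continuity result (the paper's lemma \ref{lemma_homotopy_description_of_K_groups}) and the universal-coefficient computation of corollary \ref{corollary_profinite_K_groups_of_p_adic_order}, obtain surjectivity on the right from injectivity of $A^\times\to\tilde A^\times$, and obtain injectivity on the left from the Moore--Merkurjev torsion-freeness of the divisible part of $K_2$ of a local field combined with the finiteness of $\ker(K_2(A)\to K_2(\tilde A))$ coming from pro-excision. Your only deviations are cosmetic: you replace the paper's lemma \ref{lemma_mittag_leffler} and corollary \ref{corollary_group_versions_of_les} by the observation that the degree-two pro-systems of finite groups are essentially constant, and you prove that $D_1(A)$ is torsion-free directly rather than through the equivalences of proposition \ref{proposition_equivalent_conditions_in_even_case}.
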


Finally, using similar methods as in our proof of Geller's conjecture in equal finite characteristic, this theorem yields the first results towards Geller's conjecture in mixed characteristic. Our methods do not see the element $p$, so rather than establishing regularity under the conditions of the conjecture, we instead bound the embedding dimension of $A$ by $2$:

\begin{theorem}\label{theorem_intro_5}
Let $A$ be a one-dimensional, Noetherian, reduced local ring of mixed characteristic with finite residue field of characteristic $p>2$, and such that $A\to\tilde A$ is finite. Suppose that at least one of the following is true:
\begin{enumerate}
\item $\Frac\hat A$ contains no non-trivial $p$-power roots of unity; or
\item $A$ is seminormal, and $\hat A$ is not a certain exceptional case (see theorem \ref{theorem_geller_in_mixed_char} for details); or
\item $\tilde A$ is local and all $p$-power roots of unity in $\Frac\hat A$ belong to $\hat A$.
\end{enumerate}
If the map $K_2(A)\to K_2(\Frac A)$ is injective then $\op{embdim}A\le 2$.
\end{theorem}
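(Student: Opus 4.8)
The plan is to argue by contraposition: assuming $\op{embdim}A\ge3$ and that one of (i)--(iii) holds, I will exhibit a non-zero element of $\ker(K_2(A)\to K_2(\Frac A))$. Since $A$ has finite residue characteristic, each factor of the (regular) normalisation $\tilde A$ has finite residue field, so $K_2(\tilde A)\into K_2(\Frac A)$ and hence $\ker(K_2(A)\to K_2(\Frac A))=\ker(K_2(A)\to K_2(\tilde A))$. As the hypotheses (i)--(iii) and the embedding dimension depend only on $\hat A$, a standard reduction (cf.\ \cite{Geller1986} and section \ref{subsection_p_adic_orders}) lets me assume $A$ is complete; then $A$ is a reduced $\bb Z_p$-algebra, finitely generated and torsion-free as a $\bb Z_p$-module (indeed finite over $W(k)$), its normalisation is $\tilde A=\prod_i\roi_{F_i}$ for certain $p$-adic local fields $F_i$, and theorems \ref{theorem_intro_3} and \ref{theorem_intro_4} apply.

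Next I would reduce to a statement about finite $p$-groups. The short exact Mayer--Vietoris sequence of theorem \ref{theorem_intro_4} identifies $\ker(K_2(A)\to K_2(\tilde A))$ with $\ker\bigl(K_2(A;\hat{\bb Z})\to K_2(\tilde A;\hat{\bb Z})\bigr)$, a subgroup of the finite $p$-group $K_2(A;\hat{\bb Z})$ of theorem \ref{theorem_intro_3}; thus it suffices to show that this map of finite $p$-groups is not injective whenever $\dim_k\frak m/\frak m^2\ge3$. To get at its source and target I would use continuity of $K$-theory with finite coefficients to write $K_2(A;\hat{\bb Z})=\projlim_rK_2(A/\frak m^r;\bb Z_p)$ (the prime-to-$p$ part vanishing by theorem \ref{theorem_intro_4}), note that $K_1(\bb F_q;\bb Z_p)=K_2(\bb F_q;\bb Z_p)=0$ --- since $K_2(\bb F_q)=0$ and $p\nmid q-1$ --- so that $K_2(A/\frak m^r;\bb Z_p)=K_2(A/\frak m^r,\frak m/\frak m^r;\bb Z_p)$, and then invoke the theorem of Dundas--Goodwillie--McCarthy to replace each of these relative $K$-groups by the relative $TC_2(A/\frak m^r,\frak m/\frak m^r;\bb Z_p)$, which Hesselholt-type formulas express through (big, relative) de Rham--Witt forms; the same is done compatibly for the $\tilde A/\frak M^r$.

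The heart of the matter --- and the step I expect to be the main obstacle --- is a computation showing that the resulting map of pro de Rham--Witt (equivalently relative $TC_2$) groups has non-zero kernel once $\dim_k\frak m/\frak m^2\ge3$. The mechanism is the one used in the equal finite characteristic proof of theorem \ref{theorem_intro_2}: the singular part of this group is built out of the cotangent space $\frak m/\frak m^2$ by wedge- and tensor-type expressions (Dennis--Stein symbols $\did{x,y}$ in the sense of Bloch and van der Kallen and their $W_r$-lengthened analogues), whereas $K_2$ of the regular ring $\tilde A$ is small --- controlled by the one-dimensional cotangent spaces of the $\roi_{F_i}$ and by their roots of unity --- so that any class assembled from a three-dimensional cotangent space must die there. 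Concretely, picking three elements of $\frak m$ whose classes span a three-dimensional subspace of $\frak m/\frak m^2$, one manufactures from them a symbol lying in the kernel and checks by the de Rham--Witt computation that it is non-zero. One of the three directions may be $p$ itself, and that direction is invisible to these $p$-adic invariants (in mod-$p$ reductions $dp=0$, and $p$ enters Witt-vector bookkeeping as a unit, not as a variable); this is exactly why a two-dimensional cotangent space cannot be excluded, so the conclusion is $\op{embdim}A\le2$ rather than regularity.

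Finally, the alternatives (i)--(iii) and the restriction $p>2$ serve to ensure that this class is not accidentally annihilated. Besides its cotangent-space part, the de Rham--Witt (or $TC_2$) description of $K_2(A;\hat{\bb Z})$ carries an ``exceptional'' summand arising from the $p$-power roots of unity of $\Frac\hat A$ --- reflecting, via Moore's theorem, the copy of $\mu_{p^\infty}$ in $K_2$ of a $p$-adic field --- and a priori the boundary maps relating $A$, $\tilde A$ and the residue field could carry the singular class into that summand. Under (i) there are no such roots of unity and the summand is absent; under (iii) they already lie in $\hat A$ and so contribute identically to source and target, cancelling; under (ii), seminormality pins the singular structure down closely enough to run the argument, once the single exceptional ring of theorem \ref{theorem_geller_in_mixed_char} is set aside. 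The hypothesis $p>2$ discards the $2$-torsion ($I/2I$-type) terms in the relative $K_2$-computation. Combining these cases with the reduction of the first paragraph finishes the proof.
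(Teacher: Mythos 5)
Your opening reductions are sound and match the paper: completing $A$, replacing $K_2(\Frac A)$ by $K_2(\tilde A)$, and using theorem \ref{theorem_intro_4} (i.e.\ corollary \ref{corollary_Artinian_Geller_mixed_char}) to identify $\ker(K_2(A)\to K_2(\tilde A))$ with $\ker(K_2(A;\hat{\bb Z})\to K_2(\tilde A;\hat{\bb Z}))$. But from that point on there is a genuine gap. The step you yourself flag as ``the main obstacle'' --- computing the map of pro relative $K_2$ (or $TC_2$, de Rham--Witt) groups for the quotients $A/\frak m^r$ and exhibiting a non-zero kernel class from any three independent directions of $\frak m/\frak m^2$ --- is only asserted, and there is no Hesselholt--Madsen-type formula to invoke here: the rings $A/\frak m^r$ are mixed-characteristic Artinian rings, not truncated polynomial algebras over a perfect field, and the difficulty of such non-vanishing statements beyond the level of $\frak m^2$ is exactly what blocks the paper in example \ref{example_geller}. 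Moreover your mechanism for why such a class must die in the normalisation is wrong as stated: $K_2(\tilde A)$ is not small (it has an uncountable uniquely divisible part); what is small is $K_2(\tilde A;\hat{\bb Z})\cong\mu_{p^\infty}(\Frac\hat A)$, and whether a given singular class maps to zero there is precisely the delicate point that hypotheses (i)--(iii) are designed to control --- your treatment of those hypotheses (``the summand is absent'', ``contribute identically and cancel'', ``seminormality pins the structure down'') is not an argument.

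The paper avoids all topological cyclic homology and instead works at the single level $r=2$, via the classical identification (proposition \ref{proposition_K_2_via_diff_forms}, built from the Maazen--Stienstra presentation and lemmas \ref{lemma_generalisation_of_MS}--\ref{lemma_square_zero}) \[K_2(A/\frak m^2)\cong\bigwedge\nolimits^2_{\bb F_q}\frak m/(\frak m^2+p\bb Z_q),\] and the three cases are handled by genuinely different arguments, not a uniform class construction. In case (i), no class is needed at all: $K_2(\tilde A;\hat{\bb Z})=0$, so injectivity forces $K_2(A/\frak m^2)=0$ and hence $\dim_{\bb F_q}\frak m/(\frak m^2+p\bb Z_q)\le1$. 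In case (ii), the seminormal decomposition $\frak m=I_1\oplus\cdots\oplus I_m$ lets one choose $x\in I_\al$, $y\in I_\beta$ so that the Dennis--Stein symbol $\did{x,y}$ vanishes in every $K_2(A/\frak q_i)$ (hence in $K_2(\tilde A)$) while surviving in $K_2(A/\frak m^2)$; the subcase $m=2$ requires identifying $A/I_1\cong\bb Z_q$ and invoking the exclusion of the bad ring, which your sketch does not address. In case (iii), the argument is a surjectivity statement --- $K_2(\roi/\frak p^2)\onto K_2(A/\frak m^2)$ for $\roi=\bb Z_q[\zeta]\subseteq A$ --- converted via relative differential forms into $\bigwedge^2_{\bb F_q}\frak m/(\frak m^2+\pi\roi)=0$. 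To repair your proposal you would either have to carry out the relative $TC$ computation you postulate (a substantial and unproved input) or, as the paper does, descend to $K_2(A/\frak m^2)$ and supply the separate case analyses in which (i)--(iii) and $p>2$ actually do their work.
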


In the remainder of this introduction we describe our methods and the ingredients of the proofs of the above results, beginning with theorem \ref{theorem_intro_1}. It is sufficient, using the relative version of (pro-MV), to show that \[K_n(\tilde A,\frak M)\to\projlimf_rK_n(\tilde A/\frak M^r,\frak M/\frak M^r)\] is surjective for all $n\ge 1$. Thus it is enough to show that the codomain is entirely symbolic and so, since $\tilde A/\frak M^r$ is a finite product of truncated polynomial rings, this reduces the theorem to proving that \[\projlimf_rK_n(k[t]/\pid{t^r},\pid t)\] is entirely symbolic for any field $k$. When $k$ has characteristic zero this is proved directly in section \ref{subsection_char_zero} by filtration arguments in cyclic homology after applying the following case of the Goodwillie isomorphism \cite{Goodwillie1986}: $K_n(k[t]/\pid{t^r},\pid t)\cong HC_{n-1}^{\bb Q}(k[t]/\pid{t^r},\pid t)$. The proof is philosophically similar in finite characteristic, in that we apply Hesselholt and Madsen's \cite{Hesselholt2001, Hesselholt2008} description of the $K$-theory of truncated polynomial rings in finite characteristic using topological cyclic homology via the McCarthy isomorphism \cite{McCarthy1997}.

Sections \ref{subsection_KH} -- \ref{subsection_rel_to_HC} cover the applications of (pro-MV) and the first main theorem; they may be read independently:

Firstly, theorem \ref{theorem_intro_1a} follows in a straightforward way from theorem \ref{theorem_intro_1} by relating the $KH$-theory of $A$ with the $K$-theory of $\tilde A$.

Next, (pro-MV) reduces theorem \ref{theorem_intro_2} to checking that \[\projlimf_rK_2(A/\frak m^r,\frak m/\frak m^r)\to\projlimf_rK_2(\tilde A/\frak M^r,\frak M/\frak M^r)\] is injective if and only if $A$ is regular. It can be shown using perfectness of the residue field that the codomain of this map vanishes, so it becomes enough to construct a non-zero element of $K_2(A/\frak m^r,\frak m/\frak m^r)$ for some $r$ under the assumption that $A$ is singular; this is easily achieved using Dennis--Stein symbols. We emphasise that the key to the proof is that (pro-MV) reduces the problem to the $K$-theory of the quotients $A/\frak m^r$ and $\tilde A/\frak M^r$. Krishna  \cite{Krishna2005} studied Geller's conjecture in a similar way in characteristic zero, introducing an `Artinian Geller's Conjecture' which mimicked G.~Corti\~nas, S.~Geller, and C.~Weibel's \cite{Weibel1998} Artinian analogue of Berger's conjecture on the torsion in differential forms of curve singularities.

Finally, theorem \ref{theorem_intro_2a} is proved by establishing an intermediate result which is of interest in its own right: The maps \[K_n(A,\frak m)\to K_n(\tilde A,\frak M),\quad\quad HC_{n-1}(A,\frak m)\to HC_{n-1}(\tilde A,\frak M)\] have isomorphic kernels (and cokernels). This is proved by establishing an analogue of theorem \ref{theorem_intro_1} for cyclic homology and then appealing to the Goodwillie isomorphism. For example, the resulting isomorphism of the kernels is uniquely determined by the commutativity of the diagram
\[\xymatrix@R=5mm{
\ker(K_n(A,\frak m)\to K_n(\tilde A,\frak M))\ar@{^(->}[d]\ar[r]^\cong& \ker(HC_{n-1}(A,\frak m)\to HC_{n-1}(\tilde A,\frak M))\ar@{^(->}[d]\\
K_n(A,\frak m) \ar[d] & HC_{n-1}(A,\frak m)\ar[d]\\
\projlimf_r K_n(A/\frak m^r,\frak m/\frak m^r)\ar[r]^\cong &\projlimf_r HC_{n-1}(A/\frak m^r,\frak m/\frak m^r)
}\]
where the bottom arrow is the Goodwillie isomorphism; the isomorphism of the cokernels is determined in a similar way. These isomorphisms reduce theorem \ref{theorem_intro_2a} to the same claim for cyclic homology, which can be deduced from the `Hochschild homology criterion for smoothness' \cite{Avramov1992}.

Now we say something about our proofs for mixed characteristic rings with finite residue fields. If $A$ is a reduced $\bb Z_p$-algebra which is finitely generated and torsion-free as a $\bb Z_p$-module, then pro-excision implies that the kernel and cokernel of $K_n(A)\to K_n(\tilde A)$ are finite; this reduces theorem \ref{theorem_intro_3} to the normal case, where it is known. The weaker mixed characteristic analogue of theorem \ref{theorem_intro_1} is deduced by explicitly examining the structural description offered by that theorem. We obtain the strongest result when $n=2$, namely theorem \ref{theorem_intro_4}, because the divisible summand of $K_2(\tilde A)$ is known to be torsion-free; the lack of this knowledge for the other even degree $K$-groups is what prevents us from fully proving theorem \ref{theorem_intro_1} in the mixed characteristic setting.

Theorem \ref{theorem_intro_5} can then be proved in a similar way as theorem \ref{theorem_intro_2}: theorem \ref{theorem_intro_4} reduces Geller's conjecture to an Artinian analogue. This can often be directly tackled at the level of $K_2(A/\frak m^2)$, which can be described in a classical style using differential forms and exterior powers of the cotangent space.

\subsection*{Acknowledgements}
I am grateful to C.~Weibel and T.~Geisser for interesting and helpful conversations during the Algebraic $K$-theory and Arithmetic meeting in Bedlewo, Poland, during July 2012. I also thank the anonymous referee who, as well as making many comments which improved the paper, identified certain inaccuracies and omissions in the original manuscript.

I greatly appreciate the Simons Foundation's support via a postdoctoral fellowship.

\subsection*{Notation, conventions, etc.}
Every ring in this paper is commutative, though we stress that the first two theorems of section \ref{section_pro_excision} remain valid in the associative, non-commutative case. Every ring is moreover unital, with the strict exception of certain instances in section \ref{section_pro_excision} where we write ``non-unital ring''.

Given a reduced ring $A$, we denote its total quotient ring by $\Frac A$, and its integral closure in $\Frac A$ by $\tilde A$. We often require $\tilde A$ to be finitely generated as an $A$-module, i.e. that $A\to \tilde A$ is finite (the term ``Mori ring'' is in the literature but we will not use it); this is true if $A$ is excellent \cite[7.8.3]{EGA_IV_II}. A ring is said to be normal if and only if it is reduced and integrally closed in its total quotient ring, i.e.~$A=\tilde A$.

Since most of this paper concerns local rings, $K_0$ is typically uninteresting and therefore we do not replace $K$-theory by its non-connective completion; in particular, our long exact Mayer--Vietoris sequences finish with a possibly non-surjective map between $K_0$ terms. However, the groups $K_n$, $n\le 0$, always satisfy excision, so there is no loss of generality in only working with the non-negative $K$-groups.

Cyclic homology with respect to the base ring $\bb Z$ is denoted $HC_*=HC_*^{\bb Z}$.

\section{Pro-excision in $K$-theory in dimension one}\label{section_pro_excision}
The aim of this foundational section is to describe pro-excision in algebraic $K$-theory, prove that it is satisfied when normalising one-dimensional rings, and state the consequences, namely the long exact sequences of proposition \ref{proposition_standard_consequences} and corollary \ref{corollary_main_application_of_birelative_vanishing}. These consequences are essential for the remainder of the paper.

If $I$ is an ideal of a ring $A$, then $K(A,I)$ is defined to be the homotopy fibre of the map $K(A)\to K(A/I)$; its homotopy groups $K_n(A,I)$ are the {\em relative $K$-groups} associated to $A$ and $I$, and they fit into a long exact sequence \[\cdots\to K_n(A,I)\to K_n(A)\to K_n(A/I)\to\cdots\] If $f:A\to B$ is a morphism of rings carrying $I$ isomorphically to an ideal of $B$, then there is a canonical map $K(A,I)\to K(B,I)$, whose homotopy fibre is denoted $K(A,B,I)$; its homotopy groups are the {\em birelative $K$-groups}, fitting into a long exact sequence \[\cdots\to K_n(A,B,I)\to K_n(A,I)\to K_n(B,I)\to\cdots\]

A non-unital ring $I$ is said to satisfy {\em excision for $K$-theory} if and only if  whenever $I$ is embedded as an ideal into a unital ring $A$, and $f:A\to B$ satisfies the conditions of the previous paragraph, then $K_n(A,I)\to K_n(B,I)$ is an isomorphism for all $n\ge 0$; equivalently, $K_n(A,B,I)=0$ for $n\ge 0$. Informally, the groups $K_n(A,I)$ depends only on $I$, not $A$. Obvious modifications of this terminology will be used. There is a universal choice of such a ring $A$, namely $A=\bb Z\ltimes I$, sometimes denoted $\tilde I$ in the literature.

The following is A.~Suslin's celebrated solution of the excision problem in $K$-theory, building on earlier work of M.~Wodzicki:

\begin{theorem}[Suslin \cite{Suslin1995}]
Let $I$ be a non-unital ring, and set $C=\bb Z$, $\bb Q$, or $\bb Z/m\bb Z$ for any $m\in\bb Z$. Fix $p>0$. Then $I$ satisfies excision for $K_n(-,C)$, for $n\le p$, if and only if $\op{Tor}^{\bb Z\ltimes I}_n(\bb Z,C)=0$ for $n=1,\dots,p$.
\end{theorem}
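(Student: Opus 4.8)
The plan is to establish the two implications separately, in both cases converting the $K$-theoretic assertion into a statement about the homology of relative general linear groups. By the long exact sequence of a birelative pair, excision for $K_n(-,C)$ in degrees $n\le p$ is equivalent, up to the usual boundary ambiguity at the top, to the vanishing of the birelative groups $K_n(A,B,I;C)$ for $n\le p$ and all admissible $f\colon A\to B$; and since $\bb Z\ltimes I$ is the universal unital ring containing $I$ as an ideal, it suffices to show $K_n(\bb Z\ltimes I,I;C)\to K_n(B,I;C)$ is an isomorphism for $n\le p$ and every unital ring $B$ containing $I$ as an ideal. For $n\ge 1$ these relative groups are computed, in the relevant range, by the relative plus construction (equivalently the Volodin model), hence by the homology $H_*(\op{GL}(A,I);C)$ of $\op{GL}(A,I)=\ker(\op{GL}(A)\to\op{GL}(A/I))$ together with its $\op{GL}(A/I)$-action; so the heart of the matter is whether $H_*(\op{GL}(A,I);C)$ is independent of the ambient unital ring $A$ in degrees $\le p$.

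For the direction ``$\op{Tor}=0\Rightarrow$ excision'' --- the deep one, due to Suslin and Wodzicki rationally and to Suslin integrally --- I would proceed as follows. First note that $\op{Tor}^{\bb Z\ltimes I}_n(\bb Z,C)=0$ for $n\le p$ is exactly the statement that the $C$-coefficient bar complex of the multiplication $I\otimes I\to I$ is acyclic through degree $p$, i.e.\ Wodzicki's $H$-unitality condition relative to $C$. This acyclicity is then fed into a spectral-sequence comparison --- a change-of-rings / Hochschild--Serre argument built from the bar resolution of $I$ --- which forces the dependence of $H_*(\op{GL}(A,I);C)$ on $A$ to collapse in degrees $\le p$. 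Two technical points must be managed. First, there is no homological stability for the rings in play, so in place of it one runs an Eilenberg-swindle argument: pass to $M_\infty(A),M_\infty(I)$, invoke Morita invariance and the flasqueness of $\op{GL}(M_\infty(-))$ to annihilate the low-degree obstructions, and only then perform the comparison. Second, one must keep the $\pi_1$-action (that of $\op{GL}(A/I)$ on the homology of the homotopy fibre) under control so that the relative plus construction genuinely computes $K_*(A,B,I;C)$ in the stated range; doing this uniformly for $C=\bb Z/m$ as well as $C=\bb Q$ --- rather than only rationally, as in the original Suslin--Wodzicki argument --- is precisely Suslin's 1995 refinement. Finally, glue the coefficient cases: by universal coefficients $\op{Tor}^{\bb Z\ltimes I}_*(\bb Z,\bb Z)=0$ holds if and only if $\op{Tor}^{\bb Z\ltimes I}_*(\bb Z,\bb Q)=0$ and $\op{Tor}^{\bb Z\ltimes I}_*(\bb Z,\bb Z/\ell)=0$ for all primes $\ell$, so the integral statement follows from the rational and torsion ones.

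For the converse I would induct on $p$, the case $p=0$ being vacuous. Assuming the equivalence below degree $p$, it is enough to show that if $\op{Tor}^{\bb Z\ltimes I}_p(\bb Z,C)\ne 0$ then some birelative $K$-group of the universal example is nonzero. Take $A=\bb Z\ltimes I$, for which low-degree Hochschild homology $HH_*(\bb Z\ltimes I;C)$ literally computes the groups $\op{Tor}^{\bb Z\ltimes I}_*(\bb Z,C)$; by the inductive hypothesis the birelative $K$-theory and birelative Hochschild homology of the relevant pair both vanish below degree $p$, and on the first obstruction the relative Dennis trace is an isomorphism. Hence a nonzero $\op{Tor}^{\bb Z\ltimes I}_p(\bb Z,C)$ forces a nonzero birelative $K_p$, contradicting excision, and so forces the $\op{Tor}$ to vanish.

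I expect the main obstacle to be the core of the first implication: transporting the purely homological acyclicity of the bar complex of $I$ into a statement about the homology of the relative linear groups $\op{GL}(A,I)$, uniformly in $C\in\{\bb Z,\bb Q,\bb Z/m\}$ and with no recourse to homological stability. This is the spectral-sequence-plus-swindle step together with the control of the $\pi_1$-action; the remaining ingredients --- the reduction to the universal ring, the birelative long exact sequences, the low-degree comparison of $K$ and $HH$ via the Dennis trace, and the universal-coefficient gluing of the $\op{Tor}$-conditions --- are comparatively formal.
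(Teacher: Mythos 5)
First, a point of orientation: the paper does not prove this statement at all --- it is quoted as an external input from Suslin's 1995 paper (building on Suslin--Wodzicki), and only its pro-analogue due to Geisser--Hesselholt is actually used later. So there is no internal argument to compare yours with; your proposal has to be judged as a proof of Suslin's theorem itself, and as such it is a roadmap rather than a proof. The formal outer layer is fine: the reduction of excision to vanishing of birelative groups, the role of the universal ring $\bb Z\ltimes I$, and the splitting of the integral $\op{Tor}$-condition into rational and mod-$\ell$ conditions are all standard. But the sentence asserting that acyclicity of the bar complex of $I$ with $C$-coefficients ``forces the dependence of $H_*(\op{GL}(A,I);C)$ on $A$ to collapse in degrees $\le p$'' is not a step one can wave at --- it is the entire mathematical content of the theorem. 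Making it precise requires the Volodin/relative plus-construction comparison, the delicate analysis of the $\op{GL}(A/I)$-action on the homology of the fibre, and, for finite coefficients, Suslin's specific refinements beyond the rational Suslin--Wodzicki argument; there is no known way to compress this into a routine ``spectral-sequence-plus-swindle'' maneuver, and you correctly identify this as the obstacle without supplying it.

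The converse direction as written also contains a concrete error and an unproved key claim. It is false that $HH_*(\bb Z\ltimes I;C)$ ``literally computes'' $\op{Tor}^{\bb Z\ltimes I}_*(\bb Z,C)$: already in degree zero $HH_0(\bb Z\ltimes I)=\bb Z\ltimes I$ whereas $\op{Tor}_0^{\bb Z\ltimes I}(\bb Z,C)=C$. The correct comparison object for the bar homology of $I$ is (bi)relative Hochschild or cyclic homology together with Wodzicki's excision theorem for those theories, and the assertion that the relative Dennis trace is an isomorphism on the first non-vanishing obstruction is itself a substantive theorem --- essentially the device by which Suslin--Wodzicki detect the lowest non-zero $\op{Tor}$ group inside a birelative $K$-group --- not a formal consequence of your inductive hypothesis. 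In short, both implications ultimately defer to the very results being proved, so the proposal cannot stand as a proof, though it does correctly locate where the real work lies.
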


Suslin--Wodzicki's criterion is rarely satisfied for ideals $I$ occurring in algebraic geometry: it is more appropriate for non-unital $C^*$-algebras and other similar function algebras. Of greater interest to us will be when excision is satisfied as we pass to increasingly fat nilpotent thickenings of an ideal; the following theorem, which we only state in the integral case, is the necessary pro extension of Suslin's result:

\begin{theorem}[T.~Geisser \& L.~Hesselholt {\cite[Thm.~1.1]{GeisserHesselholt2006} \cite[Thm.~3.1]{GeisserHesselholt2011}}]
Let $f:A\to B$ be a morphism of rings, and $I\subset A$ an ideal mapped isomorphically by $f$ to an ideal to $B$. Suppose that the pro abelian group \[\projlimf_r \op{Tor}_n^{\bb Z\ltimes(I^r)}(\bb Z,\bb Z)\] is zero for all $n>0$. Then the pro abelian group \[\projlimf_r K_n(A,B,I^r)\] is zero for all $n\ge 0$, and so $\projlimf_rK_n(A,I^r)\to\projlimf_rK_n(B,I^r)$ is an isomorphism; i.e.~$I$ satisfies `pro-excision' in $K$-theory.
\end{theorem}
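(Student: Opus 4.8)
The plan is as follows. The final assertion — that $\projlimf_r K_n(A,I^r)\to\projlimf_r K_n(B,I^r)$ is an isomorphism — is purely formal once one knows that the pro abelian groups $\projlimf_r K_n(A,B,I^r)$ vanish, because the long exact sequence of birelative $K$-theory remains exact after passing to pro abelian groups. So the real content is to deduce from the hypothesis on $\projlimf_r\op{Tor}_n^{\bb Z\ltimes(I^r)}(\bb Z,\bb Z)$ that the pro-spectrum $\{K(A,B,I^r)\}_r$ is pro-trivial (all its pro homotopy groups vanish). I would prove this by descending along the cyclotomic trace in three stages: from $K$ to $TC$, then from $TC$ to $THH$, and finally from $THH$ to a statement of pro homological unitality that the hypothesis delivers directly.

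\emph{From $K$ to $TC$.} First I would invoke the fact that, for a square of the given form, the birelative cyclotomic trace $K(A,B,I^r)\to TC(A,B,I^r)$ is a weak equivalence: rationally this is Corti\~nas's excision theorem for infinitesimal $K$-theory, which itself rests on Cuntz--Quillen periodicity in cyclic homology, while the $p$-complete statement comes from the Dundas--Goodwillie--McCarthy circle of ideas \cite{McCarthy1997}; see \cite{GeisserHesselholt2006}. Since this holds naturally in $r$, it identifies the pro-spectra $\{K(A,B,I^r)\}_r$ and $\{TC(A,B,I^r)\}_r$, so it suffices to show $\projlimf_r TC(A,B,I^r)$ is pro-trivial.

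\emph{From $TC$ down to homological unitality.} Next I would use that $TC$ is assembled by homotopy limits and homotopy (co)fibre sequences out of the genuine fixed-point spectra $THH(-)^{C_{p^n}}$ of the cyclotomic spectrum $THH(-)$ — through the cofibre sequences $THH(-)_{hC_{p^n}}\to THH(-)^{C_{p^n}}\to THH(-)^{C_{p^{n-1}}}$ and then the restriction and Frobenius maps defining $TR$, $TF$ and $TC$ — and that each ingredient preserves pro-triviality: homotopy orbits are exact, while for the homotopy limits one controls the Milnor $\varprojlim^1$ terms by keeping the reductions uniform in $r$ and tracking connectivity, which is the reason for working with pro-objects from the outset. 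It would then be enough to show the pro cyclotomic spectrum $\{THH(A,B,I^r)\}_r$ is pro-trivial. Writing $THH(R)$ as the cyclic bar construction $[q]\mapsto R^{\wedge(q+1)}$ over $\bb S$, the birelative object $THH(A,B,I^r)$ is, up to a filtration with graded pieces built from smash powers of $I^r$, controlled by the cyclic bar construction of the non-unital ring $I^r$ over $\bb S$, hence pro-trivial once $\{I^r\}_r$ is pro homologically unital over $\bb S$, i.e.\ $\projlimf_r\op{Tor}_*^{\bb S\ltimes(I^r)}(\bb S,\bb S)$ is pro-trivial. Finally I would deduce this from the hypothesis by a base-change spectral sequence computing $\op{Tor}_*^{\bb S\ltimes(I^r)}(\bb S,\bb S)$ out of $\op{Tor}_*^{\bb Z\ltimes(I^r)}(\bb Z,\bb Z)$: the assumption that $\projlimf_r\op{Tor}_n^{\bb Z\ltimes(I^r)}(\bb Z,\bb Z)=0$ for all $n>0$ is exactly the pro analogue of Wodzicki's homological unitality condition. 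This is essentially the argument of \cite{GeisserHesselholt2006, GeisserHesselholt2011}.

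\emph{Main obstacle.} The hard part will be the first stage: identifying birelative $K$-theory with birelative $TC$ for a general such square is where the deep inputs enter, since it forces one to combine Corti\~nas's rational theorem (hence Cuntz--Quillen) with the $p$-adic Dundas--Goodwillie--McCarthy results and glue them integrally. The bar-construction computation in the third stage is delicate but follows the familiar Suslin--Wodzicki template, and the second stage is largely bookkeeping; the point demanding care throughout — and the reason the theorem is stated pro-categorically rather than for a fixed thickening — is that every reduction must be carried out uniformly in $r$, since an equivalence or a vanishing that holds for each $I^r$ individually need not descend to the pro-category.
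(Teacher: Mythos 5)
The paper does not prove this statement at all: it is imported verbatim from Geisser--Hesselholt, so the only proof to compare with is theirs, and your outline does capture its general shape (trace methods reducing pro-excision in $K$-theory to pro homological unitality). But as a proof your first stage has a genuine gap. You assert that $K(A,B,I^r)\to TC(A,B,I^r)$ is an integral weak equivalence, justified by gluing ``Corti\~nas rationally'' with ``Dundas--Goodwillie--McCarthy $p$-adically''. Corti\~nas's theorem identifies rational birelative $K$-theory with rational birelative \emph{cyclic homology} (shifted by one), not with rational birelative $TC$; and the Geisser--Hesselholt comparison of birelative $K$ with birelative $TC$ holds only with finite coefficients, and then only as pro-systems in the $TC$-level. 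These two inputs do not glue to the integral equivalence you claim: for a non-nilpotent ideal $I$ there is no comparison of rational birelative $TC$ with rational birelative $K$ available at this level of generality (an integral identification of the fibre of the trace as an excisive invariant came only much later, by quite different methods). The actual argument therefore runs in two separate branches and never asserts such an equivalence: rationally, pro-$H$-unitality kills birelative cyclic homology by a pro version of the Suslin--Wodzicki argument, and Corti\~nas's theorem then kills rational birelative $K$; with finite coefficients, pro-$H$-unitality kills the pro birelative $THH$- and hence $TC$-groups, and the finite-coefficient trace theorem then kills birelative $K$ with finite coefficients; the vanishing of $\projlimf_r K_n(A,B,I^r)$ is assembled from the two branches at the level of homotopy groups.

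Two further points where the sketch hides real work. The passage from the hypothesis $\projlimf_r\op{Tor}_n^{\bb Z\ltimes(I^r)}(\bb Z,\bb Z)=0$ to the vanishing of the pro birelative $THH$ is not a routine base-change spectral sequence over the sphere: it is exactly where Geisser--Hesselholt's results on changing the base ring along surjections, proved by manipulating Eilenberg--MacLane spectra (the same statement the paper quotes separately as a lemma), carry the load. And your second stage is right to worry about homotopy limits, since pro-triviality is not preserved by them in general; this is precisely why the finite-coefficient comparison is stated with an auxiliary pro-index rather than as a levelwise equivalence, a point your sketch acknowledges but does not resolve.
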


\begin{remark}\label{remark_pro_cats}
Before continuing we include a brief discussion about pro abelian groups.

Everything we need about categories of pro objects may be found in one of the standard references, such as the appendix to \cite{ArtinMazur1969}, or \cite{Isaksen2002}. We will use $\op{Pro}Ab$, the category of pro abelian groups. An object of this category is a contravariant functor $X:\cal I\to \cal{C}$, where $\cal I$ is a small cofiltered category (in this paper it is fine to assume $\cal I=\bb N$); this object is denoted $\projlimf_{i\in\cal I} X(i)$. Morphisms are given by the rule \[\Hom_{\op{Pro}Ab}(\projlimf_{i\in\cal I}X,\projlimf_{j\in\cal J}Y):=\projlim_{j\in \cal J}\indlim_{i\in \cal I}\Hom_{Ab}(X(i),Y(j)),\] where the right side is a genuine pro-ind limit in the category of sets, and composition is defined in the obvious way. For example, a pro abelian group $\projlimf_{r\ge 1}X(r)$ is isomorphic to zero if and only if for each $r\ge 1$ there exists $s\ge r$ such that the transition map $X(s)\to X(r)$ is zero.

There is a fully faithful embedding $Ab\to\op{Pro}Ab$ with a right adjoint \[\op{Pro}Ab\to Ab,\quad \projlimf_{i\in\cal I} X(i)\mapsto \projlim_{i\in\cal I} X(i),\tag{\dag}\] which is left exact but not right exact: its first derived functor is precisely $\projlim^1$. Moreover, $\op{Pro}\cal A$ is an abelian category; given a inverse system of exact sequences \[\cdots\To X_{n-1}(i)\To X_n(i)\To X_{n+1}(i)\To\cdots,\] the limit \[\cdots \To \projlimf_{i\in\cal I}X_{n-1}(i)\To \projlimf_{i\in\cal I}X_n(i)\To \projlimf_{i\in\cal I}X_{n+1}(i)\To\cdots\] is an exact sequence in $\op{Pro}Ab$. This does {\em not} imply that \[\cdots \To \projlim_{i\in\cal I}X_{n-1}(i)\To \projlim_{i\in\cal I}X_n(i)\To \projlim_{i\in\cal I}X_{n+1}(i)\To\cdots\] is exact, since (\dag) is not an exact functor.
\comment{
If $\cal C$ is a category, then $\op{Pro}\cal C$, the {\em category of pro objects of $\cal C$}, is the following: an object of $\op{Pro}\cal{C}$ is a contravariant functor $X:\cal I\to \cal{C}$, where $\cal I$ is a small cofiltered category (in this paper it is fine to assume $\cal I=\bb N$); this object is usually denoted \[\projlimf_{i\in\cal I} X(i)\quad\quad\mbox{ or }\quad\quad\projlimf_{i}X(i),\] or by some other suggestive notation. In algebraic topology the notation $\{X(i)\}_{i\in I}$ is also popular. The morphisms in $\op{Pro}\cal{C}$ from $\projlimf_{i\in\cal I}X$ to $\projlimf_{j\in \cal J}Y(j)$ are \[\Hom_{\op{Pro}\cal{C}}(\projlimf_{i\in\cal I}X,\projlimf_{j\in\cal J}Y):=\projlim_{j\in \cal J}\indlim_{i\in \cal I}\Hom_\cal{C}(X(i),Y(j)),\] where the right side is a genuine pro-ind limit in the category of sets. Composition is defined in the obvious way. 

There is a fully faithful embedding $\cal C\to\op{Pro}\cal C$. Assuming that projective limits exist in $\cal C$, there is a realisation functor \[\op{Pro}\cal C\to\cal C,\quad \projlimf_{i\in\cal I} X(i)\mapsto \projlim_{i\in\cal I} X(i),\] which is left exact but not right exact (its derived functors are precisely $\projlim^1,\projlim^2$, etc.), and which is a left adjoint to the aforementioned embedding.

Suppose that $\cal A$ is an abelian category. Then $\op{Pro}\cal A$ is an abelian category. Moreover, given a system of exact sequences \[\cdots\To X_{n-1}(i)\To X_n(i)\To X_{n+1}(i)\To\cdots,\] the formal limit \[\cdots \To \projlimf_{i\in\cal I}X_{n-1}(i)\To \projlimf_{i\in\cal I}X_n(i)\To \projlimf_{i\in\cal I}X_{n+1}(i)\To\cdots\] is an exact sequence in $\op{Pro}\cal A$. Of course, we cannot deduce that \[\cdots \To \projlim_{i\in\cal I}X_{n-1}(i)\To \projlim_{i\in\cal I}X_n(i)\To \projlim_{i\in\cal I}X_{n+1}(i)\To\cdots\] is exact in $\cal A$, even assuming that all these projective limits exist in $\cal A$ so that the sequence makes sense, because the realisation functor is not right exact.

We will use the language of $\op{Pro}Ab$ throughout this paper. For example, to say that a pro abelian group $\projlimf_{r\ge0}A(r)$ is zero means that for any $r\ge 1$ there exists $s\ge r$ such that the transition map $A(s)\to A(r)$ is zero.
}
\end{remark}

Before introducing methods to check the conditions of the Geisser--Hesselholt theorem, we collect together the standard consequences which we will use of the vanishing of the pro birelative $K$-groups $\projlimf_r K_n(A,B,I^r)$:

\begin{proposition}\label{proposition_standard_consequences}
Let $f:A\to B$ be a morphism of rings, and let $I\subset A$ be an ideal mapped isomorphically by $f$ to an ideal of $B$. Suppose that $\projlimf_r K_n(A,B,I^r)=0$ for all $n\ge 0$. Then
\begin{enumerate}
\item There is a natural, long exact, Mayer--Vietoris sequence \[\cdots\to K_n(A)\to\projlimf_r K_n(A/I^r)\oplus K_n(B)\to\projlimf_r K_n(B/I^r)\to\cdots\]
\item Suppose that $J$ (resp.~$J'$) is an ideal of $A$ (resp.~of $B$) containing $I$ (resp.~$f(I)$). Then there is a natural, long exact, Mayer--Vietoris sequence of relative $K$-groups \[\cdots\to K_n(A,J)\to\projlimf_r K_n(A/I^r,J/I^r)\oplus K_n(B,J')\to\projlimf_r K_n(B/I^r,J'/I^r)\to\cdots\]
\item Suppose $J\supseteq I$ is an ideal of $A$ mapped isomorphically to an ideal of $B$. For any $n\ge 0$, the canonical map \[K_n(A,B,J)\to\projlimf_rK_n(A/I^r,B/I^r,J/I^r)\] is an isomorphism.
\item Suppose $J\supseteq I$ is an ideal of $A$ mapped isomorphically to an ideal of $B$. For any $n\ge 0$, the map $K_n(A,B,J)\to K_n(A/I^r,B/I^r,J/I^r)$ is split injective for $r\gg 0$.
\end{enumerate}
\end{proposition}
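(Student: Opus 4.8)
The plan is to deduce all four statements from the single hypothesis that $\projlimf_r K_n(A,B,I^r)=0$ for all $n\ge 0$, by manipulating homotopy fibre sequences of $K$-theory spectra and then passing to pro-systems. The key structural fact I would use repeatedly is that $\op{Pro}Ab$ is an abelian category in which a formal limit of (level-wise) exact sequences is exact (as recalled in Remark \ref{remark_pro_cats}), so that long exact sequences of homotopy groups remain long exact after forming the pro-system over $r$.

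For (i), I would first argue that the hypothesis implies the pro-system $\projlimf_r K(A,B,I^r)$ of birelative $K$-theory spectra is ``pro-weakly-contractible'' in the sense that all its pro homotopy groups vanish. Unwinding the definition $K(A,B,I)=\op{hofib}(K(A,I)\to K(B,I))$, this says $\projlimf_r K_n(A,I^r)\to\projlimf_r K_n(B,I^r)$ is an isomorphism of pro abelian groups for every $n\ge0$. Now I would write down the standard commuting ladder of long exact sequences coming from the fibre sequences $K(A,I^r)\to K(A)\to K(A/I^r)$ and $K(B,I^r)\to K(B)\to K(B/I^r)$ — note $K(A)$ and $K(B)$ are constant pro-systems — and splice them into a Mayer--Vietoris sequence by the usual ``square of fibre sequences'' argument: the hypothesis that the two relative terms agree (pro-isomorphically) forces the homotopy pullback square relating $K(A)$, $K(A/I^r)$, $K(B)$, $K(B/I^r)$, and taking pro homotopy groups of a homotopy pullback square with pro-equivalent fibres yields the desired long exact sequence. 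Concretely: from $K(A,I^r)\isoto K(B,I^r)$ (pro) and the two fibre sequences, the square
\[\xymatrix{K(A)\ar[r]\ar[d] & \projlimf_r K(A/I^r)\ar[d]\\ K(B)\ar[r] & \projlimf_r K(B/I^r)}\]
is homotopy cartesian in pro-spectra, and its associated Mayer--Vietoris sequence is the claim.

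Statement (ii) is the relative analogue and I would handle it the same way, but now replacing $K(A)$ by $K(A,J)$ and $K(B)$ by $K(B,J')$ throughout. The point is that $K(A,J)=\op{hofib}(K(A)\to K(A/J))$, and there is a natural fibre sequence $K(A,I^r)\to K(A,J)\to K(A/I^r,J/I^r)$ (from the octahedral axiom applied to $I^r\subseteq J$), and similarly over $B$; since the leftmost terms $K(A,I^r)$ and $K(B,I^r)$ still become pro-isomorphic, the same homotopy-cartesian-square argument produces the relative Mayer--Vietoris sequence. I would note that $J/I^r$ and $J'/I^r$ make sense because $J\supseteq I\supseteq I^r$ and similarly for $J'$, and that $J$ being contained-in or not mapped isomorphically doesn't matter here, only that $I$ is.

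For (iii) and (iv) the mechanism is different: here I would apply the hypothesis not to $I$ but compare the birelative $K$-theory of the pair $(A,B)$ for the ideal $J$ with that of the quotient pair $(A/I^r, B/I^r)$ for the ideal $J/I^r$. The fibre sequence of spectra $K(A,B,I^r)\to K(A,B,J)\to \projlimf_r K(A/I^r, B/I^r, J/I^r)$ — again obtained by an octahedral/iterated-homotopy-fibre argument from $I^r\subseteq J$ — together with the vanishing $\projlimf_r K(A,B,I^r)\simeq *$ (pro) gives immediately that $K(A,B,J)\to\projlimf_r K(A/I^r,B/I^r,J/I^r)$ is a pro-equivalence, hence an isomorphism on homotopy groups; that is (iii). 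For (iv) I would then use the elementary fact about pro abelian groups (Remark \ref{remark_pro_cats}) that if a map from a \emph{constant} pro abelian group $\{C\}_r$ to a pro abelian group $\{D(r)\}_r$ is a pro-isomorphism, then in particular the composite $C\to D(r)$ is split injective for all sufficiently large $r$: indeed a pro-isomorphism provides, for each $r$, some $s\ge r$ and a map $D(s)\to C$ whose composite with $C\to D(s)$ is the identity (using that the source is constant), and then $C\to D(r)$ factors this identity, so it is split injective for $r\le s$ — and since this works for arbitrarily large $s$, it holds for all large $r$. Applying this with $C=K_n(A,B,J)$ and $D(r)=K_n(A/I^r,B/I^r,J/I^r)$ gives (iv).

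The main obstacle I anticipate is purely bookkeeping: setting up the various iterated homotopy fibre sequences cleanly (the octahedral axiom for $K$-theory spectra relating the relative term for $I^r$, the relative term for $J$, and the relative term for $J/I^r$ inside $A/I^r$), and being careful that ``constant pro-system'' versus ``genuine pro-system'' is tracked correctly when I claim a homotopy-cartesian square of pro-spectra — the subtlety flagged in Remark \ref{remark_pro_cats} is that $\projlim$ is not exact, so I must phrase everything at the level of the abelian category $\op{Pro}Ab$ and only take honest limits, if at all, at the very end. None of the individual steps is deep; the content is entirely in the Geisser--Hesselholt vanishing hypothesis, which is assumed.
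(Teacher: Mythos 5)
Your handling of (i)--(iii) is essentially the paper's own argument: the published proof just says to take the limit over $r$ of the exact sequences of homotopy groups (or to work with pro spectra), and the intended details are precisely the ladders of homotopy fibre sequences you describe --- the hypothesis gives $\projlimf_r K_n(A,I^r)\cong\projlimf_r K_n(B,I^r)$, the fibre sequences $K(A,I^r)\to K(A,J)\to K(A/I^r,J/I^r)$ and $K(B,I^r)\to K(B,J')\to K(B/I^r,J'/I^r)$ (obtained by your octahedral argument, or by the $3\times3$ diagram the paper uses) give the relative Mayer--Vietoris sequence by the same splicing, and taking vertical fibres with $J'=f(J)$ produces the fibre sequence $K(A,B,I^r)\to K(A,B,J)\to K(A/I^r,B/I^r,J/I^r)$ from which (iii) is immediate. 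So far this matches the paper.

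The one step that does not work as written is your deduction of (iv) from (iii). Write $C=K_n(A,B,J)$, $D(r)=K_n(A/I^r,B/I^r,J/I^r)$, and $\varphi_r\colon C\to D(r)$ for the canonical maps. The pro-isomorphism of (iii) and the constancy of the source do give you some $s$ and a map $\psi\colon D(s)\to C$ with $\psi\circ\varphi_s=\mathrm{id}_C$, so $\varphi_s$ is split injective; but this says nothing about $\varphi_r$ for $r<s$, since the transition maps run from level $s$ down to level $r$ (one has $\varphi_r=t_{s\to r}\circ\varphi_s$, so the identity of $C$ factors through $D(s)$, not through $D(r)$). Your assertion that ``$C\to D(r)$ factors this identity, so it is split injective for $r\le s$'' is therefore backwards, and the follow-up ``since this works for arbitrarily large $s$, it holds for all large $r$'' would, if valid, prove split injectivity for \emph{every} $r\ge1$, which is not claimed and need not hold. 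The correct (and equally short) argument is that split injectivity propagates \emph{upwards} in the inverse system: once $\psi\circ\varphi_s=\mathrm{id}_C$, then for every $r\ge s$ the composite $\psi\circ t_{r\to s}$, with $t_{r\to s}\colon D(r)\to D(s)$ the transition map, retracts $\varphi_r$, because $t_{r\to s}\circ\varphi_r=\varphi_s$. Hence $\varphi_r$ is split injective for all $r\ge s$, i.e.\ for $r\gg0$, which is exactly statement (iv).
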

\begin{proof}
(i)--(iii) follow in a straightforward way by taking the limit over $r$ of exact sequences of homotopy groups, or using pro spectra. (iv) is a consequence of (iii).
\comment{
(i): Consider the following diagram of spectra
\[\xymatrix{
K(A,B,I^r) \ar[r] & K(A,I^r)\ar[r]\ar[d] & K(B,I^r)\ar[d]\\
&K(A)\ar[r]\ar[d] & K(B)\ar[d]\\
&K(A/I^r)\ar[r] & K(B/I^r)
}\]
in which the two columns and top row are homotopy fibre sequences. The vanishing of $\projlimf_r K_n(A,B,I^r)$ for all $n$ implies that $\projlimf_r K_n(A,I^r)\cong \projlimf_r K_n(B,I^r)$, whence the usual splicing argument produces the expected Mayer--Vietoris sequence.

(ii): Consider the following diagram of spectra
\[\xymatrix{
K(A,I^r) \ar[r]\ar[d] & K(A,J) \ar[r]\ar[d] & K(A/I^r,J/I^r) \ar[d] \\
K(A,I^r) \ar[r]\ar[d] & K(A) \ar[r]\ar[d] & K(A/I^r) \ar[d] \\
\ast \ar[r] & K(A/J) \ar[r] & K(A/J)
}\]
in which the bottom two rows and all the columns are homotopy fibre sequences; hence the top row is also a homotopy fibre sequence. The same is true with $B$ and $J'$ in place of $A$ and $J$. Next form the diagram
\[\xymatrix{
K(A,I^r) \ar[r]\ar[d] & K(A,J) \ar[r]\ar[d] & K(A/I^r,J/I^r) \ar[d] \\
K(B,I^r) \ar[r] & K(B,J') \ar[r] & K(B/I^r,J'/I^r) \\
}\]
in which we have just shown that the two rows are homotopy fibre sequences. As we observed in the proof of (i), $\projlimf_r K_n(A,I^r)\cong \projlimf_r K_n(B,I^r)$, whence the Mayer--Vietoris sequence is again obtained by the standard splicing argument.

(iii): Take $J'=f(J)$ in the previous diagram and take homotopy fibres of the vertical arrows; this shows that \[K(A,B,I^r)\to K(A,B,J)\to K(A/I^m,B/I^m,J/I^m)\] is a homotopy fibre sequence, from which the result follows. (iv) is an immediate consequence of (iii).
}
\end{proof}

\begin{remark}
The homotopy groups of $\holim_rK(A,B,I^r)$ fit into short exact sequences \[0\to{\projlim_r}^1K_{n+1}(A,B,I^r)\to\pi_n(\holim_rK(A,B,I^r))\to\projlim_rK_n(A,B,I^r)\to 0\] If $I$ satisfies pro-excision then the two outer terms are zero for all $n\ge1$, and so we deduce that $\holim_rK(A,B,I^r)$ is contractible. Thus \xysquare{K_n(A)}{K_n(B)}{\holim_rK_n(A/I^r)}{\holim_rK_n(B/I^r)}{->}{->}{->}{->} is a homotopy cartesian square of spectra. This leads to variants of the long exact sequences of the previous proposition in which the pro abelian groups are replaced by homotopy groups of homotopy limits of spectra; however, it is much easier to work with pro abelian groups throughout.
\end{remark}

The rest of this section is dedicated to the proof of the following consequence of the Geisser--Hesselholt theorem, implying the vanishing of the pro birelative $K$-groups in a wide variety of situations, the most important of which (for us) we give in the subsequent corollary. We will say that an ideal $I$ of a ring $D$ is {\em locally invertible} if and only if, for every prime ideal $\frak p\subset D$, the ideal $I_\frak p$ is generated by a single non-zero-divisor of $D_\frak p$. If $D$ is Noetherian, then $I$ is invertible if and only if it is locally invertible.

\begin{proposition}\label{proposition_vanishing_of_birelatives_for_locally_free}
Let $f:A\to B$ be a morphism of rings, and $I\subset A$ an ideal mapped isomorphically by $f$ to an ideal to $B$. Suppose moreover that there exists a ring $D$ with the following two properties: $I$ is isomorphic, as a non-unital ring, to a locally invertible ideal of $D$; and $D$ is flat over $\op{Im}(\bb Z\to D)$. Then \[\projlimf_r K_n(A,B,I^r)=0\] for all $n\ge 0$.
\end{proposition}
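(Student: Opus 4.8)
The plan is to reduce everything to the Geisser--Hesselholt theorem, so the task is to verify that the pro abelian group $\projlimf_r \op{Tor}_n^{\bb Z\ltimes(I^r)}(\bb Z,\bb Z)$ vanishes for all $n>0$. First I would replace $I$ by the isomorphic locally invertible ideal of $D$; since the Tor-groups in question, and the notion of pro-excision, depend only on $I$ as a non-unital ring, there is no loss in assuming outright that $I$ is a locally invertible ideal of $D$ and that $D$ is flat over its image $\bb Z_0 := \op{Im}(\bb Z\to D)$ (so $D$ is flat over $\bb Z$ or over $\bb Z/m$ for some $m$). The key point will be that $I^r$, being locally principal generated by non-zero-divisors, behaves homologically like a free module: for each $r$, $I^r$ is a finitely generated projective $D$-module of rank $\le 1$, and more importantly the powers $I^r$ form a "pro-free" system in a sense strong enough to kill the relevant Tor.

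The main computation is to identify $\op{Tor}_*^{\bb Z\ltimes I}(\bb Z,\bb Z)$, or rather its pro-analogue, with something built out of the $D$-module structure. Using the standard bar-type resolution, $\op{Tor}_n^{\bb Z\ltimes I}(\bb Z,\bb Z)$ is computed by the complex with terms $I^{\otimes_{\bb Z} n}$ (the $n$-th tensor power of $I$ over $\bb Z$) and the usual bar differential built from the multiplication $I\otimes_{\bb Z} I\to I$. The flatness of $D$ over $\bb Z_0$ lets me replace $\otimes_{\bb Z}$ by $\otimes_{\bb Z_0}$ without changing the homology up to the relevant degrees, and then I would like to compare $I^{\otimes_{\bb Z_0} n}$ with $I^{\otimes_D n} = I^n$ (using that $I$ is locally principal, so $I\otimes_D I\to I^2$ is an isomorphism). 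The discrepancy between tensoring over $\bb Z_0$ and over $D$ is governed by $\op{Tor}^{\bb Z_0}_*(D,D)$-type terms, but passing to the pro-system in $r$ and using that multiplication by elements of the conductor-like powers shifts everything deeper into $I^r$, these correction terms should become pro-zero. Concretely: for the pro-system $\{I^r\}_r$, the transition maps $I^{r+k}\hookrightarrow I^r$ applied to the bar complex should, for $k$ large depending on $n$, factor the "bad" part of $\op{Tor}_n$ through zero, because locally $I^r = (t^r)$ and $t$ is a non-zero-divisor, so the local bar complex of $\bb Z\ltimes (t^r D_\frak p)$ has vanishing higher homology in the limit — this is exactly the $\bb A^1$-type or "nilpotent thickening" computation underlying Geisser--Hesselholt, now done in a single variable.

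I would organise the verification as follows: (1) localise, reducing the vanishing of the pro Tor-group to a statement about $D_\frak p$ with $I_\frak p = (t)$, $t$ a non-zero-divisor — here one should be careful that pro-vanishing is not local on $\Spec$ in general, so I would instead argue that the bar complex of $\bb Z\ltimes I^r$ is built functorially from the $D$-modules $I^{rn}$ and a finite amount of $\bb Z_0$-flatness data, and then reduce the needed isomorphisms $I^{ra}\otimes_{\bb Z_0}\cdots \to I^{r\sum a_i}$ to the local case where they are clear; (2) show that the homology of the resulting complex of $D$-modules is annihilated, in the pro sense, by the transition maps, using that $I$ consists of non-zero-divisors so that $I^{r}\otimes_D^{\bb L} I^s \simeq I^{r+s}$ and the bar differential becomes, up to pro-isomorphism, the constant complex $\cdots \to I^{2r}\to I^r$ which has trivial higher pro-homology; (3) feed the conclusion $\projlimf_r \op{Tor}_n^{\bb Z\ltimes(I^r)}(\bb Z,\bb Z)=0$ into the Geisser--Hesselholt theorem to conclude $\projlimf_r K_n(A,B,I^r)=0$ for all $n\ge 0$. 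The main obstacle, and the step I expect to require the most care, is (1)--(2): disentangling the tensor products over $\bb Z$ (or $\bb Z/m$) from those over $D$ without assuming $D$ itself is $\bb Z$-flat in a way that is literally local on $\Spec D$ — this is precisely where the hypothesis "$D$ flat over $\op{Im}(\bb Z\to D)$" is used, and the bookkeeping of how deep into $I^r$ one must go (as a function of the homological degree $n$) to kill each Tor class is the technical heart of the argument.
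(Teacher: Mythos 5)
Your overall reduction is the right one, and matches the paper's: everything comes down to checking the Geisser--Hesselholt hypothesis that $\projlimf_r\op{Tor}^{\bb Z\ltimes(I^r)}_n(\bb Z,\bb Z)=0$ for $n>0$, using that $I^r$ is locally generated by a non-zero-divisor. But two steps in your sketch are genuine gaps. First, the change of base ring: writing $k=\op{Im}(\bb Z\to D)$, flatness of $D$ over $k$ does \emph{not} let you ``replace $\otimes_{\bb Z}$ by $\otimes_{k}$ without changing the homology'' -- when $k=\bb Z/m$ the two bar complexes genuinely differ, and no amount of flatness of $D$ over $\bb Z/m$ repairs the failure of $\bb Z/m$ to be flat over $\bb Z$. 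What the flatness hypothesis actually buys is that $I$ (being an invertible, hence flat, $D$-module) is flat over $k$, so that the reduced bar complex $\res\beta{}^k_\bullet(I^r)$ computes $\op{Tor}^{k\ltimes(I^r)}_*(k,k)$ at all. The passage from the base $k$ back to the base $\bb Z$ is a separate, nontrivial input: the paper quotes Geisser--Hesselholt's result (\cite[Prop.~3.6]{GeisserHesselholt2011}) that pro-vanishing of $\op{Tor}^{k\ltimes(I^r)}_n(k,k)$ for a surjection $\bb Z\onto k$ implies pro-vanishing of $\op{Tor}^{\bb Z\ltimes(I^r)}_n(\bb Z,\bb Z)$. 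Your proposal neither supplies this result nor a substitute for it, and the mechanism you suggest would fail precisely in the positive-characteristic case the hypothesis is designed to cover.

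Second, the heart of the vanishing argument is only gestured at. The assertion that the ``correction terms should become pro-zero'' and that the bar complex is ``up to pro-isomorphism the constant complex $\cdots\to I^{2r}\to I^r$'' is the statement to be proved, not an argument; note also that the relevant tensor products are over $k$, not over $D$, so the isomorphism $I\otimes_DI\cong I^2$ does not directly simplify the complex. What one actually proves is sharper and more elementary: the map $H_*(\res\beta{}^k_\bullet(I^2))\to H_*(\res\beta{}^k_\bullet(I))$ induced by the inclusion $I^2\subseteq I$ is \emph{zero} in positive degrees. In the principal case $I=tD$ this follows from the explicit homotopy $t^2a_0\otimes\cdots\otimes t^2a_n\mapsto t\otimes ta_0\otimes t^2a_1\otimes\cdots\otimes t^2a_n$, and iterating kills the whole pro-system. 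In the general locally invertible case your worry that ``pro-vanishing is not local on $\Spec$'' is beside the point: one only needs to check that this single $D$-linear map on homology vanishes, and vanishing of a map of $D$-modules \emph{is} local, so one may localise at each maximal ideal (homology commutes with the flat localisation $D\to D_{\frak m}$), write $ID_{\frak m}=\indlim_{s\in D\setminus\frak m}\tfrac{t}{s}D$ to deal with the fact that the local generator lives only in $D_{\frak m}$, and run the same homotopy on the resulting subcomplexes. Without these two ingredients -- the explicit null-homotopy and the Geisser--Hesselholt base-change lemma -- the proposal does not yet constitute a proof.
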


The following corollary of the proposition will be the case of interest to us. If $A$ is a one-dimensional, Noetherian, reduced ring such that $A\to\tilde A$ is finite, then the {\em conductor ideal} $\frak f:=\Ann_A(\tilde A/A)$ is non-zero; it is the largest ideal of $\tilde A$ contained inside $A$, and the quotients $A/\frak f$ and $\tilde A/\frak f$ are Artinian. The radical of $\frak f$ inside $\tilde A$, i.e.~the ideal $\sqrt{\frak f}=\{b\in\tilde A:b^r\in\frak f\mbox{ for }r\gg0\}$, is equal to the intersection of the finitely many maximal ideals $\frak M$ of $\tilde A$ for which $A_{A\cap\frak M}$ is not normal.

\begin{corollary}\label{corollary_main_application_of_birelative_vanishing}
Let $A$ be a one-dimensional, Noetherian, reduced ring such that $A\to\tilde A$ is finite; let $I$ be a non-zero ideal of $B:=\tilde A$ contained inside $A$, e.g.\ the conductor ideal $\frak f=\Ann_A(B/A)$. Then \[\projlimf_r K_n(A,B,I^r)=0\] for all $n\ge 0$.

In particular, if $J'$ is an ideal of $B$ contained in the radical (taken inside $B$) of $\frak f$, and $J:=A\cap J'$, then there are long exact Mayer--Vietoris sequences
 \[\cdots\to K_n(A)\to\projlimf_r K_n(A/J^r)\oplus K_n(B)\to\projlimf_r K_n(B/J'^r)\to\cdots\]
\[\cdots\to K_n(A,J)\to\projlimf_r K_n(A/J^r,J/J^r)\oplus K_n(B,J')\to\projlimf_r K_n(B/J',J'/J'^r)\to\cdots\]
\end{corollary}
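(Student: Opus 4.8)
The plan is to deduce the corollary from Proposition \ref{proposition_vanishing_of_birelatives_for_locally_free} and then from Proposition \ref{proposition_standard_consequences}. First I would verify the hypotheses of Proposition \ref{proposition_vanishing_of_birelatives_for_locally_free} with $D := B = \tilde A$. Since $A$ is one-dimensional, Noetherian, and reduced, its normalisation $\tilde A$ is a finite product of normal (hence Krull, and one-dimensional, so Dedekind) domains; in particular $\tilde A$ is Noetherian of dimension one. The ideal $I$ is by assumption a non-zero ideal of $\tilde A$; being a non-zero ideal in a product of Dedekind domains, it is locally principal, generated at each prime by a non-zero-divisor, i.e.\ locally invertible in the sense of the excerpt (and since $\tilde A$ is Noetherian, actually invertible). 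The only remaining hypothesis is flatness of $\tilde A$ over the image of $\bb Z$: if $A$ contains a field of characteristic zero then $\bb Z\to\tilde A$ factors through $\bb Q$ and flatness is automatic; in general $\tilde A$ is reduced, hence torsion-free over $\bb Z$ — a product of domains is torsion-free over $\bb Z$ unless it has positive characteristic, in which case the image of $\bb Z$ is a field $\bb F_p$ and flatness is again automatic. Either way $\tilde A$ is flat over $\op{Im}(\bb Z\to\tilde A)$. Here I should be a little careful: a priori the characteristic could be uniform on each factor but the factors could have different characteristics, so the image of $\bb Z$ in $\tilde A$ need not be a domain; I would handle this by noting $\tilde A = \prod_j \tilde A_j$ and checking flatness factor-by-factor, since $\op{Im}(\bb Z\to\tilde A) \to \prod_j \op{Im}(\bb Z\to\tilde A_j)$ and flatness can be tested componentwise. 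This is the one spot that needs a genuine (if short) argument; everything else is bookkeeping.

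Granting Proposition \ref{proposition_vanishing_of_birelatives_for_locally_free}, we obtain $\projlimf_r K_n(A,B,I^r)=0$ for all $n\ge0$, which is the first assertion. Next, to get the Mayer--Vietoris sequences I would apply Proposition \ref{proposition_standard_consequences}: part (i) directly yields the first displayed sequence (with $I^r$ in place of $J^r, J'^r$), and part (ii) with $J = A\cap J'$ yields the relative sequence. The point that requires a small extra observation is that in the statement of the corollary the powers appearing are $J^r$ and $J'^r$ rather than $I^r$, where $I = \frak f$ (or any non-zero ideal of $B$ in $A$) and $J'\subseteq\sqrt{\frak f}$ (radical taken in $B$), $J = A\cap J'$. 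I would resolve this by a cofinality argument between the pro-systems: since $J'\subseteq\sqrt{\frak f}$ and $\tilde A/\frak f$ is Artinian, there is a fixed integer $c$ with $(\sqrt{\frak f})^c\subseteq\frak f$, hence $J'^{cr}\subseteq \frak f^r \subseteq$ (for $r$ large, using $\frak f\subseteq J'$ so $\frak f^r\subseteq J'^r$) — giving mutual cofinality of $\{J'^r\}_r$ and $\{\frak f^r\}_r$, and similarly after intersecting with $A$ for $\{J^r\}_r$ versus $\{\frak f^r\}_r$ (here one uses $J = A\cap J'\supseteq \frak f$ and $J^r\subseteq J'^r\cap A$, together with the inclusion $J'^{cr}\cap A\subseteq\frak f^r\subseteq J^r$). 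Therefore the pro-systems $\{K_n(A/J^r)\}_r$ and $\{K_n(A/\frak f^r)\}_r$ are isomorphic in $\op{Pro}Ab$, and likewise for the $B$-side and the relative groups, so the sequences of Proposition \ref{proposition_standard_consequences}(i),(ii) for $I=\frak f$ translate verbatim into the sequences claimed for $J, J'$.

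Finally I would double-check that $J = A\cap J'$ does contain $I=\frak f$ — which it does, since $\frak f\subseteq A$ and $\frak f\subseteq\sqrt{\frak f}\subseteq J'$ — so that the hypotheses ``$J\supseteq I$ an ideal of $A$'' and ``$J'\supseteq f(I)$ an ideal of $B$'' of Proposition \ref{proposition_standard_consequences}(ii) are met, with $f$ the inclusion $A\hookrightarrow B$. The main obstacle is not conceptual but is the verification in the first paragraph that $\tilde A$ is $\bb Z$-flat over $\op{Im}(\bb Z\to\tilde A)$ in the possibly-mixed-characteristic-on-different-factors situation, together with keeping the cofinality argument clean; neither is deep, but both deserve a sentence so the reader is not left to wonder. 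I do not expect to need anything beyond the structure theory of one-dimensional normal Noetherian rings and the two cited propositions.
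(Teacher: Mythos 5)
Your overall route is the same as the paper's --- verify the hypotheses of Proposition \ref{proposition_vanishing_of_birelatives_for_locally_free} with $D=B$, feed the resulting vanishing into Proposition \ref{proposition_standard_consequences}, and finish by a cofinality argument --- but three of your verifications fail as written. (1) A non-zero ideal of a product of Dedekind domains need not be locally invertible: in the configuration $B=B'\times B''$, $I=I'\times\{0\}$ the localisation of $I$ at every prime of $B''$ is zero, and the hypotheses of the corollary do not exclude this (for two crossing lines, $A=\{(f,g)\in k[t]\times k[s]:f(0)=g(0)\}$, the ideal $I=0\times(s^2)$ is a non-zero ideal of $B$ contained in $A$). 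The paper handles components by a preliminary reduction, splitting $A$ into its connected components and field factors and using additivity of $K$-theory for products, and by exploiting that Proposition \ref{proposition_vanishing_of_birelatives_for_locally_free} only asks for $I$ to be isomorphic to a locally invertible ideal of \emph{some} ring $D$, so one may discard the factors of $B$ on which $I$ vanishes. (2) Your factor-by-factor flatness check is over the wrong base: what is needed is flatness of each factor $\tilde A_j$ over $k:=\op{Im}(\bb Z\to\tilde A)$, not over $\op{Im}(\bb Z\to\tilde A_j)$, and $\bb Z\to\bb F_p$ is not flat. If one branch of $A$ has characteristic $0$ and another characteristic $p$ (possible even for $A$ local: glue $\Spec\bb Z_p$ to $\Spec\bb F_p[[t]]$ along their closed points), then $k=\bb Z$ while $B$ has $p$-torsion, so $B$ is not flat over $k$ and $D=B$ cannot be used at all; the componentwise reduction does not repair this. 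The paper first reduces to $\Spec A$ connected and then invokes the dichotomy that $\op{Im}(\bb Z\to B)$ is either $\bb Z$ with $B$ torsion-free or a field; since branches of unequal characteristic can meet at a single point, this configuration deserves explicit treatment in any complete write-up, and your proposal does not supply it.

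(3) The deduction of the two Mayer--Vietoris sequences relies on $\frak f\subseteq J'$, which is not a hypothesis: your inclusion ``$\frak f\subseteq\sqrt{\frak f}\subseteq J'$'' reverses the assumption $J'\subseteq\sqrt{\frak f}$. Without it, Proposition \ref{proposition_standard_consequences}(ii) with $I=\frak f$ is not even applicable (it requires $J'\supseteq\frak f$), and the chains $\{\frak f^r\}_r$ and $\{J'^r\}_r$ need not be cofinal: for the node, $B=k[[t_1]]\times k[[t_2]]$ and $\frak f=(t_1)\times(t_2)$, the ideal $J'=(t_1)\times 0$ lies in $\sqrt{\frak f}$ but contains no power of $\frak f$, so there is no comparison of the quotient systems in the required direction and the claimed ``verbatim translation'' is unjustified. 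The paper instead applies Propositions \ref{proposition_vanishing_of_birelatives_for_locally_free} and \ref{proposition_standard_consequences} to $I:=J'\frak f$, which is a non-zero ideal of $B$ contained in $A$ whenever $J'\neq 0$ (since $\frak f$ contains a non-zero-divisor of $B$), satisfies $I\subseteq J\subseteq J'$, and, using $(\sqrt{\frak f})^{c}\subseteq\frak f$, also $J'^{(c+1)r}\subseteq(J'\frak f)^{r}$; hence $\{I^r\}$, $\{J^r\}$ and $\{J'^r\}$ are mutually cofinal with no extra hypotheses. With $I=J'\frak f$ in place of $I=\frak f$, and with points (1) and (2) repaired, your argument coincides with the paper's.
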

\begin{proof}
To prove the birelative vanishing claim we first reduce to the case where $\Spec A$ is connected and of dimension one. Indeed, the spectrum of $A$ has finitely many components and some of these may be spectra of fields, since we have not insisted $A$ be equi-dimensional. Therefore we may write $A=A'\times\kappa$, and $I=I'\times J$, where $A'$ is an equi-dimensional, Noetherian, reduced ring, $\kappa$ is a finite product of fields, and $I'$ (resp.~$J$) is an ideal of $A'$ (resp.~$\kappa$). Then $B=\tilde{A'}\times\kappa$ and $I'$ is an ideal of $\tilde{A'}$; since $K$-theory is additive for products of rings, we have \[K_n(A,B,I^r)\cong K_n(A',\tilde{A'},I'^r)\oplus K_n(\kappa,\kappa,J^r)\cong K_n(A',\tilde{A'},I'^r).\] In the same way, if $\Spec A'$ has multiple components then we may treat them individually by writing $A'=A''\times A'''$ and proceeding inductively. This reduces us to the connected, one-dimensional case.

 Under this assumption, we will show that the conditions of proposition \ref{proposition_vanishing_of_birelatives_for_locally_free} are satisfied with $D=B$, from which the first part of the corollary follows. Firstly, $B$ is a finite product of Dedekind domains in each of which $I$ has non-zero image, and so $I$ is automatically a locally invertible ideal of $B$. Secondly, the image of $\bb Z$ inside $B$ is either equal to $\bb Z$ (in which case $B$ is a torsion-free, hence flat, $\bb Z$-module), or is equal to a field $k$ since $\Spec A$ is connected (in which case $B$ is automatically flat over $k$).

For the long exact sequences, take $I=J'\frak f$ and just apply proposition \ref{proposition_standard_consequences}, noting that the chains of ideals $\{I^r\}$, $\{J^r\}$ and $\{J'^r\}$ are cofinal in one another.
\end{proof}

\begin{example}\label{example_main_application_of_birelative_vanishing}
If $A$ is a one-dimensional, Noetherian, reduced semi-local ring such that $A\to B:=\tilde A$ is finite, then we may apply corollary \ref{corollary_main_application_of_birelative_vanishing} to the Jacobson radicals $\frak m$, $\frak M$ of $A$, $B$, yielding the long exact Mayer--Vietoris sequences alluded to in the introduction
 \[\cdots\to K_n(A)\to\projlimf_r K_n(A/\frak m^r)\oplus K_n(B)\to\projlimf_r K_n(B/\frak M^r)\to\cdots\]
\[\cdots\to K_n(A,\frak m)\to\projlimf_r K_n(A/\frak m^r,\frak m/\frak m^r)\oplus K_n(B,\frak M)\to\projlimf_r K_n(B/\frak M^r,\frak M/\frak M^r)\to\cdots\]
\end{example}

\begin{remark}
Here we explain some issues surrounding the assumed finiteness of the normalisation morphism $A\to\tilde A$.

Let $A$ be one-dimensional, Noetherian, reduced ring, with normalisation $B=\tilde A$. According to the Krull--Akizuki theorem \cite[Thm.~4.9.2]{Huneke2006}, $B$ is again one-dimensional and Noetherian, hence is a product of finitely many Dedekind domains. Moreover, if $\frak p$ is a prime ideal of $A$, then the prime ideals of $B$ containing $\frak p$ are precisely those finitely many prime ideals occurring in the prime ideal factorisation of $\frak pB$; so only finitely many prime ideals of $B$ contain $\frak p$. In particular, if $A$ is semi-local then so is $B$.

If $A\to B$ is moreover assumed to be a finite morphism, in which case the Krull--Akizuki theorem is of course unnecessary to deduce that $B$ is Noetherian, then the quotient $B/A$ is a finitely generated $A$-module, and so the conductor $\frak f=\Ann_A(B/A)$ is a non-zero ideal of $A$ such that $A/\frak f$ has finite length. Conversely, if $I$ is an ideal of $B$ contained in $A$, and we assume that $I$ is not contained inside any minimal prime ideal of $B$ (to avoid the situation that $B=B'\times B''$ and $I=I'\times\{0\}$), then $I$ contains a non-zero-divisor $f$ of $B$; hence $fB\cong B$ is an ideal of $A$, whence it is finitely generated as an $A$-module.

In conclusion, the finiteness of $A\to B$ is the minimal condition required to formulate corollary \ref{corollary_main_application_of_birelative_vanishing}.
\end{remark}

Now we turn to the proof of proposition \ref{proposition_vanishing_of_birelatives_for_locally_free}. We will check the conditions of the Geisser--Hesselholt theorem by explicitly calculating the Tor groups in question; this is achieved in a standard way using bar resolutions, which we now briefly review.

Let $k$ be a ring, $D$ a $k$-algebra possibly without unit, and let $M,N$ be $D$-modules (recall our convention, for simplicity, that all rings are commutative). The associated two-sided bar construction is the simplicial $D$-module \[\beta_\bullet^k(N,D,M)=N\otimes_k\underbrace{D\otimes_k\cdots\otimes_kD}_{\sub{\sub{$\bullet$ times}}}\otimes_kM,\] with the obvious boundary and degeneracy maps. If $D$ is unital then the presence of the ``extra degeneracy'' (e.g.~\cite[Prop.~1.1.2]{Loday1992}) implies that $\beta_\bullet^k(D,D,M)$ is a resolution of $M$; if $D$ and $M$ are also flat over $k$, it follows that $\beta_\bullet^k(N,D,M)=N\otimes_D\beta_\bullet^k(D,D,M)$ calculates $\op{Tor}_*^D(N,M)$.

Suppose now that $I$ is a non-unital, flat $k$-algebra. Then the normalised chain complex associated to $\beta_\bullet^k(k,k\ltimes I,k)$, which we have just shown calculates $\op{Tor}_*^{k\times I}(k,k)$, can be identified with the subcomplex $\res\beta{}^k_\bullet(I):=\beta_\bullet^{k\ltimes I}(k,I,k)$, where $I$ acts on $k$ as zero. Explicitly, $\res\beta{}^k_\bullet(I)$ is the complex of $k$-modules \[\cdots\To I\otimes_kI\otimes_kI\To I\otimes_kI\stackrel0{\To} k\To0\] where the $k$ sits in degree $0$ and the boundary maps are $b=\sum_{i=0}^{n-1}(-1)^id_i$, where \[d(a_0\otimes\cdots\otimes a_n)=a_0\otimes\cdots\otimes a_ia_{i+1}\otimes\cdots\otimes a_n\qquad (i=0,\dots,n-1).\]

\comment{
\begin{lemma}
Let $k$ be a ring, and $I$ a non-unital, flat $k$-algebra. Let $\res\beta_\bullet^k(I)$ denote \end{lemma}

Assuming now that $I$ is a non-unital, flat $k$-algebra, the normalised  Notice here that $I$ acts as zero on $k$

 The augmented bar complex $C^\sub{bar}_\bullet(D; k)$ is the complex of left $D$-modules \[\cdots\to D^{\otimes_k n}\to\cdots\to D^{\otimes_k 2}\to D\to0\] in which the boundary map $b':D^{\otimes n}\to D^{\otimes n-1}$ is given by  Here $D^{\otimes n}$ occurs in degree $n$ of the complex, though this slightly deviates from standard simplicial notation

\begin{lemma}
Let $k$ be a ring, and $I$ a non-unital, flat $k$-algebra. Then \[\op{Tor}_{*+1}^{k\ltimes I}(k,k)=H_*(C_\bullet^\sub{bar}(I;k))\] for $*>0$.
\end{lemma}
\begin{proof}
Since $I\ltimes k$ is a unital $k$-algebra, the augmented bar complex $C_\bullet^\sub{bar}(I\ltimes k;k)$ is exact; the associated normalised complex $\res C_\bullet^\sub{bar}(I\ltimes k;k)$ may be identified with the subcomplex \[\cdots\to (k\ltimes I)\otimes I^{\otimes n-1}\to\cdots\to (k\ltimes I)\otimes I\to I\to 0,\] which is therefore also exact. Therefore the following complex $C_\bullet$ forms a resolution of $k$ as a left $k\ltimes I$ module:
\[\xymatrix{
\cdots\ar[r]& (k\ltimes I)\otimes I^{\otimes n}\ar[r] &\cdots \ar[r] &(k\ltimes I)\otimes I\ar[r]&k\ltimes I\ar[r]\ar[d]&0\\ &&&&k&
}\] Moreover, $I$ is flat over $k$, so each term $(k\ltimes I)\otimes I^{\otimes n-1}$ is flat as a left $k\ltimes I$ module, and therefore this resolution may be used to compute $\op{Tor}_*^{k\ltimes I}(-,k)$. It remains only to notice that $k\otimes_{k\ltimes I}C_n=C_{n-1}^\sub{bar}(I;k)$ for $n\ge 2$, and $=k$ for $n=1$; in fact, there is a short exact sequence of complexes \[0\to C_\bullet^\sub{bar}(I;k)[-1]\to k\otimes_{k\ltimes I}C_\bullet\to k[-1]\to 0,\] from which the final result follows.
\end{proof}
}

In light of this discussion, the key to proving proposition \ref{proposition_vanishing_of_birelatives_for_locally_free} therefore rests with the following lemma:

\begin{lemma}
Let $k\to D$ be a flat morphism of rings, and $I\subseteq D$ a locally invertible ideal of $D$. Then the canonical map \[H_*(\overline\beta{}^k_\bullet(I^2))\to H_*(\res\beta{}^k_\bullet(I))\] is zero for $*>0$.
\end{lemma}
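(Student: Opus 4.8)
The plan is to reduce to the case in which $I=fD$ is principal, generated by a single non-zero-divisor $f$, and then to write down an explicit contracting homotopy for the comparison chain map $\res\beta{}^k_\bullet(I^2)\to\res\beta{}^k_\bullet(I)$ in positive degrees.

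For the reduction, recall that for $*>0$ the group $H_*(\res\beta{}^k_\bullet(I))=\op{Tor}_*^{k\ltimes I}(k,k)$ carries a $D$-module structure through its left-most tensor coordinate, compatibly with the bar differential, and that the comparison map $H_*(\res\beta{}^k_\bullet(I^2))\to H_*(\res\beta{}^k_\bullet(I))$ is $D$-linear; moreover source and target are annihilated by $I^2$ and by $I$ respectively. Consequently the map vanishes after localising $D$ at any prime $\frak p\not\supseteq I$, so it suffices to treat primes $\frak p\supseteq I$. Since localisation is exact it may be performed coordinate-by-coordinate on the tensor factors, carrying $\res\beta{}^k_\bullet(I)$ to $\res\beta{}^{k_\frak q}_\bullet(I_\frak p)$ with $\frak q=\frak p\cap k$; by the local invertibility hypothesis $I_\frak p$ is now principal, generated by a non-zero-divisor of $D_\frak p$, while $k_\frak q\to D_\frak p$ remains flat. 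This reduces the lemma to the principal case, in which the comparison map in degree $p$ is simply $\iota^{\otimes p}\colon(I^2)^{\otimes_k p}\to I^{\otimes_k p}$ induced by the inclusion $\iota\colon I^2\into I$.

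So suppose $I=fD$ with $f$ a non-zero-divisor; then $I^2=f^2D$, each $y\in I^2$ is uniquely of the form $y=f\cdot(y/f)$ with $y/f\in I$, and $y\mapsto y/f$ is $k$-linear. I would define $h_p\colon(I^2)^{\otimes_k p}\to I^{\otimes_k(p+1)}$ (and $h_0:=0$) by \[h_p(x_1\otimes\cdots\otimes x_p)=\pm\,x_1\otimes\cdots\otimes x_{p-1}\otimes(x_p/f)\otimes f,\] with a sign depending only on $p$; this is well-defined, $k$-linear, and lands in $I^{\otimes_k(p+1)}$ because $x_1,\dots,x_{p-1}$, $x_p/f$ and $f$ all lie in $I$. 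A direct computation---using only associativity and the fact that $f$ is central, so that $x_{p-1}\cdot(x_p/f)=(x_{p-1}x_p)/f$---then shows $\partial h_p+h_{p-1}\partial=\iota^{\otimes p}$ for all $p\ge1$: the unique face of $\partial h_p$ that multiplies the adjoined $f$ into the preceding divided entry $x_p/f$ reconstitutes $x_p$, hence contributes exactly $\iota^{\otimes p}$, while every other face of $\partial h_p$ cancels term-by-term against a face of $h_{p-1}\partial$. Thus $\iota^{\otimes\bullet}$ is null-homotopic in positive degrees and induces the zero map on $H_*$ for $*>0$, as claimed.

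The part I expect to need the most care is the localisation reduction: one must verify that the bar construction $\res\beta{}^k_\bullet(-)$ and the comparison map are genuinely compatible with the coordinate-wise localisation, so that the localised complex really does compute the localised $\op{Tor}$-groups and ``vanishing after localising at every $\frak p$'' truly implies vanishing. Once the principal case is isolated, the homotopy above is an elementary---if sign-sensitive---verification.
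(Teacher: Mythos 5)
Your contracting homotopy in the principal case is correct---it is a mirror image of the one the paper uses, acting on the last tensor slot instead of the first---but the localisation reduction is a genuine gap, and it is exactly the point you flagged. Localising every tensor coordinate, i.e.\ passing from $\res\beta{}^k_\bullet(I)$ to $\res\beta{}^{k_\frak q}_\bullet(I_\frak p)$, is not a flat base change of the complex over any single ring: in homological degree $n$ the term is being base-changed along $D^{\otimes_k (n+1)}\to D_\frak p^{\otimes_k (n+1)}$, a ring which varies with $n$ and for which the face maps are only semilinear. So there is no reason why $H_*(\res\beta{}^{k_\frak q}_\bullet(I_\frak p))$ should compute $D_\frak p\otimes_D H_*(\res\beta{}^k_\bullet(I))$, and knowing that the comparison map dies in $H_*(\res\beta{}^{k_\frak q}_\bullet(I_\frak p))$ for every $\frak p$ does not give that it is zero: you would need the maps $H_*(\res\beta{}^k_\bullet(I))\to H_*(\res\beta{}^{k_\frak q}_\bullet(I_\frak p))$ to be jointly injective (at least on the image of the comparison map), which is not established and is essentially as hard as the lemma itself. (The annihilation statements you invoke are true, but they too need a small insertion-homotopy argument; in any case that part is dispensable, since at primes $\frak p\not\supseteq I$ one has $I_\frak p=D_\frak p$, which is already principal on a non-zero-divisor.)

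The legitimate local-to-global argument uses only the $D$-module structure of the complex through its first tensor factor: a map of $D$-modules is zero once it is zero after $D_\frak p\otimes_D-$ for every maximal ideal, and since $D\to D_\frak p$ is flat this base change commutes with homology. The price is that only the first slot gets localised, so your last-slot homotopy no longer transplants: the last slot is still the non-principal ideal $I$, where ``division by $f$'' is unavailable. One must run the homotopy in the first slot and control denominators; the paper does this by choosing a generator $t\in I$ of $ID_\frak p$, writing $ID_\frak p=\indlim_{s\in D\setminus\frak p}\tfrac{t}{s}D$, using that homology commutes with filtered colimits, and working in the subcomplexes with first slot $\tfrac{t}{s}D$ and remaining slots $I$, where the homotopy $\tfrac{t^2}{s}a\otimes b_1\otimes\cdots\otimes b_n\mapsto\tfrac{t}{s}\otimes ta\otimes b_1\otimes\cdots\otimes b_n$ makes sense. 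With that replacement your argument goes through; as written, the reduction step does not.
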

\begin{proof}
To illustrate the proof we first suppose that $I=tD$ for some non-zero-divisor $t\in D$. For each $n\ge 0$ define \[s:\res\beta{}^k_n(I^2)\to \res\beta{}^k_{n+1}(I),\quad t^2a_0\otimes\cdots\otimes t^2a_n\mapsto t\otimes ta_0\otimes t^2a_1\otimes\cdots\otimes t^2a_n,\] where $a_0,\dots,a_n\in D$. Letting $f:\res\beta{}^k_\bullet(I^2)\to \res\beta{}^k_\bullet(I)$ denote the canonical map induced by the inclusion $I^2\subseteq I$, it is clear that $d_0s=f$ and $d_is=sd_{i-1}$ for $i=1,\dots,n$. So $b's+sb'=f$, whence $f$ induces the zero map on homology.

Now we consider the general case; we must reduce to the situation where have a contracting homotopy by localising enough. First observe that $\res\beta{}^k_\bullet$ has the structure of a complex of left $D$-modules; moreover, for any maximal ideal $\frak m$ of $D$, the flatness of $D\to D_{\frak m}$ implies that $H_*(D_{\frak m}\otimes_D \res\beta{}^k_\bullet(I))=D_{\frak m}\otimes_D H_*(\res\beta{}^k_\bullet(I))$. Moreover, $ID_{\frak m}=tD_{\frak m}$ for some non-zero-divisor $t\in I$, and we may further write $ID_{\frak m}=\indlim_{s\in D\setminus\frak m} \frac{t}{s}D$. So, for $s\in D\setminus\frak m$, let $\cal N_\bullet^s(I)$ denote the subcomplex of $D_{\frak m}\otimes_D \res\beta{}^k_\bullet(I)$ given by \[\cal N_n^s(I)=\left(\frac{t}{s}D\right)\otimes_k I^{\otimes_k n}\] (and similarly $\cal N_\bullet^s(I^2)$, replacing $t$ by $t^2$). Then \[D_{\frak m}\otimes_D H_*(\res\beta{}^k_\bullet(I))=\indlim_{s\in D\setminus\frak m} H_*(\cal N_\bullet^s(I))\] (and similarly for $I^2$), so we have reduced the problem to proving that the canonical map \[f:\cal N_\bullet^s(I^2)\to\cal N_\bullet^s(I)\] induces zero on homology. But this follows similarly to the invertible case, using the contracting homotopy \[s:\cal N_n^s(I^2)\to\cal N_{n+1}^s(I),\quad \frac{t^2}{s}a\otimes b_1\otimes\cdots\otimes b_n\mapsto \frac{t}{s}\otimes ta\otimes b_1\otimes\cdots\otimes b_n,\] where $a\in D$ and $b_1,\dots,b_n\in I^2$.
\end{proof}

\begin{corollary}
Let $k,D,I$ be as in the previous lemma, and let $n>0$. Then \[\op{Tor}_n^{k\ltimes(I^2)}(k,k)\to \op{Tor}_n^{k\ltimes I}(k,k)\] is zero, and so \[\projlimf_r\op{Tor}_n^{k\ltimes(I^r)}(k,k)=0.\]
\end{corollary}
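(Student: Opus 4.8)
The plan is to deduce the corollary directly from the preceding lemma together with the elementary behaviour of zero maps in the category $\op{Pro}Ab$. First, recall from the bar-resolution discussion immediately above the lemma that, since $I$ (and likewise each $I^r$) is flat over $k$ — this uses the flatness of $k\to D$ — the normalised complex $\res\beta{}^k_\bullet(I)=\beta_\bullet^{k\ltimes I}(k,I,k)$ computes $\op{Tor}_*^{k\ltimes I}(k,k)$ in degrees $*>0$, and similarly $\res\beta{}^k_\bullet(I^2)$ computes $\op{Tor}_*^{k\ltimes(I^2)}(k,k)$. Moreover the inclusion $I^2\subseteq I$ of non-unital $k$-algebras induces on these complexes precisely the canonical map $f:\res\beta{}^k_\bullet(I^2)\to\res\beta{}^k_\bullet(I)$ appearing in the lemma, and on homology in positive degrees it is identified with the transition map $\op{Tor}_n^{k\ltimes(I^2)}(k,k)\to\op{Tor}_n^{k\ltimes I}(k,k)$ induced by $I^2\subseteq I$. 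Hence the lemma says exactly that this transition map is zero for $n>0$, which is the first assertion.

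For the second assertion, I would observe that for any $r\ge1$ the same argument applies with $I$ replaced by $I^r$: the ideal $I^r$ is again locally invertible in $D$ (locally $I_\frak p=tD_\frak p$ forces $(I^r)_\frak p=t^rD_\frak p$ with $t^r$ again a non-zero-divisor), so the lemma gives that $\op{Tor}_n^{k\ltimes(I^{2r})}(k,k)\to\op{Tor}_n^{k\ltimes(I^r)}(k,k)$ is zero for all $n>0$. Thus in the inverse system $\{\op{Tor}_n^{k\ltimes(I^r)}(k,k)\}_r$, for every index $r$ there is a later index — namely $2r$ — whose transition map down to $r$ vanishes. By the criterion recalled in Remark~\ref{remark_pro_cats} (a pro abelian group $\projlimf_{r}X(r)$ is zero precisely when, for each $r$, some transition map $X(s)\to X(r)$ with $s\ge r$ is zero), this means $\projlimf_r\op{Tor}_n^{k\ltimes(I^r)}(k,k)=0$ for all $n>0$, as claimed. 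One small point to verify is cofinality: the maps $I^{2r}\subseteq I^r$ are compatible with the full system of inclusions $I^{s}\subseteq I^{r}$ ($s\ge r$), and the subsystem indexed by powers of $2$ is cofinal, so no ambiguity arises in identifying the pro object.

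There is really no serious obstacle here — the content is all in the lemma, and the corollary is a formal consequence. The only thing requiring a moment's care is the bookkeeping that identifies the map $f$ of the lemma with the Tor transition map and checks that everything is natural enough to pass to the pro category; this is routine given the explicit bar-complex description above. I would state the first paragraph's identification of complexes with Tor groups as a single sentence citing that discussion, spend one line on the local-invertibility of $I^r$, and close with the $\op{Pro}Ab$ vanishing criterion.
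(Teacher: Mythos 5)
Your proposal is correct and follows essentially the same route as the paper: the first claim is exactly the lemma read through the identification $H_*(\res\beta{}^k_\bullet(I))\cong\op{Tor}_*^{k\ltimes I}(k,k)$, and the pro-vanishing is obtained by applying the lemma with $I^r$ in place of $I$ (which remains locally invertible) and invoking the criterion of remark \ref{remark_pro_cats}. Your extra checks (flatness of $I^r$ over $k$ and the identification of $f$ with the Tor transition map) are exactly the routine points the paper leaves implicit.
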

\begin{proof}
The first claim is just a statement of the previous lemma, since we had previously shown $H_*(\res\beta{}^k_\bullet(I))\cong\op{Tor}_*^{k\ltimes I}(k,k)$ (and similarly for $I^2$). The second claim follows by replacing $I$ by successively higher powers of itself and repeatedly applying the first claim, which remains applicable since $I^r$ is still a locally invertible ideal of $D$.
\end{proof}

We need one more lemma, for which we refer to Geisser and Hesselholt's paper for a proof. In fact, their result is much more general and the proof is based on manipulations of Eilenberg--Maclane spectra; for our special case, a direct analysis of the Tor groups using spectral sequences is possible and is due to appear in forthcoming work.

\begin{lemma}
Let $k'\to k$ be a surjective map of rings, and $I$ a non-unital $k$-algebra. Suppose that $\projlimf_r\op{Tor}_n^{k\ltimes(I^r)}(k,k)=0$ for all $n>0$. Then $\projlimf_r\op{Tor}_n^{k'\ltimes(I^r)}(k',k')=0$ for all $n>0$.
\end{lemma}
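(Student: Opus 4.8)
The plan is to route the comparison through the reduced bar complex $\overline\beta{}^A_\bullet(I^r)$, exploiting the one feature special to a \emph{surjection} $k'\to k$ that is unavailable for a general ring map: on $k$-modules the tensor products over $k'$ and over $k$ agree. First I would fix $r$ and write $R'=k'\ltimes I^r$, $R=k\ltimes I^r$; if $J=\ker(k'\to k)$ then $R=R'/J$, where $J$ sits inside $R'$ in the $k'$-factor and annihilates $I^r$ (since $J$ dies in $k$ and $I^r$ is a $k$-module), while $k'=R'/I^r$ and $k=R/I^r$ are the two augmentation quotients. The decisive point is that, $k'\to k$ being surjective and each tensor power $(I^r)^{\otimes m}$ ($m\ge1$) being a $k$-module, the canonical surjection $(I^r)^{\otimes_{k'}m}\to(I^r)^{\otimes_k m}$ is an isomorphism: the relations imposed by $\otimes_{k'}$ and by $\otimes_k$ literally coincide on $k$-modules once the structure map is onto. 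Hence $\overline\beta{}^{k'}_\bullet(I^r)$ and $\overline\beta{}^{k}_\bullet(I^r)$ are identical, with identical differentials, in every positive degree --- the degree-$0$ terms $k'$ and $k$ being irrelevant since the differential into them vanishes --- so $H_s(\overline\beta{}^{k'}_\bullet(I^r))=H_s(\overline\beta{}^{k}_\bullet(I^r))$ for all $s\ge1$, naturally in $r$.

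Next I would attach to each ground ring $A$ the spectral sequence obtained by replacing the $A$-module tensor products in the relative bar resolution of $A$ over $A\ltimes I^r$ by derived tensor products; it takes the form $E_{s,t}^{2}(A)\Rightarrow\op{Tor}_{s+t}^{A\ltimes I^r}(A,A)$, its bottom row $t=0$ being precisely the bar homology $H_s(\overline\beta{}^A_\bullet(I^r))$ and its rows $t>0$ recording the failure of $I^r$ to be $A$-flat. Running this over $A=k$ and over $A=k'$ gives two spectral sequences converging to the hypothesis side and the conclusion side, and by the first paragraph their bottom rows agree as pro-abelian groups in positive degree. The whole statement therefore reduces to a single claim, uniform in the ground ring: for every ring $A$ and non-unital $A$-algebra $I$, the rows $t>0$ of this spectral sequence pro-vanish as $r\to\infty$, so that $\projlimf_r\op{Tor}_s^{A\ltimes I^r}(A,A)\cong\projlimf_r H_s(\overline\beta{}^A_\bullet(I^r))$ for all $s>0$. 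Granting this for $A=k$ and $A=k'$, the hypothesis forces the common bottom row to pro-vanish, and the conclusion then drops out.

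The main obstacle is exactly this pro-vanishing of the higher rows, i.e.\ the non-flatness of $I^r$ over the ground ring; it is invisible over $k$ in the situations of interest (there $I$ is $k$-flat, so the rows $t>0$ vanish outright), but over $k'=\bb Z$ a torsion algebra $I$ is never $\bb Z$-flat, so these rows are genuinely present and must be killed using the pro-structure rather than flatness. To do so I would adapt the power-trick underlying the lemma immediately preceding this one: the transition maps are induced by $I^{r+1}\subseteq I^r$ and hence factor through multiplication by $I$, and I would try to promote the explicit contracting homotopy used there (which raises the power of $I$) to the full bicomplex, showing that the map induced on each higher-row term by a sufficiently large power $I^{Nr}\subseteq I^r$ is zero. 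The reliable alternative --- and the reason the result is credited to Geisser and Hesselholt --- is to run the identical comparison after smashing with Eilenberg--MacLane spectra, where $Hk'\wedge^{\mathbb{L}}_{H(k'\ltimes I^r)}Hk'$ is defined with no flatness hypothesis whatsoever; there the bottom-row identification of the first paragraph is all the structure one needs, and the pro-vanishing of the higher terms is automatic from the convergence of the bar filtration in the pro-homotopy category.
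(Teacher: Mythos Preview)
The paper does not actually prove this lemma: its proof consists solely of the citation \cite[Prop.~3.6]{GeisserHesselholt2011}, together with the remark that the cited argument uses Eilenberg--MacLane spectra and that ``for our special case, a direct analysis of the Tor groups using spectral sequences is possible and is due to appear in forthcoming work.'' Your proposal is therefore already more detailed than what the paper offers, and the two routes you sketch --- a spectral-sequence comparison whose bottom row is the reduced bar complex, and the spectrum-level approach --- are exactly the two the paper alludes to.

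Your central observation is correct and is the crux of the matter: because $k'\to k$ is surjective and each $I^r$ is a $k$-module, the canonical map $(I^r)^{\otimes_{k'}m}\to(I^r)^{\otimes_k m}$ is an isomorphism for every $m\ge1$, so $\overline\beta{}^{k'}_\bullet(I^r)$ and $\overline\beta{}^{k}_\bullet(I^r)$ agree in positive degrees. This is precisely where surjectivity is used, and it reduces everything to controlling the discrepancy between $H_*(\overline\beta{}^{k'}_\bullet(I^r))$ and $\op{Tor}_*^{k'\ltimes I^r}(k',k')$ when $I^r$ is not $k'$-flat.

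That discrepancy, however, you do not close. Your suggestion to ``promote the explicit contracting homotopy'' from the preceding lemma is not convincing as stated: that homotopy relied essentially on $I$ being locally generated by a single non-zero-divisor $t$, so that one can split $t^2a$ as $t\otimes ta$; for a general non-unital $k$-algebra there is no such structure, and the mere inclusion $I^{r+1}\subseteq I^r$ does not produce a null-homotopy of the higher rows of your spectral sequence. Likewise, the final sentence --- that in the Eilenberg--MacLane setting ``the pro-vanishing of the higher terms is automatic from the convergence of the bar filtration in the pro-homotopy category'' --- is too breezy: the associated graded of that filtration involves iterated derived tensor powers $(HI^r)^{\wedge^{\mathbb L}_{Hk'}s}$, whose homotopy groups are iterated $\op{Tor}^{k'}$'s of $I^r$, and these need not pro-vanish for formal reasons. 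Geisser--Hesselholt's actual argument is more careful than this. In short, your outline is accurate and matches what the paper gestures toward, but the gap you honestly flag remains open in your write-up --- which, to be fair, is also where the paper itself stops before invoking the citation.
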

\begin{proof}
\cite[Prop.~3.6]{GeisserHesselholt2011}.
\end{proof}

Now we may prove that the pro birelative $K$-groups vanish in situations of interest to us:

\begin{proof}[Proof of proposition \ref{proposition_vanishing_of_birelatives_for_locally_free}]
By assumption there is a ring $D$ which contains $I$ as a locally invertible ideal and which is flat over $k:=\op{Im}(\bb Z\to D)$. The previous corollary implies that \[\projlimf_r\op{Tor}_n^{k\ltimes(I^r)}(k,k)=0\] for all $n>0$, and then the previous lemma, with $k'=\bb Z$, implies $\projlimf_r\op{Tor}_n^{\bb Z\ltimes(I^r)}(\bb Z,\bb Z)=0$ for all $n>0$. Finally we apply the Geisser--Hesselholt theorem from the start of the section to complete the proof.
\end{proof}

\section{Applications in equal characteristic}
In this section we apply corollary \ref{corollary_main_application_of_birelative_vanishing} to the study of one-dimensional local rings of equal characteristic. Sections \ref{subsection_finite_char} and \ref{subsection_char_zero} contain preliminary lemmas, and we then establish in section \ref{subsection_main_results} when the pro-excision, Mayer--Vietoris sequences breaks into short exact sequences. In particular, we show that the $K$-theory of a complete, equal charactersitic, one-dimensional local ring is captured by its generic $K$-theory and by all thickenings of its closed point (theorem \ref{theorem_main_theorem_II}), an idea also encapsulated by our application in section \ref{subsection_KH} to homotopy invariant $K$-theory.

Using pro-excision we prove Geller's conjecture in the finite characteristic, perfect residue field case in section \ref{subsubsection_Geller}, and then turn to characteristic zero and a relation with cyclic homology in section \ref{subsection_rel_to_HC}.

Sections \ref{subsection_KH}--\ref{subsection_rel_to_HC} are independent of each other.

\subsection{Finite charactersitic}\label{subsection_finite_char}
We begin with a result in finite characteristic. The following does not seem to be available in exactly this form anywhere in the literature, but readily follows from deep results on the $K$-theory of truncated polynomial algebras by L.~Hesselholt and I.~Madsen, building on older work by S.~Bloch. 

If $A$ is any ring then we write $K^\sub{sym}_n(A)\subseteq K_n(A)$ to denote the symbolic part of the $K$-group, i.e.~the image of the canonical map from Milnor $K$-theory to Quillen $K$-theory; moreover, we write $K_n^\sub{sym}(A[t]/\pid{t^r},\pid t):=\ker(K_n^\sub{sym}(A[t]/\pid{t^r})\to K_n^\sub{sym}(A))$.

\begin{proposition}[Bloch, Hesselholt--Madsen]\label{proposition_symbolic_in_char_p}
Let $A$ be a regular, Noetherian, local $\bb F_p$-algebra, and let $n\ge1$. Then the canonical injection \[\projlimf_rK_n^\sub{sym}(A[t]/\pid{t^r},\pid t)\to \projlimf_rK_n(A[t]/\pid{t^r},\pid t)\] is an isomorphism.
\end{proposition}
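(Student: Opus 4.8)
The plan is to reduce the statement to Hesselholt--Madsen's explicit computation of the $K$-theory of truncated polynomial algebras over a regular $\bb F_p$-algebra and then to trace through how the symbolic part sits inside that computation. First I would recall that, by the McCarthy isomorphism together with Hesselholt--Madsen's work \cite{Hesselholt2001, Hesselholt2008}, there is a natural description of the pro abelian group $\projlimf_r K_n(A[t]/\pid{t^r},\pid t)$ in terms of big de Rham--Witt forms of $A$: for $A$ regular, Noetherian, and an $\bb F_p$-algebra, one has a natural isomorphism of pro abelian groups
\[\projlimf_r K_n(A[t]/\pid{t^r},\pid t)\;\cong\;\projlimf_r \bigoplus_{m\ge 1}\bb W_{?}\Omega^{n-?}_{A}\]
(the precise indexing of the big de Rham--Witt groups is what I would extract from loc.\ cit.). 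The point is that each summand is built out of $\bb W_j\Omega^q_A$, and the transition maps in $r$ are Verschiebung/restriction type maps; I would cite the exact statement rather than reprove it.

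Next I would identify the image of the symbolic part under this isomorphism. The strategy is classical and goes back to Bloch and to Graham: a symbol in $K_n(A[t]/\pid{t^r},\pid t)$, built from $1+at^i$ and units/elements of $A$, maps under the de Rham--Witt identification onto exactly the part of $\bigoplus_m \bb W_j\Omega^q_A$ that is generated by $d\log$ forms — i.e.\ the ``linear'' or top-filtration piece. So the claim becomes: in the pro system $\projlimf_r$, the subgroup generated by $d\log$-type forms is everything, in each degree $n\ge 1$. I expect this to be exactly where the regularity and $\bb F_p$-algebra hypotheses do their work: for $A$ regular over $\bb F_p$, the de Rham--Witt complex is generated in the appropriate sense, and more importantly the Verschiebung maps $V_s$ kill the ``non-symbolic'' contributions in the pro system, because $V_s$ raises the level of the Witt vectors while the symbolic generators survive the transition maps. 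Concretely, I would show that for each fixed $r$ there exists $r'\gg r$ such that the transition map $\projlimf$ from level $r'$ to level $r$ sends the whole group into the symbolic subgroup; this is the meaning of ``isomorphism of pro abelian groups'' from Remark \ref{remark_pro_cats}, and it is weaker than an isomorphism at each finite level (which is false).

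The main obstacle, then, is the bookkeeping in step two: precisely matching Hesselholt--Madsen's indexing of the de Rham--Witt summands with the filtration by symbol length, and verifying that the transition maps have the annihilation property on the complementary piece. I would handle this by working one weight at a time (the computation of $K_*$ of truncated polynomials is graded by the $t$-adic weight), reducing to a statement about a single $\bb W_j\Omega^q_A$ and the Verschiebung map into it, and then invoking the fact — available from the de Rham--Witt formalism over a regular $\bb F_p$-algebra — that $\bb W_j\Omega^q_A$ is generated over $\bb W_j(A)$ by products of $d\log$'s together with the images of Verschiebungs of lower levels, so that passing far enough down the pro system leaves only the $d\log$-generated (hence symbolic) part. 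Finally I would note that the canonical map in the statement is injective essentially by definition (the symbolic $K$-theory maps to Quillen $K$-theory, and the relative versions are defined compatibly), so surjectivity in $\op{Pro}Ab$ is the only content, and that is what the above delivers.
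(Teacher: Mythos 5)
Your overall frame --- invoke Hesselholt--Madsen's de Rham--Witt description of the $K$-theory of truncated polynomial rings and then argue that in the pro-system only symbolic classes remain --- is the same as the paper's, but the step that carries the weight in your write-up does not work as stated. You claim that symbols correspond, under the de Rham--Witt identification, to the ``$d\log$-generated'' (top-filtration) piece, and that the complementary piece is annihilated by the transition maps via Verschiebung considerations. Neither half is correct. What the transition maps actually kill, by \cite[Thm.~6.3]{Hesselholt2008} (multiplication by the elements $\al(s,r)\in\bb W(\bb F_p)$), are precisely the summands $\bb W_{ir}\Omega_A^{n+1-2i}$ with $i>1$; this vanishing has nothing to do with symbolic versus non-symbolic classes. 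What survives is the whole $i=1$ summand, giving an isomorphism of pro abelian groups $\projlimf_r\bb W_r\Omega_A^{n-1}\cong\projlimf_rK_n(A[t]/\pid{t^r},\pid t)$, and $\bb W_r\Omega_A^{n-1}$ is in general strictly larger than its span of $\bb W_r(A)$-multiples of $d\log$'s of units of $A$, with no further collapse in the pro-system; if the symbolic part really were only that $d\log$ piece, the proposition would be false, not provable by passing further down the tower.

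The genuine content, which your proposal skips, is to show that the map realizing this isomorphism (the $i=1$ component of Hesselholt--Madsen's map $\varepsilon$) has image inside the symbolic part. The paper does this via Bloch's curves on $K$-theory: the relative $K_1$ of $A[t]/\pid{t^r}$ is $1+tA[t]/\pid{t^r}$, i.e.\ big Witt vectors of $A$, products of such $K_1$-classes are Steinberg symbols, so the symbolic part carries a pro differential graded algebra structure receiving a map from $\projlimf_r\Omega^*_{\bb W_r(A)}$ by the universal property \cite{Bloch1977}; by \cite{Illusie1979} (see also \cite{Stienstra1985}) this descends along the surjection $\Omega^*_{\bb W_r(A)}\onto\bb W_r\Omega^*_A$ and agrees with $\varepsilon$ on the $i=1$ part. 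In other words, an arbitrary class $\omega_0\,d\omega_1\cdots d\omega_{n-1}$ with $\omega_j\in\bb W_r(A)$ --- not merely $d\log$'s of units of $A$ --- is symbolic, because each $\omega_j$ is itself (the class of) a unit congruent to $1$ modulo $t$. So your argument needs to be repaired by exactly this factorization of $\varepsilon|_{i=1}$ through the symbol map; once you have it, the ``kill the non-$d\log$ part in the pro-system'' step is both unnecessary and unavailable.
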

\begin{proof}
For a moment let $A$ be any ring, and let $r\ge 1$. We denote by $\bb W_r(A)$ the length $r$ big Witt vectors; this is a commutative, unital ring whose underlying abelian group structure is \[1+tA[t]/1+t^{r+1}A[t].\] Next let $\bb W_r\Omega_A^*$ be the big de Rham Witt complex (e.g., \cite{Hesselholt2001}); it is a differential graded algebra over $\bb W_r(A)$, equipped with a canonical surjection \[\Omega_{\bb W_r(A)}^*\onto \bb W_r\Omega_A^*\] of dg algebras, which is an isomorphism when $*=0$.

Now let $A$ be a regular, Noetherian $\bb F_p$-algebra. In \cite[Thm.~A]{Hesselholt2001}, Hesselholt and Madsen determined the $K$-theory of truncated polynomial algebras over $A$ using R.~McCarthy's \cite{McCarthy1997} comparison theorem between relative $K$-groups and topological cyclic homology: their primary result takes the form of a long exact sequence \[\cdots\To \bigoplus_{i\ge 1}\bb W_i\Omega_A^{n+1-2i} \stackrel{\oplus_iV_r}{\To}\bigoplus_{i\ge 1}\bb W_{ir}\Omega_A^{n+1-2i}\stackrel{\ep}{\To} K_n(A[t]/\pid{t^r},\pid t)\To\cdots,\] where $V_r:\bb W_i\Omega_A^*\to\bb W_{ir}\Omega_A^*$ denotes the Verschiebung map and we will describe a special case of $\ep$ below. To be precise, the result in \cite{Hesselholt2001} is stated only when $A$ is smooth over a perfect field of characteristic $p$, but it was observed in later work that it extended to the more general regular case using Neron--Popescu desingularisation  \cite{Popescu1985, Popescu1986}.

The behaviour of this complex with $r$ was later given in \cite[Thm.~A]{Hesselholt2008}: given $s\ge r$, there is a morphism of complexes
\[\xymatrix{
\cdots \ar[r] & \bigoplus_{i\ge 1}\bb W_i\Omega_A^{n+1-2i} \ar[r]\ar[d]_0 & \bigoplus_{i\ge 1}\bb W_{is}\Omega_A^{n+1-2i}\ar[r]\ar[d] & K_n(A[t]/\pid{t^s},\pid t)\ar[r]\ar[d] & \cdots \\
\cdots \ar[r] & \bigoplus_{i\ge 1}\bb W_i\Omega_A^{n+1-2i} \ar[r] & \bigoplus_{i\ge 1}\bb W_{ir}\Omega_A^{n+1-2i}\ar[r] & K_n(A[t]/\pid{t^r},\pid t)\ar[r] & \cdots
}\]
where the left vertical arrow is zero, the right vertical arrow is the canonical map, and the central vertical arrow is $\bigoplus_{i\ge 1}$ of the maps
\[\xymatrix{
\bb W_{is}\Omega_A^{n+1-2i}\ar[r] &
\bb W_{ir}\Omega_A^{n+1-2i}\ar[r]^{\times \al(s,r)} &
\bb W_{ir}\Omega_A^{n+1-2i},
}\]
where the first map is the canonical reduction map and the second map is multiplication by a certain element $\al(s,r)\in\bb W(\bb F_p)=\projlim_r\bb W_r(\bb F_p)$. Moreover, [Thm.~6.3, op.~cit.] states that for any $r\ge 1$ there exists $s_0\ge r$ such that if $s\ge s_0$ and $i>1$ then $\times \al(s,r)$ is the zero map on $\bb W_{ir}\Omega_A^{n+1-2i}$.

Applying $\projlimf_r$ to the system of complexes, one now quickly sees that the limit of the $i=1$ parts of the $\ep$ maps
\[\projlimf_r\bb W_r\Omega_A^{n-1} \To \projlimf_rK_n(A[t]/\pid{t^r},\pid t)\tag{\dag}\] is an isomorphism, which we will denote $u$. Now, assuming in addition that $A$ is local, we must more carefully describe $u$; we refer the reader to \cite[I.5]{Illusie1979} or \cite[\S8]{Stienstra1985} for more details on what we are about to say.

In \cite[II]{Bloch1977}, Bloch called the relative $K$-group $C_rK_n(A):=K_n(A[t]/\pid{t^{r+1}},\pid t)$ the {\em curves of length $r$ on $K_n$}; let $SC_rK_n(A)=K_n^\sub{sym}(A[t]/\pid{t^{r+1}},\pid t)$ be its symbolic part. Bloch (when $p\neq 2$, an assumption that was removed by \cite[\S2.2]{Kato1980}; see also \cite[\S11]{Stienstra1982}) showed that $S\hat C_rK_{*+1}(A):=\projlimf_r\bigoplus_{n\ge 0}SC_rK_{n+1}(A)$ can be equipped with the structure of pro differential graded associative algebra in such a way that, looking at its degree $0$ component, the inverse of the determinant map \[\{\,\}:\projlimf_r\bb W_r(A)=\projlimf_r1+tA[t]/1+t^{r+1}A[t]\to S\hat C_rK_1(A)\] is an isomorphism of pro rings. The universal property of $\projlimf_r\Omega_{\bb W_r(A)}^*$ as a pro differential graded algebra therefore implies that there is an induced homomorphism of $\projlimf_r\bb W_r(A)$-algebras \[\phi:\projlimf_r\Omega_{\bb W_r(A)}^*\to S\hat C_rK_{*+1}(A)\] (see \cite[Thm.~II.\S6.2.1]{Bloch1977} for a detailed proof). This descends to the quotient $\projlimf_r\bb W_r\Omega_A^*$ by \cite[Thm.~I.5.2]{Illusie1979}, inducing \[u:\projlimf_r\bb W_r\Omega_A^* \to S\hat C_rK_{*+1}(A)= \projlimf_rK_{*+1}^\sub{sym}(A[t]/\pid{t^r},\pid t),\] which is the desired map. Thus the isomorphism (\dag) has image inside $\projlimf_rK_n^\sub{sym}(A[t]/\pid{t^r},\pid t)$, which completes the proof.
\end{proof}

\begin{remark}\label{remark_vanishing_of_even_K_groups_for_truncated_polys}
Suppose that $k$ is a perfect field of characteristic $p\neq0$. Then $\Omega_{\bb W_r(k)}^*=0$ for $*>0$, so the surjection $\Omega_{\bb W_r(k)}^*\onto\bb W_r\Omega_k^*$ shows that $\bb W_r\Omega_k^*=0$ for $*>0$. Applying the isomorphism (\dag) of the previous proof, with $A=k$, we deduce that \[\projlimf_rK_n(k[t]/\pid{t^r},\pid t)=0\] for all $n\ge 2$. In fact, if $n\ge 2$ is even then already $K_n(k[t]/\pid{t^r},\pid t)=0$ for all $r\ge 1$ by \cite[Thm.~A]{Hesselholt1997a}. In our proof of Geller's conjecture we will need the vanishing of this relative group when $n=2$, which can proved in a straightforward classical way by manipulating Steinberg symbols: see e.g., \cite[Lemma 3.4]{Dennis1975}.

Suppose that $A$ is a one-dimensional, Noetherian, reduced semi-local $\bb F_p$-algebra such that $A\to\tilde A$ is finite, and that the residue fields of $A$ are perfect. Then $\tilde A/\frak M^r$ may be identified with a finite product of truncated polynomial rings (see the proof of corollary \ref{corollary_surjectivity_in_limit}) and so the previous paragraph implies that $\projlimf_rK_n(\tilde A/\frak M^r,\frak M/\frak M^r)=0$ for all $n\ge 2$. From corollary \ref{corollary_main_application_of_birelative_vanishing} we deduce that \[K_n(A,\frak m)\cong K_n(A,\frak M)\oplus\projlimf_rK_n(A/\frak m^r,\frak m/\frak m^r)\] for $n\ge 2$. It follows that the map $K_n(A,\frak m)\to K_n(\tilde A,\frak M)$ is surjective and that its kernel is isomorphic to a direct summand of $K_n(A/\frak m^r,\frak m/\frak m^r)$ for $r\gg0$.
\end{remark} 

\subsection{Characteristic zero}\label{subsection_char_zero}
Now we establish the analogue of the previous proposition in characteristic zero. The proof is inspired by ideas from two papers by A.~Krishna \cite{Krishna2005, Krishna2010}, especially section 3 of \cite{Krishna2005}; parts of the proof are also presented in special cases in the author's work \cite{Morrow_Singular_Gersten}.

\begin{remark}
The following lemma and proposition make use of the Hodge decomposition of Hochschild and cyclic homology, which we now briefly review; more details may be found in \cite[\S4.5--4.6]{Loday1992}. Let $k$ be a ring containing $\bb Q$ and let $R$ be a $k$-algebra; all Hochschild and cyclic homologies will be taken with respect to $k$.

The action of the Eulerian idempotents $e_n^{(i)}\in\bb Q[\op{Sym}_n]$, for $1\le i\le n$, on the Hochschild complex $C_\bullet(R)$ are compatible with the boundary maps, thereby resulting in a direct sum decomposition of the Hochschild complex \[C_\bullet(R)=\bigoplus_{i\ge 1}C_\bullet^{(i)}(R),\quad\quad\mbox{where } C_n^{(i)}(R):=e_n^{(i)}C_n(R).\] Thus the Hochschild homology groups decompose as $HH_n(R)=\bigoplus_{i=1}^n HH_n^{(i)}(R)$, where $HH_n^{(i)}(R):=H_n(C_\bullet^{(i)}(R))$. The cyclic homology groups decompose in a similar way: $HC_n(R)=\bigoplus_{i=1}^n HC_n^{(i)}(R)$.

The canonical surjection $HH_n(R)\to\Omega_{R/k}^n$ and anti-symmetrisation map $\ep_n:\Omega_{R/k}^n\to HH_n(R)$ induce isomorphisms \smash{$HH_n^{(n)}(R)\cong \Omega_{R/k}^n$} and \smash{$HC_n^{(n)}(R)\cong\Omega_{R/k}^n/d\Omega_{R/k}^{n-1}$}.
\end{remark}

The following lemma describes the cyclic homology of graded algebras in the limit; I am grateful to C.~Weibel for explaining this style of argument to me.

\begin{lemma}\label{lemma_HC_of_graded_ring}
Let $k$ be a ring (with respect to which all Hochschild/cyclic homologies in the lemma will be taken), and $A=\bigoplus_{w\ge 0}A_w$ a positively graded $k$-algebra. Write $A_{\ge r}=\bigoplus_{w\ge r}A_w\subseteq A$ for each $r\ge 0$. Then for any $n\ge 0$, the canonical map \[HH_n(A)\to\projlimf_r HH_n(A/A_{\ge r})\] is surjective.

Suppose further that $k$ contains $\bb Q$ and that $A$ is a filtered inductive limit of smooth, finite-type $k$-algebras. Then \[\projlimf_r \tilde{HC}_n^{(i)}(A/A_{\ge r})=0\] for all $0\le i<n$, where the notation means the $i^\sub{th}$ piece of the Hodge decomposition of the reduced cyclic homology $\tilde{HC}_*=HC_*/HC_*(A_0)$. Hence \[\projlimf_r\tilde{HC}_n(A/A_{\ge r})\cong\projlimf_r\tilde\Omega^n_{(A/A_{\ge r})/k}/d\tilde\Omega^{n-1}_{(A/A_{\ge r})/k},\] where $\tilde{\Omega}^*$ denotes the quotient by $\Omega_{A_0/k}^*$.
\end{lemma}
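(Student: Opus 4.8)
\emph{Proof proposal.} The plan is to exploit the internal weight grading coming from $A=\bigoplus_w A_w$: each pro-system appearing below stabilises, weight by weight, to the corresponding group for $A$ itself, so the genuinely new contributions in $A/A_{\ge r}$ sit in weights $\ge r$ and get wiped out by the transition maps once one truncates far enough. For the surjectivity statement I would argue as follows. The Hochschild complex $C_\bullet(A)$ over $k$ inherits a grading by total weight, with $C_n(A)_W=\bigoplus_{w_0+\cdots+w_n=W}A_{w_0}\otimes_k\cdots\otimes_k A_{w_n}$, preserved by the differential, and the same for every truncation $A/A_{\ge r}$. A tuple of non-negative integers summing to $W<r$ has all of its entries $<r$, so the quotient map induces an isomorphism $C_\bullet(A)_W\isoto C_\bullet(A/A_{\ge r})_W$ for $W<r$, whence $HH_n(A)_W\isoto HH_n(A/A_{\ge r})_W$ for $W<r$. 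Since $HH_n(A/A_{\ge r})$ vanishes in weights $>(r-1)(n+1)$, for any $s>(r-1)(n+1)$ the image of $HH_n(A/A_{\ge s})\To HH_n(A/A_{\ge r})$ agrees, weight by weight, with the image of $HH_n(A)$; hence the cokernel pro-system $\{HH_n(A/A_{\ge r})/\Im(HH_n(A))\}_r$ has zero transition map from a high enough level down to any fixed level, so it is pro-zero. As $\op{Pro}Ab$ is abelian, a morphism with pro-zero cokernel is an epimorphism, which gives the first assertion.

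For the second assertion I would bring in $\bb Q\sseq k$ and that $A$ is a filtered colimit of smooth, finite-type $k$-algebras. The Eulerian idempotents act on $C_\bullet$ by $\bb Q$-linear combinations of permutations of the tensor factors, hence preserve the weight grading; therefore the Hodge decomposition refines the weight decomposition, and in particular $HC_n^{(i)}(A/A_{\ge r})_W=HC_n^{(i)}(A)_W$ for $W<r$. This passes to the reduced theories, since the weight-$0$ part of the cyclic complex of $A$ (resp.\ of $A/A_{\ge r}$) is the cyclic complex of $A_0$, so $HC_*(A_0)$ is exactly the weight-$0$ summand and $\tilde{HC}_*$ is the positive-weight part. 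Now Hochschild and cyclic homology commute with filtered colimits, so the Hochschild--Kostant--Rosenberg theorem and the usual computation of the Hodge pieces of the cyclic homology of a smooth algebra give $HC_n^{(i)}(A)\cong H^{2i-n}_{dR}(A)$ for $0\le i<n$ (with $H^{<0}_{dR}:=0$). But $A$ is positively graded over a $\bb Q$-algebra, so its Euler derivation $E$ (multiplication by $w$ on $A_w$) together with Cartan's formula $L_E=d\iota_E+\iota_E d$ makes $W^{-1}\iota_E$ a contracting homotopy for $\Omega^\bullet_{A/k}$ in each weight $W\ge1$; hence $H^*_{dR}(A)$ is concentrated in weight $0$, and therefore $\tilde{HC}_n^{(i)}(A)=0$ for $0\le i<n$. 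Consequently $\tilde{HC}_n^{(i)}(A/A_{\ge r})$ is supported in weights $\ge r$ (and in only finitely many weights), so the very argument of the first paragraph shows the pro-system $\{\tilde{HC}_n^{(i)}(A/A_{\ge r})\}_r$ is pro-zero for $0\le i<n$.

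The final display is then immediate: the Hodge decomposition gives $\tilde{HC}_n(A/A_{\ge r})=\bigoplus_{i=1}^n\tilde{HC}_n^{(i)}(A/A_{\ge r})$, all summands with $i<n$ pro-vanish by the previous step, and the top piece is $\tilde{HC}_n^{(n)}(A/A_{\ge r})\cong\tilde\Omega^n_{(A/A_{\ge r})/k}/d\tilde\Omega^{n-1}_{(A/A_{\ge r})/k}$ by the standard identification already recalled above.

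The step I expect to be the real obstacle is the middle one: recognising that the lower Hodge pieces of $\tilde{HC}$ are governed by (reduced) de Rham cohomology — which is exactly where the HKR theorem and the ind-smoothness hypothesis are needed — and then killing that de Rham cohomology, which is the precise point at which the positivity of the grading together with $\bb Q\sseq k$ is exploited, via the Euler-derivation contracting homotopy. Everything else is bookkeeping with the weight grading and with epimorphisms in $\op{Pro}Ab$.
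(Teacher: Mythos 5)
Your argument is correct, and its first half is essentially the paper's: the same weight decomposition of the Hochschild complex, the same two observations (the map $C_\bullet(A)_W\to C_\bullet(A/A_{\ge s})_W$ is an isomorphism in weights $W<s$, while $HH_n(A/A_{\ge r})$ vanishes in weights above roughly $(n+1)r$), and the same conclusion that the cokernel pro-system has zero transition maps. Where you genuinely diverge is the passage from $HH$ to $HC$. The paper stays with pro-systems: HKR gives $HH_n^{(i)}(A)=0$ for $i\neq n$, hence $\projlimf_r\tilde{HH}_n^{(i)}(A/A_{\ge r})=0$ by the surjectivity just proved, and this is transferred to cyclic homology using the fact that the reduced SBI sequence of a graded ring splits into short exact sequences $0\to\tilde{HC}_{n-1}^{(i-1)}\to\tilde{HH}_n^{(i)}\to\tilde{HC}_n^{(i)}\to0$ compatibly with the Hodge decomposition (\cite[Thm.~4.1.13, Prop.~4.6.9]{Loday1992}). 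You instead prove $\tilde{HC}_n^{(i)}(A)=0$ for $i<n$ outright, by identifying the lower Hodge pieces of the ind-smooth algebra $A$ with de Rham cohomology and killing its positive-weight part via the Euler-derivation homotopy, and then you run the weight-support argument directly on $HC$: the reduced groups of $A/A_{\ge r}$ are supported in weights between $r$ and $(r-1)(n+1)$, so sufficiently distant transition maps vanish. Both routes are sound. The paper's buys economy: HKR enters only through the weight-free statement $HH_n^{(i)}(A)=0$ for $i\neq n$, so one never needs to know that the identification $HC_n^{(i)}\cong H_\sub{dR}^{2i-n}$ respects the weight grading. Yours buys a sharper picture (the explicit weight support of $\tilde{HC}_n^{(i)}(A/A_{\ge r})$ and the vanishing $\tilde{HC}_n^{(i)}(A)=0$ itself) and avoids invoking the splitting of the graded SBI sequence; the one point you should make explicit is precisely that weight-compatibility of $HC_n^{(i)}(A)\cong H_\sub{dR}^{2i-n}(A)$, which does hold because the isomorphism is induced by the weight-preserving projection from the cyclic (mixed) complex to the de Rham mixed complex, but it is used silently in your step ``$\tilde{HC}_n^{(i)}(A/A_{\ge r})$ is supported in weights $\ge r$''.
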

\begin{proof}
For any $k$-algebra $R$ we let $C_\bullet(R)=C^k_\bullet(R)=R^{\otimes_k\bullet+1}$ denote its Hochschild complex, whose homology is $HH_*(R)$. If $R$ is positively graded, then $C_\bullet(R)$ breaks into a direct sum of subcomplexes, $C_\bullet(R)=\bigoplus_{w\ge 0}C_\bullet(R)_w$, where the weight $w$ piece is \[C_n(R)_w:=\bigoplus_{i_0+\cdots+i_n=w}R_{i_0}\otimes_k\cdots\otimes_kR_{i_n}\subseteq C_n(R)\] This in turn induces a decomposition of the Hochschild homology, $HH_*(R)=\bigoplus_{w\ge 0}HH_*(R)_w$. Also, write $F^pC_\bullet(R)=\bigoplus_{w\ge p}C_\bullet(R)_w$ for the associated filtration on the complex.

Let $r\ge 0$ be given. By considering two cases we will show that if $s>(n+1)r$ then the map \[\frac{HH_n(A/A_{\ge s})_w}{\op{Im}HH_n(A)_w}\To \frac{HH_n(A/A_{\ge r})_w}{\op{Im}HH_n(A)_w}\tag{\dag}\] is zero for all $w\ge 0$:
\begin{description}
\item[case $\mathbf{w>(n+1)r}$:] Then it is obvious that $C_n(A/A_{\ge r})_w=0$, and so $HH_n(A/A_{\ge r})_w=0$, which clearly suffices to prove our claim.
\item[case $\mathbf{w<s}$:] Notice that $C_n(A,A_{\ge s}):=\ker(C_n(A)\to C_n(A/A_{\ge s}))$ is additively generated by symbols $a_0\otimes\cdots\otimes a_n$ where $|a_i|\ge s$ for at least one $i$, and therefore $C_\bullet(A,A_{\ge s})\subseteq F^sC_\bullet(A)$. Hence $HH_n(A)\to HH_n(A/A_{\ge s})$ is an isomorphism on the weight $w$ pieces whenever $0\le w<s$; so in this case the left side of (\dag) vanishes, which is again sufficient to prove our claim.
\end{description}
Thus for each $n,r\ge 0$ we have found $s\ge r$ such that the map \[\frac{HH_n(A/A_{\ge s})}{\op{Im}HH_n(A)}\To \frac{HH_n(A/A_{\ge r})}{\op{Im}HH_n(A)}\] is zero; this is precisely the statement that $HH_n(A)\to\projlimf_rHH_n(A/A_{\ge r})$ is surjective, thereby proving the first claim of the lemma.

Next, assuming that $k$ contains $\bb Q$ (so that the Hodge decomposition exists) and that $A$ is a filtered inductive limit of smooth, finite-type $k$-algebras (so that the Hochschild--Konstant--Rosenberg theorem \cite[Thm.~3.4.4]{Loday1992} implies $HH_n^{(i)}(A)=0$ for $i\neq n$), we see that \[\projlimf_r HH_n^{(i)}(A/A_{\ge r})=0\] whenever $0\le i<n$. So certainly $\projlimf_r\tilde{HH}_n^{(i)}(A/A_{\ge r})=0$ whenever $0\le i<n$.

Extending this to cyclic homology is straightforward. Indeed, the SBI sequence for the reduced Hochschild and cyclic homology of a graded ring $R$ breaks into short exact sequences \cite[Thm.~4.1.13]{Loday1992}, and the $S$, $B$, and $I$ maps respect the Hodge grading in a suitable way [Prop.~4.6.9, op.~cit.]: \[0\to\tilde{HC}_{n-1}^{(i-1)}(R)\xto{B} \tilde{HH}_n^{(i)}(R)\xto{I}\tilde{HC}_n^{(i)}(R)\to 0.\] Thus we obtain short exact sequences of pro $A$-modules, \[0\to\projlimf_r\tilde{HC}_{n-1}^{(i-1)}(A/A_{\ge r})\to \projlimf_r\tilde{HH}_n^{(i)}(A/A_{\ge r})\to\projlimf_r\tilde{HC}_n^{(i)}(A/A_{\ge r})\to 0,\] and we have just proved that the central term vanishes when $0\le i<n$; hence the right term vanishes, proving the desired vanishing claim for the limit of cyclic homology.

The final claim is immediate from the standard identification of $HC_n^{(n)}$ with $\Omega^n/d\Omega^{n-1}$.
\end{proof}

\begin{proposition}\label{proposition_limit_is_symbols_char_0}
Let $A$ be a regular, Noetherian, local $\bb Q$-algebra, and let $n\ge1$. Then the canonical injection \[\projlimf_rK_n^\sub{sym}(A[t]/\pid{t^r},\pid t)\to \projlimf_rK_n(A[t]/\pid{t^r},\pid t)\] is an isomorphism.
\end{proposition}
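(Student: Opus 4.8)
The plan is to mimic the finite-characteristic argument of Proposition \ref{proposition_symbolic_in_char_p}, but replacing Hesselholt--Madsen's computation of the $K$-theory of truncated polynomial algebras by Goodwillie's theorem together with the cyclic-homology computation packaged in Lemma \ref{lemma_HC_of_graded_ring}. Since $A$ is a regular Noetherian local $\bb Q$-algebra, it is a filtered colimit of smooth finite-type $\bb Q$-algebras by Neron--Popescu desingularisation, and hence so is the graded ring $A[t]$ (and each quotient $A[t]/(t^r)$, being a quotient of a polynomial ring over such). Goodwillie's theorem \cite{Goodwillie1986} gives, for each $r$, a natural isomorphism $K_n(A[t]/(t^r),(t))\cong HC_{n-1}^{\bb Q}(A[t]/(t^r),(t))=\tilde{HC}_{n-1}(A[t]/(t^r))$, where the last equality uses that $HC_*$ of the base $A=A[t]/(t)$ splits off; taking the formal limit over $r$ we get \[\projlimf_r K_n(A[t]/(t^r),(t))\;\cong\;\projlimf_r\tilde{HC}_{n-1}(A[t]/(t^r)).\]

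Next I would apply Lemma \ref{lemma_HC_of_graded_ring} with the positively graded $\bb Q$-algebra $A[t]$, so that $A[t]/A[t]_{\ge r}=A[t]/(t^r)$ and $A[t]_0=A$. The lemma gives, for $0\le i<n-1$, the vanishing of $\projlimf_r\tilde{HC}^{(i)}_{n-1}(A[t]/(t^r))$, and hence a natural isomorphism \[\projlimf_r\tilde{HC}_{n-1}(A[t]/(t^r))\;\cong\;\projlimf_r\tilde{HC}^{(n-1)}_{n-1}(A[t]/(t^r))\;\cong\;\projlimf_r\tilde\Omega^{n-1}_{(A[t]/(t^r))/\bb Q}\big/d\tilde\Omega^{n-2}_{(A[t]/(t^r))/\bb Q}.\] So the pro abelian group $\projlimf_r K_n(A[t]/(t^r),(t))$ is concentrated in the top piece of the Hodge decomposition, i.e.\ it is identified with a quotient of relative K\"ahler differentials. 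The remaining point is to check that under this identification — which, via the Hodge decomposition and the Chern character / Goodwillie map, is compatible with the symbol map — the differential-forms description is matched exactly by the image of $\projlimf_r K_n^\sub{sym}(A[t]/(t^r),(t))$. Concretely, the relative top-degree forms on $A[t]/(t^r)$ are generated by expressions of the form $f\,dg_1\wedge\cdots\wedge dg_{n-1}$ with $t$ dividing one of the entries, and such forms correspond under $\ep_{n-1}$ and the Chern character to products of Dennis--Stein / symbol classes in relative $K$-theory; thus every class in $\projlimf_r\tilde\Omega^{n-1}/d\tilde\Omega^{n-2}$ lifts to a symbol. Since the map $\projlimf_rK_n^\sub{sym}\to\projlimf_rK_n$ is known to be injective (as noted before the statement), surjectivity onto the whole group gives the isomorphism.

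The step I expect to be the main obstacle is the last one: pinning down the compatibility between Goodwillie's isomorphism, the Hodge decomposition, and the symbol map closely enough to conclude that the top Hodge piece $HC^{(n-1)}_{n-1}\cong\Omega^{n-1}/d\Omega^{n-2}$ is hit by symbolic $K$-theory. The cleanest route is probably to argue on the level of the pro differential graded algebra structure, exactly as in the characteristic-$p$ case: the pro ring $\projlimf_r\bb W_r(A)=\projlimf_r(1+tA[t])/(1+t^{r+1}A[t])$ maps by the determinant/symbol map into $\projlimf_rK_1^\sub{sym}(A[t]/(t^r),(t))$, this extends multiplicatively using the product on relative $K$-theory of truncated polynomial rings, and one checks that the resulting map hits a set of generators of $\projlimf_r\Omega^{n-1}_{(A[t]/(t^r))/\bb Q}/d\Omega^{n-2}$. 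Alternatively, and perhaps more transparently in characteristic zero, one can exhibit explicit generators of the relative differential forms — forms $a_0\,da_1\wedge\cdots\wedge da_{n-1}$ with at least one $a_i\in(t)$ — and match each directly with a product of symbols $\{1+a_0 t^j,\,a_1,\dots\}$-type Dennis--Stein elements via the known formula for the Chern character on such classes, then take the formal limit. Either way, the graded/weight bookkeeping from Lemma \ref{lemma_HC_of_graded_ring} is what makes the argument go through, by killing all the lower Hodge pieces in the limit so that only the manifestly symbolic top piece survives.
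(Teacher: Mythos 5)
The first half of your argument is exactly the paper's: Goodwillie's theorem (valid here because the relative groups for a nilpotent ideal in a $\bb Q$-algebra are $\bb Q$-vector spaces) plus lemma \ref{lemma_HC_of_graded_ring} applied to the graded ring $A[t]$, killing the Hodge pieces $HC^{(i)}_{n-1}$ with $i<n-1$ in the pro-limit and leaving only the top piece $\Omega^{n-1}/d\Omega^{n-2}$. (A minor slip: the quotients $A[t]/\pid{t^r}$ are \emph{not} filtered colimits of smooth $\bb Q$-algebras, but this is harmless since the lemma only needs this for $A[t]$ itself.) The genuine gap is the final step, which you yourself flag as the main obstacle and only sketch: the claim that every class of $\projlimf_r\tilde\Omega^{n-1}/d\tilde\Omega^{n-2}$ is hit by symbols. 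A general relative form $a_0\,da_1\wedge\cdots\wedge da_{n-1}$ is not visibly a logarithmic form, and the Chern character of a Steinberg or Dennis--Stein product is not "the obvious" form without a nontrivial compatibility statement (the Hodge-compatibility of Goodwillie's map is itself delicate -- it rests on Cathelineau and Corti\~nas--Weibel, as the paper's footnote explains); so asserting that ``such forms correspond \dots to products of symbol classes'' is precisely the point that needs proof, and neither of your two proposed routes (the pro-dga/Witt-vector argument mimicking characteristic $p$, or explicit matching of generators) is carried out.

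The paper closes this step by a different, purely algebraic device which your proposal is missing: the Nesterenko--Suslin theorem identifies the top Adams piece $K_n^{(n)}(R)_{\bb Q}$ with $K_n^M(R)_{\bb Q}$ for a local ring $R$ with infinite residue field, reducing the problem to showing that $\ker\big(K_n^M(A[t]/\pid{t^r})\to K_n^M(A)\big)$ surjects onto its rationalisation. That kernel is generated by symbols $\{a_1,\dots,a_n\}$ with $a_1\in1+\pid t$, and since $\pid t$ is nilpotent and the ring contains $\bb Q$, the group $1+\pid t$ is divisible; hence the kernel is divisible and the surjection follows, showing $K_n^{(n)}$ is entirely symbolic. (This does require, as in the paper, the Cathelineau/Corti\~nas--Weibel result that Goodwillie's isomorphism respects the Adams/Hodge decompositions, so that the vanishing of the lower $HC$ pieces really isolates $K_n^{(n)}$.) Your alternative ending -- showing that $HC^{(n-1)}_{n-1}$ of the truncated polynomial ring consists of logarithmic forms -- is the route the paper mentions parenthetically as possible, but to make your proof complete you would have to actually establish that statement, or else import the Milnor-$K$/divisibility argument above.
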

\begin{proof}
According to the previous lemma with $k=\bb Q$ and $A[t]$ in place of $A$ (which is a filtered inductive limit of smooth, finite-type $\bb Q$-algebras by Neron--Popescu desingularisation), \[\projlimf_r HC_n^{(i)}(A[t]/\pid{t^r},\pid t)=0\tag{\dag}\] whenever $0\le i<n$.

Notice that $K_n(A[t]/\pid{t^r},\pid t)$ is a relative $K$-group for a nilpotent ideal in a $\bb Q$-algebra, hence is a $\bb Q$-vector space by Weibel \cite[1.5]{Weibel1982}. So T.~Goodwillie's celebrated isomorphism \cite{Goodwillie1986} takes the form \[K_n(A[t]/\pid{t^r},\pid t)\isoto HC_{n-1}(A[t]/\pid{t^r},\pid t).\] This isomorphism respects the Adams/Hodge decompositions by \cite{Cathelineau1990, Cortinas2009}\footnote{
I am grateful to the referee for pointing out that this statement is often not clearly credited. For any ring $R$ there are natural maps \smash{$K_n(R)\xto{ch_n} HN_n(R)\stackrel{B}{\longleftarrow} HC_{n-1}(R)$}, where $ch_n$ is the Chern character and $B$ is the boundary map from cyclic homology to negative cyclic homology.

Now assume that $I$ is a nilpotent ideal of a $\bb Q$-algebra $R$. Then a result of Goodwillie \cite[Thm.~II.5.1]{Goodwillie1985} states that the induced map on relative groups $B:HC_{n-1}(R,I)\to HN_n(R,I)$ is an isomorphism, thereby inducing the relative Chern character $ch_n:K_n(R,I)\to HC_{n-1}(R,I)$. Moreover, Goodwillie \cite{Goodwillie1986} also proved that this relative Chern character is an isomorphism; his proof relied on a ``modified relative Chern character'' $ch_n':K_n(R,I)\to HC_{n-1}(R,I)$, which was only recently proved, by Corti\~nas and Weibel \cite{Cortinas2009a}, to in fact be equal to $ch_n$. However, J.-L.~Cathelineau \cite{Cathelineau1990} had already proved that the modified character $ch'_n$ respects the Adams/Hodge decompositions. In conclusion, $ch_n:K_n(R,I)\to HC_{n-1}(R,I)$ respects the decompositions.
},
 thus inducing \[K_n^{(i)}(A[t]/\pid{t^r},\pid t)\isoto HC_{n-1}^{(i-1)}(A[t]/\pid{t^r},\pid t).\] The vanishing of (\dag) therefore implies that the canonical inclusion \[\projlimf_r K_n^{(n)}(A[t]/\pid{t^r},\pid t)\to \projlimf_r K_n(A[t]/\pid{t^r},\pid t)\] is an isomorphism.

It remains to show that $K_n^{(n)}(A[t]/\pid{t^r},\pid t)$ is entirely symbolic. We start with the following classical Nesterenko--Suslin result \cite[Thm.~4.1]{Suslin1989}: if $R$ is a local ring with an infinite residue field, then $K_n^{(n)}(R)_{\bb Q}\cong K_n^M(R)_{\bb Q}$. Therefore \[K_n^{(n)}(A[t]/\pid{t^r},\pid t)=\ker(K_n^M(A[t]/\pid{t^r})\to K_n^M(A))\otimes_{\bb Z}\bb Q.\] Next we use the following standard result concerning Milnor $K$-theory: If $R$ is a ring and $J\subseteq R$ is an ideal contained inside its Jacobson radical, then $\kappa:=\ker(K_n^M(R)\to K_n^M(R/J))$ is generated by Steinberg symbols of the form $\{a_1,a_2\dots,a_n\}$, where $a_1\in1+J$ and $a_2,\dots,a_n\in\mult A$. (Indeed, if we let $\Lambda$ denote the subgroup of $K_n^M(R)$ generated by such elements, then it is enough to check that \[K_n^M(R/J)\to K_n^M(R)/\Lambda,\quad\{a_1,\dots,a_n\}\mapsto \{\tilde a_1,\dots,\tilde a_n\}\] is well-defined, where $\tilde a\in\mult R$ denotes an arbitrary lift of $a\in\mult{(R/J)}$.) If moreover $J$ is nilpotent and $R\supseteq\bb Q$, then $1+J$ is a divisible group, whence $\kappa$ is also a divisible group and so \[\ker(K_n^M(R)\to K_n^M(R/J))\to\ker(K_n^M(R)\to K_n^M(R/J))\otimes_{\bb Z}\bb Q\] is surjective. Taking $R=A[t]/\pid{t^r}$ and $J=\pid t$ completes the proof.

(An alternative way to show that $K_n^{(n)}(A[t]/\pid{t^r},\pid t)$ is entirely symbolic is to show that $HC_{n-1}^{(n-1)}(A[t]/\pid{t^r},\pid t)$ consists entirely of logarithmic forms: these correspond to symbols in $K$-theory via the Goodwillie isomorphism.)
\end{proof}

\subsection{The short exact Mayer--Vietoris sequences}\label{subsection_main_results}
The propositions of sections \ref{subsection_finite_char} and \ref{subsection_char_zero} yield the following essential corollary:

\begin{corollary}\label{corollary_surjectivity_in_limit}
Let $B$ be a one-dimensional, normal, semi-local ring containing a field; let $\frak M$ denote its Jacobson radical. Then the canonical map \[K_n(B,\frak M)\to \projlimf_rK_n(B/\frak M^r,\frak M/\frak M^r)\] is surjective for all $n\ge0$.
\end{corollary}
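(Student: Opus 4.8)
The plan is to reduce the claim to the relative $K$-theory of truncated polynomial rings over fields, where Propositions~\ref{proposition_symbolic_in_char_p} and~\ref{proposition_limit_is_symbols_char_0} apply. Since $B$ is normal, one-dimensional and semi-local, it is a finite product of semi-local Dedekind domains, so each localisation $B_\frak p$ at a maximal ideal is a discrete valuation ring; as $B$ contains a field, all its factors share a common characteristic, and Cohen's structure theorem identifies $\widehat{B_\frak p}$ with $\kappa(\frak p)[[t]]$. Fixing such isomorphisms gives, compatibly as $r$ varies, an identification $B/\frak M^r\cong\prod_\frak p\kappa(\frak p)[t]/\pid{t^r}$ carrying $\frak M/\frak M^r$ to $\prod_\frak p\pid t$, and hence, by additivity of $K$-theory over finite products,
\[K_n(B/\frak M^r,\frak M/\frak M^r)\cong\bigoplus_\frak p K_n(\kappa(\frak p)[t]/\pid{t^r},\pid t)\]
compatibly in $r$, and similarly for the symbolic subgroups. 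For $n=0$ the target vanishes, since $K_0(\kappa[t]/\pid{t^r},\pid t)=0$; so assume $n\ge 1$ from now on.

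Each $\kappa(\frak p)$ is a field, hence a regular Noetherian local $\bb F_p$- or $\bb Q$-algebra, so Proposition~\ref{proposition_symbolic_in_char_p} or~\ref{proposition_limit_is_symbols_char_0} applies to it; summing over $\frak p$, the canonical inclusion
\[\projlimf_r K_n^\sub{sym}(B/\frak M^r,\frak M/\frak M^r)\longrightarrow\projlimf_r K_n(B/\frak M^r,\frak M/\frak M^r)\]
is an isomorphism of pro abelian groups. (These symbolic subgroups genuinely lie inside the relative groups: since $\kappa\to\kappa[t]/\pid{t^r}$ splits, the boundary map $K_{n+1}(\kappa)\to K_n(\kappa[t]/\pid{t^r},\pid t)$ vanishes, so $K_n(\kappa[t]/\pid{t^r},\pid t)\hookrightarrow K_n(\kappa[t]/\pid{t^r})$.) It therefore suffices to show that $K_n(B,\frak M)$ surjects onto $K_n^\sub{sym}(B/\frak M^r,\frak M/\frak M^r)$ for each fixed $r$: granting this, the cokernel of $K_n(B,\frak M)\to K_n(B/\frak M^r,\frak M/\frak M^r)$ is a quotient of $K_n(B/\frak M^r,\frak M/\frak M^r)/K_n^\sub{sym}(B/\frak M^r,\frak M/\frak M^r)$, whose pro-limit is zero, and so the map of the corollary is surjective in $\op{Pro}Ab$.

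For the level-$r$ statement I would invoke the standard fact (already used in the proof of Proposition~\ref{proposition_limit_is_symbols_char_0}) that, for an ideal $J$ inside the Jacobson radical of a ring $R$, $\ker(K_n^M(R)\to K_n^M(R/J))$ is generated by Steinberg symbols $\{1+x,u_2,\dots,u_n\}$ with $x\in J$ and $u_i\in\mult R$. Taking $R=B/\frak M^r$ and $J=\frak M/\frak M^r$, I would lift $x$ along the surjection $\frak M\onto\frak M/\frak M^r$ and each $u_i$ along $\mult B\onto\mult{(B/\frak M^r)}$ (surjective because $\frak M^r$ lies in the Jacobson radical of $B$) to obtain $\{1+\tilde x,\tilde u_2,\dots,\tilde u_n\}\in K_n^M(B)$, which is trivial in $K_n^M(B/\frak M)$. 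Its image $\sigma\in K_n(B)$ thus lies in $\op{Im}(K_n(B,\frak M)\to K_n(B))$; choosing any $\eta\in K_n(B,\frak M)$ over $\sigma$, the image of $\eta$ in $K_n(B/\frak M^r)$ is exactly the original symbol, and --- by the injectivity just noted --- so is its image in $K_n(B/\frak M^r,\frak M/\frak M^r)$, as needed.

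The genuinely hard input is entirely contained in Propositions~\ref{proposition_symbolic_in_char_p} and~\ref{proposition_limit_is_symbols_char_0}, which render the relative $K$-theory of truncated polynomial rings over regular local rings pro-symbolic. Given those, the one delicate point is the pro-categorical bookkeeping above: one should \emph{not} expect $K_n(B,\frak M)\to K_n(B/\frak M^r,\frak M/\frak M^r)$ to be surjective at each finite level, and the argument instead relies on the symbolic subgroup being cofinally everything while being visibly hit by lifted Steinberg symbols. The structure-theoretic reduction in the first paragraph is routine (in the intended applications $B$ is a finite normalisation, hence Noetherian).
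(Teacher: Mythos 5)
Your proposal is correct and follows essentially the same route as the paper's proof: reduce, via the $\frak M$-adic completion and the equal-characteristic structure theorem, to the relative $K$-theory of truncated polynomial rings $\kappa(\frak p)[t]/\pid{t^r}$ over the residue fields, invoke Propositions \ref{proposition_symbolic_in_char_p} and \ref{proposition_limit_is_symbols_char_0} to see that the pro-target is symbolic, and then hit the symbols by lifting units along $\mult B\to\mult{(B/\frak M^r)}$. In fact you spell out the pro-categorical bookkeeping and the level-$r$ symbol-lifting (the splitting of the augmentation, the factorisation through $K_n(B,\frak M)$) in more detail than the paper, which condenses this final step into the assertion that $K_n^\sub{sym}(B,\frak M)\to K_n^\sub{sym}(B/\frak M^r,\frak M/\frak M^r)$ is surjective because units lift.
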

\begin{proof}
Each of the groups on the right is zero if $n=0$, so assume $n>0$. Let $\hat B$ be the $\frak M$-adic completion of $B$. There is an isomorphism $\hat B\cong \prod_{\frak n}\hat{B_{\frak n}}$, where $\frak n$ varies over the finitely many maximal ideals of $B$, and each $\hat{B_{\frak n}}$ is a complete discrete valuation ring containing a field, hence isomorphic to $k_{\frak n}[[t]]$ for some field $k_{\frak n}$. Then \[K_n(B/\frak M^r,\frak M/\frak M^r)\cong\bigoplus_{\frak n}K_n(k_{\frak n}[t]/\pid{t^r},\pid t)\] and so the propositions of sections \ref{subsection_finite_char} and \ref{subsection_char_zero} imply that \[\projlimf_rK_n^\sub{sym}(B/\frak M^r,\frak M/\frak M^r)\isoto\projlimf_rK_n(B/\frak M^r,\frak M/\frak M^r).\]

This reduces the claim to showing that $K_n^\sub{sym}(B,\frak M)\to K_n^\sub{sym}(B/\frak M^r,\frak M/\frak M^r)$ is surjective for each $n,r\ge 1$. But this is clear: $\frak M^r$ is contained in the Jacobson radical of $B$, so $\mult B\to\mult{(B/\frak M^r)}$ is surjective.
\end{proof}

We are now prepared to prove our first main result, stating that the long exact, pro-excision, Mayer--Vietoris sequence breaks into short exact sequences in dimension one; special cases were established in \cite[Prop.~3.8]{Morrow_Singular_Gersten} and \cite[Thm.~3.6]{Krishna2005}. This serves as a singular analogue of the Gersten conjecture:

\begin{theorem}\label{theorem_main_theorem}
Let $A$ be a one-dimensional, Noetherian, reduced semi-local ring containing a field, and such that $A\to\tilde A$ is finite; let $\frak m$ and $\frak M$ denote the Jacobson radicals of $A$ and $\tilde A$. For any $n\ge 0$, there is a natural short exact sequence \[0 \to K_n(A,\frak m)\to\projlimf_r K_n(A/\frak m^r,\frak m/\frak m^r)\oplus K_n(\tilde A,\frak M)\to\projlimf_r K_n(\tilde A/\frak M^r,\frak M/\frak M^r)\to0\] of pro abelian groups. In other words, the square \xysquare{K_n(A,\frak m)}{K_n(\tilde A,\frak M)}{\projlimf_r K_n(A/\frak m^r,\frak m/\frak m^r)}{\projlimf_r K_n(\tilde A/\frak M^r,\frak M/\frak M^r)}{->}{->}{->}{->} is bicartesian in Pro$Ab$.
\end{theorem}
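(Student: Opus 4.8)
The strategy is to combine the relative version of the long exact Mayer--Vietoris sequence from Corollary~\ref{corollary_main_application_of_birelative_vanishing} (as recorded in Example~\ref{example_main_application_of_birelative_vanishing}) with the surjectivity statement of Corollary~\ref{corollary_surjectivity_in_limit}. Recall that we already have, in $\op{Pro}Ab$, the long exact sequence
\[\cdots\to K_{n+1}(\tilde A/\frak M^r,\frak M/\frak M^r)\xto{\partial} K_n(A,\frak m)\to\projlimf_r K_n(A/\frak m^r,\frak m/\frak m^r)\oplus K_n(\tilde A,\frak M)\xto{\psi_n}\projlimf_r K_n(\tilde A/\frak M^r,\frak M/\frak M^r)\to\cdots\]
where I abbreviate the last term's pro-index. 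To extract short exact sequences it suffices to show that $\psi_n$ is an epimorphism of pro abelian groups for every $n\ge 1$ (the $n=0$ case being handled separately, since the relative $K_0$-groups of nilpotent ideals vanish, so all four terms at $n=0$ are zero and the statement is trivial). Indeed, if every $\psi_n$ is surjective then the connecting map $\partial$ is zero for all $n$, which immediately breaks the long exact sequence into the asserted short exact sequences; equivalently, this is exactly the assertion that the displayed square is bicartesian in $\op{Pro}Ab$, since a commuting square is bicartesian precisely when the induced map from the pushout-corner is both mono and epi, and here monomorphy of $K_n(A,\frak m)\to(\cdots)\oplus(\cdots)$ together with exactness gives the rest once $\psi_n$ is onto.

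The key reduction is that surjectivity of $\psi_n$ follows from surjectivity of the single map $K_n(\tilde A,\frak M)\to\projlimf_r K_n(\tilde A/\frak M^r,\frak M/\frak M^r)$, because $\psi_n$ is a sum of this map with the natural map out of $\projlimf_r K_n(A/\frak m^r,\frak m/\frak m^r)$, so hitting the target already via the $\tilde A$-summand is enough. (One checks that the $\tilde A$-component of $\psi_n$ is exactly the map induced by $\tilde A\to\tilde A/\frak M^r$, up to sign, which is the map appearing in Corollary~\ref{corollary_surjectivity_in_limit}.) Now $\tilde A$ is a one-dimensional, normal, semi-local ring containing a field --- normality is automatic since $\tilde A$ is the normalisation, and it is semi-local because $A$ is and $A\to\tilde A$ is finite --- so Corollary~\ref{corollary_surjectivity_in_limit} applies verbatim and gives precisely the surjectivity we need. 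This closes the argument for the first short exact sequence.

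For the second (non-relative) sequence, assume $\tilde A\to\tilde A/\frak M$ splits, say by a ring map $\sigma$. Then in the non-relative long exact Mayer--Vietoris sequence of Example~\ref{example_main_application_of_birelative_vanishing}, the map $K_n(\tilde A)\to\projlimf_r K_n(\tilde A/\frak M^r)$ has a section up to the relative part: more precisely, writing $K_n(\tilde A)=K_n(\tilde A/\frak M)\oplus K_n(\tilde A,\frak M)$ via $\sigma$ (and similarly for each $\tilde A/\frak M^r$), the map on the first summand is the identity on $K_n(\tilde A/\frak M)$ and on the second summand it is the surjection just established. Hence $K_n(\tilde A)\to\projlimf_r K_n(\tilde A/\frak M^r)$ is itself surjective, so the non-relative $\psi_n$ is surjective, and the same splicing argument breaks the non-relative sequence into short exact sequences; finally, if $A$ is complete then so is $\tilde A$ (finite over a complete ring), $\frak M$ is in its Jacobson radical, and the residue ring $\tilde A/\frak M$ is a finite product of fields, so $\tilde A\to\tilde A/\frak M$ splits by Hensel/Cohen structure theory, covering the parenthetical example.

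**Main obstacle.** The entire weight of the theorem rests on Corollary~\ref{corollary_surjectivity_in_limit}, i.e.\ on the "limit is symbolic" propositions of Sections~\ref{subsection_finite_char} and~\ref{subsection_char_zero} --- this is where the real work (Hesselholt--Madsen's computation of $K$-theory of truncated polynomial rings in characteristic $p$, and the Goodwillie-isomorphism-plus-Hodge-filtration argument in characteristic zero) is hidden. Given that input, the present proof is pure diagram-chasing in the abelian category $\op{Pro}Ab$; the only mild subtlety is making sure that one is allowed to split off the relative summand and that the $\tilde A$-component of $\psi_n$ really is (up to sign) the map appearing in Corollary~\ref{corollary_surjectivity_in_limit}, which is a matter of unwinding the definitions of the Mayer--Vietoris maps in Proposition~\ref{proposition_standard_consequences}. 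There is also the bookkeeping point that the field contained in $A$ need not have a single characteristic across components of $\Spec A$, but since everything is additive over the product decomposition (as in the proof of Corollary~\ref{corollary_main_application_of_birelative_vanishing}) one reduces to the connected case where either Section~\ref{subsection_finite_char} or Section~\ref{subsection_char_zero} applies.
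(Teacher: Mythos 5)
Your proposal is correct and follows essentially the same route as the paper: take the relative pro-excision Mayer--Vietoris sequence of example \ref{example_main_application_of_birelative_vanishing} and observe that the map to $\projlimf_r K_n(\tilde A/\frak M^r,\frak M/\frak M^r)$ is already surjective on the $K_n(\tilde A,\frak M)$-summand by corollary \ref{corollary_surjectivity_in_limit} (applicable since $\tilde A$ is one-dimensional, normal, semi-local and contains a field), so the long exact sequence splits into the asserted short exact sequences. Your separate treatment of $n=0$ is harmless but unnecessary (and the cited reason, vanishing of relative $K_0$ for nilpotent ideals, does not literally apply to $K_0(A,\frak m)$ and $K_0(\tilde A,\frak M)$), since corollary \ref{corollary_surjectivity_in_limit} already gives surjectivity for all $n\ge 0$; the non-relative paragraph reproduces the paper's proof of theorem \ref{theorem_main_theorem_II}, which is not part of the stated theorem.
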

\begin{proof}
According to example \ref{example_main_application_of_birelative_vanishing}, there is a long exact Mayer--Vietoris sequence of relative $K$-groups \[\cdots\to K_n(A,\frak m)\to\projlimf_r K_n(A/\frak m^r,\frak m/\frak m^r)\oplus K_n(\tilde A,\frak M)\stackrel{(\ast)}{\to}\projlimf_r K_n(\tilde A/\frak M^r,\frak M/\frak M^r)\to\cdots\] But $B$ satisfies the conditions of the previous corollary, so arrow ($\ast$) is surjective for all $n\ge 0$, which completes the proof.
\end{proof}

\begin{corollary}\label{corollary_to_main_theorem}
Let $A,\frak m,\frak M$ be as in the previous theorem, and fix $n\ge0$.
\begin{enumerate}
\item The kernel (resp.\ cokernel) of the map $K_n(A,\frak m)\to K_n(\tilde A,\frak M)$ is isomorphic to a direct summand of the kernel (resp.\ cokenel) of the map $K_n(A/\frak m^r,\frak m/\frak m^r)\to K_n(\tilde A/\frak M^r,\frak M/\frak M^r)$ for $r\gg0$. In particular, the canonical map \[K_n(A,\frak m)\to K_n(A/\frak m^r,\frak m/\frak m^r)\oplus K_n(\tilde A,\frak M)\] is injective for $r\gg 0$.
\item Let $A'$ also satisfy the conditions of the previous theorem, and suppose that $A\to A'$ is an $S$-analytic isomorphism \cite{Weibel1986}, where $S$ is the set of non-zero-divisors of $A$ (e.g., $A'$ could be an \'etale extension with the same residue field, or the Henselization or completion of $A$). Then the $S$-analytic, relative Mayer--Vietoris sequence breaks into short exact sequences \[0\to K_n(A,\frak m)\to K_n(A',\frak m')\oplus K_n(\tilde A,\frak M)\to K_n(\tilde{A'},\frak M')\to 0\]

In other words, the kernel and cokernel of the map $K_n(A,\frak m)\to K_n(\tilde A,\frak M)$ are unchanged after replacing $A$ by $A'$.
\end{enumerate}
\end{corollary}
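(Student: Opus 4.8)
The plan is to deduce both statements from the single bicartesian square of pro abelian groups produced by Theorem~\ref{theorem_main_theorem}, using two soft facts. First: in a bicartesian square in any abelian category, the vertical arrow between the sources of the two horizontal arrows restricts to an isomorphism of their kernels, and the vertical arrow between the targets induces an isomorphism of their cokernels; this is immediate from the associated four-term exact sequence. Second: a pro abelian group isomorphic to a constant one, $\projlimf_r X_r\cong X$ with $X$ constant, has $X$ as a direct summand of $X_r$ for all sufficiently large $r$ --- realise the inverse isomorphism by a genuine homomorphism $X_{r_0}\to X$ and compose it with the structure maps $X\to X_r$ to obtain a retraction for each $r\geq r_0$.

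Part~(i) is then immediate. Kernels and cokernels in $\op{Pro}Ab$ are formed term by term, so the first fact identifies $\ker\big(K_n(A,\frak m)\to K_n(\tilde A,\frak M)\big)$ with $\projlimf_r\ker\big(K_n(A/\frak m^r,\frak m/\frak m^r)\to K_n(\tilde A/\frak M^r,\frak M/\frak M^r)\big)$, and similarly for cokernels; since the left-hand side is a constant pro abelian group, the second fact yields the stated direct-summand description (with $r$ depending only on $n$). For the last clause of~(i): the retraction above exhibits the natural map $\ker\big(K_n(A,\frak m)\to K_n(\tilde A,\frak M)\big)\to K_n(A/\frak m^r,\frak m/\frak m^r)$ as (split) injective for $r\gg0$, so any class of $K_n(A,\frak m)$ that dies both in $K_n(\tilde A,\frak M)$ and in $K_n(A/\frak m^r,\frak m/\frak m^r)$ vanishes.

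For part~(ii) I would apply Theorem~\ref{theorem_main_theorem} to $A$ and to $A'$ simultaneously and arrange the resulting data as a commutative cube in $\op{Pro}Ab$: the two end faces are the bicartesian squares for $A$ and $A'$, and the face joining their bottom rows carries the maps induced by $A\to A'$ on the quotient $K$-groups. The edges of that face in the $A\to A'$ direction are isomorphisms, because an $S$-analytic isomorphism is an honest isomorphism modulo any ideal containing a non-zero-divisor, and the ideals $\frak m^r$, $\frak M^r$ do contain non-zero-divisors (the Jacobson radicals of these equidimensional one-dimensional reduced rings are not contained in any minimal prime); hence $A/\frak m^r\cong A'/\frak m'^r$ and $\tilde A/\frak M^r\cong\tilde{A'}/\frak M'^r$ for a cofinal system of $r$, compatibly with all inclusion and reduction maps, giving $\projlimf_r K_n(A/\frak m^r,\frak m/\frak m^r)\cong\projlimf_r K_n(A'/\frak m'^r,\frak m'/\frak m'^r)$ and similarly upstairs. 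Chasing the cube with the first soft fact, the maps $K_n(A,\frak m)\to K_n(A',\frak m')$ and $K_n(\tilde A,\frak M)\to K_n(\tilde{A'},\frak M')$ induce isomorphisms on, respectively, the kernels and the cokernels of $K_n(A,\frak m)\to K_n(\tilde A,\frak M)$ and $K_n(A',\frak m')\to K_n(\tilde{A'},\frak M')$; this is the claimed invariance of kernels and cokernels. Finally, a commutative square of abelian groups whose verticals induce such isomorphisms on the kernels and cokernels of its horizontals is itself bicartesian (a short diagram chase), so the square built from $K_n(A,\frak m)\to K_n(\tilde A,\frak M)$ and $K_n(A',\frak m')\to K_n(\tilde{A'},\frak M')$ is bicartesian; together with the long exact $S$-analytic relative Mayer--Vietoris sequence of~\cite{Weibel1986} this says precisely that the latter degenerates into the short exact sequences claimed.

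The one genuinely non-formal ingredient, and the step I expect to need the most care, is the compatible identification of the finite-length quotients $A/\frak m^r\cong A'/\frak m'^r$ and $\tilde A/\frak M^r\cong\tilde{A'}/\frak M'^r$ --- one must check that these quotients lie in the range where an $S$-analytic isomorphism is an honest isomorphism (which is why the conductor, containing a non-zero-divisor, is relevant), and that $\{\frak m^r A'\}_r$ and $\{\frak m'^r\}_r$ (and the analogous pair upstairs) are cofinal. One should also confirm that the ``$S$-analytic relative Mayer--Vietoris sequence'' invoked in the statement is the long exact sequence attached to the square above, so that ``breaks into short exact sequences'' has the intended meaning. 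Everything else is elementary manipulation of bicartesian squares in $\op{Pro}Ab$.
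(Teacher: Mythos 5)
Your proposal is correct and follows essentially the same route as the paper: the paper likewise reads both parts off the bicartesian square of Theorem \ref{theorem_main_theorem}, identifying kernels and cokernels of the two horizontal arrows (the direct-summand statement in (i) coming from the constant pro abelian group being a retract of the terms for $r\gg0$), and for (ii) observing that an $S$-analytic isomorphism leaves the bottom row unchanged, so the comparison square for $A$ and $A'$ is again bicartesian. The only difference is that you spell out details the paper leaves implicit, in particular the identification $A/\frak m^r\cong A'/\frak m'^r$ (and its analogue for $\tilde A$), which you rightly flag as the one step needing care.
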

\begin{proof}
According to the theorem, the square of groups \xysquare{K_n(A,\frak m)}{K_n(\tilde A,\frak M)}{\projlimf_rK_n(A/\frak m^r,\frak m/\frak m^r)}{\projlimf_r K_n(\tilde A/\frak M^r,\frak M/\frak M^r)}{->}{->}{->}{->} is bicartesian, and so the kernels (resp. cokernels) of the two horizontal arrows are the same. This easily implies (i). For (ii), notice that an $S$-analytic isomorphism $A\to A'$ does not change the bottom row of the diagram, and thus \xysquare{K_n(A,\frak m)}{K_n(\tilde A,\frak M)}{K_n(A',\frak m')}{K_n(\tilde{A'},\frak M')}{->}{->}{->}{->} is also bicartesian.
\end{proof}

The following is the non-relative version of the theorem, in which we suppose that $K_n(\tilde A)\to K_n(\tilde A/\frak M)$ is surjective for all $n\ge 0$; e.g., perhaps $\tilde A\to\tilde A/\frak M$ splits, which is automatic if $A$ is complete but also holds for unibranch, rational singularities.

\begin{theorem}\label{theorem_main_theorem_II}
Let $A,\frak m,\frak M$ be as in the previous theorem, and suppose further that $K_n(\tilde A)\to K_n(\tilde A/\frak M)$ is surjective for all $n\ge0$. Then, for any $n\ge 0$, there is a natural short exact sequence \[0 \to K_n(A)\to\projlimf_r K_n(A/\frak m^r)\oplus K_n(\tilde A)\to\projlimf_r K_n(\tilde A/\frak M^r)\to0.\]
\end{theorem}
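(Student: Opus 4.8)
The plan is to deduce this from Theorem~\ref{theorem_main_theorem} and Corollary~\ref{corollary_surjectivity_in_limit} by a short diagram chase. By Example~\ref{example_main_application_of_birelative_vanishing} there is a natural long exact Mayer--Vietoris sequence
\[\cdots\to K_n(A)\to\projlimf_r K_n(A/\frak m^r)\oplus K_n(\tilde A)\xto{\delta_n}\projlimf_r K_n(\tilde A/\frak M^r)\to K_{n-1}(A)\to\cdots,\]
and by exactness it splits into the asserted short exact sequences as soon as $\delta_n$ is surjective for every $n\ge0$ (then all the connecting maps vanish, which also forces injectivity of $K_n(A)\to\projlimf_r K_n(A/\frak m^r)\oplus K_n(\tilde A)$). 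Since $\delta_n$ restricted to the summand $K_n(\tilde A)$ is, up to sign, the canonical map $K_n(\tilde A)\to\projlimf_r K_n(\tilde A/\frak M^r)$, it is therefore enough to show that this canonical map is surjective for all $n\ge0$.

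To prove that surjectivity I would compare two long exact sequences. The first is the one associated to the fibre sequence $K(\tilde A,\frak M)\to K(\tilde A)\to K(\tilde A/\frak M)$,
\[\cdots\to K_n(\tilde A,\frak M)\to K_n(\tilde A)\xto{q_n}K_n(\tilde A/\frak M)\to K_{n-1}(\tilde A,\frak M)\to\cdots.\]
The second is obtained by forming, for each $r$, the long exact sequence of the fibre sequence $K(\tilde A/\frak M^r,\frak M/\frak M^r)\to K(\tilde A/\frak M^r)\to K(\tilde A/\frak M)$ and then passing to the limit over $r$: since the limit of an inverse system of exact sequences is exact in $\op{Pro}Ab$ (Remark~\ref{remark_pro_cats}) and $\projlimf_r K_*(\tilde A/\frak M)=K_*(\tilde A/\frak M)$ is a constant pro abelian group, this yields an exact sequence of pro abelian groups
\[\cdots\to\projlimf_r K_n(\tilde A/\frak M^r,\frak M/\frak M^r)\to\projlimf_r K_n(\tilde A/\frak M^r)\xto{q'_n}K_n(\tilde A/\frak M)\to\cdots.\]
The reduction maps $\tilde A\to\tilde A/\frak M^r$ induce a morphism from the first sequence to the second; it is the identity on the terms $K_*(\tilde A/\frak M)$, the canonical map $K_*(\tilde A)\to\projlimf_r K_*(\tilde A/\frak M^r)$ on the middle terms (compatibly with $q_n$ and $q'_n$), and on the relative terms the map $K_*(\tilde A,\frak M)\to\projlimf_r K_*(\tilde A/\frak M^r,\frak M/\frak M^r)$, which is surjective by Corollary~\ref{corollary_surjectivity_in_limit} applied to $\tilde A$ (a one-dimensional, normal, semi-local ring containing a field). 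Granting the hypothesis that $q_n$ is surjective for all $n\ge0$, a routine diagram chase now gives the desired surjectivity: given a class $y$ of $\projlimf_r K_n(\tilde A/\frak M^r)$, lift $q'_n(y)$ along $q_n$ and subtract its image from $y$ to reduce to the case $q'_n(y)=0$; then $y$ comes from $\projlimf_r K_n(\tilde A/\frak M^r,\frak M/\frak M^r)$, which in turn is hit from $K_n(\tilde A,\frak M)$, and pushing the resulting class into $K_n(\tilde A)$ produces the required lift. The chase is carried out throughout in the abelian category $\op{Pro}Ab$.

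Combining the two steps, $\delta_n$ is surjective for all $n\ge0$, so the Mayer--Vietoris sequence of Example~\ref{example_main_application_of_birelative_vanishing} splits into the natural short exact sequences
\[0\to K_n(A)\to\projlimf_r K_n(A/\frak m^r)\oplus K_n(\tilde A)\to\projlimf_r K_n(\tilde A/\frak M^r)\to0.\]
Given Theorem~\ref{theorem_main_theorem} --- equivalently Corollary~\ref{corollary_surjectivity_in_limit}, on which it rests --- the deduction is essentially formal, so I do not expect a genuine obstacle; the point requiring attention is that the hypothesis on $K_n(\tilde A)\to K_n(\tilde A/\frak M)$ really must be used (without it $\delta_0$, the last map between $K_0$-terms, can fail to be surjective, exactly as the conventions warn), and it enters precisely at the lifting step of the chase. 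One should also be careful to run the long exact sequence for the quotients $\tilde A/\frak M^r$ inside $\op{Pro}Ab$, rather than first passing to $\projlim_r$, where exactness would be destroyed.
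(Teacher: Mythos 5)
Your argument is correct and is essentially the paper's own proof: both reduce, via the Mayer--Vietoris sequence of example \ref{example_main_application_of_birelative_vanishing}, to the surjectivity of $K_n(\tilde A)\to\projlimf_r K_n(\tilde A/\frak M^r)$, and both obtain that by comparing the relative sequences for $(\tilde A,\frak M)$ and $(\tilde A/\frak M^r,\frak M/\frak M^r)$, invoking corollary \ref{corollary_surjectivity_in_limit} together with the hypothesis that $K_n(\tilde A)\to K_n(\tilde A/\frak M)$ is surjective. The only cosmetic difference is that the paper first splits both long exact sequences into short exact ones and applies the resulting map of short exact sequences, whereas you run the chase directly in the long exact sequences in $\op{Pro}Ab$; the content is the same.
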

\begin{proof}
Using the same argument as the previous theorem, it is enough to show that $K_n(\tilde A)\to\projlimf_rK_n(\tilde A/\frak M^r)$ is surjective. The long exact $K$-theory sequences for $\tilde A\to\tilde A/\frak M$ and $\tilde A/\frak M^r\to\tilde A/\frak M$ split, yielding short exact sequences
\[\xymatrix{
0 \ar[r] & K_n(\tilde A,\frak M) \ar[r]\ar[d] & K_n(\tilde A) \ar[r]\ar[d] & K_n(\tilde A/\frak M)\ar[r]\ar@{=}[d] & 0\\
0 \ar[r] & K_n(\tilde A/\frak M^r,\frak M/\frak M^r) \ar[r] & K_n(\tilde A/\frak M^r) \ar[r] & K_n(\tilde A/\frak M)\ar[r] & 0
}\]
After taking $\projlimf_r$ of the bottom row the left vertical arrow becomes surjective by corollary \ref{corollary_surjectivity_in_limit}, whence the central vertical arrow also becomes surjective.
\end{proof}

The following is the non-relative version of corollary \ref{corollary_to_main_theorem}:

\begin{corollary}
Let $A,\frak m,\frak M$ satisfy the conditions of the previous theorem, and fix $n\ge0$.
\begin{enumerate}
\item The kernel (resp.\ cokernel) of the map $K_n(A)\to K_n(\tilde A)$ is isomorphic to a direct summand of the kernel (resp.\ cokenel) of the map $K_n(A/\frak m^r)\to K_n(\tilde A/\frak M^r)$ for $r\gg0$. In particular, the canonical map \[K_n(A)\to K_n(A/\frak m^r)\oplus K_n(\tilde A)\] is injective for $r\gg 0$.
\item Let $A'$ also satisfy the conditions of the previous theorem, and suppose that $A\to A'$ is an $S$-analytic isomorphism \cite{Weibel1986}, where $S$ is the set of non-zero-divisors of $A$ (e.g., $A'$ could be an \'etale extension with the same residue field, or the Henselization or completion of $A$). Then the $S$-analytic, relative Mayer--Vietoris sequence breaks into short exact sequences \[0\to K_n(A)\to K_n(A')\oplus K_n(\tilde A)\to K_n(\tilde{A'})\to 0\]

In other words, the kernel and cokernel of the map $K_n(A)\to K_n(\tilde A)$ are unchanged after replacing $A$ by $A'$.
\end{enumerate}
\end{corollary}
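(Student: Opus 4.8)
The plan is to deduce both parts directly from Theorem~\ref{theorem_main_theorem_II}, exactly as corollary~\ref{corollary_to_main_theorem} was deduced from Theorem~\ref{theorem_main_theorem}; no further $K$-theoretic input is needed, only formal manipulations in $\op{Pro}Ab$.

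Theorem~\ref{theorem_main_theorem_II} amounts to the statement that the square
\xysquare{K_n(A)}{K_n(\tilde A)}{\projlimf_r K_n(A/\frak m^r)}{\projlimf_r K_n(\tilde A/\frak M^r)}{->}{->}{->}{->}
is bicartesian in $\op{Pro}Ab$. For (i), bicartesianness forces the kernels (resp.\ cokernels) of its two horizontal arrows to be isomorphic as pro abelian groups. The left-hand kernel is the \emph{constant} pro abelian group $\ker(K_n(A)\to K_n(\tilde A))$, whereas the bottom arrow is, up to the standard cofinality identifications of corollary~\ref{corollary_main_application_of_birelative_vanishing}, a level morphism, so its kernel is represented by the inverse system $r\mapsto\ker(K_n(A/\frak m^r)\to K_n(\tilde A/\frak M^r))$. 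Now a constant pro abelian group that is isomorphic to some $\projlimf_r C(r)$ is necessarily a retract of $C(r)$ for $r\gg0$: by definition of morphisms in $\op{Pro}Ab$, the inverse isomorphism is represented by an honest homomorphism out of a single $C(s)$, and this splits the structure map. This gives the direct-summand assertion, and the cokernel case is identical. For the ``in particular'' clause, the splitting just produced coincides with the restriction of the canonical map $K_n(A)\to K_n(A/\frak m^r)$, so for $r\gg0$ that map is injective on $\ker(K_n(A)\to K_n(\tilde A))$; therefore $\ker(K_n(A)\to K_n(A/\frak m^r)\oplus K_n(\tilde A))=\ker(K_n(A)\to K_n(\tilde A))\cap\ker(K_n(A)\to K_n(A/\frak m^r))=0$.

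For (ii), the key observation --- the one already used in corollary~\ref{corollary_to_main_theorem}(ii) --- is that an $S$-analytic isomorphism $A\to A'$ leaves the relevant formal completions unchanged, and hence induces isomorphisms $\projlimf_r K_n(A/\frak m^r)\cong\projlimf_r K_n(A'/\frak m'^r)$ and $\projlimf_r K_n(\tilde A/\frak M^r)\cong\projlimf_r K_n(\tilde{A'}/\frak M'^r)$, compatibly with the horizontal maps and with the vertical maps out of $K_n(A)$ and $K_n(\tilde A)$. Since $A'$ also satisfies the hypotheses of Theorem~\ref{theorem_main_theorem_II}, the corresponding square for $A'$ is bicartesian, and under these identifications it has the same bottom row as the square for $A$. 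A diagram chase --- two bicartesian squares with a common bottom row yield a bicartesian square on their two top rows, verified by checking exactness of $0\to K_n(A)\to K_n(A')\oplus K_n(\tilde A)\to K_n(\tilde{A'})\to0$ term by term using the pullback/pushout descriptions of the two given squares --- then produces the asserted short exact Mayer--Vietoris sequence. The final sentence now follows at once: by part (i) applied to $A$ and to $A'$, the kernel and cokernel of $K_n(A)\to K_n(\tilde A)$ and those of $K_n(A')\to K_n(\tilde{A'})$ all coincide with the kernel and cokernel of the common bottom-row pro map.

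The only point that takes any care --- hence the main obstacle, modest as it is --- is the bookkeeping in (ii): checking that the $S$-analytic identifications of the bottom rows are compatible with everything in sight, and then that the surjectivity step in the term-by-term exactness check goes through, for which one uses that $K_n(\tilde A)\to\projlimf_r K_n(\tilde A/\frak M^r)$ is surjective modulo the image of $\projlimf_r K_n(A/\frak m^r)$ --- a consequence of the square for $A$ being a pushout. None of this is harder than what was carried out in corollary~\ref{corollary_to_main_theorem}, and the parenthetical examples ($A'$ an \'etale extension with the same residue field, the Henselization, or the completion) are standard situations in which the hypotheses of Theorem~\ref{theorem_main_theorem_II} transfer to $A'$.
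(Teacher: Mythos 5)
Your proposal is correct and follows essentially the same route as the paper: the paper simply repeats the proof of corollary \ref{corollary_to_main_theorem}, reading theorem \ref{theorem_main_theorem_II} as the statement that the square of absolute $K$-groups is bicartesian in $\op{Pro}Ab$, deducing (i) from the resulting isomorphism of kernels/cokernels (your retract argument is the implicit ``easily implies'' step) and (ii) from the fact that an $S$-analytic isomorphism leaves the bottom row unchanged, so the two bicartesian squares paste to give the short exact sequence.
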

\begin{proof}
One repeats the proof of corollary \ref{corollary_to_main_theorem}, using absolute rather than relative $K$-groups.
\end{proof}

\begin{remark}
Suppose that $A$ satisfies the conditions of the second of the previous theorems. Then Quillen's proof of the Gersten conjecture in the geometric case, and its extension to the general equal characteristic case by Neron--Popescu desingularisation \cite{Panin2003} (in fact, the case of an arbitrary equal charactersitic discrete valuation ring had already been treated by C.~Sherman \cite{Sherman1978}), tells us that $K_n(\tilde A)\to K_n(F)$ is injective, where $F$ is the total quotient ring of $A$. The corollary therefore implies that $\ker(K_n(A)\to K_n(F))$ is `small enough' to embed into $K_n(A/\frak m^r)$ for some sufficiently large $r$. Thus the philosophy of the second theorem is the following:
\begin{quote}
The $K$-theory of $A$ is determined by its generic information together with all infinitesimal thickenings of its closed point.
\end{quote}
This philosophy will be made even more precise by our result on $KH$-theory in the next section.
\end{remark}

Let $A,\frak m,\frak M$ be as in the theorems; we will finish this section by showing that the conclusions of the second theorem and its corollary are not always valid without some sort of surjectivity/splitting assumption on $\tilde A$. Indeed, supposing that $K_n(A)\to K_n(A/\frak m^r)\oplus K_n(\tilde A)$ is injective for $r\gg 0$, we see from the second long exact sequence of corollary \ref{corollary_main_application_of_birelative_vanishing} that \[\projlimf_r K_{n+1}(A/\frak m^r)\oplus K_{n+1}(\tilde A)\to\projlimf_r K_{n+1}(\tilde A/\frak M^r)\] is surjective. Since $\tilde A\to\tilde A/\frak M$ splits after completion, the map $\projlimf_r K_{n+1}(\tilde A/\frak M^r)\to K_{n+1}(\tilde A/\frak M)$ is also surjective, and so we conclude that \[K_{n+1}(A/\frak m)\oplus K_{n+1}(\tilde A)\to K_{n+1}(\tilde A/\frak M)\] is surjective. Supposing that $A\to A/\frak m$ splits (i.e., $A$ has a coefficient field), the map $K_{n+1}(A/\frak m)\to K_{n+1}(\tilde A/\frak M)$ factors through $K_{n+1}(\tilde A)$, and so we see that in fact \[K_{n+1}(\tilde A)\to K_{n+1}(\tilde A/\frak M)\] is surjective, which of course need not be true. The following provides a specific example:

\begin{example}
Let $A$ be the local ring of the singular point on the nodal curve $Y^2=X^2(X+1)$ over a field $k$. We will show that the map \[K_2(A)\to K_2(A/\frak m^r)\oplus K_2(F)\] is not injective for any $r\ge 0$; to prove this using the above argument we must show that $K_3(\tilde A)\to K_3(\tilde A/\frak M)$ is not surjective:

Well, $B:=\tilde A$ is the semi-local ring obtained by localising $C:=k[t]$ away from two distinct points $x_1,x_2\in \bb A_k^1$. Quillen's localisation theorem implies that there is a short exact sequence \[0\to K_*(k)\to K_*(B)\to \bigoplus_{x\in\bb A_k^1\setminus\{x_1,x_2\}} K_{*-1}(k)\to 0.\] 

However, in the case $*=3$, the boundary map $\bigoplus_{x\neq x_1,x_2}\bor_x:K_3(B)\to\bigoplus_{x\neq x_1,x_2}K_2(k)$ is already surjective when restricted to the symbolic part $K_3^\sub{sym}(B)$ of $K_3(B)$; this is because $K_2(k)$ is generated by symbols and the tame symbols satisfy \[\bor_x\{\theta_1,\theta_2,t_y\}=\begin{cases}\{\theta_1,\theta_2\}&x=y\\0&x\neq y,\end{cases}\] if $x,y\in\bb A_k^1$, $\theta_i\in \mult k$, and $t_y\in k[t]$ is a local parameter at $y$.

Writing $K_3^\sub{ind}=K_3/K_3^\sub{sym}$ as usual, this implies that $K_3^\sub{ind}(k)\to K_3^\sub{ind}(B)$ is surjective. So, if \[K_3^\sub{ind}(B)\to K_3^\sub{ind}(B/\frak M)=K_3^\sub{ind}(k)\oplus K_3^\sub{ind}(k)\] were surjective (which would certainly follow from the surjectivity we are aiming to disprove), we would deduce that the diagonal map $K_3^\sub{ind}(k)\to K_3^\sub{ind}(k)\oplus K_3^\sub{ind}(k)$ were surjective. However, $K_3^\sub{ind}(k)$ is non-zero: for example, its $n$-torsion is $H^0(k,\mu_n^{\otimes 2})$ for any $n$ not divisible by $\op{char}k$ by \cite{Levine1987}, and this is non-zero by picking any such $n$ such that $\mu_n\subseteq\mult k$. This completes the proof.

In fact, with $A$ still being the local ring of the singular point at a node, one can even show that $K_2(A)\to K_2(\hat A)\oplus K_2(\tilde A)$ is not injective. The argument is given in \cite[Prop.~2.14]{Morrow_Singular_Gersten}: it is very similar, using the excision sequence for Weibel's homotopy invariant $K$-theory to reduce to the same non-surjectivity assertion.
\end{example}

\subsection{Application to $KH$-theory}\label{subsection_KH}
Now we turn to applications of the main theorems of the previous section.

For a ring $R$, we denote by $KH(R)$ Weibel's homotopy invariant $K$-theory \cite{Weibel1989a}\comment{(TT's version is Ex.~9.11)}, and we write $K^\sub{sing}(X)$ for the homotopy fibre of $K(X)\to KH(X)$ (there does not appear to be a standardised name for this fibre, but it certainly captures information about the singularities of $R$; if $R$ is $K_0$-regular then $KH(R)\simeq KV(R)$ and so $K^\sub{sing}(R)\simeq K^\sub{nil}(R)$). Recall that $KH$-theory satisfies excision, is invariant under nilpotent extensions, and agrees with $K$-theory on regular rings.

If $A$ is a one-dimensional, Noetherian, reduced, semi-local ring such that $A\to\tilde A$ is finite, then the $S$-analytic isomorphism $A\to\hat A$ yields homotopy cartesian squares in both $K$-theory and $KH$-theory:
\[\xymatrix{
K(A) \ar[r]\ar[d] & K(\Frac A)\ar[d]\\
K(\hat A)\ar[r] & K(\Frac\hat A)
}\quad
\xymatrix{
KH(A) \ar[r]\ar[d] & KH(\Frac A)\ar[d]\\
KH(\hat A)\ar[r] & KH(\Frac\hat A)
}
\]
Taking homotopy fibres shows that $K^\sub{sing}(A)\simeq K^\sub{sing}(\hat A)$; i.e., $K^\sub{sing}$ is an analytic invariant of $A$. The main purpose of this section is to establish corollary \ref{corollary_KH}; first we need a lemma, which we prove in greater generality than required:

\begin{lemma}\label{lemma_K_sing}
Let $A$ be a one-dimensional, Noetherian, reduced ring such that $A\to\tilde A$ is finite, and let $\frak f=\Ann_A(\tilde A/A)\subseteq A$ be the conductor ideal; let $J'$ be a radical ideal of $\tilde A$ contained in $\sqrt{\frak f}$ (the radical of $\frak f$ inside $\tilde A$), and set $J=A\cap J'$. 
\begin{enumerate}
\item Then $KH(A,J)\simeq K(\tilde A,J')$.
\item Moreover, the groups $K_n^\sub{sing}(A)$ fit into a long exact sequence \[\cdots\to K_n^\sub{sing}(A)\to \projlimf_rK_n(A/J^r,J/J^r)\to \projlimf_rK_n(\tilde A/J'^r,J'/J'^r) \to\cdots\]
\end{enumerate}
\end{lemma}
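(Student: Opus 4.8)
The plan is to prove (i) using only the three formal properties of $KH$ recalled above --- excision, invariance under nilpotent extensions, and agreement with $K$ on regular rings --- and then to deduce (ii) by feeding (i) into the relative Mayer--Vietoris sequence of corollary \ref{corollary_main_application_of_birelative_vanishing}.

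For (i) I would set $I:=J'\frak f$. Since $\frak f$ is an ideal of $\tilde A$ contained in $A$, each power $I^r=J'^r\frak f^r$ is an ideal of $\tilde A$ contained in $\frak f\subseteq A$, hence a common ideal of $A$ and $\tilde A$; moreover $I\subseteq A\cap J'=J\subseteq J'$, and --- exactly as in the proof of corollary \ref{corollary_main_application_of_birelative_vanishing}, using $J'\subseteq\sqrt{\frak f}$ --- the three filtrations $\{I^r\}$, $\{J^r\}$, $\{J'^r\}$ are cofinal in one another. Excision for $KH$ gives $KH(A,I^r)\simeq KH(\tilde A,I^r)$, while invariance under nilpotent extensions, applied to the surjections $A/I^r\onto A/J$ and $\tilde A/I^r\onto\tilde A/J'$ (whose kernels are nilpotent by cofinality), gives $KH(A,I^r)\simeq KH(A,J)$ and $KH(\tilde A,I^r)\simeq KH(\tilde A,J')$. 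Concatenating these yields $KH(A,J)\simeq KH(\tilde A,J')$, realised by $A\to\tilde A$ and $A/J\into\tilde A/J'$. Finally $\tilde A$ is a finite product of Dedekind domains and $J'$ is radical, so $\tilde A/J'$ is a finite product of Dedekind domains and fields; thus $\tilde A$ and $\tilde A/J'$ are regular and $KH(\tilde A,J')\simeq K(\tilde A,J')$, proving (i). By functoriality the equivalence $KH(A,J)\simeq K(\tilde A,J')$ so obtained is compatible with the comparison map $K\to KH$ and with $A\to\tilde A$.

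For (ii), corollary \ref{corollary_main_application_of_birelative_vanishing} (with $B=\tilde A$) says precisely that the square
\xysquare{K(A,J)}{K(\tilde A,J')}{\projlimf_r K(A/J^r,J/J^r)}{\projlimf_r K(\tilde A/J'^r,J'/J'^r)}{->}{->}{->}{->}
is homotopy cartesian (cf.\ also the remark following proposition \ref{proposition_standard_consequences}). Using (i) to replace its upper-right corner by $KH(A,J)$ --- the top arrow then being the canonical map $K(A,J)\to KH(A,J)$, by naturality of $K\to KH$ along $(A,J)\to(\tilde A,J')$ --- and taking horizontal homotopy fibres, I get
\[\operatorname{hofib}\bigl(K(A,J)\to KH(A,J)\bigr)\simeq\operatorname{hofib}\Bigl(\projlimf_r K(A/J^r,J/J^r)\to\projlimf_r K(\tilde A/J'^r,J'/J'^r)\Bigr).\]
Comparing the fibre sequences $K(A,J)\to K(A)\to K(A/J)$ and $KH(A,J)\to KH(A)\to KH(A/J)$ and taking vertical fibres identifies the left-hand side with $\operatorname{hofib}\bigl(K^\sub{sing}(A)\to K^\sub{sing}(A/J)\bigr)$. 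Now $A/J$ is regular (for instance a finite product of fields when $J'=\sqrt{\frak f}$), so $K^\sub{sing}(A/J)\simeq 0$ and the left-hand side is simply $K^\sub{sing}(A)$. Hence $K^\sub{sing}(A)\simeq\operatorname{hofib}\bigl(\projlimf_r K(A/J^r,J/J^r)\to\projlimf_r K(\tilde A/J'^r,J'/J'^r)\bigr)$, and the long exact sequence of pro abelian groups associated to this fibre sequence is the one asserted.

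The step I expect to be most delicate is the compatibility used in (ii): one must verify that, after substituting the equivalence of (i) into the Mayer--Vietoris square, the transition map genuinely becomes the canonical comparison $K\to KH$, which forces one to track the $KH$-excision isomorphism and the naturality of $K\Rightarrow KH$ simultaneously. A secondary but genuine point is the regularity of $A/J$, needed so that the ``quotient'' contribution vanishes; this is immediate when $J'$ is $\sqrt{\frak f}$ or, in the semi-local case, the Jacobson radical of $\tilde A$, which are the cases invoked afterwards.
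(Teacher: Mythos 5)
Your argument is correct and is essentially the paper's: part (i) is the same zig-zag of $KH$-excision and nil-invariance equivalences, merely routed through the common ideal $J'\frak f$ instead of the paper's $J'\cap\frak f$, and part (ii) performs the same fibre comparison against the homotopy cartesian pro-excision square of corollary \ref{corollary_main_application_of_birelative_vanishing}, using the relative squares and the equivalence of (i) where the paper uses the absolute squares and takes vertical fibres along $K\to KH$. The regularity of $A/J$ and $\tilde A/J'$ that you flag at the end is invoked just as implicitly in the paper's proof (where these rings are asserted to be products of fields), so your treatment is no less complete on that point.
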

\begin{proof}
$\frak f':=J'\cap \frak f$ is an ideal of both $A$ and $\tilde A$, whose radical in $A$ is $J$ and whose radical in $\tilde A$ is $J'$. We claim that all the following arrows are weak equivalences:
\[KH(A,J)\to KH(A,\frak f')\to KH(\tilde A,\frak f')\to KH(\tilde A,J')\leftarrow K(\tilde A,J')\]
Indeed, the first and third are weak equivalences because $KH$ is nil-invariant; the second is because $KH$ satisfies excision; the fourth is because $\tilde A$ and  $\tilde A/J'$ are regular (the latter is a finite products of fields). This proves (i).

For the second claim, we offer a quick proof using pro spectra, which we have not properly discussed; the cautious reader may replace our homotopy cartesian diagrams of pro spectra by statements about the pro relative groups.

The square of spectra \xysquare{KH(A)}{KH(\tilde A)}{KH(A/J)}{KH(\tilde A/J')}{->}{->}{->}{->} is homotopy cartesian by the proof of the first part. Meanwhile, according to corollary \ref{corollary_main_application_of_birelative_vanishing}, the square \xysquare{K(A)}{K(\tilde A)}{\projlimf_rK(A/J^r)}{\projlimf_rK(\tilde A/J'^r)}{->}{->}{->}{->} is homotopy cartesian; taking homotopy fibres from the second square to the first, we see that  \xysquare{K^\sub{sing}(A)}{\ast}{\projlimf_r\op{hofib}(K(A/J^r)\to KH(A/J))}{\projlimf_r\op{hofib}(K(\tilde A/J'^r)\to KH(\tilde A/J'))}{->}{->}{->}{->} is homotopy cartesian. Since the rings $A/J$ and $\tilde A/J'$ are products of fields, we may replace each $KH$ in the bottom row by $K$, completing the proof.
\end{proof}

\begin{theorem}
Let $A$ be as in the previous lemma, and assume further that it is semi-local, contains a field, and that $K_n(A)\to K_n(A/\frak m)$ is surjective for all $n\ge 1$ (e.g., $A$ local and containing a coefficient field). Then the kernels (resp. cokernels) of the maps \[K_n(A)\to KH_n(A),\quad\quad \projlimf_r K_n(A/\frak m^r,\frak m/\frak m^r)\to\projlimf_r K_n(\tilde A/\frak M^r,\frak M/\frak M^r)\]
are canonically isomorphic for all $n\ge 1$. Here we use our standard notation that $\frak m,\frak M$ are the Jacobson radicals of $A,\tilde A$.
\end{theorem}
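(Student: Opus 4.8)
The plan is to identify the homotopy fibre $K^\sub{sing}(A)=\op{hofib}(K(A)\to KH(A))$ with the homotopy fibre of the natural map $\beta\colon K(A,\frak m)\to K(\tilde A,\frak M)$ occurring in theorem \ref{theorem_main_theorem}, and then to play the two resulting long exact sequences against each other.

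First I would observe that $A/\frak m$ is a finite product of fields, hence regular, so $K^\sub{sing}(A/\frak m)\simeq\ast$. Writing $K^\sub{sing}(A,\frak m):=\op{hofib}(K(A,\frak m)\to KH(A,\frak m))$ and forming iterated homotopy fibres in the natural map of fibre sequences $[K(A,\frak m)\to K(A)\to K(A/\frak m)]\to[KH(A,\frak m)\to KH(A)\to KH(A/\frak m)]$, one obtains a fibre sequence $K^\sub{sing}(A,\frak m)\to K^\sub{sing}(A)\to K^\sub{sing}(A/\frak m)=\ast$, i.e.\ a weak equivalence $K^\sub{sing}(A,\frak m)\isoto K^\sub{sing}(A)$ compatible with the canonical maps to $K(A)$. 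Next, since $\frak M\sseq\sqrt{\frak f}$ (the Jacobson radical $\frak M$ being the intersection of all maximal ideals of $\tilde A$, while $\sqrt{\frak f}$ is the intersection of those at which $A$ is singular) and $A\cap\frak M=\frak m$ (by integrality of $A\to\tilde A$), lemma \ref{lemma_K_sing}(i) applies with $J=\frak m$, $J'=\frak M$ and gives $KH(A,\frak m)\simeq K(\tilde A,\frak M)$. Chasing the zig-zag of equivalences defining this identification — built from naturality of $K\to KH$, excision and nil-invariance of $KH$, and the regularity of $\tilde A$ and $\tilde A/\frak M$ — one checks that the composite $K(A,\frak m)\to KH(A,\frak m)\isoto K(\tilde A,\frak M)$ is exactly $\beta$. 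Thus $K^\sub{sing}(A)\simeq\op{hofib}(\beta)$, and under this identification the map $K^\sub{sing}(A)\to K(A)$ is the composite $\op{hofib}(\beta)\to K(A,\frak m)\to K(A)$.

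Now I would bring in the hypothesis. Surjectivity of $K_{n+1}(A)\to K_{n+1}(A/\frak m)$ for all $n\ge0$ forces the connecting maps of the relative $K$-theory sequence of $(A,\frak m)$ to vanish, so $K_n(A,\frak m)\to K_n(A)$ is injective for all $n\ge0$. Feeding this into the long exact sequence of $\op{hofib}(\beta)\to K(A,\frak m)\xrightarrow{\beta}K(\tilde A,\frak M)$ gives, for each $n$, $\op{im}(K_n^\sub{sing}(A)\to K_n(A))\cong\ker(\beta_n)$ and $\ker(K_n^\sub{sing}(A)\to K_n(A))\cong\op{coker}(\beta_{n+1})$; plugging these in turn into the long exact sequence of $K^\sub{sing}(A)\to K(A)\to KH(A)$ yields, for $n\ge1$, \[\ker\!\big(K_n(A)\to KH_n(A)\big)\cong\ker(\beta_n),\qquad\op{coker}\!\big(K_n(A)\to KH_n(A)\big)\cong\op{coker}(\beta_n)\] (the restriction $n\ge1$ being needed only for the cokernel statement, which uses the injectivity in degree $n-1$). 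Finally theorem \ref{theorem_main_theorem} says the square with rows $\beta_n$ and $\alpha_n\colon\projlimf_rK_n(A/\frak m^r,\frak m/\frak m^r)\to\projlimf_rK_n(\tilde A/\frak M^r,\frak M/\frak M^r)$ is bicartesian in $\op{Pro}Ab$, so its vertical arrows induce $\ker(\beta_n)\isoto\ker(\alpha_n)$ and $\op{coker}(\beta_n)\isoto\op{coker}(\alpha_n)$; composition gives the stated isomorphisms. They are canonical in the sense that every map used is the evident natural one, and, as in the introduction, one may characterise them by commutative diagrams.

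The step I expect to require the most care is the compatibility assertion in the second paragraph — that the equivalence $KH(A,\frak m)\simeq K(\tilde A,\frak M)$ supplied by lemma \ref{lemma_K_sing}(i) actually intertwines the canonical map $K(A,\frak m)\to KH(A,\frak m)$ with $\beta$, rather than merely being some abstract equivalence between the same two spectra. Once that is granted, together with the injectivity of $K_*(A,\frak m)\to K_*(A)$ forced by the hypothesis, the remainder is a routine chase of the two long exact sequences.
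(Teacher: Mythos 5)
Your proposal is correct and rests on exactly the same two pillars as the paper's proof: lemma \ref{lemma_K_sing}(i) applied with $J=\frak m$, $J'=\frak M$ to identify $KH(A,\frak m)\simeq K(\tilde A,\frak M)$ compatibly with the canonical map from $K(A,\frak m)$, followed by the bicartesian square of theorem \ref{theorem_main_theorem}. The only (cosmetic) difference is the middle step: where you chase the two long exact sequences through $K^\sub{sing}(A)\simeq\op{hofib}(\beta)$ using the injectivity of $K_n(A,\frak m)\to K_n(A)$, the paper observes more directly that the surjectivity hypothesis splits the $K$- and $KH$-sequences of $A\to A/\frak m$ into short exact sequences, so the square comparing $K_n(A,\frak m)\to KH_n(A,\frak m)$ with $K_n(A)\to KH_n(A)$ is bicartesian — an equivalent but quicker reduction.
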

\begin{proof}
The surjectivity assumption implies that the long exact sequences for the $K$ and $KH$-theories of $A\to A/\frak m$ break into short exact sequences:
\[\xymatrix{
0 \ar[r] & K_n(A,\frak m) \ar[r]\ar[d] & K_n(A) \ar[r]\ar[d] & K_n(A/\frak m)\ar[r]\ar@{=}[d] & 0\\
0 \ar[r] & KH_n(A, \frak m) \ar[r] & KH_n(A) \ar[r] & KH_n(A/\frak m)\ar[r] & 0
}\]
Thus the left square in the diagram is bicartesian, and so the vertical arrows have the same kernels and cokernels. By the previous lemma we may replace $KH_n(A,\frak m)$ by $K_n(\tilde A,\frak M)$, and then theorem \ref{theorem_main_theorem} completes the proof.
\end{proof}

\begin{corollary}\label{corollary_KH}
Let $A$ satisfy the conditions of the previous theorem. Then, for any $n\ge1$, the map \[K_n(A)\to K_n(A/\frak m^r)\oplus KH_n(A)\] is injective for $r\gg 0$.
\end{corollary}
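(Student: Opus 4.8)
The plan is to combine the description of $\ker(K_n(A)\to KH_n(A))$ obtained in the proof of the previous theorem with the injectivity assertion of corollary \ref{corollary_to_main_theorem}(i); fix $n\ge1$. Recall from that proof that the left-hand square of the map of short exact sequences comparing the $K$- and $KH$-theories of $A\to A/\frak m$ is bicartesian, so that, after identifying $KH_n(A,\frak m)$ with $K_n(\tilde A,\frak M)$ via lemma \ref{lemma_K_sing}(i), the subgroup $\ker(K_n(A)\to KH_n(A))\subseteq K_n(A)$ is precisely the image, under the injection $\iota\colon K_n(A,\frak m)\hookrightarrow K_n(A)$, of $C:=\ker(K_n(A,\frak m)\to K_n(\tilde A,\frak M))$.

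Before the main argument I would record one elementary injectivity fact: the canonical map $K_n(A/\frak m^r,\frak m/\frak m^r)\to K_n(A/\frak m^r)$ is injective for every $r$. Indeed, since $A$ surjects compatibly onto $A/\frak m^r$ and onto $A/\frak m$, the standing surjectivity hypothesis applied in degree $n+1$ forces $K_{n+1}(A/\frak m^r)\to K_{n+1}(A/\frak m)$ to be surjective, and the long exact sequence of the pair $(A/\frak m^r,\frak m/\frak m^r)$ then yields the claim.

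Now choose $r$ large enough that the canonical map $K_n(A,\frak m)\to K_n(A/\frak m^r,\frak m/\frak m^r)\oplus K_n(\tilde A,\frak M)$ is injective; this is possible by the ``in particular'' clause of corollary \ref{corollary_to_main_theorem}(i), and in particular the restriction $g_r\colon C\to K_n(A/\frak m^r,\frak m/\frak m^r)$ of the reduction map is then injective. Suppose $x\in K_n(A)$ dies in $K_n(A/\frak m^r)\oplus KH_n(A)$. Its vanishing in $KH_n(A)$ gives $x=\iota(y)$ for a (unique) $y\in C$; its vanishing in $K_n(A/\frak m^r)$ then says, by functoriality, that $g_r(y)$ maps to $0$ in $K_n(A/\frak m^r)$, whence $g_r(y)=0$ by the injectivity recorded above, and therefore $y=0$ and $x=0$. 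So $K_n(A)\to K_n(A/\frak m^r)\oplus KH_n(A)$ is injective for $r\gg0$. The only substantive ingredient is corollary \ref{corollary_to_main_theorem}(i), ultimately theorem \ref{theorem_main_theorem}; everything else is diagram chasing, and the one point requiring care is that the ``isomorphism of kernels'' in the previous theorem must be unwound into an honest equality of subgroups of $K_n(A)$ so that the element $x$ can be followed through it.
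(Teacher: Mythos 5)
Your argument is correct and is essentially the paper's own proof, just with the terse ``immediate consequence'' unwound: the paper likewise identifies $\ker(K_n(A)\to KH_n(A))$ with $\ker(K_n(A,\frak m)\to K_n(\tilde A,\frak M))$ via the bicartesian square and lemma \ref{lemma_K_sing}(i), embeds it into $K_n(A/\frak m^r,\frak m/\frak m^r)$ for $r\gg0$ using theorem \ref{theorem_main_theorem} (your appeal to corollary \ref{corollary_to_main_theorem}(i) is the same ingredient), and then uses exactly your observation that the surjectivity hypothesis in degree $n+1$ forces $K_n(A/\frak m^r,\frak m/\frak m^r)\subseteq K_n(A/\frak m^r)$. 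No gaps; your care in checking that the identification of kernels is compatible with the reduction maps is precisely what makes the ``immediate'' step legitimate.
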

\begin{proof}
This is an immediate consequence of the previous theorem since the assumption that $K_{n+1}(A)\to K_{n+1}(A/\frak m)$ is surjective implies that $K_n(A/\frak m^r,\frak m/\frak m^r)\subseteq K_n(A/\frak m^r)$.
\end{proof}

\subsection{Geller's conjecture in finite characteristic}\label{subsubsection_Geller}
Now we turn to applications to Geller's conjecture. S.~Geller's conjecture, raised in 1986 \cite{Geller1986}, is the following:
\begin{quote}
``Let $A$ be a one-dimensional, Noetherian, reduced local ring, with total quotient ring $F$. Then the map $K_2(A)\to K_2(F)$ is injective if and only if $A$ is regular.''
\end{quote}
The `if' direction is a classical theorem of K.~Dennis and M.~Stein \cite[Thm.~2.2]{Dennis1975}. Geller herself established the conjecture provided that $A$ has equal characteristic, perfect residue field, and seminormal singularities. Except for this seminormal case there has been no progress on Geller's conjecture in finite characteristic. As discussed in the introduction, Krishna \cite{Krishna2005} reduced Geller's conjecture in characteristic zero to an Artinian analogue, just as Corti\~nas, Geller, and Weibel \cite{Weibel1998} had done earlier for Berger's conjecture on differential forms. 

Using similar ideas we offer the following complete solution to Geller's conjecture for equal characteristic rings with perfect residue field of finite characteristic:

\begin{theorem}[Geller's conjecture in characteristic $p$]\label{theorem_Geller}
Let $A$ be a one-dimensional, Noetherian, reduced, local $\bb F_p$-algebra whose residue field is perfect and such that $A\to\tilde A$ is finite. Suppose that the map \[K_2(A)\To K_2(F)\] is injective, where $F$ is the total quotient ring of $A$. Then $A$ is regular.
\end{theorem}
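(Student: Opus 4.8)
The plan is to prove the contrapositive: if $A$ is not regular---equivalently, for a one-dimensional, Noetherian, reduced local ring, $A\neq\tilde A$, i.e.\ $\op{embdim}A\geq 2$---then $K_2(A)\to K_2(F)$ is not injective. Since $\tilde A$ is a finite product of discrete valuation rings, $K_2(\tilde A)\into K_2(F)$ by the equal-characteristic Gersten conjecture (Quillen, Sherman, Neron--Popescu; cf.\ the remark after corollary~\ref{corollary_to_main_theorem}), so it suffices to exhibit a non-zero element of $\ker(K_2(A)\to K_2(\tilde A))$. First I would reduce to the complete case: the square relating $K(A)\to K(\Frac A)$ with $K(\hat A)\to K(\Frac\hat A)$ is homotopy cartesian (section~\ref{subsection_KH}), $\hat A$ again satisfies the hypotheses---in particular it is reduced, since $\hat A\into\widehat{\tilde A}$, which is a finite product of complete discrete valuation rings and is the module-finite normalisation of $\hat A$---and $\hat A$ is regular iff $A$ is; so comparing the long exact sequences of the (equivalent) horizontal homotopy fibres shows that $K_2(A)\to K_2(F)$ is non-injective whenever $K_2(\hat A)\to K_2(\Frac\hat A)$ is. Assuming now $A$ complete, Cohen's structure theorem provides a coefficient field $k=A/\frak m$, so that $A\to A/\frak m$ and every $A/\frak m^r\to A/\frak m$ split; in particular $K_n(A,\frak m)\into K_n(A)$ for all $n$, and $A/\frak m^2\cong k\ltimes V$ with $V=\frak m/\frak m^2$.

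The second step is to apply theorem~\ref{theorem_main_theorem}. The residue fields of $\tilde A$ are finite over the perfect field $k$, hence perfect, so remark~\ref{remark_vanishing_of_even_K_groups_for_truncated_polys} (write $\tilde A/\frak M^r$ as a product of truncated polynomial rings over perfect fields) gives $K_2(\tilde A/\frak M^r,\frak M/\frak M^r)=0$ for all $r$, whence $\projlimf_r K_2(\tilde A/\frak M^r,\frak M/\frak M^r)=0$. The two horizontal arrows of the bicartesian square of theorem~\ref{theorem_main_theorem} at $n=2$ then have isomorphic kernels, so
\[N:=\ker\bigl(K_2(A,\frak m)\to K_2(\tilde A,\frak M)\bigr)\]
is pro-isomorphic to $\projlimf_r K_2(A/\frak m^r,\frak m/\frak m^r)$; applying the limit functor $\op{Pro}Ab\to Ab$ converts this into an honest isomorphism $N\cong\projlim_r K_2(A/\frak m^r,\frak m/\frak m^r)$. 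Via the coefficient field $N\subseteq K_2(A,\frak m)\subseteq K_2(A)$, and a one-line diagram chase places $N$ inside $\ker(K_2(A)\to K_2(\tilde A))$. So everything reduces to exhibiting a non-zero element of the inverse limit $\projlim_r K_2(A/\frak m^r,\frak m/\frak m^r)$.

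For this I would use a Dennis--Stein symbol. Since $\op{embdim}A\geq 2$, choose $a,b\in\frak m$ whose images $\bar a,\bar b$ in $\frak m/\frak m^2=V$ are $k$-linearly independent. As $ab\in\frak m^2\subseteq\frak m$, the symbol $\did{a,b}$ lies in $K_2(A,\frak m)$, and its reductions form the element $(\did{a,b}\bmod\frak m^r)_r\in\projlim_r K_2(A/\frak m^r,\frak m/\frak m^r)$ whose component at $r=2$ is $\did{\bar a,\bar b}\in K_2(A/\frak m^2,\frak m/\frak m^2)=K_2(k\ltimes V,V)$. It is therefore enough to see that $\did{\bar a,\bar b}\neq 0$. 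This is the one genuinely computational input: there is a natural homomorphism $K_2(k\ltimes V,V)\to\bigwedge^2_k V$ sending $\did{x,y}\mapsto x\wedge y$ for $x,y\in V$ (part of the classical computation of the relative $K_2$ of a square-zero extension by Dennis--Stein, van der Kallen and Bloch---and, $k$ being perfect so that $\Omega^1_{k/\bb Z}=0$, it is in fact an isomorphism), and $\did{\bar a,\bar b}$ maps to $\bar a\wedge\bar b\neq 0$ by the choice of $a,b$. Hence $N\neq 0$, so $K_2(A)\to K_2(\tilde A)$---and a fortiori $K_2(A)\to K_2(F)$---is not injective, as desired.

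The hard part is the last step---the non-vanishing of $K_2(A/\frak m^2,\frak m/\frak m^2)$ whenever $\op{embdim}A\geq 2$, a characteristic-$p$ ``Artinian Geller conjecture'' playing the role of the Artinian analogues of Berger's conjecture used in characteristic zero by Corti\~nas--Geller--Weibel and by Krishna. Everything else is formal: corollary~\ref{corollary_main_application_of_birelative_vanishing} and theorem~\ref{theorem_main_theorem} supply the pro-excision Mayer--Vietoris machinery, remark~\ref{remark_vanishing_of_even_K_groups_for_truncated_polys} supplies the perfectness vanishing $K_2(\tilde A/\frak M^r,\frak M/\frak M^r)=0$, and the passage to the completion is routine.
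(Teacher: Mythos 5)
Your proposal is correct and follows essentially the same route as the paper: reduce to the complete case via the analytic isomorphism, invoke theorem~\ref{theorem_main_theorem} together with the vanishing of $\projlimf_r K_2(\tilde A/\frak M^r,\frak M/\frak M^r)$ coming from perfectness of the residue fields, and detect non-regularity with a Dennis--Stein symbol mapped to $\bigwedge^2_k\frak m/\frak m^2$ at level $r=2$ (the paper constructs this map from the Maazen--Stienstra presentation). The only organisational difference is that you argue contrapositively and exhibit a non-zero element of $\projlim_r K_2(A/\frak m^r,\frak m/\frak m^r)\cong\ker(K_2(A,\frak m)\to K_2(\tilde A,\frak M))$ directly, whereas the paper deduces from the assumed injectivity that the pro group vanishes and then, using surjectivity of the transition maps, that each $K_2(A/\frak m^r,\frak m/\frak m^r)$ vanishes, contradicting the same level-two symbol computation.
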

\begin{proof}
As usual, let $\frak m$ denote the maximal ideal of $A$, and $\frak M$ the Jacobson radical of $\tilde A$. Let $\hat A$ be the $\frak m$-adic completion of $A$; we will show it is sufficient to prove the theorem for $\hat A$ in place of $A$. We first claim that $\hat A$ is reduced and that $\hat A\to\tilde{\hat A}$ is finite. Indeed, by flatness of completion, $\hat A$ embeds into $\hat{\tilde A}=\tilde A\otimes_A\hat A$, which is a finite product of complete discrete valuation rings since $\tilde A$ is a finite product of discrete valuation rings; therefore $\hat A$ is reduced. This shows moreover that $\hat{\tilde A}$ is normal and finite over $\hat A$, whence it is equal to $\tilde{\hat A}$, completing the proof of our claim.

The map $A\to \hat A$ is an $S$-analytic isomorphism \cite{Weibel1986}, where $S$ is the set of non-zero-divisors of $A$, and so there is a resulting long-exact, Mayer--Vietoris sequence \[\cdots \to K_n(A)\to K_n(F)\oplus K_n(\hat A)\to K_n(\hat F)\to \cdots,\] where $\hat F$ is the total quotient ring of $\hat A$. From the assumption that $K_2(A)\to K_2(F)$ is injective, and the fact that $K_1(A)\cong\mult A\to K_1(F)\cong\mult F$ is injective, this long-exact sequence yields a short exact sequence \[0\to K_2(A)\to K_2(F)\oplus K_2(\hat A)\to K_2(\hat F)\to0,\] whence $K_2(\hat A)\to K_2(\hat F)$ is also injective; moreover, $A$ is regular if and only if $\hat A$ is regular, since the two rings have the same dimension and isomorphic cotangent spaces. Therefore we may replace $A$ by $\hat A$ in the rest of the proof. Since any complete, Noetherian, local ring of equal characteristic contains a coefficient field \cite[Thm.~9]{Cohen1946}, and since $\tilde{\hat A}$ is a finite product of such rings, we may therefore henceforth assume that the morphisms $A\to A/\frak m$ and $\tilde A\to\tilde A/\frak M$ split.

Using these splitting assumptions we will now deduce that the map $K_2(A,\frak m)\to K_2(\tilde A,\frak M)$ is injective. Indeed, the splitting assumptions imply that $K_2(A,\frak m)\subseteq K_2(A)$ and $K_2(\tilde A,\frak M)\subseteq K_2(\tilde A)$, so it is enough to prove that $K_2(A)\to K_2(\tilde A)$ is injective; but we are assuming the stronger result that $K_2(A)\to K_2(F)$ is injective.

Moreover, by \cite[Lemma 3.4]{Dennis1975} (see also remark \ref{remark_vanishing_of_even_K_groups_for_truncated_polys}), the pro abelian group $\projlimf_rK_2(k[t]/\pid{t^r},\pid t)$ vanishes if $k$ is any perfect field of finite characteristic. Since $\tilde A/\frak M^r$ is a finite product of such truncated polynomial rings, we deduce that $\projlimf_rK_2(\tilde A/\frak M^r,\frak M/\frak M^r)=0$. The short exact sequence of theorem \ref{theorem_main_theorem} therefore yields a surjection \[K_2(A,\frak m)\to\projlimf_r K_2(A/\frak m^r,\frak m/\frak m^r)\oplus K_2(\tilde A,\frak M)\] (even an isomorphism, but we don't need this). So, considering the maps \[K_2(A,\frak m)\to\projlimf_r K_2(A/\frak m^r,\frak m/\frak m^r)\oplus K_2(\tilde A,\frak M)\xto{\sub{proj}} K_2(\tilde A,\frak M),\] we have proved that the composition is injective and the first arrow is surjective. It follows that \[\projlimf_r K_2(A/\frak m^r,\frak m/\frak m^r)=0,\tag{\dag}\] which is the key to completing the proof.

Each group $K_2(A/\frak m^r)$ is generated by Steinberg symbols, whence the transition maps $K_2(A/\frak m^{r+1})\to K_2(A/\frak m^r)$ are surjective; since $K_2(A/\frak m^r,\frak m/\frak m^r)$ is contained in $K_2(A/\frak m^r)$ for all $r$, thanks to the splitting of $A/\frak m^r\to A/\frak m$, it follows that the transition maps \[K_2(A/\frak m^{r+1},\frak m/\frak m^{r+1})\to K_2(A/\frak m^r,\frak m/\frak m^r)\] are also surjective. Therefore the vanishing of the pro abelian group (\dag) implies the vanishing of the individual groups: $K_2(A/\frak m^r,\frak m/\frak m^r)=0$ for all $r\ge 1$. To complete the proof, it therefore suffices to prove the following claim:
\begin{center}
If $A$ is not regular, then $K_2(A/\frak m^2,\frak m/\frak m^2)$ is non-zero.
\end{center}

Well, if $A$ is not regular, then $\dim_\kappa\frak m/\frak m^2\ge 2$, where $\kappa=A/\frak m$; let $x,y\in\frak m/\frak m^2$ be linearly independent elements. There are various ways to show that the Dennis--Stein symbol \[\langle x,y\rangle\in K_2(A/\frak m^2,\frak m/\frak m^2)\] is non-zero, which will complete the proof, the easiest of which is probably the following. The Maazen--Steinstra presentation of $K_2(A/\frak m^2,\frak m/\frak m^2)$ using Dennis--Stein symbols \cite[Thm.~3.1]{MaazenSteinstra1977} readily implies that there is a well-defined homomorphism  (having fixed a splitting of $A/\frak m^2\to\kappa$) \[K_2(A/\frak m^2,\frak m/\frak m^2)\To\left(\bigwedge\nolimits_\kappa^2A/\frak m^2\right)\bigg/\langle a\wedge b\,:a\mbox{ or } b \mbox{ is in }\kappa\rangle\,\cong\bigwedge\nolimits_\kappa^2\frak m/\frak m^2,\quad \langle a,b\rangle\mapsto a\wedge b,\] where at least one of $a, b\in A/\frak m^2$ belongs to $\frak m/\frak m^2$; this takes $\langle x,y\rangle$ to $x\wedge y\in \bigwedge_\kappa^2\frak m/\frak m^2$, where it is non-zero by choice of $x$ and $y$.
\end{proof}

\comment{
\begin{remark}
A more detailed way to finish the previous proof, when $p>2$, is the following. Firstly, the logarithm isomorphism \cite[Eg.~3.12]{MaazenSteinstra1977}, which exists if $p>2$, \[K_2(A/\frak m^2,\frak m/\frak m^2)\cong \ker(\Omega_{A/\frak m^2}^1\to\Omega_K^1)/d(\frak m/\frak m^2)=\Omega_{A/\frak m^2}^1/d(\frak m/\frak m^2)\] sends $\langle x,y\rangle$ to $y\,dx$. The form $y\,dx$ is non-zero here because there is an isomorphism \cite[Corol.\ 3.2] {Weibel1998}\[\bigwedge\nolimits_K^2\frak m/\frak m^2\isoto \Omega_{A/\frak m^2}^1/d(\frak m/\frak m^2),\quad a\wedge b\mapsto a\,db.\]
Compare with the lemmas of section \ref{subsection_mixed_char_Geller}.
\end{remark}
}

\subsection{Relation to cyclic homology in characteristic zero}\label{subsection_rel_to_HC}
The aim of this section is theorem \ref{theorem_higher_Geller} below, offering an alternative to Geller's conjecture in characteristic zero. The key tool however, which likely has other applications and depends crucially on our main theorem \ref{theorem_main_theorem}, is corollary \ref{corollary_K_vs_HC}, stating that the kernel of a map between $K$-groups is isomorphic to the analogous kernel for cyclic homology.

We start with the cyclic homology version of our main theorem \ref{theorem_main_theorem}:

\begin{proposition}
Let $A$ be a one-dimensional, Noetherian, reduced, semi-local $\bb Q$-algebra such that $A\to\tilde A$ is finite, and let $n\ge 0$. Then the natural square \xysquare{HC_n(A,\frak m)}{HC_n(\tilde A,\frak M)}{\projlimf_r HC_n(A/\frak m^r,\frak m/\frak m^r)}{\projlimf_r HC_n(\tilde A/\frak M^r,\frak M/\frak M^r)}{->}{->}{->}{->} is bicartesian in Pro$Ab$, where all cyclic homologies are taken with respect to $\bb Q$.
\end{proposition}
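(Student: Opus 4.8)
The plan is to mimic exactly the proof of theorem \ref{theorem_main_theorem}, replacing $K$-theory by cyclic homology throughout. First, I would establish the cyclic-homology analogue of the Mayer--Vietoris sequence of corollary \ref{corollary_main_application_of_birelative_vanishing}: namely, that for the conductor-type situation (here $A$, $\tilde A$ and the ideal $I = \frak f$, or cofinally the powers $\frak m^r$, $\frak M^r$), the pro birelative cyclic homology groups $\projlimf_r HC_n(A,\tilde A,\frak m^r)$ vanish. This is the cyclic-homology version of pro-excision, which in characteristic zero is itself a theorem of the author \cite{Morrow_Birelative}; alternatively one can invoke that the conductor $\frak f$ is an invertible ideal of $\tilde A$ together with Geisser--Hesselholt-type or Cuntz--Quillen-type excision results for $HC^{\bb Q}$ of $H$-unital (here even unital-up-to-localisation) ideals. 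Either way, this yields a long exact, Mayer--Vietoris sequence of pro abelian groups
\[\cdots\to HC_n(A,\frak m)\to\projlimf_r HC_n(A/\frak m^r,\frak m/\frak m^r)\oplus HC_n(\tilde A,\frak M)\xto{(\ast)}\projlimf_r HC_n(\tilde A/\frak M^r,\frak M/\frak M^r)\to\cdots\]
exactly parallel to example \ref{example_main_application_of_birelative_vanishing}.

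Then, just as in theorem \ref{theorem_main_theorem}, it suffices to prove that the arrow $(\ast)$ is surjective for all $n\ge 0$, and for this it is enough to show that $HC_n(\tilde A,\frak M)\to\projlimf_r HC_n(\tilde A/\frak M^r,\frak M/\frak M^r)$ is surjective. Completing $\tilde A$ along $\frak M$ reduces this to the case $\tilde A = k[[t]]$, $\frak M = (t)$ for a $\bb Q$-algebra (in fact a field) $k$, so I would show that
\[HC_n(k[[t]],(t))\to\projlimf_r HC_n(k[t]/(t^r),(t))\]
is surjective, or even that the target is computed by differential forms in the limit. This is precisely what lemma \ref{lemma_HC_of_graded_ring} provides: applied with $A[t]$ (graded by $t$-degree) in place of $A$, it shows $HH_n(k[t])\to\projlimf_r HH_n(k[t]/(t^r))$ is surjective and that $\projlimf_r\tilde{HC}^{(i)}_n(k[t]/(t^r),(t)) = 0$ for $i<n$, so that $\projlimf_r HC_n(k[t]/(t^r),(t))\cong\projlimf_r \tilde\Omega^n_{(k[t]/(t^r))/\bb Q}/d\tilde\Omega^{n-1}$; and these relative Kähler differentials are generated by forms $t^j\,dt\wedge(\text{stuff})$ which manifestly lift to $HC_n(k[t],(t))$ (or to $k[[t]]$), giving the required surjectivity. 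Passing back through the product decomposition of the $\frak M$-adic completion of $\tilde A$ finishes the argument, and bicartesianness of the square is then a formal consequence of exactness together with surjectivity of $(\ast)$, exactly as in corollary \ref{corollary_to_main_theorem}.

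The main obstacle is the very first step: setting up pro-excision for $HC^{\bb Q}$ in the required generality, i.e.\ the vanishing of the pro birelative cyclic homology groups for the normalisation of a one-dimensional reduced semi-local $\bb Q$-algebra. If one is willing to cite \cite[Thm.~0.1]{Morrow_Birelative} (which gives pro-excision for $K$-theory of Noetherian $\bb Q$-algebras, and whose proof passes through the cyclic-homology statement via Goodwillie), this is immediate; otherwise one must either reprove the $HC$ analogue of proposition \ref{proposition_vanishing_of_birelatives_for_locally_free} by the same bar-resolution / locally-invertible-ideal trick (which works verbatim, since the contracting-homotopy argument in the key lemma is purely algebraic and makes no use of the $\bb Z$-flatness hypothesis in a way specific to $K$-theory), or invoke Cortiñas's proof of the KABI conjecture identifying $K$-theoretic and $HC$-theoretic birelative invariants. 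Everything after that step is a routine transcription of the $K$-theory arguments, using lemma \ref{lemma_HC_of_graded_ring} in place of propositions \ref{proposition_symbolic_in_char_p}--\ref{proposition_limit_is_symbols_char_0}.
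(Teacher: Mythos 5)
Your overall route is the paper's: first get the pro Mayer--Vietoris sequence for $HC^{\bb Q}$ (the paper does this by exactly one of the options you list, namely Corti\~nas's KABI theorem, which transfers the pro birelative vanishing of corollary \ref{corollary_main_application_of_birelative_vanishing} from $K$-theory to cyclic homology), and then reduce everything to surjectivity of $HC_n(\tilde A,\frak M)\to\projlimf_r HC_n(\tilde A/\frak M^r,\frak M/\frak M^r)$, computed via lemma \ref{lemma_HC_of_graded_ring}. The formal deduction of bicartesianness from surjectivity of $(\ast)$ is also exactly as in the paper.

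The gap is in your reduction ``completing $\tilde A$ along $\frak M$ reduces this to the case $\tilde A=k[[t]]$''. The map you must show surjective factors as $HC_n(\tilde A,\frak M)\to HC_n(\hat{\tilde A},\hat{\frak M})\to\projlimf_r HC_n(\tilde A/\frak M^r,\frak M/\frak M^r)$, and surjectivity of the second arrow (which is what your lifting of forms to $k[t]$ or $k[[t]]$ would establish) says nothing about surjectivity of the composite; already for $n=0$ the first arrow is the non-surjective inclusion $\frak M\into\hat{\frak M}$, while the composite onto $\projlimf_r\frak M/\frak M^r$ is surjective for a different reason. Note that in the $K$-theoretic corollary \ref{corollary_surjectivity_in_limit} the completion is used only to identify the quotients $\tilde A/\frak M^r$ with truncated polynomial rings; the lifting itself is done from $\tilde A$, via $\mult{\tilde A}\onto\mult{(\tilde A/\frak M^r)}$. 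You need the analogous step here: once lemma \ref{lemma_HC_of_graded_ring} identifies the target with $\projlimf_r\ker\big(\Omega^n_{\tilde A/\frak M^r}/d\Omega^{n-1}_{\tilde A/\frak M^r}\to\Omega^n_{\tilde A/\frak M}/d\Omega^{n-1}_{\tilde A/\frak M}\big)$ (and beware these classes are not only of the form $t^j\,dt\wedge(\cdots)$ but also $t^j\omega$ with $\omega$ coming from the residue field), lift from $\tilde A$ itself: $\Omega^*_{\tilde A/\bb Q}\onto\Omega^*_{(\tilde A/\frak M^r)/\bb Q}$, and since $\tilde A$ is regular and equicharacteristic zero (hence by N\'eron--Popescu a filtered colimit of smooth $\bb Q$-algebras) the group $\Omega^n_{\tilde A}/d\Omega^{n-1}_{\tilde A}$ is a direct summand of $HC_n(\tilde A)$, so $\ker(HC_n(\tilde A)\to HC_n(\tilde A/\frak M))$ --- onto which $HC_n(\tilde A,\frak M)$ surjects --- already hits the whole pro system. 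This is the paper's one-line argument; substituting it for your completion step closes the gap, and the rest of your proposal then matches the paper.
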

\begin{proof}
The proof is essentially the same as for $K$-theory, so we will be brief. Firstly,  G.~Corti\~nas' \cite{Cortinas2006} proof of the KABI conjecture implies that we may replace $K_n$ by $HC_{n-1}$ in the vanishing result of corollary \ref{corollary_main_application_of_birelative_vanishing}; hence the subsequent long-exact, Mayer--Vietoris sequences remain valid for cyclic homology in place of $K$-theory. In particular, there is a long exact, Mayer--Vietoris sequence \[\cdots\to HC_n(A,\frak m)\to \projlimf_rHC_n(A/\frak m^r,\frak m/\frak m^r)\oplus HC_n(\tilde A,\frak M)\to\projlimf_rHC_n(\tilde A/\frak M,\frak M/\frak M^r)\to\cdots,\] so to complete the proof it is sufficient to show that \[HC_n(\tilde A,\frak M)\to\projlimf_rHC_n(\tilde A/\frak M,\frak M/\frak M^r)\] is surjective. Fortunately, lemma \ref{lemma_HC_of_graded_ring} implies that the right side of this morphism is \[\projlimf_r\ker\big(\Omega^n_{\tilde A/\frak M^r}/d\Omega^{n-1}_{\tilde A/\frak M^r}\to \Omega^n_{\tilde A/\frak M}/d\Omega^{n-1}_{\tilde A/\frak M}\big)\] Since $\ker(HC_n(\tilde A)\to HC_n(\tilde A/\frak M))$ contains a direct summand isomorphic to $\ker(\Omega^n_{\tilde A}/d\Omega^{n-1}_{\tilde A}\to\Omega^n_{\tilde A/\frak M}/d\Omega^{n-1}_{\tilde A/\frak M})$, one sees that the desired morphism is surjective.
\end{proof}

\begin{corollary}\label{corollary_K_vs_HC}
Let $A$ be as in the previous proposition and let $n\ge 1$. Then the kernel (resp.\ cokernel) of the maps \[K_n(A,\frak m)\to K_n(\tilde A,\frak M),\qquad HC_{n-1}(A,\frak m)\to HC_{n-1}(\tilde A,\frak M)\] are canonically isomorphic.
\end{corollary}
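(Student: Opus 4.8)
The plan is to combine the two bicartesian squares established above --- the one for $K$-theory (theorem~\ref{theorem_main_theorem}) and the one for cyclic homology (the previous proposition) --- with the Goodwillie isomorphism, using the elementary fact that in any abelian category a bicartesian square, say with top corners $W,X$ and bottom corners $Y,Z$, i.e.\ one whose associated sequence $0\to W\to X\oplus Y\to Z\to 0$ is exact, induces, via its left and right vertical arrows respectively, isomorphisms $\ker(W\to X)\isoto\ker(Y\to Z)$ and $\op{coker}(W\to X)\isoto\op{coker}(Y\to Z)$.

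Applying this in $\op{Pro}Ab$ to the square of theorem~\ref{theorem_main_theorem}, and then to the square of the previous proposition, gives canonical isomorphisms
\[\ker\big(K_n(A,\frak m)\to K_n(\tilde A,\frak M)\big)\isoto\ker\Big(\projlimf_r K_n(A/\frak m^r,\frak m/\frak m^r)\to\projlimf_r K_n(\tilde A/\frak M^r,\frak M/\frak M^r)\Big),\]
and the same with $HC_{n-1}$ in place of $K_n$, together with the analogous isomorphisms of cokernels. It therefore remains to identify the two pro kernels (and pro cokernels) on the right-hand side. For each $r$ the ideal $\frak m/\frak m^r$ (resp.\ $\frak M/\frak M^r$) is nilpotent in the $\bb Q$-algebra $A/\frak m^r$ (resp.\ $\tilde A/\frak M^r$), so Goodwillie's theorem provides isomorphisms $K_n(A/\frak m^r,\frak m/\frak m^r)\isoto HC_{n-1}(A/\frak m^r,\frak m/\frak m^r)$ and $K_n(\tilde A/\frak M^r,\frak M/\frak M^r)\isoto HC_{n-1}(\tilde A/\frak M^r,\frak M/\frak M^r)$. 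Since the relative Chern character is natural in the ring and the ideal, these are compatible both with the maps induced by $A\to\tilde A$ and with the transition maps in $r$, hence assemble into an isomorphism of pro abelian groups, and so into an isomorphism of the pro kernels and pro cokernels in question. Splicing the three isomorphisms produces a canonical isomorphism in $\op{Pro}Ab$ between $\ker(K_n(A,\frak m)\to K_n(\tilde A,\frak M))$ and $\ker(HC_{n-1}(A,\frak m)\to HC_{n-1}(\tilde A,\frak M))$; both are constant pro objects and $Ab\to\op{Pro}Ab$ is fully faithful, so this is an honest isomorphism of abelian groups, and likewise for cokernels. Unwinding the construction shows it is the unique isomorphism compatible with the natural maps down to the pro groups and the Goodwillie isomorphism between them, which is the characterisation indicated in the introduction.

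I do not expect a genuine obstacle here; the one point needing care is the naturality of the Goodwillie isomorphism --- both in the ring and in the truncation level $r$ --- which is what allows the degreewise isomorphisms to be assembled into a map of pro abelian groups compatible with $A\to\tilde A$. This is precisely the naturality of the relative Chern character already discussed in the proof of proposition~\ref{proposition_limit_is_symbols_char_0}, so no new input is required.
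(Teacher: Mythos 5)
Your proposal is correct and takes essentially the same route as the paper, whose proof of corollary \ref{corollary_K_vs_HC} is exactly the combination of the bicartesian square of theorem \ref{theorem_main_theorem}, the bicartesian square of the preceding proposition, and the (natural, levelwise) Goodwillie isomorphism. The details you spell out --- the kernel/cokernel identification in a bicartesian square and the assembly of the degreewise Goodwillie isomorphisms into an isomorphism of pro abelian groups --- are precisely what the paper leaves implicit.
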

\begin{proof}
This is a consequence of the previous proposition, the analogous theorem for $K$-theory (namely theorem \ref{theorem_main_theorem}), and the Goodwillie isomorphism.
\end{proof}

Before we can use the corollary to prove the main theorem of the section, we need a `cyclic homology criterion for smoothness'. In the following result we denote by $\Omega^\bullet_{R/k}$ the de Rham mixed complex of a $k$-algebra $R$, whose Hochschild and cyclic homologies are respectively $HH_n(\Omega^\bullet_{R/k})=\Omega_{R/k}^n$ and $HC_n(\Omega^\bullet_{R/k})=\Omega^n_{R/k}/d\Omega^{n-1}_{R/k}\oplus\bigoplus_{p\ge1} H_\sub{dR}^{n-2p}(R/k)$.

\begin{lemma}
Let $k\subseteq K$ be an extension of characteristic zero fields, and let $R$ be an essentially finite type $K$-algebra which is not smooth over $K$. Then the canonical map \[HC_n^k(R)\to HC_n(\Omega^\bullet_{R/k})\] is not injective for some $n\ge 2$.
\end{lemma}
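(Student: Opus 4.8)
\emph{Proof plan.} The plan is to reinterpret the map $HC_n^k(R)\to HC_n(\Omega^\bullet_{R/k})$ as the projection off a direct summand, and then to argue that this summand cannot vanish unless $R$ is smooth over $K$. First I would recall that this canonical map is induced by the Hochschild--Kostant--Rosenberg morphism of mixed complexes $\varphi\colon C_\bullet^k(R)\to\Omega^\bullet_{R/k}$, $a_0\otimes\cdots\otimes a_n\mapsto\tfrac1{n!}a_0\,da_1\cdots da_n$. In characteristic zero the antisymmetrisation map $\ep\colon\Omega^\bullet_{R/k}\to C_\bullet^k(R)$ is a section of $\varphi$, and both maps respect the mixed structures (sending $b$ to $0$ and $B$ to $d$, up to the usual normalising constants). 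Hence $C_\bullet^k(R)\cong\Omega^\bullet_{R/k}\oplus\cal K_\bullet$ as mixed complexes with $\cal K_\bullet:=\ker\varphi$, and on cyclic homology $\varphi$ becomes the split surjection $HC_n^k(R)\cong HC_n(\Omega^\bullet_{R/k})\oplus HC_n(\cal K_\bullet)\onto HC_n(\Omega^\bullet_{R/k})$; so it is injective for every $n\ge2$ if and only if $HC_n(\cal K_\bullet)=0$ for every $n\ge2$.

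Next I would note that $HH_n(\cal K_\bullet)=\ker(HH_n^k(R)\onto\Omega^n_{R/k})$ vanishes for $n=0$ (where the map is $\mathrm{id}_R$) and for $n=1$ (the classical isomorphism $HH_1^k(R)\cong\Omega^1_{R/k}$). Running the $SBI$ sequence of the mixed complex $\cal K_\bullet$ then forces $HC_0(\cal K_\bullet)=HC_1(\cal K_\bullet)=0$ as well, and shows that $HC_\bullet(\cal K_\bullet)$ is acyclic precisely when $HH_\bullet(\cal K_\bullet)$ is. Thus the canonical map is injective for all $n\ge2$ if and only if $\varphi$ induces isomorphisms $HH_n^k(R)\isoto\Omega^n_{R/k}$ for \emph{all} $n\ge0$, and it remains to show that this last condition fails when $R$ is not smooth over $K$.

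So suppose $HH_n^k(R)\cong\Omega^n_{R/k}$ for all $n$. Reading this through the Hodge decomposition: the weight--$1$ part gives $H_n(\bb L_{R/k})=HH^{(1),k}_{n+1}(R)=0$ for $n>0$, so the cotangent complex $\bb L_{R/k}$ is concentrated in degree $0$; and the higher weight parts, i.e.\ the vanishing of the positive-degree homology of the derived exterior powers $L\Lambda^i\bb L_{R/k}$, force $\Omega^1_{R/k}$ to be flat. Hence $R$ is formally smooth over $k$; the nontrivial direction here, ``Hochschild--Kostant--Rosenberg isomorphism implies smoothness'', is the Hochschild homology criterion for smoothness \cite{Avramov1992}. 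Since $R$ is Noetherian (being essentially of finite type over the field $K$) and $k$ is perfect, formal smoothness over $k$ is equivalent to geometric regularity over $k$, hence to $R$ being regular; and a regular ring essentially of finite type over the perfect field $K$ is automatically smooth over $K$. This contradicts the hypothesis, completing the argument.

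The last step is the one I expect to be the main obstacle: since $K$ may be of infinite transcendence degree over $k$, the ring $R$ need not be essentially of finite type over $k$, so one cannot simply feed ``$HH_n^k(R)=0$ for $n\gg0$'' into a finite-type smoothness criterion over $k$. The point is rather to read off formal smoothness of $R$ over $k$ directly from the acyclicity of $\cal K_\bullet$ --- the cotangent-complex form of the Hochschild criterion applies to arbitrary commutative algebras --- and only afterwards to pass, using that $k$ and $K$ are both perfect, from formal smoothness over $k$ to regularity of $R$ and finally to smoothness of $R$ over $K$. One should also verify carefully that $\ep$ is genuinely a morphism of mixed complexes splitting $\varphi$, and keep track of the $SBI$ bookkeeping in degrees $0$ and $1$.
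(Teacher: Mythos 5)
Your reduction steps are sound and amount to a split-form version of what the paper does with its auxiliary mixed-complex lemma: on the normalised Hochschild complex the rescaled HKR map $\varphi$ and the antisymmetrisation $\ep$ are indeed morphisms of mixed complexes with $\varphi\circ\ep=\mathrm{id}$, so $HC_n^k(R)\to HC_n(\Omega^\bullet_{R/k})$ is a split surjection with complementary summand $HC_n(\cal K_\bullet)$, and the $SBI$ bookkeeping in degrees $0,1$ correctly shows that injectivity for all $n\ge 2$ forces the HKR map $\Omega^n_{R/k}\to HH_n^k(R)$ to be an isomorphism in every degree. (The paper gets the same reduction without the splitting, from ``surjective on $HH$ plus injective on $HC$ implies isomorphism''.)

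The genuine gap is your final step. The Hochschild homology criterion of \cite{Avramov1992} is a theorem about algebras (essentially) of finite type over the base field, and, as you yourself note, $R$ need not be essentially of finite type over $k$; your proposed substitute --- that discreteness of $\bb L_{R/k}$ together with vanishing of the positive-degree homology of all $L\Lambda^i\bb L_{R/k}$ forces $\Omega^1_{R/k}$ to be flat, so that ``the cotangent-complex form of the Hochschild criterion applies to arbitrary commutative algebras'' --- is precisely the unproved point. No such general criterion is contained in \cite{Avramov1992} or, to my knowledge, elsewhere: the weight-one part of your hypothesis only gives $\bb L_{R/k}\simeq\Omega^1_{R/k}$ concentrated in degree $0$, which says nothing about flatness, and vanishing of $H_j(L\Lambda^i M)$ for $j>0$ tests the (enormous, non-finitely-generated) module $M=\Omega^1_{R/k}$ only against itself, which is not a known, nor obviously correct, flatness test; without flatness/projectivity you cannot conclude formal smoothness or geometric regularity over $k$. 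The paper resolves exactly this difficulty by a different device: the K\"unneth/base-change isomorphism $HH_*^k(R)\otimes_kK\cong HH_*^K(R)\otimes_k\Omega^*_{K/k}$ and faithfully flat descent show that an HKR isomorphism over $k$ would imply one over $K$, where $R$ \emph{is} essentially of finite type and \cite{Avramov1992} applies directly. Some such transfer from $k$ to $K$ (or a genuinely new smoothness criterion) is needed to close your argument.
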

\begin{proof}
The Hochschild homology criterion for smoothness \cite{Avramov1992}, which offers an converse to the Hoschschild--Konstant--Rosenberg theorem, says that if $R$ is a finitely generated algebra over a field $K$, then $\Omega_{R/K}^n\to HH_n^K(R)$ is an isomorphism for all $n\ge 0$ if and only if $R$ is smooth over $K$. Obviously this remains valid if $R$ is merely essentially of finite type over $K$, but we will also need to be able to replace $K$ by $k$, which we do as follows.

According to the K\"unneth decomposition and base change for Hochschild homology, one has isomorphisms of graded algebras
\begin{align*}
HH_*^k(R)\otimes_kK\cong HH_*^K(R\otimes_kK)
	&\cong HH_*^K(R)\otimes_KHH_*^K(K\otimes_kK)\\
	&\cong HH_*^K(R)\otimes_k\Omega_{K/k}^*
\end{align*}
Using the same decomposition for de Rham complexes, we see from faithfully flat descent that if $\Omega^*_{R/k}\to HH_*^k(R)$ were to be an isomorphism, then so would be $\Omega^*_{R/K}\to HH_*^K(R)$. In conclusion, since we are assuming $R$ is not smooth over $K$, we deduce that $\Omega^n_{R/k}\to HH_n^k(R)$ is not an isomorphism for some $n\ge 0$.

Next, the following result may be proved by a straightforward induction using the SBI sequences: if $C_\bullet\to D_\bullet$ is a morphism of mixed complexes such that $HH_n(C_\bullet)\to HH_n(D_\bullet)$ is surjective for all $n\ge 0$ and such that $HC_n(C_\bullet)\to HC_n(D_\bullet)$ is injective for all $n\ge 0$, then in fact  $HC_n(C_\bullet)\to HC_n(D_\bullet)$ is an isomorphism for all $n\ge 0$ (and so $HH_n(C_\bullet)\to HH_n(D_\bullet)$ is also an isomorphism for all $n\ge 0$).

So, letting $C_\bullet^k(R)$ denotes the Hochschild complex of $R$ as a $k$-algebra, the usual morphism of mixed complexes \cite[\S2.3]{Loday1992}\[\pi:C_\bullet^k(R)\to\Omega_{R/k}^\bullet,\quad r_0\otimes\cdots\otimes r_n\mapsto r_0\,dr_1\wedge\cdots\wedge r_n\] induces a surjection on the associated Hochschild homologies, but not an isomorphism by what we saw above; therefore the induced map on the cyclic homologies \[HC_n^k(R)\to HC_n(\Omega_{R/k}^\bullet)\] is not injective for some $n\ge 2$ (it is an isomorphism for $n=0,1$), as desired.
\end{proof}

Now we are equipped to prove our higher degree alternative to Geller's conjecture:

\begin{theorem}\label{theorem_higher_Geller}
Let $A$ be a one-dimensional, reduced, semi-local ring which is essentially of finite type over some characteristic zero field, and assume that $A$ is not regular. Then the map \[K_n(A)\to K_n(\Frac A)\] is not injective for some $n\ge3$.
\end{theorem}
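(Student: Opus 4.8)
The plan is to reduce the theorem, via Corollary~\ref{corollary_K_vs_HC}, to a non-vanishing assertion for \emph{relative} cyclic homology, and then to extract that assertion from the Hochschild homology criterion for smoothness of the preceding lemma. First I would set up the reduction. Fix a characteristic zero field $k$ over which $A$ is essentially of finite type. Since $\tilde A$ is regular and essentially of finite type over the perfect field $k$ it is smooth over $k$, and the Gersten conjecture (Quillen, Sherman, Panin--Popescu, as recalled in the remark following Corollary~\ref{corollary_to_main_theorem}) makes $K_n(\tilde A)\to K_n(\Frac A)$ injective; hence $\ker(K_n(A)\to K_n(\Frac A))=\ker(K_n(A)\to K_n(\tilde A))$, and it suffices to find some $n\ge 3$ for which $K_n(A)\to K_n(\tilde A)$ is not injective. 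As in the proof of Theorem~\ref{theorem_Geller} I would pass to the completion $\hat A$ of $A$ at its Jacobson radical: then $\hat A$ is reduced, $\hat A\to\tilde{\hat A}=\hat{\tilde A}$ is finite, $\hat A$ is regular if and only if $A$ is, and both $\hat A$ and $\tilde{\hat A}$ admit coefficient fields, which can moreover be chosen compatibly with $\hat A\to\tilde{\hat A}$. Using the $S$-analytic, homotopy cartesian square of section~\ref{subsection_KH} together with the invariance statement of Corollary~\ref{corollary_to_main_theorem}(ii) and its absolute counterpart, non-injectivity transfers freely between $A$ and $\hat A$; and the coefficient-field splittings exhibit $K_n(\hat A,\hat{\frak m})$ and $K_n(\tilde{\hat A},\hat{\frak M})$ as direct summands of $K_n(\hat A)$ and $K_n(\tilde{\hat A})$, compatibly with $\hat A\to\tilde{\hat A}$. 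In this way the theorem reduces to exhibiting some $n\ge 3$ for which $K_n(A,\frak m)\to K_n(\tilde A,\frak M)$ fails to be injective.

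By Corollary~\ref{corollary_K_vs_HC} it is then enough to find some $n-1\ge 2$ with $HC_{n-1}(A,\frak m)\to HC_{n-1}(\tilde A,\frak M)$ not injective, where throughout $HC=HC^{\bb Z}=HC^{\bb Q}$. Since $A$ is not regular it is not smooth over $k$, so applying the previous lemma with base field $\bb Q\subseteq k$ produces some $m\ge 2$ for which $HC_m^{\bb Q}(A)\to HC_m(\Omega^\bullet_{A/\bb Q})$ is not injective. On the other hand $\tilde A$, being smooth over $k$ and hence a filtered colimit of smooth $\bb Q$-algebras, satisfies $HC_m^{\bb Q}(\tilde A)\isoto HC_m(\Omega^\bullet_{\tilde A/\bb Q})$ by Hochschild--Kostant--Rosenberg; feeding the ring map $A\to\tilde A$ into the resulting commutative square, one sees that the non-trivial kernel class survives, i.e.\ that $HC_m(A)\to HC_m(\tilde A)$ is not injective.

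It remains to promote this \emph{absolute} non-injectivity to $HC_m(A,\frak m)\to HC_m(\tilde A,\frak M)$ being non-injective --- equivalently, by the cyclic homology analogue of Theorem~\ref{theorem_main_theorem} established just above (which identifies that kernel with the kernel of $\projlimf_r HC_m(A/\frak m^r,\frak m/\frak m^r)\to\projlimf_r HC_m(\tilde A/\frak M^r,\frak M/\frak M^r)$), to showing this pro map is not injective. I would run a diagram chase through the long exact cyclic homology sequences of the pairs $(A,\frak m)$ and $(\tilde A,\frak M)$, comparing with the absolute statement just obtained, and using two inputs: that $HC_m(A/\frak m)\to HC_m(\tilde A/\frak M)$ is injective --- both sides are finite products of fields, all the residue extensions are finite hence \'etale in characteristic zero, so this is flat base change on the de Rham complex --- and the explicit graded-ring computation of $\projlimf_r HC_m(\tilde A/\frak M^r,\frak M/\frak M^r)$ furnished by Lemma~\ref{lemma_HC_of_graded_ring}, which controls the closed-point contribution on the normal side.

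The step I expect to be the main obstacle is precisely this last promotion: one has to ensure that the non-trivial class in $\ker(HC_m(A)\to HC_m(\tilde A))$ coming from non-smoothness is not entirely absorbed by the discrepancy between the closed-point cyclic homologies $HC_\bullet(A/\frak m)$ and $HC_\bullet(\tilde A/\frak M)$, and it is here --- not at the level of the bare Mayer--Vietoris sequences --- that the fine structure of Theorem~\ref{theorem_main_theorem} and of its cyclic-homology counterpart must be used in an essential way. Once that is in hand, everything else is a formal consequence of the results already established.
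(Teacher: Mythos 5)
Your overall route is the paper's: reduce to non-injectivity of the relative map $K_n(A,\frak m)\to K_n(\tilde A,\frak M)$, convert this via Corollary \ref{corollary_K_vs_HC} into a statement about relative cyclic homology, and feed in the Hochschild/cyclic homology criterion for smoothness. But the proof is not complete: the step you yourself flag as ``the main obstacle'' --- promoting non-injectivity of $HC_m(A)\to HC_m(\tilde A)$ to non-injectivity of $HC_m(A,\frak m)\to HC_m(\tilde A,\frak M)$ --- is precisely the crux, and you leave it as an unexecuted diagram chase. The paper needs no chase at all, because of an observation you miss: the witness class $x\in HC_m(A)$ supplied by the smoothness lemma dies not only in $HC_m(\Omega^\bullet_{A/\bb Q})$, and hence in $HC_m(\tilde A)$ (the comparison you do exploit), but also in $HC_m(A/\frak m)$, since $A/\frak m$ is a finite product of characteristic zero fields and so $HC_m(A/\frak m)=HC_m(\Omega^\bullet_{(A/\frak m)/\bb Q})$ as well, compatibly with the projections from $HC_m(A)$. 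Once splittings of $A\to A/\frak m$ and $\tilde A\to\tilde A/\frak M$ are arranged, this puts $x$ inside $HC_m(A,\frak m)\subseteq HC_m(A)$ and shows its image in $HC_m(\tilde A,\frak M)\subseteq HC_m(\tilde A)$ vanishes; the relative non-injectivity is immediate, with no appeal to Lemma \ref{lemma_HC_of_graded_ring} or to the pro-description of the kernel.

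Moreover, the chase you sketch does not obviously close with the two inputs you list. Injectivity of $HC_m(A/\frak m)\to HC_m(\tilde A/\frak M)$ (which itself needs slightly more than flat base change of forms, e.g.\ a trace argument on $\Omega^m/d\Omega^{m-1}$ and de Rham cohomology) lets you lift $x$ to a nonzero class in $HC_m(A,\frak m)$, but its image in $HC_m(\tilde A,\frak M)$ is then only known to die in $HC_m(\tilde A)$; to conclude it is zero you need $HC_m(\tilde A,\frak M)\to HC_m(\tilde A)$ to be injective, i.e.\ a splitting of $\tilde A\to\tilde A/\frak M$ or surjectivity of $HC_{m+1}(\tilde A)\to HC_{m+1}(\tilde A/\frak M)$, which your two inputs do not provide. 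This is also where your use of the completion is delicate: you correctly apply the smoothness lemma to $A$ itself (it would fail for $\hat A$, which is not essentially of finite type over a field), but then the splittings you manufactured by completing are of no help for the class $x$, which lives over $A$. The paper instead arranges both splittings by passing to a suitable finite extension while staying essentially of finite type, and only then runs the de Rham comparison on both $\tilde A$ and $A/\frak m$; filling your gap along those lines is exactly what is missing from the proposal.
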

\begin{proof}
Just as we replaced $A$ by its completion in the proof of theorem \ref{theorem_Geller}, here we may replace $A$ by a large enough finite extension to ensure that $A\to A/\frak m$ and $\tilde A\to \tilde A/\frak M$ split. Then $K_n(A,\frak m)\subseteq K_n(A)$ and $K_n(\tilde A,\frak M)\subseteq K_n(\tilde A)$, so it is enough to prove that $K_n(A,\frak m)\to K_n(\tilde A,\frak M)$ is not injective for some $n\ge 3$. According to the previous corollary, it is therefore sufficient to prove that $HC_n(A,\frak m)\to HC_n(\tilde A,\frak M)$ is not injective for some $n\ge 2$.

Well, by the previous lemma we may find $n\ge 2$ and non-zero $x\in HC_n(A)$ such that $x$ vanishes in $HC_n(\Omega_{A/\bb Q}^\bullet)$. Since $HC_n(A/\frak m)=HC_n(\Omega_{(A/\frak m)/\bb Q}^\bullet)$ and $HC_n(\tilde A)=HC_n(\Omega_{\tilde A/\bb Q}^\bullet)$, we deduce that $x$ vanishes in both $HC_n(\tilde A)$ and $HC_n(A/\frak m)$, so belongs to $HC_n(A,\frak m)\subseteq HC_n(A)$; this completes the proof.
\end{proof}

\begin{remark}
Using the same techniques, it appears it may be possible to strengthen the conclusion of the previous theorem to `not injective for infinitely many $n\ge 3$'.
\end{remark}

\section{The case of finite residue fields}\label{section_finite_res_field}
In this section we use the pro-excision results from section \ref{section_pro_excision} to study one-dimensional, Noetherian, reduced rings with finite normalisation map, all of whose residue fields are finite: e.g., orders in number fields, affine reduced curves over finite fields, and local versions of these. Notice that corollary \ref{corollary_main_application_of_birelative_vanishing} applies to such rings. Before specialising to the arithmetic setting, we begin by establishing some general results.

We need the following:

\begin{lemma}\label{lemma_thanks_to_Vigleik}
Let $R$ be a finite ring. Then $K_n(R)$ is finite for all $n\ge1$.

In particular, if $A$ is a one-dimensional, Noetherian ring all of whose residue fields are finite, and $I\subseteq A$ is an ideal such that $A/I$ has finite length, then $K_n(A/I^r)$ is finite for all $r,n\ge1$. Hence $\projlim_rK_n(A/I^r)$ is a profinite group.
\end{lemma}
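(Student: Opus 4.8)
The plan is to avoid dealing directly with the relative $K$-theory of the (nilpotent) nilradical and instead to see finiteness at the level of the homology of $GL(R)$, where it is transparent because $GL_m(R)$ is a \emph{finite group} for every $m$. Concretely, I would compute $K_n(R)$ ($n\ge1$) as $\pi_n(BGL(R)^+)$ and deduce finiteness of these homotopy groups from degreewise finiteness of $H_*(GL(R);\bb Z)$ in positive degrees. (An alternative would be to invoke McCarthy's theorem to replace $K(R,\operatorname{nil}R)$ by $TC$ and combine this with Quillen's computation of $K_*(\bb F_q)$, but the homological route seems cleanest and most self-contained.)

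The steps, in order, would be as follows. First, for $n\ge1$ one has $K_n(R)=\pi_n(BGL(R)^+)$, so it is enough to prove these groups are finite. Second, $GL_m(R)$ is finite for each $m$ (it is a subset of the finite set $M_m(R)$), hence the bar resolution exhibits $H_i(GL_m(R);\bb Z)$ as a finitely generated abelian group, and it is annihilated by $\abs{GL_m(R)}$ for $i\ge1$ by the transfer, so it is finite for $i\ge1$. Third, a finite ring is semilocal and therefore has bounded stable rank, so homology stability for general linear groups applies: $H_i(GL_m(R);\bb Z)\to H_i(GL(R);\bb Z)$ is an isomorphism once $m\gg i$; consequently $H_i(GL(R);\bb Z)=H_i(BGL(R)^+;\bb Z)$ is finite for all $i\ge1$, in particular finitely generated in every degree, and the rational homology $H_*(BGL(R)^+;\bb Q)$ is concentrated in degree $0$. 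Fourth, $BGL(R)^+$ is an $H$-space, hence a nilpotent space, with finitely generated fundamental group $K_1(R)$; Serre's mod-$\cal C$ theory in the nilpotent setting then gives that all its homotopy groups are finitely generated, while rational homotopy theory gives that they are torsion in positive degrees. A finitely generated torsion abelian group is finite, so $K_n(R)$ is finite for all $n\ge1$.

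The one genuinely non-formal ingredient — and thus the main obstacle — is homology stability for $GL$, which is exactly what is needed to pass from ``$H_i(GL(R);\bb Z)$ is a directed colimit of finite groups'' to ``$H_i(GL(R);\bb Z)$ is finite''; everything else is bookkeeping. Granting the first assertion, the remaining two are immediate: if $A$ is one-dimensional and Noetherian then $I$ is finitely generated, so each $I^{r-1}/I^r$ is a finitely generated module over the finite-length ring $A/I$ and hence $A/I^r$ has finite length; as every residue field of $A$ is finite, $A/I^r$ is then a finite ring, so the first part gives that $K_n(A/I^r)$ is finite for all $r,n\ge1$. Finally $\projlim_rK_n(A/I^r)$ is an inverse limit of finite groups, hence profinite.
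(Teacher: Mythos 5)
Your proposal is correct and follows essentially the same route as the paper: homology stability for $GL$ over the finite (hence semilocal) ring $R$ gives finiteness of $H_i(BGL(R)^+;\bb Z)$ for $i\ge1$, and Serre-class arguments on the nilpotent $H$-space $BGL(R)^+$ transfer this to the homotopy groups $K_n(R)$; the paper phrases the last step via the trivial $\pi_1$-action coming from the infinite loop space structure rather than your split into finitely generated plus torsion, but this is only a cosmetic difference. The deduction of the ``in particular'' statements matches the paper's as well.
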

\begin{proof}
I am grateful to V.~Angeltveit for explaining the argument to me. Firstly, Bass stability implies that, for any fixed $n$, $H_n(BGL(R)^+,\bb Z)=H_n(GL(R),\bb Z)=H_n(GL_m(R),\bb Z)$ for $m$ sufficiently large, and $H_n(GL_m(R),\bb Z)$ is finite for $n\ge 1$ since $GL_m(R)$ is a finite group. Thus all the integral homology groups of degree $\ge 1$ of the $K$-theory space $BGL(R)^+$ are finite.

Since $BGL(R)^+$ is an infinite loop space, its $\pi_1$ acts trivially on its $\pi_n$ for all $n\ge 1$, so the theory of Serre classes tells us that \[\pi_n(BGL(R)^+)\mbox{ is finite for all }n\ge 1\Longleftrightarrow H_n(BGL(R)^+,\bb Z)\mbox{ is finite for all }n\ge 1,\] completing the first part of the proof.

The `in particular' claim follows from the fact that $A/I^r$ is finite for all $r\ge 1$.
\end{proof}

\begin{proposition}\label{proposition_finite_kernel_and_cokernel}
Let $A$ be a one-dimensional, Noetherian, reduced ring such that $A\to\tilde A$ is finite, and all of whose residue fields are finite. Then \[K_n(A)\to K_n(\tilde A)\] has finite kernel and cokernel for all $n\ge 1$.

Moreover, if $\ell$ is a prime number invertible in $A/\frak f$, where $\frak f$ denotes the conductor of $A\to \tilde A$, then the kernel and cokernel have no $\ell$-torsion.
\end{proposition}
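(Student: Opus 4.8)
The plan is to push everything through the birelative $K$-groups of the conductor and then reduce to the $K$-theory of finite rings. Write $B=\tilde A$ and let $\frak f=\Ann_A(B/A)$ be the conductor; recall that $\frak f$ is simultaneously an ideal of $A$ and of $B$, that $A/\frak f$ has finite length, and hence that $A/\frak f^r$ and $B/\frak f^r$ are finite rings for every $r\ge 1$. By corollary \ref{corollary_main_application_of_birelative_vanishing}, $\frak f$ satisfies pro-excision, so proposition \ref{proposition_standard_consequences}(iii) (applied with $I=J=\frak f$) identifies the birelative group $K_n(A,B,\frak f)$ with the pro abelian group $\projlimf_r K_n(A/\frak f^r,B/\frak f^r,\frak f/\frak f^r)$. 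By lemma \ref{lemma_thanks_to_Vigleik}, $K_m$ of each of the finite rings $A/\frak f^r$ and $B/\frak f^r$ is finite for $m\ge 1$; chasing this through the long exact sequences relating the absolute, relative and birelative $K$-theories — and using that $\frak f/\frak f^r$ is nilpotent, so that the relative $K_0$-groups of the pairs $(A/\frak f^r,\frak f/\frak f^r)$, $(B/\frak f^r,\frak f/\frak f^r)$ are quotients of $K_1$ of finite rings — shows that $K_m(A/\frak f^r,B/\frak f^r,\frak f/\frak f^r)$ is finite for all $m\ge 0$. Thus $K_n(A,B,\frak f)$ is a constant pro abelian group which is isomorphic in $\op{Pro}Ab$ to a pro abelian group with finite terms; such a pro abelian group is finite, being a retract in $\op{Pro}Ab$ of one of those finite terms. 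Hence $K_n(A,B,\frak f)$ is finite for every $n\ge 0$.

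Next I would transfer this finiteness to the map $K_n(A)\to K_n(B)$. The map of homotopy fibre sequences $K(A,\frak f)\to K(A)\to K(A/\frak f)$ and $K(B,\frak f)\to K(B)\to K(B/\frak f)$ produces a homotopy fibre sequence of horizontal fibres
\[K(A,B,\frak f)\To\op{hofib}(K(A)\to K(B))\To\op{hofib}(K(A/\frak f)\to K(B/\frak f)).\]
The fibre $K(A,B,\frak f)$ has finite homotopy groups in every non-negative degree by the previous step, and the base $\op{hofib}(K(A/\frak f)\to K(B/\frak f))$ has finite homotopy groups in degrees $\ge 1$ because it is assembled from $K_m$ of the finite rings $A/\frak f$ and $B/\frak f$; hence $\op{hofib}(K(A)\to K(B))$ has finite homotopy groups in all degrees $\ge 1$. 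Since $\ker(K_n(A)\to K_n(B))$ is a quotient of $\pi_n\op{hofib}(K(A)\to K(B))$ and $\coker(K_n(A)\to K_n(B))$ is a subgroup of $\pi_{n-1}\op{hofib}(K(A)\to K(B))$, both are finite for $n\ge 2$; for $n=1$ the kernel is $0$ since $A\into B$, while $\coker(K_1(A)\to K_1(B))=B^\times/A^\times$ is finite because the subgroup $B^\times\cap(1+\frak f)$ — the kernel of $B^\times\to(B/\frak f)^\times$, hence of finite index as $B/\frak f$ is finite — is contained in $A^\times$. This proves the first assertion.

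For the $\ell$-torsion assertion I would repeat the argument with $\bb Z/\ell$-coefficients. Since $\ell$ is a unit in $A/\frak f$ it is a unit in $A/\frak f^r$ and $B/\frak f^r$ (a unit modulo a nilpotent ideal), and the integral pro-vanishing $\projlimf_r\op{Tor}_*^{\bb Z\ltimes\frak f^r}(\bb Z,\bb Z)=0$ established in section \ref{section_pro_excision} implies, via the universal coefficient sequence, $\projlimf_r\op{Tor}_*^{\bb Z\ltimes\frak f^r}(\bb Z,\bb Z/\ell)=0$ for $*>0$; the Geisser--Hesselholt theorem with $\bb Z/\ell$-coefficients then gives pro-excision for $\frak f$ with $\bb Z/\ell$-coefficients, and hence $K_n(A,B,\frak f;\bb Z/\ell)\cong\projlimf_r K_n(A/\frak f^r,B/\frak f^r,\frak f/\frak f^r;\bb Z/\ell)$. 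The crucial point is that when $\ell$ is invertible the relative mod-$\ell$ $K$-theory of a nilpotent ideal vanishes, so that $K_*(A/\frak f^r,\frak f/\frak f^r;\bb Z/\ell)=0=K_*(B/\frak f^r,\frak f/\frak f^r;\bb Z/\ell)$ and therefore each birelative term, hence $K_n(A,B,\frak f;\bb Z/\ell)$, vanishes; this forces the finite group $K_n(A,B,\frak f)$ to have no $\ell$-torsion, and feeding this through the fibre sequence above — together with a check that the comparison $K_*(A/\frak f)\to K_*(B/\frak f)$ at the conductor quotients carries no $\ell$-torsion in kernel and cokernel — yields the claim for $K_n(A)\to K_n(B)$.

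The main obstacle is the $\ell$-torsion assertion, and within it two inputs. First, the vanishing $K_*(R,I;\bb Z/\ell)=0$ for a nilpotent ideal $I$ in a ring $R$ with $\ell\in R^\times$: over a $\bb Q$-algebra this is immediate since relative $K$-theory of a nilpotent ideal is then a $\bb Q$-vector space, but in the general $\ell$-invertible case it is cleanest to import it from the identification of relative $K$-theory of nilpotent ideals with relative topological cyclic homology (McCarthy \cite{McCarthy1997}) together with the insensitivity of mod-$\ell$ topological cyclic homology to nilpotent thickenings when $\ell$ is a unit. Second — and this is where the precise form of the hypothesis really matters — one must control the $\ell$-primary part of the kernel and cokernel of $K_*(A/\frak f)\to K_*(B/\frak f)$, which is not formal from $\ell$-invertibility of $A/\frak f$ alone and must be extracted carefully. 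By contrast, the finiteness statement is entirely formal once one has pro-excision and lemma \ref{lemma_thanks_to_Vigleik}, the only delicate points being the degree-zero bookkeeping for the birelative groups and the passage between a constant pro abelian group and the levelwise-finite pro abelian group to which it is isomorphic.
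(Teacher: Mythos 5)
Your treatment of the finiteness assertion is essentially the paper's own argument: pro-excision (corollary \ref{corollary_main_application_of_birelative_vanishing}) plus lemma \ref{lemma_thanks_to_Vigleik} gives finiteness of $K_n(A,\tilde A,\frak f)$ --- the paper extracts this from proposition \ref{proposition_standard_consequences}(iv), you from (iii) together with a retract argument, which is the same content --- and one then transfers the finiteness through the conductor fibre sequences, which the paper phrases as an isomorphism in the Serre quotient $Ab/FinAb$. Two slips occur at $n=1$. The kernel of $K_1(A)\to K_1(\tilde A)$ is not ``$0$ since $A\into\tilde A$'': for non-semi-local $A$ (affine curves are within the scope of the statement) there may be an $SK_1$-contribution, although finiteness of this kernel already follows from your own finiteness of $\pi_1$ of the fibre, so nothing is lost. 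For the same reason the cokernel of $K_1(A)\to K_1(\tilde A)$ is not simply $\mult{\tilde A}/\mult A$. The cleanest repair inside your set-up is to note that $K_0(A/\frak f)\to K_0(\tilde A/\frak f)$ is injective (the extension $A/\frak f\into\tilde A/\frak f$ is injective and finite, so ranks at all residue fields of $A/\frak f$ are detected), whence $\pi_0$ of your fibre is finite as well and the $n=1$ case follows uniformly; alternatively run the paper's ladder argument in $Ab/FinAb$, which treats all $n\ge1$ at once.

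The genuine gap is in the ``moreover'' clause, and it is the one you flag yourself --- but it is not merely a matter of extracting something carefully: the check you defer, that the kernel and cokernel of $K_*(A/\frak f)\to K_*(\tilde A/\frak f)$ have no $\ell$-torsion, is false. Already for a split node over $\bb F_q$ the map $A/\frak f\to\tilde A/\frak f$ is the diagonal $\bb F_q\to\bb F_q\times\bb F_q$, and the cokernel on $K_1$ is $\bb F_q^\times\cong\bb Z/(q-1)\bb Z$, which has $\ell$-torsion for every prime $\ell$ dividing $q-1$; all such $\ell\neq p$ are invertible in $A/\frak f$. So no argument for the $\ell$-torsion statement can be allowed to factor through the absolute $K$-theory of the conductor quotients, and your final assembly step cannot be completed as written. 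The paper's proof is organised differently precisely here: it quotes \cite[Consequence 1.4]{Weibel1982} to the effect that the relative groups $K_n(A,\frak f^r)$ and $K_n(\tilde A,\frak f^r)$ themselves have no $\ell$-torsion, and then reruns the quotient-category argument with the Serre subcategory of finite abelian groups without $\ell$-torsion, i.e.\ it compares the relative groups $K_n(A,\frak f)\to K_n(\tilde A,\frak f)$ rather than the quotient rings. (Concerning your first ``obstacle'': the vanishing of relative mod-$\ell$ $K$-theory of a nilpotent ideal when $\ell$ is invertible needs no McCarthy/$TC$ input --- relative $K$-groups of a nilpotent ideal are modules over the big Witt vectors $\bb W(R)$, hence $\bb Z[1/\ell]$-modules when $\ell\in\mult R$; this is exactly the circle of ideas behind the Weibel reference.) Your mod-$\ell$ pro-excision argument does correctly show that $K_n(A,\tilde A,\frak f)$ has no $\ell$-torsion, equivalently that $K_n(A,\frak f)\to K_n(\tilde A,\frak f)$ has kernel and cokernel which are finite of order prime to $\ell$; but the passage from this relative comparison to the stated claim about $K_n(A)\to K_n(\tilde A)$ is absent from your proposal, and that passage --- not the birelative vanishing --- is where the hypothesis on $\ell$ genuinely has to do its work, as the node example above makes plain even in degree one.
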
 
\begin{proof}
Consider the following diagram of spectra
\[\xymatrix{
K(A,B,\frak f) \ar[r] & K(A,\frak f)\ar[r]\ar[d] & K(B,\frak f)\ar[d]\\
&K(A)\ar[r]\ar[d] & K(B)\ar[d]\\
&K(A/\frak f)\ar[r] & K(B/\frak f)
}\]
in which the two columns and the top row are homotopy fibre sequences. According to proposition \ref{proposition_standard_consequences}(iv), if $n\ge 1$ then $K_n(A,B,\frak f)$ embeds into $K_n(A/\frak f^r,B/\frak f^r,\frak f/\frak f^r)$ for $r\gg 0$; but this latter group is finite by the previous lemma, and so $K_n(A,B,\frak f)$ is finite. Hence, in the Serre quotient category $Ab/FinAb$ we have $K_n(A,\frak f)\cong K_n(B,\frak f)$ and $K_n(A/\frak f)\cong K_n(B/\frak f)\cong 0$ for all $n\ge1$; the claim follows.

If $\ell$ is a prime number invertible in $A/\frak f$, then the relative groups $K_n(A,\frak f^r)$, $K_n(B,\frak f^r)$ have no $\ell$-torsion by \cite[Consequence 1.4]{Weibel1982}, so we may repeat the previous argument, replacing the category $FinAb$ by the category of finite abelian groups without $\ell$-torsion.
\end{proof}

Our next aim is to show that the long exact, Mayer--Vietoris sequences of pro abelian groups from section \ref{section_pro_excision} can actually be realised to the level to profinite groups; this is a formal consequence of the following lemma:

\begin{lemma}\label{lemma_mittag_leffler}
Let \[\cdots \to A_n\to\projlimf_r A_n(r)\to \projlimf_r B_n(r)\to A_{n-1}\to\cdots\] be a long exact sequence in $\op{Pro}Ab$, where $A_n, A_n(r), B_n(r)\in Ab$. Suppose that the group $B_n(r)$ is finite for all $n,r$. Then the resulting complex of groups \[\cdots\to A_n\to\projlim_r A_n(r)\to \projlim_r B_n(r)\to A_{n-1}\to\cdots\] is exact.
\end{lemma}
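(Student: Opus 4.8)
The plan is to split the given long exact sequence of pro abelian groups into short exact sequences in the abelian category $\op{Pro}Ab$, apply the realisation functor $\projlim\colon\op{Pro}Ab\to Ab$ of Remark \ref{remark_pro_cats}, and use that it is left exact with first derived functor $\projlim^1$: for a short exact sequence $0\to X\to Y\to Z\to 0$ in $\op{Pro}Ab$ the induced sequence $0\to\projlim X\to\projlim Y\to\projlim Z\to\projlim^1 X$ is exact, so $\projlim$ preserves short exactness as soon as $\projlim^1 X=0$. The whole statement thus reduces to the vanishing of a few $\projlim^1$-terms.

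Concretely, for each $n$ let $Z_n=\ker(A_n\to\projlimf_rA_n(r))$, $Y_n=\ker(\projlimf_rA_n(r)\to\projlimf_rB_n(r))$ and $W_n=\ker(\projlimf_rB_n(r)\to A_{n-1})$, viewed as objects of $\op{Pro}Ab$; by exactness of the given sequence these fit into short exact sequences
\[0\to Z_n\to A_n\to Y_n\to 0,\qquad 0\to Y_n\to\projlimf_rA_n(r)\to W_n\to 0,\qquad 0\to W_n\to\projlimf_rB_n(r)\to Z_{n-1}\to 0\]
in $\op{Pro}Ab$. Since $\projlim$ sends a constant pro abelian group to itself, $\projlim A_n=A_n$; hence, granting $\projlim^1Z_n=\projlim^1Y_n=\projlim^1W_n=0$ for all $n$, each of these three short exact sequences remains short exact after applying $\projlim$, and splicing the three resulting sequences together — noting that $\projlim$ carries the composite $A_n\to\projlimf_rA_n(r)$ to $A_n\to\projlim_rA_n(r)$, and similarly for the remaining maps — reproduces precisely the complex in the statement and exhibits it as exact.

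What remains is the vanishing of these three $\projlim^1$ groups, and this is the only step needing care with pro-categories. One invokes the standard facts about $\bb N$-indexed pro-objects that a morphism out of, resp.\ into, a constant pro-object is — under the canonical description of morphism sets — a level morphism with constant source, resp.\ target, and that kernels and images of level morphisms are computed level-wise. Writing $A_n\to\projlimf_rA_n(r)$ as a level morphism $\{f_r\colon A_n\to A_n(r)\}_r$ identifies $Y_n$ with the inverse system $\{\Im f_r\}_r$, whose transition maps are surjective because all the $f_r$ have the same source $A_n$; a system with surjective transition maps has $\projlim^1=0$. Likewise the morphism $\projlimf_rB_n(r)\to A_{n-1}$ factors through some $B_n(r_0)$ and so is, for $r\ge r_0$, a level morphism $\{\psi_r\colon B_n(r)\to A_{n-1}\}$, whence $W_n\cong\{\ker\psi_r\}_{r\ge r_0}$ and $Z_{n-1}\cong\{\Im\psi_r\}_{r\ge r_0}$; as $\ker\psi_r$ is a subgroup and $\Im\psi_r$ a quotient of the finite group $B_n(r)$, these are inverse systems of finite abelian groups, which are automatically Mittag--Leffler — for a fixed index the images of the later terms form a descending chain of subgroups of a finite group — and therefore have $\projlim^1=0$. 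Thus $\projlim^1W_n=0$ and $\projlim^1Z_n=0$ for all $n$. I expect this final paragraph to be the main obstacle: the real content of the lemma is that, after passing to level-wise representatives, the pro-objects $Z_n$, $Y_n$, $W_n$ are built either from finite groups or from surjective transition maps, which is exactly what kills their $\projlim^1$ and so makes realisation preserve exactness.
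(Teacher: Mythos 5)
Your argument is correct, but it takes a genuinely different route from the paper's proof. The paper strictifies the whole long exact sequence at once: it assumes, without loss of generality, that the pro-exact sequence arises from an inverse system of complexes $C_\bullet(r)=\big(\cdots\to A_n\to A_n(r)\to B_n(r)\to A_{n-1}\to\cdots\big)$, checks that every term of this tower of complexes satisfies the Mittag--Leffler condition (the image of $A_n$ in $A_n(r)$ has surjective transition maps, while the image of $A_n(r)$ in $B_n(r)$ is finite), and then quotes the standard $\projlim$--$\projlim^1$ short exact sequence for the homology of an inverse limit of complexes \cite[Thm.~3.5.8]{Weibel1994}, whose outer terms vanish because the pro abelian groups $\projlimf_r H_n(C_\bullet(r))$ are zero. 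You instead stay inside $\op{Pro}Ab$: you chop the long exact sequence into short exact sequences via kernels and images, apply the left exact realisation functor, and kill the relevant $\projlim^1$ terms by exhibiting level representatives --- towers with surjective transition maps for the images of the constant objects $A_n$, and towers of finite groups for the kernels and images of the maps into the $A_{n-1}$. The trade-off is this: your version only needs to strictify individual morphisms in and out of constant pro-objects, which is immediate from the $\Hom$-formula in $\op{Pro}Ab$, whereas the paper's level-wise replacement of the entire long exact sequence is asserted rather than proved and is arguably its fiddliest step; on the other hand, the paper leans on a ready-made hyperhomology result, while you lean on the derived-functor formalism for realisation --- equivalently, on the fact that $\projlim^1$ of a tower depends only on its pro-isomorphism class, so it may be computed on any level representative. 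That point is consistent with remark \ref{remark_pro_cats} and is easily checked directly for towers, but it is the one assertion in your write-up that deserves a sentence of justification. Both proofs ultimately exploit the same two sources of Mittag--Leffler: surjectivity coming from constant sources, and finiteness of the groups $B_n(r)$.
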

\begin{proof}
Firstly, there is no loss of generality in assuming that the long exact sequence in $\op{Pro}Ab$ arises from an inverse system of complexes of abelian groups \[C_\bullet(r)=\qquad \cdots\to A_n\to A_n(r)\to B_n(r)\to A_{n-1}\to \cdots\] For each $n$, the pro abelian group $\projlimf_r\op{Im}(A_n\to A_n(r))$ has surjective transition maps, while $\projlimf_r\op{Im}(A_n(r)\to B_n(r))$ is a limit of finite groups and hence satisfies the Mittag-Leffler condition; it easily follows that $\projlimf_r A_n(r)$ also satisfies the Mittag-Leffler condition.

Thus all terms in the inverse system of complexes $\cdots\to C_\bullet(2)\to C_\bullet(1)$ satisfy the Mittag-Leffler condition, so a standard result on hyper derived functors (e.g., \cite[Thm.~3.5.8]{Weibel1994}) states that there are short exact sequences in homology \[0\to {\projlim_r}^1H_{n+1}(C_\bullet(r))\to H_n(\projlim_rC_\bullet(r))\to\projlim_rH_n(C_\bullet(r))\to 0.\] The two outer groups vanish since the pro abelian groups $\projlimf_r H_n(C_\bullet(r))=H_n(\projlimf_rC_\bullet(r))$ are zero; hence $H_n(\projlim_rC_\bullet(r))$ vanishes for all $n$, as desired.
\comment{
-------------------------------------

The proof is a case of checking that the Mittag-Leffler condition is satisfied in all the necessary places. There is no loss of generality in assuming that the maps $\projlimf_r A_n(r)\to \projlimf_r B_n(r)$ are strict, i.e.~come from a family of compatible maps $A_n(r)\to B_n(r)$.

The boundary map $\projlimf_r B_{n+1}(r)\to A_n$ factors through $B_n(r)$ for $r\gg 0$, hence its image $I_n$ is a finite subgroup of $A_n$. Next put $I_n(r)=\op{Im}(A_n(r)\to B_n(r))$, which is finite since each $B_n(r)$ is finite. This breaks up our long exact sequence into short exact sequences as follows:
\[0\to A_n/I_n \to\projlimf_rA_n(r)\to\projlimf_r I_n(r)\to 0\]
\[0\to \projlimf_rI_n(r)\to\projlimf_rB_n(r)\to I_{n-1}\to 0\]

The realisation functor $\projlim:\op{Pro}Ab\to Ab$ applied to these short exact sequences again gives short exact sequences: in the first case because $\projlim^1$ of a constant system vanishes, and in the second case because $\projlim^1$ of a system of finite abelian groups vanishes. In conclusion we get short exact sequences of groups
\[0\to A_n/I_n \to\projlim_rA_n(r)\to\projlim_r I_n(r)\to 0\]
\[0\to \projlim_rI_n(r)\to\projlim_rB_n(r)\to I_{n-1}\to 0,\]
which may be spliced together to give the desired long exact sequence of groups.}
\end{proof}

\begin{corollary}\label{corollary_group_versions_of_les}
Let $A$ be a one-dimensional, Noetherian, reduced semi-local ring such that $A\to\tilde A$ is finite, and all of whose residue fields are finite; let $\frak m$, $\frak M$ denote the Jacobson radicals of $A$, $\tilde A$. Then there are long exact, Mayer--Vietoris sequences of abelian groups
\[\cdots\to K_n(A)\to\projlim_r K_n(A/\frak m^r)\oplus K_n(B)\to\projlim_r K_n(B/\frak M^r)\to\cdots\]
\[\cdots\to K_n(A,\frak m)\to\projlim_r K_n(A/\frak m^r,\frak m/\frak m^r)\oplus K_n(B,\frak M)\to\projlim_r K_n(B/\frak M^r,\frak M/\frak M^r)\to\cdots\]
\end{corollary}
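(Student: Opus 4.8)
The plan is to deduce both sequences from Lemma~\ref{lemma_mittag_leffler}, applied to the long exact sequences of pro abelian groups furnished by Example~\ref{example_main_application_of_birelative_vanishing} (equivalently, Corollary~\ref{corollary_main_application_of_birelative_vanishing}). First one observes that these pro-sequences already have the shape required by that lemma: since $K_n(\tilde A,\frak M)$ and $K_n(\tilde A)$ are constant in $r$, the middle terms $\projlimf_r K_n(A/\frak m^r,\frak m/\frak m^r)\oplus K_n(\tilde A,\frak M)$ and $\projlimf_r K_n(A/\frak m^r)\oplus K_n(\tilde A)$ are themselves pro abelian groups indexed by $r$. Thus the relative sequence reads $\cdots\to K_n(A,\frak m)\to\projlimf_r A_n(r)\to\projlimf_r B_n(r)\to K_{n-1}(A,\frak m)\to\cdots$ with $A_n(r):=K_n(A/\frak m^r,\frak m/\frak m^r)\oplus K_n(\tilde A,\frak M)$ and $B_n(r):=K_n(\tilde A/\frak M^r,\frak M/\frak M^r)$, and similarly for the non-relative sequence with $B_n(r):=K_n(\tilde A/\frak M^r)$. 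It then remains only to verify the finiteness hypothesis imposed on the $B_n(r)$.

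Because $A$ is semi-local with finite residue fields, each $A/\frak m^r$ has finite length with finite residue fields, hence is a finite ring; since $\tilde A$ is finite over $A$ it is again semi-local with finite residue fields, so each $\tilde A/\frak M^r$ is likewise a finite ring. By Lemma~\ref{lemma_thanks_to_Vigleik}, $K_n$ of a finite ring is finite for every $n\ge1$, so $B_n(r)$ is finite for all $n\ge1$ and all $r$. In the relative case one moreover has $K_0(\tilde A/\frak M^r,\frak M/\frak M^r)=0$, since idempotents and units lift along the nilpotent ideal $\frak M/\frak M^r$; hence $B_n(r)$ is finite for all $n\ge0$, Lemma~\ref{lemma_mittag_leffler} applies verbatim, and the asserted long exact relative Mayer--Vietoris sequence of profinite abelian groups follows.

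For the non-relative sequence the same argument works in degrees $n\ge1$; the only point needing care is $n=0$, where $B_0(r)=K_0(\tilde A/\frak M^r)$ is free abelian of finite rank (the number of maximal ideals of $\tilde A$) rather than finite. However, the transition maps $K_0(\tilde A/\frak M^{r+1})\to K_0(\tilde A/\frak M^r)$ are isomorphisms because idempotents lift along the nilpotent ideal $\frak M^r/\frak M^{r+1}$, and likewise for $\{K_0(A/\frak m^r)\}_r$; consequently every term of the inverse system of complexes underlying the non-relative pro-sequence is Mittag--Leffler -- the positive-degree terms because they are finite, the degree-zero terms because the systems are pro-constant -- and this is all that is actually used in the proof of Lemma~\ref{lemma_mittag_leffler}. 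Re-running that proof therefore yields exactness of $\cdots\to K_n(A)\to\projlim_r K_n(A/\frak m^r)\oplus K_n(\tilde A)\to\projlim_r K_n(\tilde A/\frak M^r)\to\cdots$ as well. (Alternatively one may simply assert this sequence to be exact away from the terminal $K_0$-term, consistent with the paper's convention that its Mayer--Vietoris sequences end with a possibly non-surjective map between $K_0$-groups.)

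I do not anticipate any genuine obstacle: the substance is the combination of pro-excision (Corollary~\ref{corollary_main_application_of_birelative_vanishing}), finiteness of $K$-theory of finite rings (Lemma~\ref{lemma_thanks_to_Vigleik}), and the $\projlim^1$/Mittag--Leffler formalism (Lemma~\ref{lemma_mittag_leffler}). The only mildly delicate step is the degree-zero bookkeeping for the non-relative sequence described above, which is purely cosmetic.
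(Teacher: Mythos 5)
Your proof is correct and is essentially the paper's own argument: apply Lemma \ref{lemma_mittag_leffler}, together with Lemma \ref{lemma_thanks_to_Vigleik} for the finiteness of $K_n$ ($n\ge1$) of the finite rings $A/\frak m^r$ and $\tilde A/\frak M^r$, to the pro-excision Mayer--Vietoris sequences of Example \ref{example_main_application_of_birelative_vanishing}. Your extra bookkeeping in degree zero of the non-relative sequence (where $K_0(\tilde A/\frak M^r)$ is not finite but the relevant systems are pro-constant by nil-invariance of $K_0$, so the Mittag--Leffler argument still runs) addresses a point the paper's one-line proof passes over silently, and either of your fixes is adequate.
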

\begin{proof}
This follows by applying the previous two lemmas to the exact sequences of example \ref{example_main_application_of_birelative_vanishing}.
\end{proof}

In order to prove the analogue in the finite residue field case of our main theorem \ref{theorem_main_theorem_II}, we would like to check condition (i) of the following corollary; the corollary shows that this would follow from showing, informally, that $\projlim_r K_{n+1}(A/\frak m^r)$ is `open' in $\projlim_r K_{n+1}(\tilde A/\frak M^r)$ and that $K_{n+1}(\tilde A)$ is `dense'. Despite being a conceivable `continuity' property of $K$-theory, we will only be able to prove this in special cases.

\begin{corollary}\label{corollary_application_of_group_version_of_les}
Let notation be as in the previous corollary. Then the following are equivalent:
\begin{enumerate}
\item $K_n(A)\to K_n(A/\frak m^r)\oplus K_n(\tilde A)$ is injective for $r\gg0$.
\item $\projlim_r K_{n+1}(A/\frak m^r)\oplus K_{n+1}(\tilde A)\to\projlim_r K_{n+1}(\tilde A/\frak M^r)$ is surjective.
\end{enumerate}
\end{corollary}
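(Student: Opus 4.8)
The plan is to deduce both (i) and (ii) from the single intermediate assertion that the natural map
\[\psi\colon K_n(A)\longrightarrow\projlim_r K_n(A/\frak m^r)\oplus K_n(\tilde A)\]
is injective, and then to relate these via the long exact Mayer--Vietoris sequence of abelian groups provided by corollary \ref{corollary_group_versions_of_les}. Throughout one takes $n\ge1$.

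First I would show that (i) is equivalent to injectivity of $\psi$. The point requiring care is that (i) concerns the maps into the individual groups $K_n(A/\frak m^r)\oplus K_n(\tilde A)$ for large $r$, whereas $\psi$ maps into the inverse limit. Put $L:=\ker\bigl(K_n(A)\to K_n(\tilde A)\bigr)$, which is a \emph{finite} group by proposition \ref{proposition_finite_kernel_and_cokernel}. For each $r$ the kernel of the map in (i) is $L_r:=L\cap\ker\bigl(K_n(A)\to K_n(A/\frak m^r)\bigr)$; since $K_n(A)\to K_n(A/\frak m^r)$ factors through $K_n(A/\frak m^{r+1})$, the $L_r$ form a descending chain $L_1\supseteq L_2\supseteq\cdots$ of subgroups of the finite group $L$. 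Such a chain stabilises, and its eventual value equals $\bigcap_r L_r$, which is exactly $\ker\psi$. Hence (i) holds if and only if $\ker\psi=0$.

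Next I would insert this into the relevant portion of the exact sequence of corollary \ref{corollary_group_versions_of_les}, namely
\[\projlim_r K_{n+1}(A/\frak m^r)\oplus K_{n+1}(\tilde A)\xrightarrow{\phi}\projlim_r K_{n+1}(\tilde A/\frak M^r)\xrightarrow{\partial}K_n(A)\xrightarrow{\psi}\projlim_r K_n(A/\frak m^r)\oplus K_n(\tilde A).\]
Exactness at $K_n(A)$ gives $\ker\psi=\Im\partial$, so $\psi$ is injective if and only if $\partial=0$; exactness at the middle term gives $\ker\partial=\Im\phi$, so $\partial=0$ if and only if $\phi$ is surjective. As surjectivity of $\phi$ is precisely condition (ii), one obtains the chain of equivalences (i) $\iff$ $(\ker\psi=0)$ $\iff$ $(\partial=0)$ $\iff$ (ii).

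I expect the only substantive step to be the first: passing from ``the kernel vanishes for $r\gg0$'' to ``the kernel into the inverse limit vanishes'' requires a genuine finiteness input, and this is exactly where the finite-residue-field hypothesis enters, through proposition \ref{proposition_finite_kernel_and_cokernel}; without such a finiteness the descending chain of kernels need not stabilise. Everything after that is a formal diagram chase in the Mayer--Vietoris sequence.
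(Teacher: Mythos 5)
Your proof is correct and follows the paper's own argument: the finiteness of $\ker(K_n(A)\to K_n(\tilde A))$ from proposition \ref{proposition_finite_kernel_and_cokernel} is used to identify (i) with injectivity of the map into $\projlim_r K_n(A/\frak m^r)\oplus K_n(\tilde A)$, and the rest is a formal exactness chase in the first long exact sequence of corollary \ref{corollary_group_versions_of_les}. You have merely spelled out the stabilising-chain argument that the paper leaves implicit, which is a fair elaboration rather than a different route.
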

\begin{proof}
Since the kernel of $K_n(A)\to K_n(\tilde A)$ is finite by proposition \ref{proposition_finite_kernel_and_cokernel}, one sees that (i) is equivalent to the injectivity of $K_n(A)\to \projlim_rK_n(A/\frak m^r)\oplus K_n(\tilde A)$. The result therefore follows from the first long exact sequence of the previous corollary.
\end{proof}

\subsection{The global case}\label{subsection_global}
Here we apply pro-excision and the preliminary observations of the previous section to deduce finiteness properties of $K$-groups of orders in number fields and non-smooth curves over finite fields.

We begin with the geometric case. A celebrated theorem due to G.~Harder \cite{Harder1977} and C.~Soul\'e \cite{Soule1984} states that the higher $K$-groups of a smooth projective curve over a finite field are finite groups; we can remove the smoothness hypothesis:

\begin{theorem}
Let $k$ be a finite field and $X$ a one-dimensional, reduced scheme, separated and of finite type over $k$. Then $K_n(X)\to K_n(\tilde X)$ has finite kernel and cokernel for all $n\ge 1$. In particular, if $X/k$ is proper then $K_n(X)$ is finite for all $n\ge 1$.
\end{theorem}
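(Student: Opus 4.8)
The plan is to reduce to the affine case, where the first assertion is exactly Proposition~\ref{proposition_finite_kernel_and_cokernel}, and then to deduce the proper case by combining the first assertion with Harder--Soul\'e's finiteness theorem.

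For the first assertion I would argue by induction on the size of an affine cover. Being of finite type over a field, $X$ is Noetherian and therefore admits a finite affine open cover $X=U_1\cup\dots\cup U_m$; since $X$ is \emph{separated}, every finite intersection $U_{i_1}\cap\dots\cap U_{i_j}$ is again an affine, one-dimensional, reduced, finite-type $k$-scheme. Also, the normalisation morphism $\nu\colon\tilde X\to X$ is finite (as $X$ is excellent), hence affine, and it is compatible with passing to open subschemes, so $\nu^{-1}(U)=\tilde X\times_XU$ is the normalisation of any open $U\subseteq X$. Now induct on $m$: the base case $m=1$ is Proposition~\ref{proposition_finite_kernel_and_cokernel}. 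For the inductive step write $X=V\cup U_m$ with $V=U_1\cup\dots\cup U_{m-1}$; then $V$, $U_m$ and $V\cap U_m=\bigcup_{i<m}(U_i\cap U_m)$ are each covered by fewer than $m$ affines, so the inductive hypothesis applies to them, and --- by the previous sentence --- to their normalisations. The Zariski--Mayer--Vietoris sequence for $K$-theory (Thomason--Trobaugh), which is exact in degrees $\ge1$, then yields a map, induced by $\nu$, from the long exact sequence
\[\cdots\to K_n(X)\to K_n(V)\oplus K_n(U_m)\to K_n(V\cap U_m)\to K_{n-1}(X)\to\cdots\]
to the corresponding sequence for $\tilde X=\tilde V\cup\widetilde{U_m}$. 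Pass to the Serre quotient $Ab/FinAb$: this is an abelian category, the functor $Ab\to Ab/FinAb$ is exact, and a morphism of abelian groups becomes invertible there precisely when it has finite kernel and cokernel. By the inductive hypothesis the vertical maps on the $V\oplus U_m$ and $V\cap U_m$ terms become isomorphisms in every degree $\ge1$, so the five lemma in $Ab/FinAb$ forces $K_n(X)\to K_n(\tilde X)$ to be an isomorphism there for all $n\ge1$; that is, it has finite kernel and cokernel. (The instances of the five lemma that are used only involve the groups $K_m$ with $m\ge1$, so the possible failure of exactness of the connective Mayer--Vietoris sequence near $K_0$ plays no role.)

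For the second assertion, suppose $X$ is proper over $k$. Then $\tilde X\to\Spec k$ is proper, being the composite of the finite morphism $\nu$ with $X\to\Spec k$; moreover $\tilde X$ is normal of dimension $\le1$, hence regular, so each of its finitely many connected components is a smooth projective curve over a finite extension of $k$. By the theorem of Harder~\cite{Harder1977} and Soul\'e~\cite{Soule1984} recalled above, $K_n$ of such a curve is finite for all $n\ge1$, whence $K_n(\tilde X)$ is finite for $n\ge1$. Combining with the first assertion: $K_n(X)\to K_n(\tilde X)$ has finite kernel, and its image is a subgroup of the finite group $K_n(\tilde X)$, so $K_n(X)$ is finite for all $n\ge1$.

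The only delicate point --- and the main obstacle --- is the globalisation of Proposition~\ref{proposition_finite_kernel_and_cokernel} to non-affine $X$: one needs that the affine cover is compatible with normalisation and that all its intersections remain affine (which is exactly where the separatedness of $X$ is used), and one must run the comparison of Mayer--Vietoris sequences entirely in degrees $\ge1$, where connective $K$-theory agrees with Bass--Thomason--Trobaugh non-connective $K$-theory and the Zariski descent square is homotopy cartesian. Everything else is formal, given Proposition~\ref{proposition_finite_kernel_and_cokernel} and the classical finiteness of the $K$-theory of smooth projective curves over finite fields.
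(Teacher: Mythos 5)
Your argument is correct and follows essentially the same route as the paper: an induction on the number of affine patches, using Proposition~\ref{proposition_finite_kernel_and_cokernel} as the base case and the Thomason--Trobaugh Zariski Mayer--Vietoris sequences (compared via the normalisation map, in degrees $\ge1$) for the inductive step, then Harder--Soul\'e applied to $\tilde X$ for the proper case. The extra care you take (separatedness giving affine intersections, compatibility of normalisation with open subschemes, working in the Serre quotient $Ab/FinAb$) only makes explicit what the paper leaves implicit.
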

\begin{proof}
The first claim is a straightforward induction, using proposition \ref{proposition_finite_kernel_and_cokernel}, on the number of affine patches required to cover $X$. Indeed, let $\pi:\tilde X\to X$ be the normalisation map, and $U,V\subseteq X$ an open cover of $X$ such that the claim has been proved for $U$, $V$, and $U\cap V$. Then Thomason--Trobaugh Zariski descent \cite[Thm.~8.1]{Thomason1990} provides us with long exact Mayer--Vietoris sequences
\[\xymatrix{
\cdots\ar[r] & K_n(X) \ar[r]\ar[d] & K_n(U)\oplus K_n(V) \ar[r]\ar[d] & K_n(U\cap V) \ar[r]\ar[d] & \cdots\\
\cdots\ar[r] & K_n(\tilde X) \ar[r] & K_n(\tilde U)\oplus K_n(\tilde V) \ar[r] & K_n(\tilde U\cap \tilde V) \ar[r] & \cdots
}\]
By hypothesis the right and central vertical arrows have finite kernel and cokernel for $n\ge 1$, whence the same is true of the left vertical arrow.

The second claim follows by applying the Harder--Soul\'e theorem to $\tilde X$.
\end{proof}

The analogue in the arithmetic case of Harder--Soul\'e's result is that of A.~Borel \cite{Borel1974} and D.~Quillen \cite{Quillen1973}, stating that the $K$-groups of the ring of integers of a number field are finitely generated and precisely identifying their ranks.

\begin{theorem}
Let $F$ be a number field with ring of integers $\roi$, and let $A\subseteq \roi$ be an order (i.e. any subring with fractions $F$). Then $K_n(A)$ is a finitely generated abelian group and $K_n(A)\otimes\bb Q\to K_n(\roi)\otimes\bb Q$ is an isomorphism for all $n\ge 1$.
\end{theorem}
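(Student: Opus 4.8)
The plan is to deduce the theorem quickly from Proposition~\ref{proposition_finite_kernel_and_cokernel} together with Quillen's and Borel's theorems on the $K$-groups of rings of integers; no further $K$-theoretic work is needed.

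First I would verify that $A$ satisfies the hypotheses of Proposition~\ref{proposition_finite_kernel_and_cokernel}. Being a subring of the field $F$, the ring $A$ is a domain, hence reduced; being a $\bb Z$-submodule of the finitely generated $\bb Z$-module $\roi$, it is itself finitely generated over $\bb Z$, hence a Noetherian ring; and since $\bb Z\sseq A$ is integral, $A$ has Krull dimension one. Each residue field of $A$ is a field finitely generated as a ring, hence finite. Finally, since $\roi$ is integrally closed, integral over $\bb Z\sseq A$, and has fraction field $F=\Frac A$, it is the integral closure $\tilde A$ of $A$ in $\Frac A$, and it is finite over $A$ because it is already finite over $\bb Z$. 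Thus $\tilde A=\roi$ and $A\to\tilde A$ is finite, so Proposition~\ref{proposition_finite_kernel_and_cokernel} applies and the map $K_n(A)\to K_n(\roi)$ has finite kernel $N$ and finite cokernel $C$ for every $n\ge 1$. (In particular Corollary~\ref{corollary_main_application_of_birelative_vanishing}, and everything downstream, applies to such $A$.)

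Now I would invoke Quillen's theorem \cite{Quillen1973} that $K_n(\roi)$ is a finitely generated abelian group. The image of $K_n(A)\to K_n(\roi)$ has finite index $|C|$ in $K_n(\roi)$, hence is finitely generated; and $K_n(A)$ is an extension of this image by the finite group $N$, hence is finitely generated. For the rational statement, applying $-\otimes_{\bb Z}\bb Q$ to the four-term exact sequence $0\to N\to K_n(A)\to K_n(\roi)\to C\to 0$ annihilates the finite groups $N$ and $C$, so $K_n(A)\otimes\bb Q\to K_n(\roi)\otimes\bb Q$ is an isomorphism; the common rank is then the one computed by Borel \cite{Borel1974}. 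There is no real obstacle here: all the $K$-theoretic content is already contained in Proposition~\ref{proposition_finite_kernel_and_cokernel}, and the only point requiring (minor) care is checking that orders do satisfy its hypotheses, which amounts to the standard finiteness properties of orders recalled in the previous step.
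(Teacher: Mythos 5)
Your proposal is correct and follows essentially the same route as the paper: verify that an order satisfies the hypotheses of proposition \ref{proposition_finite_kernel_and_cokernel} with $\tilde A=\roi$, then conclude from Quillen's finite generation and Borel's rank computation, the finite kernel and cokernel being killed by $-\otimes_{\bb Z}\bb Q$. The paper merely compresses the verification of the hypotheses and the diagram chase that you spell out explicitly.
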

\begin{proof}
$A$ automatically satisfies the conditions of proposition \ref{proposition_finite_kernel_and_cokernel} and $\tilde A=\roi$, whence the claims follows from Borel--Quillen.
\end{proof}

\subsection{The local case: finite $\bb Z_p$-algebras}\label{subsection_p_adic_orders}
In this section we will apply pro-excision to the study of certain finite $\bb Z_p$-algebras; to be precise, we will be interested in rings $A$ satisfying the following equivalent conditions:
\begin{enumerate}
\item $A$ is a reduced $\bb Z_p$-algebra which is finitely generated and torsion-free as a $\bb Z_p$-module.
\item $A$ is a one-dimensional, Noetherian, reduced, complete semi-local ring, of mixed characteristic $(0,p)$, and having finite residue fields.
\end{enumerate}
If $A$ satisfies these conditions then its total quotient ring $\Frac A$ is a finite product of finite extensions of $\bb Q_p$, and its normalisation $\tilde A$ is the product of their rings of integers; the Jacobson radicals of $A$ and $\tilde A$ will always be denoted $\frak m$ and $\frak M$ respectively. Note that $A\to\tilde A$ is a finite morphism because $A$, being complete, is excellent and hence has finite normalisation \cite[7.8.3]{EGA_IV_II}.

It would be more straightforward (and intuitive) to assume in addition that $A$ is a local domain, but this restriction would later cause problems.

\begin{example}\label{examples_p_adic_orders}
Here we offer some examples and basic properties of such rings:
\begin{enumerate}
\item Let $\roi$ be the ring of integers of a finite extension of $\bb Q_p$, and let $\frak p$ be the maximal ideal of $\roi$. Then $A:=\bb Z_p+\frak p^s$ satisfies the above conditions for any $s\ge 1$; moreover, $\tilde A=\roi$ and $A/\frak m=\bb F_p$.
\item Let $\roi_1,\dots,\roi_n$ be rings of integers of finite extensions of $\bb Q_p$, and let $\frak p_1,\dots,\frak p_n$ denote their maximal ideals. Let $k$ be a finite field contained in all $\roi_1/\frak p_1,\dots,\roi_n/\frak p_n$; e.g., $k=\bb F_p$ suffices. Then \[A:=\big\{(f_i)\in\prod\nolimits_i\roi_i:f_i\mbox{ mod } {\frak p_i}\mbox{ belongs to $k$ and does not depend on }i\big\}\] is a seminormal local ring satisfying the above conditions, with normalisation $\prod_i\roi_i$ and residue field $k$.
\item Suppose $A$ is local and satisfies the above conditions. Then Hensel's lemma implies that $A$ contains the Teichm\"uller lifts of its residue field $A/\frak m=\bb F_q$. Since $A$ also contains $\bb Z_p$, we deduce it contains $\bb Z_q:=W(\bb F_q)$.
\item If $\roi$ is the ring of integers of a finite extension of $\bb Q_p$ and $G$ is a finite group, then the group algebra $\roi G$ satisfies the above conditions. The only condition which is not immediate is that $\roi G$ is reduced, but it is enough to check this when $G$ is cyclic, in which case $\roi G=\roi[X]/\pid{X^n-1}$ for some $n\ge 1$, and this is reduced since $X^n-1$ has no repeated factors in the UFD $\roi[X]$. For example, if $G$ is the cyclic group of order $p$, then $\bb Z_pG$ is the seminormal ring constructed in (ii) from the fields $\bb Q_p$ and $\bb Q_p(\zeta_p)$; i.e., \[\bb Z_pG\cong\{(f,g)\in\bb Z_p\times\bb Z_p[\zeta_p]:f\mbox{ mod }\pid p=g\mbox{ mod } \pid{1-\zeta}\}.\]
\end{enumerate}
\end{example}

The {\em topological $K$-groups} of a semi-local ring $A$ with Jacobson radical $\frak m$ are defined by \[K_n^\sub{top}(A):=\pi_n(K^\sub{top}(A)),\quad K^\sub{top}(A):=\holim_rK(A/\frak m^r)\] The following isomorphisms, the second of which is a deep theorem of A.~Suslin and A.~Yufryakov, relate completed $K$-groups, topological $K$-groups, and $K$-groups with $\hat{\bb Z}$-coefficients (which we comment on after the lemma); these isomorphisms will be essential tools for our study of the $K$-groups of finite $\bb Z_p$-algebras.

\begin{lemma}\label{lemma_homotopy_description_of_K_groups}
Let $A$ be a $\bb Z_p$-algebra which is finitely generated as a $\bb Z_p$-module; let $\frak m$ be the Jacobson radical of $A$. For all $n\ge1$, there are canonical isomorphisms \[\projlim_r K_n(A/\frak m^r)\cong K_n^\sub{top}(A)\cong K_n(A;\hat{\bb Z}).\]
\end{lemma}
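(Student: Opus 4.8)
The plan is to establish the two isomorphisms separately: the first, $\projlim_r K_n(A/\frak m^r)\cong K_n^\sub{top}(A)$, by a Milnor $\projlim^1$-argument together with the finiteness of the $K$-groups of finite rings, and the second, $K_n^\sub{top}(A)\cong K_n(A;\hat{\bb Z})$, by passing to finite coefficients, where the prime-to-$p$ part is Gabber's rigidity theorem and the $p$-part is the theorem of Suslin and Yufryakov.

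First some elementary observations. As $A$ is finitely generated over the Noetherian ring $\bb Z_p$ it is Noetherian, and each of its residue fields, being a field which is finitely generated as a $\bb Z_p$-module, is finite; hence $A/\frak m$ is finite, and since $\frak m^i/\frak m^{i+1}$ is then a finitely generated $A/\frak m$-module, each $A/\frak m^r$ is a finite ring. Moreover $p\in\frak m$ and $\frak m^N\subseteq pA$ for some $N$, because $\frak m/pA$ is the nilpotent Jacobson radical of the Artinian ring $A/pA$; so the $\frak m$-adic and $p$-adic topologies on $A$ agree, $A$ is $\frak m$-adically complete, and $(A,\frak m)$ is a Henselian pair, as is each $(A/\frak m^r,\frak m/\frak m^r)$. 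For the first isomorphism, the Milnor sequence for the tower $\{K(A/\frak m^r)\}_r$ reads
\[0\to{\projlim_r}^1 K_{n+1}(A/\frak m^r)\to K_n^\sub{top}(A)\to\projlim_r K_n(A/\frak m^r)\to 0,\]
and by lemma \ref{lemma_thanks_to_Vigleik} each $K_{n+1}(A/\frak m^r)$ is finite, so the tower is Mittag--Leffler, the ${\projlim_r}^1$-term vanishes, and $K_n^\sub{top}(A)\cong\projlim_r K_n(A/\frak m^r)$ for all $n\ge1$; in particular each $K_n^\sub{top}(A)$ is then a profinite abelian group.

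For the second isomorphism I would first work with finite coefficients $\bb Z/m$. Smashing with the finite Moore spectrum $M(\bb Z/m)$ commutes with $\holim_r$, and the finiteness just used again makes the relevant ${\projlim_r}^1$-terms vanish, so $K_n^\sub{top}(A;\bb Z/m)\cong\projlim_r K_n(A/\frak m^r;\bb Z/m)$ for all $n$. Writing $m=p^am'$ with $p\nmid m'$, I would treat the two factors separately. For the prime-to-$p$ factor, $m'$ is invertible in the finite ring $A/\frak m$, so Gabber's rigidity theorem applied to the Henselian pairs $(A,\frak m)$ and $(A/\frak m^r,\frak m/\frak m^r)$ identifies $K(A;\bb Z/m')$ and each $K(A/\frak m^r;\bb Z/m')$ with $K(A/\frak m;\bb Z/m')$, compatibly with the transition maps; hence those transition maps are isomorphisms and $K_n(A;\bb Z/m')\cong\projlim_r K_n(A/\frak m^r;\bb Z/m')$. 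For the $p$-power factor, this is precisely the content of the Suslin--Yufryakov theorem: the canonical map $K(A)\to K^\sub{top}(A)$ is an isomorphism on mod-$p^a$ homotopy groups in positive degrees. Combining the two factors, $K(A)\to K^\sub{top}(A)$ is an isomorphism on $\bb Z/m$-homotopy in positive degrees for every $m$.

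Finally I would pass from finite to $\hat{\bb Z}$-coefficients. Taking the homotopy limit over $m$ of these mod-$m$ isomorphisms gives $K_n(A;\hat{\bb Z})\cong\pi_n$ of the profinite completion of $K^\sub{top}(A)$ for $n\ge1$; and since each $K(A/\frak m^r)$ has finite homotopy groups in positive degrees, hence is $\hat{\bb Z}$-complete there, and homotopy limits of $\hat{\bb Z}$-complete spectra are $\hat{\bb Z}$-complete, the canonical map from $K^\sub{top}(A)$ to its profinite completion is an isomorphism on $\pi_n$ for $n\ge1$ (this also follows directly from the profiniteness of $K_n^\sub{top}(A)$ noted above). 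Concatenating the isomorphisms yields $K_n^\sub{top}(A)\cong K_n(A;\hat{\bb Z})$ for all $n\ge1$. The one genuinely deep ingredient here is the $p$-primary comparison, namely the Suslin--Yufryakov theorem, which I would simply quote; everything else --- the Milnor sequence, finiteness of the $K$-theory of finite rings, Gabber rigidity, and the formal manipulations with completions --- fits together routinely.
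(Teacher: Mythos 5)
Your proof is correct, and for the first isomorphism it is exactly the paper's argument: the Milnor sequence for $\holim_rK(A/\frak m^r)$ plus the finiteness of $K_{n+1}(A/\frak m^r)$ (lemma \ref{lemma_thanks_to_Vigleik}) kills the ${\projlim_r}^1$-term. For the second isomorphism you take a mildly different route: you argue one coefficient ring $\bb Z/m$ at a time, splitting $m=p^am'$, handling the prime-to-$p$ part by Gabber rigidity for the Henselian pairs $(A,\frak m)$ and $(A/\frak m^r,\frak m/\frak m^r)$ and the $p$-primary part by the mod-$p^a$ form of the Suslin--Yufryakov continuity theorem, and then reassembling over $m$. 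The paper instead quotes Suslin--Yufryakov wholesale, as the statement that $K(A)\to K^\sub{top}(A)$ becomes a weak equivalence after profinite completion (full argument in the appendix of \cite{Hesselholt1997}), and then removes the completion from $K^\sub{top}(A)$ by commuting profinite completion with the homotopy limit and using that each $K(A/\frak m^r)$ has finite homotopy groups in positive degrees --- which is precisely your final step too, with the same caveat at $\pi_0$. So the deep input is identical; your packaging makes the prime-to-$p$ contribution elementary (rigidity) at the cost of an extra pass through finite coefficients, and it matches the form in which the continuity theorem is usually stated in the literature. One small remark: your parenthetical claim that completeness of $K^\sub{top}(A)$ in positive degrees ``follows directly from the profiniteness of $K_n^\sub{top}(A)$'' is true but not purely formal --- it uses that profinite abelian groups have no divisible subgroups and are cotorsion, so that $\Hom_{\bb Z}(\bb Q/\bb Z,M)=0$ and $\op{Ext}^1_{\bb Z}(\bb Q/\bb Z,M)\cong M$; the homotopy-limit argument you give first is the cleaner one and is the one the paper uses.
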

\begin{proof}
The topological $K$-groups fit into short exact sequences \[0\to {\projlim_r}^1 K_{n+1}(A/\frak m^r)\to K_n^\sub{top}(A)\to\projlim_rK_n(A/\frak m^r)\to 0.\] But $K_{n+1}(A/\frak m^r)$ is finite for all $r$ (lemma \ref{lemma_thanks_to_Vigleik}), so the $\projlim^1$ term vanishes and we get isomorphisms $K_n^\sub{top}(A)\cong\projlim_rK_n(A/\frak m^r)$ for $n\ge 0$.

Next, since $A$ is a finite $\bb Z_p$-algebra, A.~Suslin and A.~Yufryakov \cite{Suslin1984a, Suslin1986} proved that the canonical map $K(A)\to K^\sub{top}(A)$ induces a weak equivalence after profinite completion: $K(A)^\comp\stackrel{\sim}{\to} K^\sub{top}(A)^\comp$ (the full argument can be found in the appendix of \cite{Hesselholt1997}). But profinite completion commutes with homotopy limits, and so \[K^\sub{top}(A)^\comp=\holim_r\left( K(A/\frak m^r)^\comp\right)\stackrel{(\ast)}{=}\holim_r K(A/\frak m^r),\] where the final equality follows again from the fact that $K(A/\frak m^r)$ has finite homotopy groups, at least if we ignore $\pi_0$: thus ($\ast$) is actually only an equality if we restrict to a connected component of each side. Hence $\pi_n(K(A)^\comp)=K^\sub{top}_n(A))$ for $n>0$, establishing the second isomorphism.
\end{proof}

\begin{remark}\label{remark_Z_hat_coefficients}
$K$-theory $K_*(-;\hat{\bb Z})=\pi_*(K(-)^\comp)$ with $\hat{\bb Z}$-coefficients is defined to be the homotopy groups of the profinite completion of the $K$-theory spectrum. It is described by short exact sequences
\[0\to\op{Ext}^1_{\bb Z}(\bb Q/\bb Z,K_n(-))\to K_n(-;\hat{\bb Z})\to\Hom_{\bb Z}(\bb Q/\bb Z,K_{n-1}(-))\to 0\]
\[0\to{\projlim_\lambda}^1 K_n(-)[\lambda]\to \op{Ext}^1_{\bb Z}(\bb Q/\bb Z,K_n(-))\to K_n(-)^\comp\to 0\] where $\lambda$ varies over positive integers ordered by divisibility, $C[\lambda]$ denotes the $\lambda$-torsion of an abelian group $C$, and $C^\comp=\projlim_\lambda C/\lambda C$ denotes the $\hat{\bb Z}$-completion of $C$.
\end{remark}

Let $F$ be a finite extension of $\bb Q_p$ with ring of integers $\roi$. We must review the structure of the $K$-groups of $\roi$. For more details we refer the reader to the survey \cite[\S5]{Weibel2005}.

Let $i\ge 1$, and set \[w_i(F)=\#H^0(F,\mu(i)),\quad\quad w_i^{(p)}(F)=\#H^0(F,\mu_{p^\infty}(i)),\] where $\mu$ (resp. $\mu_{p^\infty}$) denote the group of all (resp. $p$-power) roots of unity in $F^\sub{alg}$; then \[\bb Z/w_i(F)\bb Z\cong H^0(F,\mu(i)),\quad\quad \bb Z/w_i^{(p)}(F)\bb Z\cong H^0(F,\mu_{p^\infty}(i)).\] Then $K_{2i}(\roi)$ decomposes into a direct sum \[K_{2i}(\roi)\cong D_i(\roi)\oplus\bb Z/w_i^{(p)}(F)\bb Z,\] where $D_i(\roi)$ is a divisible $\bb Z_{(p)}$-module. On the other hand, the $e$-invariant $e:K_{2i-1}(\roi)\to\bb Z/w_i(F)\bb Z$ induces a direct sum decomposition \[K_{2i-1}(\roi)\cong T_i(\roi)\oplus \bb Z/w_i(F)\bb Z,\] where $T_i(\roi)$ is a torsion-free $\bb Z_{(p)}$-module.

\begin{example}\label{example_K2_groups_of_local_field}
Consider the case $n=2$ as an example. Let $F$ be a finite extension of $\bb Q_p$ and let $\mu_F\subset F$ be the group of roots of unity inside it; put $m=|\mu_F|$. Then the Hilbert symbol induces a surjective homomorphism $H:K_2(F)\to\mu_F$. A theorem of C.~Moore \cite{Moore1968} states that $\ker H=mK_2(F)$ and that this kernel is an uncountable, divisible group (even uniquely-divisible, by \cite{Merkurjev1983}) contained inside $K_2(\roi)$; moreover, $K_2(F)\to\mu_F$ splits.

Restricting to $K_2(\roi)$ one obtains a split short exact sequence \[0\To\ker H=D_1(\roi)\To K_2(\roi)\To\mu^{(p)}_F\cong\bb Z/w_1^{(p)}(F)\bb Z\To0\] where $\mu^{(p)}_F$ denotes the $p$-power roots of unity inside $F$.
\end{example}

Using pro-excision we may generalise these structural descriptions to all finite $\bb Z_p$-algebras satisfying the equivalent conditions (i)--(ii) above:

\begin{theorem}\label{theorem_description_of_K_thy_of_p-adic_orders}
Let $A$ be a reduced $\bb Z_p$-algebra which is finitely generated and torsion-free as a $\bb Z_p$-module, and let $i\ge 1$. Then $K_{2i}(A)$ decomposes as a direct sum \[K_{2i}(A)\cong D_i(A)\oplus W_i^{(p)}(A)\] where $D_i(A)$ is a divisible $\bb Z_{(p)}$-module and $W_i^{(p)}(A)$ is a finite $p$-group. On the other hand, $K_{2i-1}(A)$ decomposes as a direct sum \[K_{2i-1}(A)\cong T_i(A)\oplus W_i(A)\] where $T_i(A)$ is a torsion-free $\bb Z_{(p)}$-module and $W_i(A)$ is a finite group.
\end{theorem}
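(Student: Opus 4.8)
The plan is to transport the known structure of the $K$-groups of the rings of integers of $p$-adic fields along the comparison map $\phi_n\colon K_n(A)\to K_n(\tilde A)$ coming from pro-excision. Since $\tilde A$ is a finite product $\prod_j\roi_j$ of such rings of integers and $K$-theory is additive, the decompositions recalled above give at once $K_{2i}(\tilde A)\cong D_i(\tilde A)\oplus W_i^{(p)}(\tilde A)$ with $D_i(\tilde A):=\bigoplus_jD_i(\roi_j)$ a divisible $\bb Z_{(p)}$-module and $W_i^{(p)}(\tilde A)$ a finite $p$-group, and $K_{2i-1}(\tilde A)\cong T_i(\tilde A)\oplus W_i(\tilde A)$ with $T_i(\tilde A)$ a torsion-free $\bb Z_{(p)}$-module and $W_i(\tilde A)$ a finite group. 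Next I would observe that $\ker\phi_n$ and $\op{coker}\phi_n$ are finite $p$-groups for every $n\ge1$: proposition~\ref{proposition_finite_kernel_and_cokernel} already gives finiteness together with the absence of $\ell$-torsion for every prime $\ell$ invertible in $A/\frak f$, and since $A/\frak f$ is Artinian with all residue fields of characteristic $p$, the element $p$ lies in its nilpotent Jacobson radical, so $A/\frak f$ is a $\bb Z/p^N\bb Z$-algebra in which every $\ell\neq p$ is invertible.

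The rest is a purely group-theoretic lemma: if $\phi\colon G\to H$ is a homomorphism of abelian groups with finite $p$-group kernel and cokernel, then $H\cong D\oplus F$ with $D$ a divisible $\bb Z_{(p)}$-module and $F$ a finite $p$-group forces $G\cong D'\oplus F'$ of the same shape, while $H\cong T\oplus W$ with $T$ a torsion-free $\bb Z_{(p)}$-module and $W$ finite forces $G\cong T'\oplus W'$ of the same shape. Applying the first case to $\phi_{2i}$ and the second to $\phi_{2i-1}$ then yields the theorem, with the four summands in the statement obtained as $D'$, $F'$, $T'$, $W'$.

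To prove the lemma I would put $G'=\op{im}\phi$, so that $p^mH\subseteq G'$ where $p^m=|\op{coker}\phi|$. In the divisible case $D=p^mD\subseteq G'$, and as $D$ is a direct summand of $H$ this gives $G'=D\oplus F''$ with $F''\subseteq F$; pulling back $D$ through $G\onto G'$ produces an extension $G_1$ of $D$ by $\ker\phi$, in which $G_1/p^kG_1$ is a quotient of $\ker\phi$, whence $\bigcap_kp^kG_1=p^{k_0}G_1$ is divisible, splits off (being injective), and leaves a finite $p$-group; splitting this divisible summand off $G$ reduces the remainder to an extension of two finite $p$-groups. In the torsion-free case one checks that $\op{Tors}(G)$ is finite (its image in $\op{Tors}(H)=W$ is finite and its kernel lies in $\ker\phi$), that $G/\op{Tors}(G)$ is isomorphic to a subgroup of $T$ of $p$-power index and so is again a torsion-free $\bb Z_{(p)}$-module, and that the extension $0\to\op{Tors}(G)\to G\to G/\op{Tors}(G)\to0$ splits because $\op{Ext}^1_{\bb Z}$ of a torsion-free $\bb Z_{(p)}$-module against a finite group vanishes — the prime-to-$p$ part because multiplication by $\ell\neq p$ is invertible on the first argument, the $p$-part because $\bb Z_{(p)}$ is a discrete valuation ring of global dimension one over which torsion-free modules are flat and $\op{Ext}^1$ against bounded torsion vanishes. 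The delicate point — and essentially the only real obstacle — is this group-theoretic lemma, in particular the $\op{Ext}$-vanishing and the persistence of the $\bb Z_{(p)}$-module property under $\phi_n$; the $K$-theoretic ingredients (pro-excision via proposition~\ref{proposition_finite_kernel_and_cokernel} and the recalled structure in the normal case) are completely formal here.
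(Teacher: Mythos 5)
Your proposal is correct and follows essentially the same route as the paper: pro-excision (proposition \ref{proposition_finite_kernel_and_cokernel}, sharpened by noting $p$ is nilpotent in $A/\frak f$ so the kernel and cokernel of $K_n(A)\to K_n(\tilde A)$ are finite $p$-groups), the known structure for $\tilde A$, and an abstract group-theoretic transfer lemma, exactly as in the paper's proof via lemma \ref{lemma_on_groups}. The only divergence is the splitting step in odd degrees, where you invoke vanishing of $\op{Ext}^1_{\bb Z}(M,F)$ for $M$ torsion-free and $F$ finite (true because finite groups are cotorsion, though flatness of torsion-free $\bb Z_{(p)}$-modules is not by itself the reason) in place of the paper's appeal to the fact that a torsion subgroup of finite exponent is a direct summand.
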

\begin{proof}
Let $F=\Frac A$ be the total quotient ring of $A$ and let $\roi=\tilde A$ be its normalisation. The claims are clearly true for $\roi$ since it is a finite product of rings of integers of finite extensions of $\bb Q_p$; therefore we may write $D_i(\roi)$, $W_i^{(p)}(\roi)$, etc.\ having the claimed properties.

Proposition \ref{proposition_finite_kernel_and_cokernel} implies that the kernel and cokernel of \[K_{2i}(A)\to K_{2i}(\roi)=D_i(\roi)\oplus W_i^{(p)}(\roi)\] are finite $p$-groups. Therefore the kernel and cokernel of the composition $K_{2i}(A)\to D_i(\roi)$ are finite $p$-groups; but divisible groups have no non-trivial finite images, so this map is actually surjective. The first claim now follows from the algebraic lemma \ref{lemma_on_groups}(ii) which we have postponed until the end of the section to avoid disrupting the exposition.

For the odd case, the same argument implies that the kernel (resp.~cokernel) of the composition $K_{2i-1}(A)\to K_{2i-1}(\roi)\onto T_i(\roi)$ is a finite group (resp.~finite $p$-group). Its image is therefore a torsion-free $\bb Z_{(p)}$-submodule of $T_i(\roi)$, and so we deduce that $K_{2i-1}(A)_\sub{tors}$ is a finite subgroup of $K_{2i-1}(A)$. Finally we use algebraic lemma \ref{lemma_on_groups}(i) to see that \[0\to K_{2i-1}(A)_\sub{tors}\to K_{2i-1}(A)\to K_{2i-1}(A)/K_{2i-1}(A)_\sub{tors}\to 0\] splits.
\end{proof}

\begin{remark}
We stress that although the direct sum decompositions appearing in the proposition are not unique, the summands themselves are. Firstly, $D_i(A)=\bigcap_{n\ge 1}nK_{2i}(A)$ is the maximal divisible subgroup of $K_{2i}(A)$, and $W_i^{(p)}(A)$ is the quotient. Secondly, $W_i(A)$ is the torsion subgroup of $K_{2i-1}(A)$, and $T_i(A)$ is the quotient.
\end{remark}

\begin{remark}
Gabber rigidity \cite{Gabber1992} (and Quillen's calculation of the $K$-theory of finite fields) implies that $W_i(A)\otimes_{\bb Z}\bb Z[\tfrac{1}{p}]\cong K_{2i-1}(A/\frak m)$.
\end{remark}

From the proposition we obtain structural descriptions of the $K$-theory of $A$ with $\hat{\bb Z}$ coefficients, again analogous to what is know for rings of integers of local fields; also, recall from lemma \ref{lemma_homotopy_description_of_K_groups} that $K_n(A;\hat{\bb Z})\cong\projlim_r K_n(A/\frak m^r)$ for all $n\ge 1$:

\begin{corollary}\label{corollary_profinite_K_groups_of_p_adic_order}
Let $A$ be a reduced $\bb Z_p$-algebra which is finitely generated and torsion-free as a $\bb Z_p$-module, let $i\ge 1$, and continue to use the notation introduced in the previous theorem. Then there is a natural isomorphism \[K_{2i}(A;\hat{\bb Z})\cong W_i^{(p)}(A)\] and a short exact sequence \[0\to K_{2i+1}(A)^\comp\to K_{2i+1}(A;\hat{\bb Z})\to\Hom_{\bb Z}(\bb Q/\bb Z,D_i(A))\to 0.\] 
\end{corollary}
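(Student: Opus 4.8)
The plan is to derive both statements purely formally from the structural description of Theorem~\ref{theorem_description_of_K_thy_of_p-adic_orders} together with the two natural short exact sequences recalled in Remark~\ref{remark_Z_hat_coefficients}, applied with $R=A$. Since those sequences are natural, and since the summands $D_i(A)$ and $W_i^{(p)}(A)$ of Theorem~\ref{theorem_description_of_K_thy_of_p-adic_orders} are themselves natural (being respectively the maximal divisible subgroup of $K_{2i}(A)$ and its quotient), the isomorphism and exact sequence produced this way will automatically be natural in $A$. In both cases the index of the $K$-group we must feed into Theorem~\ref{theorem_description_of_K_thy_of_p-adic_orders} is $\ge1$, so it applies.

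For the even case I would take $n=2i$ in the first sequence of Remark~\ref{remark_Z_hat_coefficients}. The term $\Hom_{\bb Z}(\bb Q/\bb Z,K_{2i-1}(A))$ vanishes: by Theorem~\ref{theorem_description_of_K_thy_of_p-adic_orders}, $K_{2i-1}(A)\cong T_i(A)\oplus W_i(A)$, and the divisible torsion group $\bb Q/\bb Z$ admits no nonzero map into the torsion-free summand $T_i(A)$ (its image would be torsion and torsion-free) nor into the finite summand $W_i(A)$ (its image would be divisible). Hence $K_{2i}(A;\hat{\bb Z})\cong\op{Ext}^1_{\bb Z}(\bb Q/\bb Z,K_{2i}(A))$. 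Applying $\op{Ext}^\ast_{\bb Z}(\bb Q/\bb Z,-)$ to the natural sequence $0\to D_i(A)\to K_{2i}(A)\to W_i^{(p)}(A)\to 0$ and using that $\bb Z$ has global dimension $1$, it then suffices to prove $\op{Ext}^1_{\bb Z}(\bb Q/\bb Z,D_i(A))=0$, whence $K_{2i}(A;\hat{\bb Z})\cong\op{Ext}^1_{\bb Z}(\bb Q/\bb Z,W_i^{(p)}(A))$. This vanishing I would read off the second sequence of Remark~\ref{remark_Z_hat_coefficients}: the completion $D_i(A)^\comp$ is zero because $D_i(A)$ is divisible, and ${\projlim_\lambda}^1 D_i(A)[\lambda]=0$ because for a divisible group the transition maps $D_i(A)[\lambda']\to D_i(A)[\lambda]$ (multiplication by $\lambda'/\lambda$) are surjective — given $x$ with $\lambda x=0$, pick $z$ with $(\lambda'/\lambda)z=x$ and note $\lambda' z=\lambda x=0$ — so that tower is Mittag--Leffler. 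Finally, the same second sequence applied to the finite group $W_i^{(p)}(A)$ gives $\op{Ext}^1_{\bb Z}(\bb Q/\bb Z,W_i^{(p)}(A))\cong W_i^{(p)}(A)^\comp=W_i^{(p)}(A)$, a tower of finite groups again being Mittag--Leffler; this proves the first claim.

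For the odd case I would take $n=2i+1$. By Theorem~\ref{theorem_description_of_K_thy_of_p-adic_orders}, $K_{2i+1}(A)\cong T_{i+1}(A)\oplus W_{i+1}(A)$ with $T_{i+1}(A)$ torsion-free, so the tower $\{K_{2i+1}(A)[\lambda]\}_\lambda=\{W_{i+1}(A)[\lambda]\}_\lambda$ consists of finite groups; hence ${\projlim_\lambda}^1 K_{2i+1}(A)[\lambda]=0$ and the second sequence of Remark~\ref{remark_Z_hat_coefficients} identifies $\op{Ext}^1_{\bb Z}(\bb Q/\bb Z,K_{2i+1}(A))$ with $K_{2i+1}(A)^\comp$. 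On the other hand, applying $\Hom_{\bb Z}(\bb Q/\bb Z,-)$ to $0\to D_i(A)\to K_{2i}(A)\to W_i^{(p)}(A)\to 0$ gives $\Hom_{\bb Z}(\bb Q/\bb Z,K_{2i}(A))\cong\Hom_{\bb Z}(\bb Q/\bb Z,D_i(A))$, the finite group $W_i^{(p)}(A)$ contributing nothing. Substituting these two identifications into the first sequence of Remark~\ref{remark_Z_hat_coefficients} yields exactly the asserted short exact sequence $0\to K_{2i+1}(A)^\comp\to K_{2i+1}(A;\hat{\bb Z})\to\Hom_{\bb Z}(\bb Q/\bb Z,D_i(A))\to 0$.

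I do not anticipate a genuine obstacle here: the argument is homological bookkeeping once Theorem~\ref{theorem_description_of_K_thy_of_p-adic_orders} is in hand. The two points deserving a moment of care are the vanishing $\op{Ext}^1_{\bb Z}(\bb Q/\bb Z,D)=0$ for divisible $D$ (reduced above to surjectivity of the $\lambda$-torsion tower), and checking that every identification above is natural in $A$, which is automatic from the naturality of the sequences of Remark~\ref{remark_Z_hat_coefficients} and of the summands in Theorem~\ref{theorem_description_of_K_thy_of_p-adic_orders}.
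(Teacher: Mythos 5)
Your proposal is correct and follows exactly the route the paper intends: the paper's own proof simply cites the two standard exact sequences of remark \ref{remark_Z_hat_coefficients} together with theorem \ref{theorem_description_of_K_thy_of_p-adic_orders}, and you have carried out precisely that bookkeeping (your vanishing $\op{Ext}^1_{\bb Z}(\bb Q/\bb Z,D_i(A))=0$ could also be seen at once from injectivity of divisible abelian groups, but your tower argument is fine).
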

\begin{proof}
These readily follow from the standard short exact sequences for $K_*(-;\hat{\bb Z})$ given in remark \ref{remark_Z_hat_coefficients}.
\end{proof}

Now we may prove an arithmetic analogue of the main results in section \ref{subsection_main_results}; unfortunately we can only prove it in odd degrees:

\begin{theorem}
Let $A$ be a reduced $\bb Z_p$-algebra which is finitely generated and torsion-free as a $\bb Z_p$-module, and let $i\ge 1$. Then there is a short exact sequence \[0\to K_{2i-1}(A)\to\projlimf_rK_{2i-1}(A/\frak m^r)\oplus K_{2i-1}(\tilde A)\to\projlimf_rK_{2i-1}(\tilde A/\frak M^r).\] In particular, \[K_{2i-1}(A)\to K_{2i-1}(A/\frak m^r)\oplus K_{2i-1}(\tilde A)\] is injective for all $r\gg 0$.
\end{theorem}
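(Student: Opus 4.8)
The plan is to derive both assertions from the exactness in $\op{Pro}Ab$ of the relative Mayer--Vietoris sequence
\[\cdots\to K_n(A)\to\projlimf_r K_n(A/\frak m^r)\oplus K_n(\tilde A)\to\projlimf_r K_n(\tilde A/\frak M^r)\to\cdots\]
supplied by example \ref{example_main_application_of_birelative_vanishing}. Since this long exact sequence already yields exactness of the displayed sequence at the term $\projlimf_r K_{2i-1}(A/\frak m^r)\oplus K_{2i-1}(\tilde A)$, the displayed sequence is short exact precisely when the first arrow $K_{2i-1}(A)\to\projlimf_r K_{2i-1}(A/\frak m^r)\oplus K_{2i-1}(\tilde A)$ is a monomorphism in $\op{Pro}Ab$. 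The source here is a constant pro abelian group, so unwinding the description of morphisms in $\op{Pro}Ab$ and of when a pro abelian group with decreasing transition maps vanishes (remark \ref{remark_pro_cats}) shows that this monomorphism condition holds if and only if $K_{2i-1}(A)\to K_{2i-1}(A/\frak m^r)\oplus K_{2i-1}(\tilde A)$ is injective for $r\gg0$, which is exactly the ``in particular'' assertion. Hence it suffices to prove that latter injectivity statement.

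To prove it, I would apply corollary \ref{corollary_application_of_group_version_of_les} --- which is available since $A$, being one-dimensional, Noetherian, reduced, semi-local, with finite residue fields and finite normalisation, satisfies its hypotheses. By that corollary (taking $n=2i-1$), the injectivity of $K_{2i-1}(A)\to K_{2i-1}(A/\frak m^r)\oplus K_{2i-1}(\tilde A)$ for $r\gg0$ is equivalent to the surjectivity of
\[\projlim_r K_{2i}(A/\frak m^r)\oplus K_{2i}(\tilde A)\To\projlim_r K_{2i}(\tilde A/\frak M^r).\]
Using lemma \ref{lemma_homotopy_description_of_K_groups} to identify $\projlim_r K_{2i}(A/\frak m^r)\cong K_{2i}(A;\hat{\bb Z})$ and $\projlim_r K_{2i}(\tilde A/\frak M^r)\cong K_{2i}(\tilde A;\hat{\bb Z})$, and noting that the second summand maps to the target through the canonical map $K_{2i}(\tilde A)\to K_{2i}(\tilde A;\hat{\bb Z})$, it is enough to show that this latter map is surjective.

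Here is where the structural results on local fields are used. The ring $\tilde A$ is a finite product of rings of integers $\roi$ of finite extensions $F$ of $\bb Q_p$, for each of which $K_{2i}(\roi)\cong D_i(\roi)\oplus\bb Z/w_i^{(p)}(F)\bb Z$ with $D_i(\roi)$ a divisible $\bb Z_{(p)}$-module (the decomposition recalled before theorem \ref{theorem_description_of_K_thy_of_p-adic_orders}); equivalently, by corollary \ref{corollary_profinite_K_groups_of_p_adic_order}, $K_{2i}(\tilde A;\hat{\bb Z})\cong W_i^{(p)}(\tilde A)$ and, under this identification, the canonical map $K_{2i}(\tilde A)\to K_{2i}(\tilde A;\hat{\bb Z})$ is the $\hat{\bb Z}$-completion map --- which, since the divisible summand $D_i(\tilde A)$ is annihilated by completion, is the projection $K_{2i}(\tilde A)=D_i(\tilde A)\oplus W_i^{(p)}(\tilde A)\onto W_i^{(p)}(\tilde A)$ onto the finite summand, and in particular surjective. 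This verifies condition (ii) of corollary \ref{corollary_application_of_group_version_of_les}, hence condition (i), hence the injectivity and the short exact sequence.

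I expect the only delicate point to be the last step: confirming that the natural map $K_{2i}(\tilde A)\to K_{2i}(\tilde A;\hat{\bb Z})$ really is the completion map onto the finite part of $K_{2i}(\tilde A)$, rather than something smaller --- everything else is formal pro-categorical bookkeeping together with citations of results already established in the paper. Note that the same strategy does not reach the even degrees: for $0\to K_{2i}(A)\to\cdots$ one would instead need $K_{2i+1}(\tilde A)\to K_{2i+1}(\tilde A;\hat{\bb Z})$ to be surjective, and the target then acquires the extra summand $\Hom_{\bb Z}(\bb Q/\bb Z,D_i(\tilde A))$ coming from the divisible part of $K_{2i}(\tilde A)$, which is not in the image; this is the reason for restricting to odd degrees.
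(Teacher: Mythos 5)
Your proposal is correct and follows essentially the same route as the paper: the paper also deduces the injectivity from corollary \ref{corollary_application_of_group_version_of_les} by using lemma \ref{lemma_homotopy_description_of_K_groups} and corollary \ref{corollary_profinite_K_groups_of_p_adic_order} to see that $K_{2i}(\tilde A)\to\projlim_rK_{2i}(\tilde A/\frak M^r)=K_{2i}(\tilde A;\hat{\bb Z})=W_i^{(p)}(\tilde A)$ is surjective (the divisible summand dying under completion), and then obtains the short exact sequence from example \ref{example_main_application_of_birelative_vanishing}. Your extra care about why the completion map is the projection onto the finite summand, and your remark on why even degrees fail, match the paper's discussion (cf.\ proposition \ref{proposition_equivalent_conditions_in_even_case}).
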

\begin{proof}
Applying lemma \ref{lemma_homotopy_description_of_K_groups} and the previous corollary to $\tilde A$, we see that \[K_{2i}(\tilde A)\to \projlim_rK_{2i}(\tilde A/\frak M^r)=K_{2i}(\tilde A;\hat{\bb Z})=W_i^{(p)}(A)\] is surjective. The claimed injectivity now follows from corollary \ref{corollary_application_of_group_version_of_les}, and then the short exact sequence follows from example \ref{example_main_application_of_birelative_vanishing}.
\end{proof}

We would like to prove the injectivity claim of the previous theorem in even degrees; such a result would imply that the first long exact Mayer--Vietoris sequence of example \ref{example_main_application_of_birelative_vanishing} breaks into short exact sequences, thereby fully extending the main results of section \ref{subsection_main_results} to such finite $\bb Z_p$-algebras. Unfortunately, the best that we can offer in even degree is a list of equivalent conditions which reduces the problem to understanding the torsion in $K_{2i}(A)$, which is unfortunately a difficult problem whose solution is only know when $i=1$:

\begin{proposition}\label{proposition_equivalent_conditions_in_even_case}
Let $A$ be a reduced $\bb Z_p$-algebra which is finitely generated and torsion-free as a $\bb Z_p$-module, and let $i\ge 1$. Then the following are equivalent:
\begin{enumerate}
\item $K_{2i}(A)\to K_{2i}(A/\frak m^r)\oplus K_{2i}(\tilde A)$ is injective for all $r\gg 0$.
\item The canonical map $\Hom_{\bb Z}(\bb Q/\bb Z,D_i(A))\to \Hom_{\bb Z}(\bb Q/\bb Z,D_i(\tilde A))$ is surjective.
\item The canonical map $D_i(A)\to D_i(\tilde A)$ is injective.
\item The canonical map $D_i(A)\to D_i(\tilde A)$ is an isomorphism.
\end{enumerate}
\end{proposition}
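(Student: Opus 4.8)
Write $f$ for the canonical maps $K_\ast(A)\to K_\ast(\tilde A)$; by proposition~\ref{proposition_finite_kernel_and_cokernel} they have finite kernels and cokernels in degrees $\geq 1$, and since every residue field of $A$ has characteristic $p$ the Artinian ring $A/\frak f$ has $p$-power characteristic, so these kernels and cokernels are in fact finite $p$-groups. Let $\phi\colon D_i(A)\to D_i(\tilde A)$ be the restriction of $f$ to the maximal divisible subgroups (it lands in $D_i(\tilde A)$ because $f$ carries divisible subgroups to divisible subgroups); this is the canonical map of (iii)--(iv), and $\ker\phi$ is finite, being contained in $\ker(f\colon K_{2i}(A)\to K_{2i}(\tilde A))$. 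The plan is to prove the cycle by showing that each of (i), (ii), (iii) is equivalent to the vanishing of $\ker\phi$, and that (iv) is equivalent to (iii).

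\emph{That $\phi$ is automatically surjective, so (iii)$\Leftrightarrow$(iv).} The proof of theorem~\ref{theorem_description_of_K_thy_of_p-adic_orders} already shows that the composite of $f\colon K_{2i}(A)\to K_{2i}(\tilde A)$ with the projection onto $D_i(\tilde A)$ is surjective. Writing $K_{2i}(A)=D_i(A)\oplus W_i^{(p)}(A)$ and restricting, one gets $D_i(\tilde A)=\phi(D_i(A))+S$ with $S$ the finite image of $W_i^{(p)}(A)$; then $D_i(\tilde A)/\phi(D_i(A))$ is simultaneously a quotient of $S$ (hence finite) and of the divisible group $D_i(\tilde A)$ (hence divisible), so it vanishes. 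Thus $\phi$ is onto, and (iii) (injectivity of $\phi$) is equivalent to (iv).

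\emph{That (ii)$\Leftrightarrow$(iii).} Apply $\Hom_{\bb Z}(\bb Q/\bb Z,-)$ and $\op{Ext}^1_{\bb Z}(\bb Q/\bb Z,-)$ to the short exact sequence $0\to\ker\phi\to D_i(A)\to D_i(\tilde A)\to 0$. Since $\ker\phi$ is finite, $\Hom_{\bb Z}(\bb Q/\bb Z,\ker\phi)=0$ and $\op{Ext}^1_{\bb Z}(\bb Q/\bb Z,\ker\phi)\cong\ker\phi$; since $D_i(A)$ is divisible, hence an injective $\bb Z$-module, $\op{Ext}^1_{\bb Z}(\bb Q/\bb Z,D_i(A))=0$. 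The long exact sequence therefore reads $0\to\Hom_{\bb Z}(\bb Q/\bb Z,D_i(A))\xto{\beta}\Hom_{\bb Z}(\bb Q/\bb Z,D_i(\tilde A))\to\ker\phi\to 0$, where $\beta$ is the map of (ii). So $\op{coker}\beta\cong\ker\phi$, whence (ii)$\Leftrightarrow$(iii).

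\emph{That (i)$\Leftrightarrow$(iii).} By corollary~\ref{corollary_application_of_group_version_of_les} with $n=2i$, (i) is equivalent to surjectivity of $\projlim_r K_{2i+1}(A/\frak m^r)\oplus K_{2i+1}(\tilde A)\to\projlim_r K_{2i+1}(\tilde A/\frak M^r)$, which by lemma~\ref{lemma_homotopy_description_of_K_groups} is surjectivity of
\[
g\colon K_{2i+1}(A;\hat{\bb Z})\oplus K_{2i+1}(\tilde A)\longrightarrow K_{2i+1}(\tilde A;\hat{\bb Z}).
\]
Into this I would feed the natural (in $A\to\tilde A$) short exact sequence of corollary~\ref{corollary_profinite_K_groups_of_p_adic_order}, $0\to K_{2i+1}(-)^\comp\to K_{2i+1}(-;\hat{\bb Z})\xto{q}\Hom_{\bb Z}(\bb Q/\bb Z,D_i(-))\to 0$, noting that the canonical map $K_{2i+1}(-)\to K_{2i+1}(-;\hat{\bb Z})$ factors through the subgroup $K_{2i+1}(-)^\comp$. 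First one checks $\Im g\supseteq K_{2i+1}(\tilde A)^\comp$: taking $\projlim_m$ of the right-exact sequences $K_{2i+1}(A)/m\to K_{2i+1}(\tilde A)/m\to Q/m\to 0$, where $Q=\op{coker}f$ is finite (all these systems, and their quotient subquotients, being Mittag--Leffler), shows that $\op{coker}(K_{2i+1}(A)^\comp\to K_{2i+1}(\tilde A)^\comp)\cong Q$ and that $K_{2i+1}(\tilde A)$ already surjects onto this $Q$; hence the images of $K_{2i+1}(A)^\comp$ and $K_{2i+1}(\tilde A)$ together generate $K_{2i+1}(\tilde A)^\comp$, and both lie in $\Im g$. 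Consequently $\Im g$ is saturated for $q$, so $g$ is surjective iff $q(\Im g)=\Hom_{\bb Z}(\bb Q/\bb Z,D_i(\tilde A))$; but $q$ annihilates $K_{2i+1}(\tilde A)$ and, by naturality, carries $K_{2i+1}(A;\hat{\bb Z})$ onto $\Im\beta$, so $q(\Im g)=\Im\beta$. Thus $g$ is surjective precisely when $\beta$ is, i.e.\ (by the previous step) precisely when $\ker\phi=0$, which is (iii); this closes the cycle.

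The $\Hom/\op{Ext}^1$ computations against $\bb Q/\bb Z$ and the Mittag--Leffler bookkeeping are routine. The step demanding the most care is the inclusion $\Im g\supseteq K_{2i+1}(\tilde A)^\comp$: one must keep straight how $K_{2i+1}(\tilde A)$ embeds in $K_{2i+1}(\tilde A;\hat{\bb Z})$ — only as its profinitely completed ``integral part'', never onto the Tate-module quotient $\Hom_{\bb Z}(\bb Q/\bb Z,D_i(\tilde A))$ — and verify that this integral part is always hit; the entire even-degree difficulty, encoded by (ii)--(iv), is precisely that the Tate-module quotient need not be.
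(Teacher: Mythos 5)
Your proof is correct, and the (ii)$\Leftrightarrow$(iii) and (iii)$\Leftrightarrow$(iv) steps coincide with the paper's (same $\Hom/\op{Ext}^1(\bb Q/\bb Z,-)$ sequence for $0\to\ker\phi\to D_i(A)\to D_i(\tilde A)\to 0$, same finite-plus-divisible argument for surjectivity of $\phi$). Where you genuinely diverge is in hooking (i) into the cycle: the paper never leaves degree $2i$ — since $W_i^{(p)}(A)=K_{2i}(A)/D_i(A)\cong K_{2i}(A;\hat{\bb Z})$ embeds into $K_{2i}(A/\frak m^r)$ for $r\gg0$ by corollary \ref{corollary_profinite_K_groups_of_p_adic_order}, the kernel of the map in (i) is exactly the part of $D_i(A)$ killed by $K_{2i}(A)\to K_{2i}(\tilde A)$, so (i)$\Leftrightarrow$(iii) drops out in one line — whereas you shift to degree $2i+1$ via corollary \ref{corollary_application_of_group_version_of_les} and analyse surjectivity of $g$ through the coefficient sequence $0\to K_{2i+1}(-)^\comp\to K_{2i+1}(-;\hat{\bb Z})\to\Hom_{\bb Z}(\bb Q/\bb Z,D_i(-))\to 0$. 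Your route is longer and its cost is precisely the step you flag: the inclusion $\Im g\supseteq K_{2i+1}(\tilde A)^\comp$. The claim is true, but the parenthetical ``all these systems \dots being Mittag--Leffler'' is glib as stated, since the system $\ker\bigl(K_{2i+1}(A)/m\to K_{2i+1}(\tilde A)/m\bigr)$ is not obviously Mittag--Leffler; the clean fix is to factor through $M=\Im f$: the kernels of $K_{2i+1}(A)/m\to M/m$ are quotients of the finite group $\ker f$ (hence Mittag--Leffler), while the kernels $(M\cap mK_{2i+1}(\tilde A))/mM$ of $M/m\to K_{2i+1}(\tilde A)/m$ form a pro-zero system (an element of $M\cap meK_{2i+1}(\tilde A)$ lies in $mM$, where $e$ is the exponent of $\op{coker}f$), so both $\projlim^1$ obstructions vanish and $\op{coker}\bigl(K_{2i+1}(A)^\comp\to K_{2i+1}(\tilde A)^\comp\bigr)\cong\op{coker}f$ is indeed hit by $K_{2i+1}(\tilde A)$. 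What your approach buys is a precise justification of the heuristic stated before corollary \ref{corollary_application_of_group_version_of_les}: the ``integral part'' $K_{2i+1}(\tilde A)^\comp$ is always in the image, and the whole even-degree obstruction lives in the Tate-module quotient $\Hom_{\bb Z}(\bb Q/\bb Z,D_i(\tilde A))$; what the paper's approach buys is brevity, since it needs no completion bookkeeping at all.
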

\begin{proof}
Corollary \ref{corollary_profinite_K_groups_of_p_adic_order} implies that $W_i^{(p)}(A)=K_{2i}(A)/D_i(A)$ embeds into $K_{2i}(A/\frak m^r)$ for $r\gg 0$, from which (i)$\Leftrightarrow$(iii) easily follows. Next notice that the map $D_i(A)\to D_i(\tilde A)$ has finite kernel and cokernel by proposition \ref{proposition_finite_kernel_and_cokernel}; since the cokernel is divisible it is actually zero, and so this map is surjective. This proves (iii)$\Leftrightarrow$(iv) and gives a short exact sequence \[0\to G\to D_i(A)\to D_i(\tilde A)\to 0\] where $G$ is a finite group. The long exact sequence for $\op{Ext}_{\bb Z}^*(\bb Q/\bb Z,-)$ of this sequence degenerates to \[0\to \Hom_{\bb Z}(\bb Q/\bb Z,D_i(A))\to \Hom_{\bb Z}(\bb Q/\bb Z,D_i(A))\to G\to 0,\] which proves (ii)$\Leftrightarrow$(iii).
\end{proof}

We finish this section by collecting together the various results from abstract algebra which were required in the proof of theorem \ref{theorem_description_of_K_thy_of_p-adic_orders}:

\begin{lemma}\label{lemma_on_groups}
\begin{enumerate}
\item Let $G$ be an abelian group and suppose that the subgroup of torsion elements $G_\sub{tors}$ has finite exponent. Then $G_\sub{tors}$ is a direct summand of $G$.
\item Let \[0\to B\to C\to D\to 0\] be a short exact sequence of abelian groups, where $B$ is finite and $D$ is divisible. Then $C$ is isomorphic to the direct sum of a divisible group and a finite group which is a quotient of $B$.
\end{enumerate}
\end{lemma}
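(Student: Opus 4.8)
For part (i), the plan is to invoke the standard fact that a pure subgroup of bounded exponent is a direct summand (Kulikov's theorem; see e.g.\ Fuchs, \emph{Infinite Abelian Groups}, Vol.~I, Thm.~27.5), after noting that $G_\sub{tors}$ is pure in $G$: if a torsion element $t\in G$ satisfies $t=ng$ for some $g\in G$, then $g$ has finite order too, so $g\in G_\sub{tors}$ and $t\in nG_\sub{tors}$. Since $G_\sub{tors}$ is pure and, by hypothesis, bounded, it splits off. Should a citation-free proof be wanted instead, one can argue directly that the extension $0\to G_\sub{tors}\to G\to G/G_\sub{tors}\to 0$ splits by showing $\op{Ext}^1_{\bb Z}(F,B)=0$, where $F:=G/G_\sub{tors}$ is torsion-free and $B:=G_\sub{tors}$ has exponent $m$: since multiplication by $n$ on $F$ is injective for every $n$ and $\op{gl.dim}\bb Z=1$, multiplication by $n$ is surjective on $\op{Ext}^1_{\bb Z}(F,\bb Z)$, so $\op{Ext}^1_{\bb Z}(F,\bb Z)$ is divisible and $\op{Ext}^1_{\bb Z}(F,\bb Z/m)=\op{Ext}^1_{\bb Z}(F,\bb Z)/m\op{Ext}^1_{\bb Z}(F,\bb Z)=0$; a change of rings (legitimate because $\op{Tor}_1^{\bb Z}(F,\bb Z/m)=0$) then identifies $\op{Ext}^1_{\bb Z}(F,B)$ with $\op{Ext}^1_{\bb Z/m}(F/mF,B)$, which vanishes since $F/mF$ is flat, hence projective, over the Artinian ring $\bb Z/m$.

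For part (ii), the plan is the following short argument. Put $N=\abs B$, so $NB=0$. Because $D=C/B$ is divisible we have $N(C/B)=C/B$, that is, $NC+B=C$. On the other hand, the endomorphism ``multiplication by $N$'' on $C$ has image $NC$ and kernel $C[N]\supseteq B$, so $NC\cong C/C[N]$, and since $B\subseteq C[N]$ this last group is $(C/B)/(C[N]/B)=D/(C[N]/B)$, a quotient of the divisible group $D$; hence $NC$ is divisible. A divisible subgroup is injective and therefore a direct summand, so $C=NC\oplus R$ for some subgroup $R$, and $R\cong C/NC=(NC+B)/NC\cong B/(B\cap NC)$ is a quotient of the finite group $B$, hence finite. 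Thus $C$ decomposes as the direct sum of the divisible group $NC$ and the finite group $R=B/(B\cap NC)$, which is a quotient of $B$, exactly as claimed.

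I do not anticipate any real obstacle. The substantive points are merely: that the torsion subgroup is always pure and (by hypothesis in (i)) bounded, together with the quoted splitting theorem for bounded pure subgroups; and, for (ii), the observation that multiplication by $\abs B$ on $C$ factors through the divisible quotient $C/B$ and so has divisible — hence injective, hence split — image. The only step demanding slight care is the change-of-rings identification in the alternative proof of (i), which is avoided altogether if one simply cites Kulikov's theorem.
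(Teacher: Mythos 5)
Your proposal is correct, and for part (i) it matches the paper, which likewise simply quotes the splitting theorem for bounded pure subgroups (the paper cites Robinson, \emph{A course in the theory of groups}, 4.3.9, where you cite Fuchs/Kulikov); your verification that $G_{\mathrm{tors}}$ is pure and your optional self-contained $\op{Ext}$ argument are both fine, though the latter can be shortened: since $mB=0$, multiplication by $m$ is zero on $\op{Ext}^1_{\bb Z}(F,B)$, yet it is surjective because $F$ is torsion-free and $\op{Ext}^2_{\bb Z}(F/mF,B)=0$, so $\op{Ext}^1_{\bb Z}(F,B)=0$ without any change of rings. For part (ii) your argument has the same skeleton as the paper's --- exhibit a divisible subgroup of $C$ whose quotient is a finite quotient of $B$, then split using injectivity of divisible groups --- but your route to divisibility is genuinely slicker. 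The paper works with a general exponent: it stabilises the lattice $nC\cap B$ to a subgroup $B_\infty$, chooses $m$ with $nmC\cap B=B_\infty$ for all $n$, shows $mC/B_\infty\cong D$ and $B_\infty\subseteq\bigcap_n nmC$, and deduces that $mC$ is divisible, with finite complement $B/B_\infty$. By instead taking $N=\abs B$ you get $B\subseteq C[N]$ for free, so $NC\cong C/C[N]$ is visibly a quotient of the divisible group $D$, and the complement is $B/(B\cap NC)$; the stabilisation step disappears entirely. Since the statement only asks for \emph{some} finite quotient of $B$, nothing is lost, and your proof is the shorter of the two.
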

\begin{proof}
(i) is a (perhaps unfamiliar) result from the theory of pure subgroups; e.g., see \cite[4.3.9]{Robinson1996}.

(ii): Since $B$ is finite, its lattice of subgroups $nC\cap B$, $n\ge 1$, is eventually constant, equal to $B_\infty\subseteq B$, say. In other words, there is a fixed integer $m\ge 1$ such that $nmC\cap B=B_\infty$ for all $n\ge 1$. Since $D$ is divisible, the map $mC\to D$ is surjective, and so we obtain an isomorphism $mC/B_\infty\cong D$; moreover, $B_\infty\subseteq\bigcap_{n\ge1}nmC$ by construction, and so it easily follows that $mC$ is a divisible group.

Since $C/B$ is $m$-divisible, the map $B\to C/mC$ is surjective, and thus induces an isomorphism $B/B_\infty\isoto C/mC$. In conclusion we obtain an exact sequence \[0\to mC\to C\to B/B_\infty\to 0,\] where $mC$ is a divisible group and $B/B_\infty$ is a finite group. But Baer's well-known criterion states that divisible groups are injective in the category $Ab$, and so this exact sequence splits.
\comment{
(iii): The assumptions imply that $H_\sub{tors}$ also has bounded exponent, so (i) allows us to write $H=H_\sub{tors}\oplus H_\sub{tf}$, $G=G_\sub{tors}\oplus G_\sub{tf}$, where $H_\sub{tf}$ and $G_\sub{tf}$ are torsion-free. Since $G_\sub{tors}^\comp=G_\sub{tors}$, it is enough to prove the claim for the torsion-free parts. Notice that $H_\sub{tf}\to G_\sub{tf}$ still has kernel and cokernel of bounded exponent, hence is injective and its image contains $nG_\sub{tf}$ for some $n\ge 1$.

Therefore, replacing $G$ by $G_\sub{tf}$, we have reduced the problem to the following: If $G$ is a torsion-free abelian group, then $(nG)^\comp\oplus G\to G^\comp$ is surjective. Well, it easily follows from the torsion-freeness of $G$ that $(nG)^\comp=nG^\comp$ and that $G^\comp/nG^\comp=G/nG$, which completes the proof.
}
\end{proof}

\subsection{Finite $\bb Z_p$-algebras continued: $K_2$ and Geller's conjecture}\label{subsection_mixed_char_Geller}
We continue to study pro-excision for finite $\bb Z_p$-algebras, now focussing on $K_2$ and applications to Geller's conjecture in mixed characteristic. For a reduced $\bb Z_p$-algebra which is finitely generated and torsion-free as a $\bb Z_p$-module, lemma \ref{lemma_homotopy_description_of_K_groups} and corollary \ref{corollary_profinite_K_groups_of_p_adic_order} tell us that \[K_2(A;\hat{\bb Z})=W_1^{(p)}(A)=\projlim_r K_2(A/\frak m^r)=K_2(A/\frak m^r)\quad(r\gg0),\] which is a finite $p$-group. Moreover, example \ref{example_K2_groups_of_local_field} implies that if $A$ is normal then this group is simply the group of $p$-power roots of unity inside $A$.

For $K_2$ of such $\bb Z_p$-algebras we can prove the full analogue of the main theorems of section \ref{subsection_main_results}:

\begin{theorem}
Let $A$ be a reduced $\bb Z_p$-algebra which is finitely generated and torsion-free as a $\bb Z_p$-module. Then $D_1(A)$ is torsion-free and there is a short exact, Mayer--Vietoris sequence \[0\to K_2(A)\to K_2(A;\hat{\bb Z})\oplus K_2(\tilde A)\to K_2(\tilde A;\hat{\bb Z})\to 0.\]
\end{theorem}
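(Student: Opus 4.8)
The plan is to feed the degree-$2$ part of the already-established Mayer--Vietoris sequence of corollary \ref{corollary_group_versions_of_les} through the identifications of lemma \ref{lemma_homotopy_description_of_K_groups} and watch it collapse into the asserted short exact sequence; the only genuinely non-formal input is the torsion-freeness of $D_1(A)$, which rests on the feature special to degree $2$, namely the structure of the kernel of the local Hilbert symbol.

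First I would establish that $D_1(A)$ (hence also $D_1(\tilde A)$) is torsion-free. Since $\tilde A$ is a finite product of rings of integers $\roi_j$ of finite extensions of $\bb Q_p$, example \ref{example_K2_groups_of_local_field} --- Moore's theorem together with Merkurjev's unique-divisibility result --- shows that each $D_1(\roi_j)=\ker\big(K_2(\Frac\roi_j)\to\mu\big)$ is uniquely divisible, so $D_1(\tilde A)$ is torsion-free. Now the proof of proposition \ref{proposition_equivalent_conditions_in_even_case} (with $i=1$) exhibits a short exact sequence $0\to G\to D_1(A)\to D_1(\tilde A)\to 0$ with $G$ finite; since $D_1(\tilde A)$ is torsion-free, every torsion element of $D_1(A)$ lies in $G$, while $G$ itself is all torsion, so $G$ is precisely the torsion subgroup of $D_1(A)$. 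But $D_1(A)$ is divisible, hence so is its torsion subgroup, and a finite divisible abelian group is trivial; therefore $G=0$. This shows both that $D_1(A)$ is torsion-free and that $D_1(A)\xto{\sim}D_1(\tilde A)$, so in particular the equivalent conditions (i)--(iv) of proposition \ref{proposition_equivalent_conditions_in_even_case} all hold for $i=1$.

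Next I would rewrite the degree-$2$ stretch of the first Mayer--Vietoris sequence of corollary \ref{corollary_group_versions_of_les}, using $\projlim_r K_n(A/\frak m^r)\cong K_n(A;\hat{\bb Z})$ and $\projlim_r K_n(\tilde A/\frak M^r)\cong K_n(\tilde A;\hat{\bb Z})$ from lemma \ref{lemma_homotopy_description_of_K_groups}, as
\[\cdots\to K_2(A)\xto{\alpha}K_2(A;\hat{\bb Z})\oplus K_2(\tilde A)\xto{\beta}K_2(\tilde A;\hat{\bb Z})\xto{\partial}K_1(A)\to\cdots .\]
Injectivity of $\alpha$ is then immediate from condition (i) of proposition \ref{proposition_equivalent_conditions_in_even_case}: the map $K_2(A)\to K_2(A/\frak m^r)\oplus K_2(\tilde A)$ factors through $\alpha$ followed by projection of $\projlim_r K_2(A/\frak m^r)$ onto its $r$-th term, so its injectivity for one large $r$ forces $\alpha$ to be injective. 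For surjectivity of $\beta$ I would apply corollary \ref{corollary_application_of_group_version_of_les} in degree $n=1$: its condition (i) there, that $K_1(A)\to K_1(A/\frak m^r)\oplus K_1(\tilde A)$ be injective for $r\gg0$, holds because $A$ and $\tilde A$ are semi-local, whence $K_1(A)=A^\times$, $K_1(\tilde A)=\tilde A^\times$, and $A^\times\hookrightarrow\tilde A^\times$ as $A\hookrightarrow\tilde A$; the equivalent condition (ii) is exactly the surjectivity of $\beta$. With $\alpha$ injective and $\beta$ surjective, the long exact sequence degenerates into the required short exact sequence.

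I expect the only real obstacle to be the first step --- specifically the torsion-freeness of the divisible summand $D_1(\tilde A)$ of $K_2$ of a $p$-adic ring of integers, which is the degree-$2$ input unavailable for the higher even $K$-groups (as flagged in the introduction) and hence the reason the full splitting of the pro-excision sequence can be proved here only in degree $2$. The remainder is bookkeeping with the pro-excision sequence and the $\hat{\bb Z}$-coefficient identifications; the one routine point still to verify is that the canonical isomorphism of lemma \ref{lemma_homotopy_description_of_K_groups} is compatible with the evident structure maps out of $K_n(A)$ and $K_n(\tilde A)$.
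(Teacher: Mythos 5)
Your proposal is correct and follows essentially the same route as the paper's proof: torsion-freeness of $D_1(\tilde A)$ from example \ref{example_K2_groups_of_local_field}, then the equivalent conditions of proposition \ref{proposition_equivalent_conditions_in_even_case} for $i=1$ to get injectivity in degree $2$, injectivity of $K_1(A)=\mult A\to K_1(\tilde A)=\mult{\tilde A}$ to get surjectivity, and the identifications of lemma \ref{lemma_homotopy_description_of_K_groups} applied to corollary \ref{corollary_group_versions_of_les}. Your two small variations --- deducing $D_1(A)\cong D_1(\tilde A)$ directly from the finite divisible kernel rather than via the $\Hom(\bb Q/\bb Z,-)$ condition (ii), and packaging the surjectivity step through corollary \ref{corollary_application_of_group_version_of_les} at $n=1$ --- are only cosmetic repackagings of the same argument.
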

\begin{proof}
Example \ref{example_K2_groups_of_local_field} implies that $D_1(\tilde A)$ is not merely divisible, but is also torsion-free. Hence $\Hom_{\bb Z}(\bb Q/\bb Z,D_1(\tilde A))=0$, so condition (ii) of proposition \ref{proposition_equivalent_conditions_in_even_case} implies that $K_2(A)\to K_2(A/\frak m^r)\oplus K_2(\tilde A)$ is injective for $r\gg 0$ and that $D_1(A)\cong D_1(\tilde A)$. Also, $K_1(A)=\mult A\to K_1(\tilde A)=\mult{\tilde A}$ is injective. Combining these two results with the long exact sequence of corollary \ref{corollary_group_versions_of_les} implies that there is a short exact sequence \[0\to K_2(A)\to \projlim_rK_2(A/\frak m^r)\oplus K_2(\tilde A)\to \projlim_rK_2(\tilde A/\frak M^r)\to 0,\] which is the desired result.
\end{proof}

A useful diagrammatic way to restate the theorem is the following:
\[\xymatrix{
0 \ar[r] & D_1(A) \ar[r]\ar[d]^{\cong} & K_2(A) \ar[r]\ar[d] & K_2(A;\hat{\bb Z})\ar[r]\ar[d] & 0\\
0 \ar[r] & D_1(\tilde A) \ar[r] & K_2(\tilde A) \ar[r] & K_2(\tilde A;\hat{\bb Z})\ar[r] & 0
}\]
Thus the right square is bicartesian and all reasonable questions concerning the central vertical arrow can be reduced to an analogous question for the right vertical arrow. In particular we obtain the following, which reduces Geller's conjecture in mixed charactersitic to an Artinian version (c.f.~the opening paragraph of section \ref{subsubsection_Geller}):

\begin{corollary}\label{corollary_Artinian_Geller_mixed_char}
Let $A$ be a one-dimensional, Noetherian, reduced local ring of mixed characteristic $(0,p)$, with finite residue field, and such that $A\to\tilde A$ is finite. Consider the following statements:
\begin{enumerate}
\item $K_2(A)\To K_2(\Frac A)$ is injective.
\item $K_2(\hat A)\To K_2(\Frac \hat A)$ is injective.
\item $K_2(\hat A;\hat{\bb Z})\to K_2(\hat{\tilde A};\hat{\bb Z})$ is injective.
\item $K_2(A/\frak m^r)\To K_2(\tilde A/\frak M^r)$ is injective for $r\gg 0$.
\end{enumerate}
Then (i)$\Rightarrow$(ii)$\Leftrightarrow$(iii)$\Leftrightarrow$(iv).
\end{corollary}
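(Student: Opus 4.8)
The plan is to push statements~(ii)--(iv) down to the completion $\hat A$, where the bicartesian square displayed just above is available, and to play this off against Moore's theorem on $K_2$ of local fields. As a preliminary I would record --- exactly as in the proof of theorem~\ref{theorem_Geller} --- that $\hat A$ is reduced, that $\hat A\to\tilde{\hat A}$ is finite, and that $\tilde{\hat A}=\hat{\tilde A}$; being moreover complete, $\hat A$ satisfies condition~(ii) of section~\ref{subsection_p_adic_orders}, hence also condition~(i), so it is a reduced $\bb Z_p$-algebra which is finitely generated and torsion-free as a $\bb Z_p$-module. The theorem preceding this corollary therefore applies to $\hat A$, furnishing a commutative diagram with short exact rows
\[
0\to D_1(\hat A)\to K_2(\hat A)\to K_2(\hat A;\hat{\bb Z})\to 0,\qquad 0\to D_1(\tilde{\hat A})\to K_2(\tilde{\hat A})\to K_2(\tilde{\hat A};\hat{\bb Z})\to 0,
\]
in which the leftmost vertical map $D_1(\hat A)\to D_1(\tilde{\hat A})$ is an isomorphism.

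For the implication (i)$\Rightarrow$(ii) I would copy the completion argument from theorem~\ref{theorem_Geller}: the $S$-analytic isomorphism $A\to\hat A$ \cite{Weibel1986} gives a long exact Mayer--Vietoris sequence relating $K_*(A)$, $K_*(F)\oplus K_*(\hat A)$ and $K_*(\hat F)$, where $F=\Frac A$ and $\hat F=\Frac\hat A$. Since $K_1(A)\cong\mult A\into\mult F\cong K_1(F)$ and, by hypothesis, $K_2(A)\into K_2(F)$, this sequence collapses to a short exact sequence $0\to K_2(A)\to K_2(F)\oplus K_2(\hat A)\to K_2(\hat F)\to 0$; a one-line diagram chase then shows $K_2(\hat A)\to K_2(\hat F)$ is injective, which is~(ii).

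The core of the argument is (ii)$\Leftrightarrow$(iii). The crucial point is that $K_2(\tilde{\hat A})\to K_2(\Frac\hat A)$ is \emph{injective}: writing $\tilde{\hat A}=\prod_j\roi_j$ as a finite product of rings of integers of finite extensions $F_j$ of $\bb Q_p$, one has $\Frac\hat A=\prod_j F_j$, and by Moore's theorem in the form of example~\ref{example_K2_groups_of_local_field} the subgroup $K_2(\roi_j)=D_1(\roi_j)\oplus\mu_{F_j}^{(p)}$ of $K_2(F_j)=D_1(\roi_j)\oplus\mu_{F_j}$ maps in injectively. Hence $\ker\bigl(K_2(\hat A)\to K_2(\Frac\hat A)\bigr)=\ker\bigl(K_2(\hat A)\to K_2(\tilde{\hat A})\bigr)$, so~(ii) amounts to injectivity of $K_2(\hat A)\to K_2(\tilde{\hat A})$. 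Applying the snake lemma to the two short exact sequences displayed above, and using that $D_1(\hat A)\to D_1(\tilde{\hat A})$ is an isomorphism, identifies $\ker\bigl(K_2(\hat A)\to K_2(\tilde{\hat A})\bigr)$ with $\ker\bigl(K_2(\hat A;\hat{\bb Z})\to K_2(\tilde{\hat A};\hat{\bb Z})\bigr)$, and since $\tilde{\hat A}=\hat{\tilde A}$ this is exactly~(iii). Finally (iii)$\Leftrightarrow$(iv) is formal: applying lemma~\ref{lemma_homotopy_description_of_K_groups}, corollary~\ref{corollary_profinite_K_groups_of_p_adic_order} and the identifications recalled at the start of this section to $\hat A$ and $\hat{\tilde A}$, the pro-systems $\{K_2(A/\frak m^r)\}_r$ and $\{K_2(\tilde A/\frak M^r)\}_r$ are essentially constant, with $K_2(A/\frak m^r)\cong K_2(\hat A;\hat{\bb Z})$ and $K_2(\tilde A/\frak M^r)\cong K_2(\hat{\tilde A};\hat{\bb Z})$ for $r\gg 0$, compatibly with the maps induced by $A\to\tilde A$; so the map of~(iv) is injective for $r\gg 0$ precisely when the map of~(iii) is.

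I expect the only genuinely non-formal ingredient to be the injectivity of $K_2(\tilde{\hat A})\to K_2(\Frac\hat A)$: this is what permits trading the true total quotient ring in~(ii) for the normalisation, and it is precisely where Moore's theorem on $K_2$ of $p$-adic fields does the real work --- without it one cannot connect~(ii) to~(iii)/(iv) at all. The remaining care needed is bookkeeping, namely checking that $K_2(\hat A;\hat{\bb Z})$, $\projlim_r K_2(A/\frak m^r)$ and the literal groups $K_2(A/\frak m^r)$ for $r\gg 0$ (and likewise for $\tilde A$) are canonically and compatibly identified, which is ensured by lemma~\ref{lemma_homotopy_description_of_K_groups} and corollary~\ref{corollary_profinite_K_groups_of_p_adic_order}.
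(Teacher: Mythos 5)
Your proof is correct and follows essentially the same route as the paper: (i)$\Rightarrow$(ii) by the completion argument of theorem \ref{theorem_Geller}, and the remaining equivalences by applying the bicartesian square (i.e.\ the isomorphism $D_1(\hat A)\cong D_1(\tilde{\hat A})$ together with the two short exact sequences) to $\hat A$, combined with the identifications $K_2(\,\cdot\,;\hat{\bb Z})\cong\projlim_r K_2(\,\cdot\,/\frak m^r)\cong K_2(\,\cdot\,/\frak m^r)$ for $r\gg0$. The only ingredient you spell out that the paper leaves implicit is the injectivity of $K_2(\tilde{\hat A})\to K_2(\Frac\hat A)$ (Dennis--Stein/Moore), which is indeed exactly what is needed to replace $\Frac\hat A$ by $\tilde{\hat A}$ in statement (ii).
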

\begin{proof}
(i)$\Rightarrow$(ii) is proved exactly as in the proof of theorem \ref{theorem_Geller}. The remaining equivalences are clear in light of the above bicartesian square since $\hat A$ satisfies the conditions of the previous theorem.
\end{proof}

Using the corollary and a handful of lemmas which we postpone until afterwards, we can now present the first ever results on Geller's conjecture in mixed characteristic. We can rarely show that $A$ is regular, i.e., $\op{embdim}A=1$, but only that $\op{embdim}A\le 2$. This is a consequence of the inability of our methods to detect the crucial element $p\in A$.

Additionally, in case (ii) of the theorem we must exclude one possibility, which we now explain. If $q$ is a power of $p$ and $\roi$ is the ring of integers of a finite extension of $\Frac\bb Z_q$, then \[\{(f,g)\in\bb Z_q\times\roi:f\mbox{ mod }p\bb Z_q=g\mbox{ mod }\frak p\}\tag{\dag}\] ($\frak p$ is the maximal ideal of $\roi$) is a seminormal finite $\bb Z_p$-algebra as in example \ref{examples_p_adic_orders}(ii); if moreover $\roi/\frak p$ is a strict extension of $\bb F_q$ and $\roi$ also contains non-trivial $p$-power roots of unity, then we say that ($\dag$) is {\em bad}.

\begin{theorem}\label{theorem_geller_in_mixed_char}
Let $A$ be a one-dimensional, Noetherian, reduced local ring of mixed characteristic $(0,p)$ such that $A\to\tilde A$ is finite, and with finite residue field. Assume $p\neq2$ and suppose that at least one of the following is true:
\begin{enumerate}
\item $\Frac\hat A$ contains no non-trivial $p$-power roots of unity; or
\item $A$ is seminormal, but $\hat A$ is not isomorphic to a bad ring in the above sense; or
\item $\tilde A$ is local and all $p$-power roots of unity in $\Frac\hat A$ belong to $\hat A$.
\end{enumerate}
If the map $K_2(A)\to K_2(\Frac A)$ is injective then $\op{embdim}A\le 2$. 

In fact, in case (i), if $p\in\frak m^2$ then we actually prove that $\op{embdim}A=1$, i.e.~that $A$ is regular.
\end{theorem}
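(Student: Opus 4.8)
The plan is to reduce to the Artinian statement of corollary~\ref{corollary_Artinian_Geller_mixed_char}, bound $K_2$ of the second infinitesimal neighbourhood of the closed point in each of the three cases, and then run an explicit Dennis--Stein computation with $K_2(A/\frak m^2)$.

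\textbf{Step 1: reduction to the completion, and translation of the hypothesis.} Exactly as in the proof of theorem~\ref{theorem_Geller}, passing from $A$ to $\hat A$ changes neither $\op{embdim}A$ nor the validity of any of the three listed hypotheses, and by corollary~\ref{corollary_Artinian_Geller_mixed_char}~(i)$\Rightarrow$(ii) the injectivity of $K_2(A)\to K_2(\Frac A)$ is inherited by $\hat A$. So I would assume $A$ complete; then (example~\ref{examples_p_adic_orders}(iii)) $A$ contains $\bb Z_q=W(\bb F_q)$, where $\bb F_q=A/\frak m$. By corollary~\ref{corollary_Artinian_Geller_mixed_char} the hypothesis is then equivalent to condition~(iv), namely that $K_2(A/\frak m^r)\to K_2(\tilde A/\frak M^r)$ is injective for $r\gg0$; by lemma~\ref{lemma_homotopy_description_of_K_groups}, corollary~\ref{corollary_profinite_K_groups_of_p_adic_order}, and the bicartesian square preceding corollary~\ref{corollary_Artinian_Geller_mixed_char}, this says precisely that the finite $p$-group $W_1^{(p)}(A)=K_2(A;\hat{\bb Z})=K_2(A/\frak m^r)$ (any $r\gg0$) injects into $K_2(\tilde A;\hat{\bb Z})=\mu_{p^\infty}(\tilde A)$, the group of $p$-power roots of unity in $\tilde A$.

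\textbf{Step 2: bounding $K_2(A/\frak m^2)$ case by case.} Since $K_2$ of an Artinian local ring is generated by Steinberg symbols, the transition maps $K_2(A/\frak m^{r+1})\twoheadrightarrow K_2(A/\frak m^r)$ are surjective, so $K_2(A/\frak m^2)$ is a quotient of the stable value $W_1^{(p)}(A)$. In case~(i), $\mu_{p^\infty}(\Frac\hat A)=0$ gives $\mu_{p^\infty}(\tilde A)=0$, hence $W_1^{(p)}(A)=0$; by surjectivity of the transition maps $K_2(A/\frak m^r)=0$ for \emph{all} $r$, in particular $K_2(A/\frak m^2)=0$. In case~(iii), $\tilde A=\roi$ is a single complete discrete valuation ring, so $\mu_{p^\infty}(\tilde A)=\mu_{p^\infty}(\Frac\hat A)$ is cyclic, whence $W_1^{(p)}(A)$ and therefore $K_2(A/\frak m^2)$ are cyclic $p$-groups. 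In case~(ii), $\hat A$ is a seminormal ring of the form described in example~\ref{examples_p_adic_orders}(ii), and the exclusion of the bad ring is exactly what is needed to conclude that $\mu_{p^\infty}(\tilde A)$ contributes only a cyclic group, so again $K_2(A/\frak m^2)$ is cyclic.

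\textbf{Step 3: the Artinian $K_2$-computation, and the main obstacle.} Set $B:=A/\frak m^2$, an Artinian local ring whose maximal ideal $\bar{\frak m}:=\frak m/\frak m^2$ has square zero, with residue field $\bb F_q$ and coefficient ring $W_2(\bb F_q)$ (which collapses to $\bb F_q$ when $p\in\frak m^2$). Using the Maazen--Steinstra presentation \cite[Thm.~3.1]{MaazenSteinstra1977}, the logarithm isomorphism (valid since $p>2$) \cite[Eg.~3.12]{MaazenSteinstra1977}, and the identification $\Omega^1_B/dB\cong\bigwedge^2_{\bb F_q}\bar{\frak m}$ of \cite[Corol.~3.2]{Weibel1998} --- set up over $W_2(\bb F_q)$ rather than over a field --- one produces, when $\op{embdim}A\ge3$, enough non-zero Dennis--Stein symbols $\langle x,y\rangle$ with $x,y\in\bar{\frak m}$ not both in $\bb F_q\!\cdot\!p$ to contradict $K_2(B)$ being cyclic; hence $\op{embdim}A\le2$. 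When $p\in\frak m^2$ the ring $B$ is genuinely an $\bb F_q$-algebra, the description collapses to $K_2(B)=K_2(B,\bar{\frak m})\cong\bigwedge^2_{\bb F_q}\bar{\frak m}$, and $K_2(B)=0$ (from case~(i)) forces $\dim_{\bb F_q}\bar{\frak m}\le1$, i.e.\ $A$ regular, which is the final assertion. I expect the genuine difficulty to lie here, in the mixed-characteristic computation with $p\notin\frak m^2$: then $B$ is not an algebra over a field, the Dennis--Stein relations interact with $p$, and the differential-forms/logarithm description must be carried out over $W_2(\bb F_q)$ --- this is precisely why the symbolic methods cannot ``see'' the element $p$ and one only bounds $\op{embdim}A$ by $2$ rather than proving regularity outright. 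A secondary technical nuisance is the cyclicity claim in case~(ii), which is exactly where the bad ring, with $\roi/\frak p$ a proper extension of $\bb F_q$, must be excluded.
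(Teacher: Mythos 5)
Your Step 1 and your treatment of case (i) match the paper: reduce to $\hat A$, invoke corollary \ref{corollary_Artinian_Geller_mixed_char}, get $K_2(A/\frak m^2)=0$, and conclude via the identification $K_2(A/\frak m^2)\cong\bigwedge^2_{\bb F_q}\frak m/(\frak m^2+p\bb Z_q)$ (the paper's proposition \ref{proposition_K_2_via_diff_forms}, assembled from exactly the Maazen--Stienstra/Weibel ingredients you cite, so the computation you flag as the ``main obstacle'' does go through). The genuine gaps are in cases (ii) and (iii), where your reduction to ``$K_2(A/\frak m^2)$ is cyclic'' is both partly false and, where true, too weak. In case (ii) the bad-ring exclusion does \emph{not} make $\mu_{p^\infty}(\tilde A)$ cyclic: a seminormal gluing of three copies of $\bb Z_p[\zeta_p]$ along $\bb F_p$ is not bad (badness requires the specific two-factor shape with an unramified $\bb Z_q$ branch and a residue-field extension on the other branch), yet $\mu_{p^\infty}(\tilde A)\supseteq\mu_p\times\mu_p\times\mu_p$. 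The paper's case (ii) argument is entirely different: it uses the seminormal decomposition $\frak m=I_1\oplus\cdots\oplus I_m$ to build Dennis--Stein symbols $\langle x,y\rangle$ with $x,y$ on different branches, which die in every $K_2(A/\frak q_i)$, hence in $K_2(\tilde A)$, but survive in $K_2(A/\frak m^2)$, forcing $m\le 2$; the bad-ring exclusion only enters in the residual subcase $A/I_1\cong\bb Z_q$, where it lets one either fall back on case (i) or check $\op{embdim}A\le2$ by hand.

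Even in case (iii), where cyclicity of $W_1^{(p)}(A)$ is genuine, it does not give the bound: $K_2(A/\frak m^2)\cong\bigwedge^2_{\bb F_q}\frak m/(\frak m^2+p\bb Z_q)$ is an $\bb F_q$-vector space, and when $q=p$ and $p\notin\frak m^2$ it can be $\bb Z/p$ --- cyclic and non-zero --- with $\dim_{\bb F_p}\frak m/(\frak m^2+p\bb Z_p)=2$, i.e.\ $\op{embdim}A=3$; so ``produce symbols contradicting cyclicity'' cannot rule out embedding dimension $3$ in that subcase. The paper closes this by using \emph{where} the cyclic group sits: the hypothesis in (iii) puts $\zeta$ inside $A$, so $\roi:=\bb Z_q[\zeta]\subseteq A$ and the composite $K_2(\roi;\hat{\bb Z})\to K_2(A;\hat{\bb Z})\to K_2(\tilde A;\hat{\bb Z})$ is an isomorphism; injectivity then forces $K_2(\roi/\frak p^2)\to K_2(A/\frak m^2)$ to be \emph{surjective}, and a relative version of the differential-forms computation (lemma \ref{lemma_relative_to_absolute_HC} plus a modified lemma \ref{lemma_square_zero} over $\roi/\frak p^2$) yields $\bigwedge^2_{\bb F_q}\frak m/(\frak m^2+\pi\roi)=0$, whence $\op{embdim}A\le 2$. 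That surjectivity statement --- all of $K_2(A/\frak m^2)$ comes from the subring $\roi$ --- is strictly stronger than cyclicity and is the missing idea in your Steps 2--3.
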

\begin{proof}
Using corollary \ref{corollary_Artinian_Geller_mixed_char} we see that we may replace $A$ by its completion, which is a reduced, local $\bb Z_p$-algebra which is finitely generated and torsion-free as a $\bb Z_p$-module. Letting $\bb F_q=A/\frak m$, example \ref{examples_p_adic_orders}(iii) says that $\bb Z_q\subseteq A$.

(i): Assume first that $\Frac A$, hence $\tilde A$, contains no non-trivial $p$-power roots of unity. Then $K_2(\tilde A;\hat{\bb Z})=0$, so again using the corollary we deduce that if $K_2(A)\to K_2(\Frac A)$ is injective then $K_2(A/\frak m^r)=0$ for $r\gg 0$; but since $K_2(A/\frak m^r)\to K_2(A/\frak m^2)$ is surjective, this would imply that $K_2(A/\frak m^2)= 0$.

Therefore, according to proposition \ref{proposition_K_2_via_diff_forms} below, $\bigwedge^2_{\bb F_q}\frak m/(\frak m^2+p\bb Z_q)=0$; i.e., \[\dim_{\bb F_q}\frak m/(\frak m^2+p\bb Z_q)\le1,\] from which (i) and the final claim about $p\in\frak m^2$ follow.

(ii): Now assume instead that $A$ is seminormal. By standard theory of seminormal rings (we refer the reader to any of \cite{Davis1978, Weibel1989, Roberts1976, Weibel1980} for such standard theory), $A$ has the following description: if $\frak q_1,\dots,\frak q_m$ are the minimal prime ideals of $A$, and $I_i:=\bigcap_{j\neq i}\frak q_j$, then the maximal ideal of $A$ is $\frak m=I_1+\cdots+I_m$ and this sum is direct.

We first treat the case $m>2$. In this case there clearly exist indices $\al\neq \beta$ and elements $x\in I_\al$, $y\in I_\beta$ such that $x,y,p$ are linearly independent in $\frak m/\frak m^2=I_1/I_1^2\oplus\cdots\oplus I_m/I_m^2$. Then the Dennis--Stein symbol $\did{x,y}\in K_2(A)$ vanishes in $K_2(A/\frak q_i)$ for all $i$, since each $\frak q_i$ contains $x$ or $y$; hence $\did{x,y}$ vanishes in $K_2(\tilde A)$. But $\did{x,y}$ has non-zero image in $K_2(A/\frak m^2)$ by proposition \ref{proposition_K_2_via_diff_forms} and by choice of $x,y$. This shows that $K_2(A)\to K_2(\tilde A)$ cannot be injective when $m>2$.

Next suppose that $m=2$, so that \[A=\{(f_1,f_2)\in\tilde{A/I_1}\times\tilde{A/I_2}:f_1\mbox{ mod } \frak p_1=f_2\mbox{ mod } \frak p_2\in \bb F_q\},\] where $\frak p_i$ denotes the maximal ideal $\tilde{A/I_i}$, which is the ring of integers of a finite extension of $\bb Q_p$. There are two subcases to consider.  Firstly, if the images of $p$ in $I_1/I_1^2$ and $I_2/I_2^2$ span neither of these $\bb F_q$-spaces, then there exist $x\in I_1$ and $y\in I_2$ such that $x,y,p$ are linearly independent in $\frak m/\frak m^2$, and the same proof as in the case $m>2$ works. Secondly suppose that the image of $p$ in $I_2/I_2^2$ spans this space; this case is trickier. Since $I_2/I_2^2$ is the tangent space of the local ring $A/I_1$, we deduce that $A/I_1$ is both regular and unramified, i.e.\ $A/I_1\cong \bb Z_q$. Letting $\roi=A/I_2$, it follows that $A$ is exactly of type (\dag) above; so, by our assumption that $A$ is not bad, either $\roi$ has residue field $\bb F_q$ or $\roi$ contains no non-trivial $p$-power roots of unity. The second case is covered by (i). In the first case it is straightforward to check that $\frak m$ is generated by the elements $(p,0),(0,\pi)$, where $\pi$ is a uniformiser of $\roi$ (c.f.~example \ref{example_geller} below), whence $\op{embdim}A\le 2$. This completes the proof of part (ii) of the theorem.

\comment{
 Let $\zeta$ be a generator of the (possibly trivial) group of $p$-power roots of unity in $\tilde{A/I_1}$; then, in the notation above, $A$ contains the $p$-power root of unity $(\zeta,1)$ and hence contains the $\bb Z_q$-subalgebra $\roi$ it generates, which can be identified with the ring of integers of $\bb Q_q(\zeta)$. Thus the composition \[K_2(\roi;\hat{\bb Z})\to K_2(A;\hat{\bb Z})\to K_2(\tilde{A/I_1};\hat{\bb Z})\] is an isomorphism. Assuming henceforth that $K_2(A)\to K_2(\tilde A)$ is injective implies (using corollary \ref{corollary_Artinian_Geller_mixed_char} and the fact that $K_2(A/I_2;\hat{\bb Z})=0$) that the second arrow in this composition is injective; therefore the second arrow is actually an isomorphism and the first arrow is a split surjection.

Hence $K_2(\roi/\frak n^2)\to K_2(A/\frak m^2)$ is surjective, where $\frak n$ denotes the maximal ideal of $\roi$. Rewriting these $K_2$ groups in terms of differential forms using lemma \ref{lemma_relative_to_absolute_HC} and applying the standard exact sequence for differential forms, we see that $\Omega_{(A/\frak m^2)/(\roi/\frak n^2)}^1/d(A/\frak m^2)=0$. Finally, from a slight modification of lemma \ref{lemma_square_zero}, this can be rewritten as \[\bigwedge\nolimits_{\bb F_q}^2\frak m/(\frak m^2+\pi\roi)=0,\] where $\pi$ is a uniformiser of $\roi$. Just as we finished the proof of part (i), this implies $\op{embdim}A\le 2$.
}

(iii): Finally, suppose that $\tilde A$ is local and all $p$-power roots of unity in $\Frac A$ belong to $A$. Let $\zeta$ be a generator of the (possibly trivial) group of $p$-power roots of unity in $\tilde{A}$ and let $A/\frak m=\bb F_q$. Then $A$ contains $\roi:=\bb Z_q[\zeta]$, which is the ring of integers in $\bb Q_q(\zeta)$. Thus the composition \[\did{\zeta}=K_2(\roi;\hat{\bb Z})\to K_2(A;\hat{\bb Z})\to K_2(\tilde{A};\hat{\bb Z})=\did{\zeta}\] is an isomorphism. Assuming henceforth that $K_2(A)\to K_2(\tilde A)$ is injective implies, using corollary \ref{corollary_Artinian_Geller_mixed_char}, that the second arrow in this composition is injective; therefore the second arrow is actually an isomorphism and the first arrow is a split surjection.

Hence $K_2(\roi/\frak p^2)\to K_2(A/\frak m^2)$ is surjective, where $\frak p$ denotes the maximal ideal of $\roi$. Rewriting these $K_2$ groups in terms of differential forms using lemma \ref{lemma_relative_to_absolute_HC} and applying the standard exact sequence for differential forms, we see that $\Omega_{(A/\frak m^2)/(\roi/\frak p^2)}^1/d(A/\frak m^2)=0$. Finally, from a slight modification of lemma \ref{lemma_square_zero}, this can be rewritten as \[\bigwedge\nolimits_{\bb F_q}^2\frak m/(\frak m^2+\pi\roi)=0,\] where $\pi$ is a uniformiser of $\roi$. Just as we finished the proof of part (i), this implies $\op{embdim}A\le 2$.

\comment{
Firstly, if the image of $p$ in  spans both of these $\bb F_q$-spaces then $\dim_{\bb F_q}\frak m/\frak m^2=2$, as desired.

(iii): Next, assume instead that $A$ is seminormal and that $\tilde A$ is local, with maximal ideal $\frak M$. Then $A$ is a domain and $\tilde A$ is the ring of integers of $F=\Frac A$, which is a finite extension of $\bb Q_p$. Since $A$ is seminormal we have $\frak m=\frak M$ and therefore $A$ is determined by its residue field; so $A=\bb Z_q+\frak M$. If $\zeta$ is a generator of the group of $p$-power roots of unity inside $\tilde A$ then $\zeta=1+(1-\zeta)\in\bb Z_q+\frak M=A$, whence $A$ contains $\roi:=\bb Z_q[\zeta]$. Note that the composition \[K_2(\roi;\hat{\bb Z})\to K_2(A;\hat{\bb Z})\to K_2(\tilde A;\hat{\bb Z})\] is an isomorphism. Our assumption that $K_2(A)\to K_2(\tilde A)$ is injective implies (using corollary \ref{corollary_Artinian_Geller_mixed_char}) that the second arrow in this composition is injective; therefore the second arrow is actually an isomorphism and the first arrow is a (split) surjection.
}
\end{proof}

\begin{example}\label{example_geller}
Let $p>2$ be prime and let $q=p^l$ be a power of $p$. Then $A:=\bb Z_p+p^s\bb Z_q$ is a reduced, local $\bb Z_p$-algebra which is finitely generated and torsion-free as a $\bb Z_p$-module; $A$ has normalisation $\bb Z_q$, maximal ideal $\frak m=p\bb Z_p+p^s\bb Z_q$, residue field $\bb F_p$, and embedding dimension $\dim_{\bb F_p}\frak m/\frak m^2=l+1$. Indeed, an $\bb F_p$ basis for $\frak m/\frak m^2$ is given by $p$, $p^s\theta_i$, $i=1,\dots,l$, where $\{\theta_i\}$ are Teichm\"uller lifts of a basis of $\bb F_q$ as a $\bb F_p$-space.

So, assuming that $l\neq 1$, part (i) of the theorem implies that $K_2(A)\to K_2(\bb Z_q)$ is not injective. However, if $l=1$ then $\dim_{\bb F_p}\frak m/(p\bb Z_p+\frak m^2)=1$ and so $K_2(A/\frak m^2)=0$, telling us nothing about the putative injectivity of $K_2(A)\to K_2(\bb Z_q)$. It seems likely that the Dennis--Stein symbol $\did{p,p^s\theta_1}\in K_2(A/\frak m^r)$ will be non-zero for $r\gg0$, which would prove non-injectivity of $K_2(A)\to K_2(\bb Z_q)$, but I cannot prove it.
\end{example}

The theorem required various explicit descriptions of $K_2(A/\frak m^r)$, especially when $r=2$, which we establish in the remainder of this section by modifying classical results such as those in \cite{MaazenSteinstra1977} and \cite{Weibel1980a}:

\begin{lemma}\label{lemma_generalisation_of_MS}
Let $R$ be a ring containing a nilpotent ideal $I$; let $N$ be the smallest integer for which $I^N=0$, and assume that $N!\in\mult R$. Then there is a natural isomorphism \[K_2(R,I)\cong HC_1(R,I).\]
\end{lemma}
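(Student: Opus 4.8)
The plan is to identify both sides explicitly and then match them via the relative Chern character, showing this character is an isomorphism precisely because $N!$ is invertible. First I would describe the target. Using the standard identification $HC_1(S)=\Omega^1_{S/\bb Z}/dS$ for a commutative ring $S$, the map $HC_1(R)\to HC_1(R/I)$ is a surjection, being induced by the surjection $\Omega^1_{R/\bb Z}\onto\Omega^1_{(R/I)/\bb Z}$; hence $HC_1(R,I)=\ker(HC_1(R)\to HC_1(R/I))$, and since $\ker(\Omega^1_{R/\bb Z}\to\Omega^1_{(R/I)/\bb Z})=I\Omega^1_{R/\bb Z}+R\,dI$ one computes $HC_1(R,I)$ to be the subgroup of $\Omega^1_{R/\bb Z}/dR$ generated by the classes $[a\,db]$ with $a\in I$ or $b\in I$. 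On the source side, $I$ is nilpotent hence contained in the Jacobson radical of $R$, so the Maazen--Steinstra theorem \cite[Thm.~3.1]{MaazenSteinstra1977} presents $K_2(R,I)$ by Dennis--Stein symbols $\did{a,b}$ (with $a\in I$ or $b\in I$) subject to the usual relations.

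Next I would make the comparison map explicit. Since relative periodic cyclic homology of a nilpotent ideal vanishes, the connecting map $B\colon HC_1(R,I)\to HN_2(R,I)$ is an isomorphism for an arbitrary ring \cite[Thm.~II.5.1]{Goodwillie1985}, so the relative Chern character $ch_2\colon K_2(R,I)\to HC_1(R,I)$ is defined unconditionally; computing the Dennis trace on a Dennis--Stein symbol yields $ch_2(\did{a,b})=\big[\sum_{n\ge0}(-1)^n(ab)^na\,db\big]$, a finite sum by nilpotence. Surjectivity of $ch_2$ then holds with no hypothesis on $N$: a generating class $[a\,db]$ is hit by a telescoping sum of finitely many Dennis--Stein symbols beginning with $\did{a,b}$, each successive term chosen so as to push the error into a strictly higher power of $I$, the process terminating because $I^N=0$.

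The essential point is injectivity, and this is exactly where the hypothesis $N!\in\mult R$ is consumed. The plan is to construct a two-sided inverse $\eta\colon HC_1(R,I)\to K_2(R,I)$ by $[a\,db]\mapsto\did{a,b}$; the real work is well-definedness, namely that $\eta$ respects additivity and the Leibniz rule in $\Omega^1_{R/\bb Z}$ and annihilates the exact forms $dR$. Matching the Leibniz identity $[a_0\,d(a_1a_2)]=[a_0a_1\,da_2]+[a_0a_2\,da_1]$ with the Dennis--Stein relation $\did{a_0a_1,a_2}=\did{a_0,a_1a_2}+\did{a_1,a_2a_0}$, and killing $dR$ (which comes down to the identity $\did{1,c}=1$ for $c\in I$ together with the additivity relations), both require correcting the naive assignment by $\log(1+x)$-type power series, and assembling those corrections into genuine elements of $K_2(R,I)$ forces division by $2,3,\dots,N$ --- precisely the hypothesis. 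The square-zero case $I^2=0$ (only $2$ inverted) is classical \cite[Eg.~3.12]{MaazenSteinstra1977}, but a filtration induction along $I\supseteq I^2\supseteq\cdots$ does not obviously reduce the general case to it, so I would carry out the power-series bookkeeping directly. The main obstacle is exactly this bookkeeping: verifying term by term that $\eta$ is well defined and that $ch_2\circ\eta$ and $\eta\circ ch_2$ are the identity, while confirming that $N!$, and nothing smaller, is what the argument actually needs.
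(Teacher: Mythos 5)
There are genuine gaps here, both in the setup and at the heart of the argument. First, your identification of the target group is not justified: from the long exact sequence, $HC_1(R,I)\cong\ker(HC_1(R)\to HC_1(R/I))$ would require surjectivity of $HC_2(R)\to HC_2(R/I)$, not of $HC_1(R)\to HC_1(R/I)$, which is what you actually check; for a non-split nilpotent extension there is no evident reason for that surjectivity, so a priori your ``subgroup of $\Omega^1_{R/\bb Z}/dR$'' is only a quotient of the true relative group, and the lemma is a statement about $HC_1(R,I)$ itself. The paper sidesteps this entirely: since $N>1$ forces $2\in\mult R$, it describes $HC_1(R,I)$ honestly via Connes' complex, as the span of the symbols $a\otimes b$ with at least one entry in $I$ modulo the relative Hochschild and cyclic relations. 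Two citation points compound this. The Dennis--Stein presentation you take from \cite[Thm.~3.1]{MaazenSteinstra1977} is a theorem about \emph{split} radical pairs; in the generality of this lemma, where $R\to R/I$ need not split, you need Keune's relativization \cite[Thm.~15]{Keune1978}, which is what the paper uses. And \cite[Thm.~II.5.1]{Goodwillie1985} is proved for nilpotent ideals in $\bb Q$-algebras, not ``for an arbitrary ring,'' so even the unconditional construction of your comparison map $ch_2$ via $B$ is not available here without extra work.

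Second, the core of your argument is deferred rather than done: the well-definedness of $\eta$, the $\log(1+x)$-type correction series, and the verification that $\eta$ inverts $ch_2$ are exactly the content of Maazen--Stienstra's construction of the isomorphism $\did{a,b}\mapsto l(a,b)\,da$ via the formal logarithm (this is where $N!\in\mult R$ is used), and you yourself flag this bookkeeping as the main obstacle. The paper's proof consists precisely of citing that construction \cite[E.g.~3.12]{MaazenSteinstra1977} and observing that it goes through verbatim without a splitting once the target is written as $l(a,b)\otimes a$ in the Connes-complex model of $HC_1(R,I)$, together with Keune's presentation of the source. So your plan amounts to re-deriving Maazen--Stienstra by hand while omitting the computation that constitutes the proof; as written it is a programme, not a proof, and your formula for $ch_2\did{a,b}$ and the telescoping surjectivity argument are likewise asserted rather than verified.
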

\begin{proof}
If $N=1$ then both sides vanish; assume henceforth that $N>1$. So, in particular, $2$ is invertible in $R$, which implies that $HC_0, HC_1$ and $HC_2$ may be defined using Connes' complex rather than the cyclic bicomplex: see the remark in \cite[\S2.1]{Loday1992}. So the relative group $HC_1(R,I)$ admits the following description: First let $C_1(R,I)$ be the submodule of $R\otimes_\bb ZR$ generated by symbols $a\otimes b$ where at least one of $a,b$ lies in $I$; then $HC_1(R,I)$ is the abelian group obtained by quotienting $C_1(R,I)$ by the relations
\begin{align*}
&ab\otimes c-a\otimes bc + ca\otimes b=0\quad\quad (a,b,c\in R,\mbox{ at least one in }I)\\
&a\otimes b+b\otimes a=0\quad\quad (a,b\in R,\mbox{ at least one in }I)
\end{align*}

On the other hand, F.~Keune \cite[Thm.~15]{Keune1978} proved that $K_2(R,I)$ admits the following description by Dennis--Stein symbols: It is the abelian group generated by symbols $\did{a,b}$, where $a,b\in R$ and at least one of $a,b$ lies in $I$, modulo the relations
\begin{align*}
&\did{a,b}=-\did{-b,-a}\\
&\did{a,b}+\did{a,c}=\did{a,b+c+abc}\\
&\did{a,bc}=\did{ab,c}+\did{ac,b}
\end{align*}

In the case when $R\to R/I$ is split, and with the same hypothesis that $N!\in\mult R$, H.~Maazen and J.~Stienstra \cite[E.g.~3.12]{MaazenSteinstra1977} explicitly constructed an isomorphism \[K_2(R,I)\cong \ker(\Omega_R^1\to\Omega_{R/I}^1)/dI=HC_1(R,I),\quad \did{a,b}\mapsto l(a,b)\,da,\] where $l(X,Y)$ is a formal logarithm function. Their proof works verbatim in the general situation when $R\to R/I$ is not necessarily split, replacing $l(a,b)\,da$ by $l(a,b)\otimes a\in HC_1(R,I)$.
\end{proof}

Next we pass from the relative groups to the absolute ones:

\begin{lemma}\label{lemma_relative_to_absolute_HC}
Let $R$ be a finite ring with Jacobson radical $\frak M$, and suppose that $R/\frak M$ is a finite product of finite fields of characteristic $p$. Assume that $\frak M^{p-1}=0$. Then there is a natural isomorphism \[K_2(R)\cong \Omega_R^1/dR\]
\end{lemma}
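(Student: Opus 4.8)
The plan is to reduce the statement to the relative isomorphism $K_2(R,\frak M)\cong HC_1(R,\frak M)$ supplied by lemma \ref{lemma_generalisation_of_MS} (all Hochschild and cyclic homology is taken over $\bb Z$, in the naive sense of \cite[\S2.1]{Loday1992}), and then to pass from relative to absolute groups on both sides. If $\frak M=0$ then $R$ is a finite product of finite fields and both sides vanish ($K_2$ by Quillen, and $\Omega^1_{k/\bb Z}=0$ for every finite field $k$ since any generator satisfies $x^q=x$), so I may assume $\frak M\neq0$; then $p\geq3$ and $2\in\mult R$.

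First I would verify the hypotheses of lemma \ref{lemma_generalisation_of_MS} for the nilpotent ideal $\frak M$: if $N$ is least with $\frak M^N=0$ then $N\leq p-1$, so $N!$ is prime to $p$, hence a unit modulo $\frak M$, hence a unit in $R$ (as $\frak M=\Jac R$); thus $K_2(R,\frak M)\cong HC_1(R,\frak M)$. Next, $R$ is a finite $p$-group (its order is $|R/\frak M|\cdot|\frak M|$, and $\frak M$ is filtered by the $\bb F_p$-vector spaces $\frak M^j/\frak M^{j+1}$), so $R\otimes_\bb Z R$ is a finite $p$-group and therefore so is $HC_1(R,\frak M)$, being a subquotient of it. Meanwhile $K_2(R/\frak M)=\bigoplus_i K_2(k_i)=0$ while $K_3(R/\frak M)=\bigoplus_i\bb Z/(|k_i|^2-1)$ has order prime to $p$, both by Quillen; plugging this into the relative $K$-theory exact sequence $K_3(R/\frak M)\to K_2(R,\frak M)\to K_2(R)\to K_2(R/\frak M)$ shows that $K_2(R,\frak M)\to K_2(R)$ is surjective and that the boundary map into it — being a homomorphism from a prime-to-$p$ group to a $p$-group — vanishes, so $K_2(R,\frak M)\xrightarrow{\ \sim\ }K_2(R)$. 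At this point $K_2(R)\cong HC_1(R,\frak M)$ naturally.

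It then remains to identify $HC_1(R,\frak M)$ with $\Omega^1_{R/\bb Z}/dR$. The crucial point is that $R/\frak M=\prod_i k_i$ has vanishing (naive) Hochschild homology over $\bb Z$ in positive degrees: since $p\cdot k_i=0$, the $\bb Z$-tensor powers of $k_i$ agree with its $\bb F_p$-tensor powers, so $HH_n(k_i/\bb Z)=HH_n(k_i/\bb F_p)=0$ for $n\geq1$ because $k_i/\bb F_p$ is \'etale; in particular $HH_1(R/\frak M)=HH_2(R/\frak M)=0$ and $HC_1(R/\frak M)=0$. The relative Hochschild and cyclic long exact sequences then give $HC_0(R,\frak M)\cong\frak M$ and $HH_1(R,\frak M)\cong HH_1(R)=\Omega^1_{R/\bb Z}$, and the relative $SBI$-sequence, whose tail is $HC_0(R,\frak M)\xrightarrow{B}HH_1(R,\frak M)\xrightarrow{I}HC_1(R,\frak M)\xrightarrow{S}HC_{-1}(R,\frak M)=0$, identifies $HC_1(R,\frak M)$ with $\operatorname{coker}(B\colon\frak M\to\Omega^1_{R/\bb Z})=\Omega^1_{R/\bb Z}/d\frak M$. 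Finally I would show $d\frak M=dR$ in $\Omega^1_{R/\bb Z}$: by Schur--Zassenhaus the extension $1\to1+\frak M\to\mult R\to\mult{(R/\frak M)}\to1$ of a prime-to-$p$ group by a $p$-group splits, so $\mult R$ contains Teichm\"uller lifts $\mu\cong\prod_i\mult{k_i}$; every $\zeta\in\mu$ has order prime to $p$ and hence $d\zeta=0$, so the subring generated by $\mu$ lies in $R_0:=\ker(d\colon R\to\Omega^1_{R/\bb Z})$ and surjects onto $R/\frak M$ (using $p\geq3$, so that $\prod_i\mult{k_i}$ generates $\prod_i k_i$ as a ring). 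Thus $R=R_0+\frak M$ and $dR=d\frak M$, and altogether $K_2(R)\cong HC_1(R,\frak M)\cong\Omega^1_{R/\bb Z}/d\frak M=\Omega^1_{R/\bb Z}/dR$, all isomorphisms being natural.

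The step demanding the most care is the computation of $HC_1(R,\frak M)$: one must use the naive Loday cyclic homology over $\bb Z$ throughout (for the derived variant $HH_*(\prod_i k_i/\bb Z)$ would not be concentrated in degree $0$) and check that the $B$-map $HC_0(R,\frak M)=\frak M\to HH_1(R,\frak M)=\Omega^1_{R/\bb Z}$ in the relative $SBI$-sequence is indeed $m\mapsto dm$. By comparison the identification $d\frak M=dR$ is routine once the Teichm\"uller lifts are available.
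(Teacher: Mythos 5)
Your proof is correct, and while its overall architecture matches the paper's (reduce to lemma \ref{lemma_generalisation_of_MS}, show $K_2(R,\frak M)\cong K_2(R)$ via Quillen's computation for finite fields, then identify $HC_1(R,\frak M)$ with $\Omega^1_R/dR$), the cyclic homology half is handled by a genuinely different mechanism. The paper proves $HC_1(R,\frak M)\cong HC_1(R)$ from the absolute long exact sequence: it uses \'etaleness of $R/\frak M$ over $\bb F_p$ to get $HC_1(R/\frak M)=0$ and $HC_2(R/\frak M)\cong HC_0(R/\frak M)$, and then argues (rather tersely, via Teichm\"uller lifts) that $HC_2(R)\to HC_2(R/\frak M)$ is surjective, finally quoting $HC_1(R)=\Omega^1_R/dR$. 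You instead compute the relative group directly from the relative SBI sequence, $HC_0(R,\frak M)\cong\frak M\xrightarrow{B}HH_1(R,\frak M)\cong\Omega^1_R\to HC_1(R,\frak M)\to 0$, obtaining $\Omega^1_R/d\frak M$, and then prove $d\frak M=dR$ by splitting off the prime-to-$p$ units (Schur--Zassenhaus, or just the primary decomposition of the abelian group $\mult R$) and observing that $d$ kills roots of unity of order prime to $p$, so that $R=R_0+\frak M$ with $dR_0=0$. What your route buys is that it never touches degree-$2$ cyclic homology, replacing the paper's surjectivity claim for $HC_2$ by the elementary identity $d\frak M=dR$; what it costs is the need to verify the naturality statement that the relative $B$-map is $m\mapsto dm$ under the identifications above (which you correctly flag), plus the small ring-theoretic check that $\prod_i\mult{k_i}$ generates $R/\frak M$. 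A second minor divergence: you deduce that $K_2(R,\frak M)$ is a finite $p$-group from its presentation as a subquotient of $R\otimes_{\bb Z}R$ via $HC_1(R,\frak M)$, whereas the paper cites the fact that relative $K$-groups of nilpotent ideals in such rings are $\bb Z_{(p)}$-modules; both suffice to kill the boundary map from $K_3(R/\frak M)$. Your handling of the degenerate case $\frak M=0$ (forced when $p=2$) and of the hypothesis $N!\in\mult R$ is also correct.
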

\begin{proof}
The relative group $K_2(R,\frak M)$ is a $\bb Z_{(p)}$-module, while $K_3(R/\frak M)$ is a finite group of order prime to $p$ (thanks to Quillen's calculation of the $K$-theory of finite fields); therefore the map $K_3(R/\frak M)\to K_2(R,\frak M)$ is zero. Moreover, $K_2(R/\frak M)=0$, again because $R/\frak M$ is a finite product of finite fields, and so we have proved that $K_2(R,\frak M)\cong K_2(R)$.

Now we will prove the analogous result for cyclic homology, which will finish the proof (using the previous lemma). Notice that it does not matter whether we compute $HH$ and $HC$ with respect to $\bb Z$ or with respect to the image of $\bb Z$ inside the ring under question; we will freely pass between the two without indictation. Since $R/\frak M$ is a finite product of finite fields of characteristic $p$,  it is smooth over $\bb F_p$ and $\Omega_{R/\frak M}^*=0$ for $*\ge1$. The Hochschild--Kostant--Rosenberg theorem \cite[Thm.~3.4.4]{Loday1992} and SBI sequence therefore implies that $HC_2(R/\frak M)=HC_0(R/\frak M)=R/\frak M$. The existence of Teichm\"uller lifts easily implies that $HC_0(R)\to HC_0(R/\frak M)$ is surjective, and therefore $HC_2(R)\to HC_2(R/\frak M)$ is surjective. Moreover, $HC_1(R/\frak M)=0$, completing the proof that $HC_1(R,\frak M)=HC_1(R)=\Omega_R^1/dR$.
\end{proof}

Next we specialise to the case of a square-zero ideal:

\begin{lemma}\label{lemma_square_zero}
Let $R$ be a ring containing an ideal $I$ such that $I^2=0$ and such that $2\in\mult R$. Assume that the composition $H_\sub{dR}^0(R)\to R\to R/I$ is surjective, and let $k$ be any subring of $H_\sub{dR}^0(R)$ which surjects onto $R/I$. Then there is a natural isomorphism of $k$-modules \[\bigwedge\nolimits_{R/I}^2\res I\cong\Omega_R^1/dR,\quad a\wedge b\mapsto a\,db,\] where $\res I:=I/I\cap k$.
\end{lemma}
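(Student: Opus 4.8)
The plan is to compute $\Omega^1_R/dR$ explicitly from the standard presentation of K\"ahler differentials and then identify the answer with $\bigwedge^2_{R/I}\res I$. Set $S:=R/I$. Since $k\onto R/I$ we have $R=k+I$ as additive groups; since $S$ is a ring quotient of $R$, the hypothesis $2\in\mult R$ forces $2\in\mult S$; and $(k\cap I)I\sseq I^2=0$, so both $I$ and $\res I=I/(k\cap I)$ are naturally $S$-modules.

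First I would reduce to relative differentials. As $k\sseq H^0_\sub{dR}(R)=\ker(d\colon R\to\Omega^1_{R/\bb Z})$, the cotangent sequence for $\bb Z\to k\to R$ gives $\Omega^1_{R/k}=\Omega^1_{R/\bb Z}=\Omega^1_R$, and since $d$ kills $k$ we have $dr=da$ for $r=\lambda+a$ $(\lambda\in k,\ a\in I)$, so the subgroup $dR\sseq\Omega^1_{R/k}$ equals $dI:=\{da:a\in I\}$. Next I would write down a presentation: a $k$-derivation $R\to M$ is the same as a $k$-linear map $I\to M$ vanishing on $k\cap I$ and satisfying Leibniz, and expanding Leibniz for $\omega_1\omega_2$ with $\omega_i=\lambda_i+a_i$ and using $a_1a_2=0$ collapses it to $a_1\,da_2+a_2\,da_1=0$. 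Hence $\Omega^1_{R/k}\cong(R\otimes_kI)/N$ with $r\otimes a\leftrightarrow r\,da$ and $N$ the submodule generated by the $a_1\otimes a_2+a_2\otimes a_1$ (the extra relations $dc=0$, $c\in k\cap I$, are redundant once we divide by $dI$ below); moreover $r(a_1\otimes a_2+a_2\otimes a_1)=\lambda(a_1\otimes a_2+a_2\otimes a_1)$ for $r=\lambda+y$, because $ya_i\in I^2=0$, so $N$ is in fact a $k$-submodule.

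Dividing by $dI$, which corresponds to $P:=\op{Im}(k\otimes_kI\to R\otimes_kI)$, yields
\[\Omega^1_R/dR\;\cong\;(R\otimes_kI)/(N+P)\;\cong\;\big((R/k)\otimes_kI\big)/\res N,\]
where $\res N$ is the image of $N$. By the second isomorphism theorem $R/k\cong I/(I\cap k)=\res I$, and $\res I\otimes_kI=\res I\otimes_S\big(I/(k\cap I)I\big)=\res I\otimes_SI$ because $(k\cap I)I=0$; under these identifications the class of $\res{a}\otimes b$ is $a\,db$, and $\res N$ is the $S$-submodule generated by the $\res{a_1}\otimes a_2+\res{a_2}\otimes a_1$. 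Finally I would invoke the surjection $\pi\colon\res I\otimes_SI\onto\res I\otimes_S\res I$ induced by $I\onto\res I$ in the second slot: it carries the generators of $\res N$ onto $\res{a_1}\otimes\res{a_2}+\res{a_2}\otimes\res{a_1}$, which (using $2\in\mult S$) generate the kernel of $\res I\otimes_S\res I\to\bigwedge^2_S\res I$, so $\pi$ descends to a surjection $(\res I\otimes_SI)/\res N\onto\bigwedge^2_S\res I$. It is injective because $\ker\pi$, which by right-exactness of $\res I\otimes_S(-)$ applied to $0\to k\cap I\to I\to\res I\to 0$ is spanned by the elements $\res x\otimes c$ with $x\in I$, $c\in k\cap I$, already lies in $\res N$: indeed $\res x\otimes c=\res x\otimes c+\res c\otimes x\in\res N$ since $\res c=0$. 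Chaining the isomorphisms produces an $S$-linear (hence $k$-linear), natural isomorphism $\Omega^1_R/dR\cong\bigwedge^2_S\res I$ with $a\,db\mapsto\res a\wedge\res b$, whose inverse is the asserted map $a\wedge b\mapsto a\,db$.

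The main obstacle is the injectivity in the last step. A priori $\Omega^1_R/dR$ is only a quotient of $\res I\otimes_SI$, and this is genuinely larger than $\res I\otimes_S\res I$, so one must check that the Leibniz relations already annihilate the extra piece $\ker\pi$. This works for a slightly lucky reason: a Leibniz relation $\res{a_1}\otimes a_2+\res{a_2}\otimes a_1$ with $a_2\in k\cap I$ simplifies to $\res{a_1}\otimes a_2$ (since $\res{a_2}=0$), which is exactly a generator of $\ker\pi$. By contrast the hypothesis $2\in\mult R$ is used only at the very end, to pass from the symmetrized-tensor quotient to the honest exterior square.
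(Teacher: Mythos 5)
Your argument is correct, but it takes a genuinely different route from the paper's. The paper identifies $\Omega_R^1/dR=\Omega_{R/k}^1/dR$ with $HC_1^k(R)$ presented via Connes' complex (a quotient of $R\otimes_kR$ by the Hochschild and cyclic relations), then proves $HC_1^k(R)\cong\big(\bigwedge\nolimits_k^2R\big)/\Lambda$, where $\Lambda$ is generated by the $a\wedge b$ with $a$ or $b$ in $k$ --- the key verification being that $ab\wedge c-a\wedge bc+ca\wedge b\equiv0\bmod\Lambda$, checked in the two cases $a\in k$ and $a,b,c\in I$ using $I^2=0$ --- and finally descends $\bigwedge\nolimits_k^2I$ to $\bigwedge\nolimits_{R/I}^2\res I$. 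You bypass cyclic homology entirely: you compute $\Omega^1_{R/k}$ from its universal property, using that a $k$-derivation out of the square-zero extension $R=k+I$ is a $k$-linear map on $I$ killing $k\cap I$ and satisfying the symmetrised Leibniz relation, and then perform the tensor-module reductions down to $\bigwedge\nolimits_S^2\res I$ with $S=R/I$. Both arguments rest on the same two inputs ($I^2=0$ and $2$ invertible), but yours isolates the role of $2\in\mult R$ at the single step where the symmetric-tensor submodule is identified with the kernel of $\res I\otimes_S\res I\to\bigwedge\nolimits_S^2\res I$, whereas the paper also needs $2$ invertible for Connes' complex to compute $HC_1$; and your injectivity step (showing $\ker\pi\subseteq\res N$ via right-exactness) replaces the paper's construction of an explicit inverse map. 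The paper's route is shorter in context, since the neighbouring lemmas already work with $HC_1$ and Dennis--Stein symbols; yours is more elementary and self-contained.

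One small point you should write out rather than assert: the parenthetical claim that the relations $r\,dc=0$, $c\in k\cap I$, become redundant after dividing by $dI$ amounts to checking $r\otimes c\in N+P$. This is a one-liner, but it uses $N$ and the square-zero hypothesis, not merely the passage to the quotient by $P$: writing $r=\lambda+y$ with $\lambda\in k$, $y\in I$, one has $\lambda\otimes c=1\otimes\lambda c\in P$, while $y\otimes c=(y\otimes c+c\otimes y)-c\otimes y$ with $c\otimes y=1\otimes cy=0$ because $cy\in I^2=0$, so $y\otimes c\in N$. With that line added, your presentation $\Omega_R^1/dR\cong(R\otimes_kI)/(N+P)$ is fully justified and the rest of the argument goes through as written.
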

\begin{proof}
First notice that $R=k+I$, though not necessarily as a direct sum, and that $\Omega_R^1=\Omega_{R/k}^1$, so we may work with $\Omega_{R/k}^1$ throughout; we will identify $\Omega_{R/k}^1/dR$ with $HC_1^k(R)$ via $a\,db\leftrightarrow a\otimes b$, which has the following presentation: it is the quotient of $R\otimes_kR$ by the $k$-submodule generated by
\begin{align*}
&ab\otimes c-a\otimes bc + ca\otimes b=0\quad\quad (a,b,c\in R),\\
&a\otimes b+b\otimes a=0\quad\quad (a,b\in R).
\end{align*}

Let $\Lambda$ be the $k$-submodule of $\bigwedge_k^2R$ generated by terms $a\wedge b$ where at least one of $a,b$ belongs to $k$. We claim that there is an isomorphism \[HC_1^k(R)\cong \big(\bigwedge\nolimits_k^2R\big)/\Lambda,\quad a\otimes b\leftrightarrow a\wedge b.\] It is clear that $(\bigwedge_k^2R)/\Lambda\to\Omega_{R/k}^1/dR$, $a\wedge b\mapsto a\,db$ is well-defined, thereby defining the isomorphism in one direction. In the other direction, it is evident that $a\otimes b\mapsto a\wedge b\mod\Lambda$ sends $a\otimes b+b\otimes a$ to zero, so it remains only to check that \[ab\wedge c-a\wedge bc+ca\wedge b=0\mod\Lambda\] for all $a,b,c\in R$. Since the identity is linear and symmetric in $a,b,c$ it is sufficient to prove it in the following two cases:
\begin{enumerate}
\item $a\in k$: Then the identity becomes \[ab\wedge c-a\wedge bc-ab\wedge c=-a\wedge bc\equiv0\mod\Lambda.\]
\item $a,b,c\in I$: Then the identity vanishes since $I^2=0$.
\end{enumerate}
This proves that $HC_1^k(R)\to (\bigwedge_k^2R)/\Lambda$ is well-defined, completing the proof of our claimed isomorphism.

Finally, it is straightforward to see that the surjection $\bigwedge_k^2I\to(\bigwedge_k^2R)/\Lambda$ descends to an isomorphism $\bigwedge_{k/k\cap I}^2\res I\cong(\bigwedge_k^2R)/\Lambda$.
\end{proof}

Now we reach the main application of the lemmas; recall from example \ref{examples_p_adic_orders}(iii) that if a finite local $\bb Z_p$-algebra has residue field $\bb F_q$ then it contains $\bb Z_q$.

\begin{proposition}\label{proposition_K_2_via_diff_forms}
Let $A$ be a reduced local $\bb Z_p$-algebra which is finitely generated and torsion-free as a $\bb Z_p$-module, with residue field $\bb F_q$, and assume $p>2$. Then there is a natural isomorphism \[K_2(A/\frak m^2)\cong\bigwedge\nolimits^2_{\bb F_q}\frak m/\frak m',\quad\did{x,y}\leftrightarrow y\wedge x\] where $\frak m'=\frak m^2+p\bb Z_q$.
\end{proposition}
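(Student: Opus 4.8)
The plan is to set $R:=A/\frak m^2$ and to build the isomorphism by combining lemmas~\ref{lemma_relative_to_absolute_HC} and~\ref{lemma_square_zero}. Since $A$ is finitely generated over $\bb Z_p$, the ring $R$ is finite and local, with maximal ideal $\bar{\frak m}:=\frak m/\frak m^2$ satisfying $\bar{\frak m}^2=0$ and with residue field $R/\bar{\frak m}=\bb F_q$ of characteristic $p$; note also that $p^2=0$ in $R$ since $p\in\frak m$. Because $p>2$ we have $\bar{\frak m}^{p-1}=0$, so lemma~\ref{lemma_relative_to_absolute_HC} applies and gives a natural isomorphism $K_2(R)\cong\Omega^1_R/dR$. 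Inspecting the proof of that lemma — which passes through $K_2(R)\cong K_2(R,\bar{\frak m})$ and then through lemma~\ref{lemma_generalisation_of_MS} (applicable since $2!=2\in\mult R$) — the Dennis--Stein symbol $\did{a,b}$ corresponds to the class of $l(a,b)\,da$ in $\Omega^1_R/dR$, where $l$ is the formal logarithm of that lemma.

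Next I would apply lemma~\ref{lemma_square_zero} to the square-zero ideal $I:=\bar{\frak m}$ of $R$ (with $2\in\mult R$), taking as coefficient subring $k:=$ the image in $R$ of $\bb Z_q=W(\bb F_q)$; recall $\bb Z_q\subseteq A$ by example~\ref{examples_p_adic_orders}(iii), and $k$ surjects onto $R/I=\bb F_q$. The one point needing a short verification is that $k\subseteq H^0_\sub{dR}(R)$: since $p^2=0$ in $R$, the image of $\bb Z_q=\bb Z_p[\zeta]$ (for $\zeta$ a primitive $(q-1)$-st root of unity) is generated as a ring by the image of $\zeta$, and differentiating $\zeta^{q-1}=1$ and using that $q-1$ and $\zeta$ are invertible in $R$ shows $d\zeta=0$; hence $d$ kills $k$. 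Lemma~\ref{lemma_square_zero} then yields a natural isomorphism $\Omega^1_R/dR\cong\bigwedge^2_{\bb F_q}\bar I$, $a\,db\leftrightarrow a\wedge b$, with $\bar I=I/(I\cap k)$. Finally, a direct computation using $\frak m\cap\bb Z_q=p\bb Z_q$ identifies $I\cap k$ with $(\frak m^2+p\bb Z_q)/\frak m^2$ inside $\frak m/\frak m^2$, so that $\bar I\cong\frak m/\frak m'$ for $\frak m'=\frak m^2+p\bb Z_q$.

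Composing these gives a natural isomorphism $\Phi\colon K_2(A/\frak m^2)\xrightarrow{\sim}\bigwedge^2_{\bb F_q}\frak m/\frak m'$. To evaluate it on symbols, take $x,y\in\frak m$; then $xy=0$ in $R$ because both lie in $\bar{\frak m}$ and $\bar{\frak m}^2=0$, so $l(x,y)=y$, whence $\Phi(\did{x,y})$ is the class of $y\,dx$, which maps to $y\wedge x\in\bigwedge^2_{\bb F_q}\frak m/\frak m'$. Since the wedges $y\wedge x$ span the target and $\Phi$ is an isomorphism, the symbols $\did{x,y}$ with $x,y\in\frak m$ automatically generate $K_2(A/\frak m^2)$, and $\Phi$ is precisely the asserted correspondence $\did{x,y}\leftrightarrow y\wedge x$.

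There is no deep difficulty here; the work is bookkeeping. The step I expect to require the most care is checking the hypotheses of lemma~\ref{lemma_square_zero} — in particular that the Teichm\"uller-type subring $k$ lies in $H^0_\sub{dR}(R)$ — together with tracking the Dennis--Stein symbol through the chain of identifications so that the correction terms (from the logarithm $l$, and from the ``constant parts'' of elements of $R$) drop out and the order in $\did{x,y}\leftrightarrow y\wedge x$ comes out the right way round.
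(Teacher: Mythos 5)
Your proposal is correct and is essentially the paper's own proof, which simply combines lemmas \ref{lemma_generalisation_of_MS}, \ref{lemma_relative_to_absolute_HC} and \ref{lemma_square_zero} with $R=A/\frak m^2$ and $k=\bb Z_q/\bb Z_q\cap\frak m^2$. The verifications you supply (that $p>2$ gives $\bar{\frak m}^{p-1}=0$ and $2\in\mult R$, that $d$ kills the image of $\bb Z_q$, that $I\cap k$ is $(\frak m^2+p\bb Z_q)/\frak m^2$, and that $l(x,y)\,dx$ reduces to $y\,dx$ when $xy=0$) are exactly the routine checks the paper leaves implicit.
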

\begin{proof}
Combine the previous three lemmas, with $R=A/\frak m^2$ and $k=\bb Z_q/\bb Z_q\cap\frak m^2$.
\end{proof}

\vspace{1cm}\noindent Matthew Morrow,\\
University of Chicago,\\
5734 S. University Ave.,\\
Chicago,\\
IL, 60637,\\
USA\\
{\tt mmorrow@math.uchicago.edu}\\
\url{http://math.uchicago.edu/~mmorrow/}\\
\end{document}